\documentclass[12pt, reqno]{amsart}
\usepackage{amsmath, amssymb, amscd, amsthm, comment, amsxtra}
\usepackage{graphicx}
\usepackage{subfigure}
\usepackage{enumerate}
\usepackage[pdftex, linktocpage=true]{hyperref}
\usepackage{euscript}
\usepackage[format=plain,labelfont=bf,up,width=.99\textwidth]{caption}
\usepackage[toc,page]{appendix}
\usepackage{xcolor}
 \usepackage[colorinlistoftodos]{todonotes}
 \usepackage{marginnote}

\setlength{\topmargin}{0.1in}
\setlength{\textheight}{8.27in}
\setlength{\oddsidemargin}{0.15in}
\setlength{\evensidemargin}{0.15in}
\setlength{\textwidth}{6in}

\theoremstyle{plain}
\newtheorem{thm}{Theorem}[section]
\newtheorem{lem}[thm]{Lemma}
\newtheorem{prop}[thm]{Proposition}
\newtheorem{cor}[thm]{Corollary}
\newtheorem*{mainthm}{Main Theorem}

\theoremstyle{definition}
\newtheorem{defn}[thm]{Definition}

\newtheorem{rem}[thm]{Remark}

\numberwithin{equation}{section}
\newcommand{\thmref}[1]{Theorem~\ref{#1}}
\newcommand{\propref}[1]{Proposition~\ref{#1}}
\newcommand{\lemref}[1]{Lemma~\ref{#1}}
\newcommand{\corref}[1]{Corollary~\ref{#1}}
\newcommand{\figref}[1]{Figure~\ref{#1}}
\newcommand{\secref}[1]{Section~\ref{#1}}
\newcommand{\subsecref}[1]{Subsection~\ref{#1}}
\newcommand{\appref}[1]{Appendix~\ref{#1}}

\newcommand{\defnref}[1]{Definition~\ref{#1}}

\newcommand{\md}[1]{\;(\operatorname{mod}\; #1)}

\renewcommand{\epsilon}{\varepsilon}

\newcommand{\bbB}{\mathbb B}

\newcommand{\bbD}{\mathbb D}

\newcommand{\bbN}{\mathbb N}

\newcommand{\bbP}{\mathbb P}

\newcommand{\bbR}{\mathbb R}

\newcommand{\bbZ}{\mathbb Z}

\newcommand{\cB}{\mathcal B}
\newcommand{\cC}{\mathcal C}

\newcommand{\cE}{\mathcal E}
\newcommand{\cF}{\mathcal F}
\newcommand{\cG}{\mathcal G}

\newcommand{\cI}{\mathcal I}
\newcommand{\cJ}{\mathcal J}
\newcommand{\cK}{\mathcal K}
\newcommand{\cL}{\mathcal L}

\newcommand{\cP}{\mathcal P}

\newcommand{\cR}{\mathcal R}

\newcommand{\cT}{\mathcal T}
\newcommand{\cU}{\mathcal U}
\newcommand{\cV}{\mathcal V}

\newcommand{\cY}{\mathcal Y}
\newcommand{\cZ}{\mathcal Z}

\newcommand{\bfC}{\mathbf C}

\newcommand{\bfK}{\mathbf K}

\newcommand{\bfb}{\mathbf b}

\newcommand{\hB}{\hat B}

\newcommand{\hE}{\hat E}

\newcommand{\hH}{\hat H}

\newcommand{\ha}{\hat a}

\newcommand{\hk}{\hat k}

\newcommand{\hm}{\hat m}

\newcommand{\tiA}{\tilde A}
\newcommand{\tiB}{\tilde B}
\newcommand{\tiC}{\tilde C}

\newcommand{\tiE}{\tilde E}

\newcommand{\tiQ}{\tilde Q}

\newcommand{\tia}{\tilde a}

\newcommand{\tig}{\tilde g}

\newcommand{\bepsilon}{{\bar \epsilon}}

\newcommand{\bL}{{\bar L}}

\newcommand{\tixi}{\tilde \xi}

\newcommand{\tipsi}{\tilde \psi}

\newcommand{\tiGamma}{\tilde \Gamma}

\newcommand{\tiPhi}{\tilde \Phi}

\newcommand{\udelta}{\underline{\delta}}

\newcommand{\ueta}{\underline{\eta}}
\newcommand{\baeta}{\bar\eta}

\newcommand{\chcU}{\check{\cU}}

\newcommand{\chcI}{\check{\cI}}

\newcommand{\ticI}{\tilde{\cI}}
\newcommand{\hcI}{\hat{\cI}}

\newcommand{\hcJ}{\hat{\cJ}}

\newcommand{\ticJ}{\tilde{\cJ}}

\newcommand{\chl}{\check{l}}
\newcommand{\chH}{\check{H}}

\newcommand{\hcB}{\hat{\cB}}

\newcommand{\ticB}{\tilde\cB}

\newcommand{\frd}{{\frd}}

\newcommand{\ticE}{{\tilde\cE}}

\newcommand{\chA}{{\check A}}

\newcommand{\bC}{{\bar C}}

\newcommand{\Jac}{\operatorname{Jac}}

\newcommand{\dist}{\operatorname{dist}}

\newcommand{\Id}{\operatorname{Id}}
\newcommand{\loc}{\operatorname{loc}}

\newcommand{\Cr}{\operatorname{Cr}}
\newcommand{\CrD}{\operatorname{CrD}}

\newcommand{\Dis}{\operatorname{Dis}}

\newcommand{\Piod}{\Pi_{\operatorname{1D}}}

\newcommand{\matsp}[1]{\hspace{5mm} \text{#1} \hspace{5mm}}

\newcommand{\comma}{, \hspace{5mm}}



\title[A Priori Bounds for H\'enon-like Renormalization]{A Priori Bounds for H\'enon-like Renormalization}
\author{Sylvain Crovisier, Mikhail Lyubich, Enrique Pujals, Jonguk Yang}
\thanks{The first author was partially supported by the ANR project CoSyDy. The second author was partly supported by the NSF, the Hagler and Clay Fellowships, the Institute for Theoretical Studies at ETH (Zurich), SLMSI (formerly MSRI Berkeley), the Fields Institute and the Center of Nonlinear Analysis and Modeling at the University of Toronto. The third author was partly supported by NSF award number 24000048. The fourth author was partially supported by SLMSI, Institut Mittag-Leffler, the thematic programs `Topological, smooth and holomorphic dynamics, ergodic theory, fractals’ of the Simons Foundation Award No. 663281 for IMPAN, the Fields Institute and the Center of Nonlinear Analysis and Modeling at the University of Toronto.}

\setcounter{tocdepth}{1}
\begin{document}

\maketitle

\begin{abstract}
We formulate and prove \emph{a priori} bounds for the renormalization of H\'enon-like maps (under certain regularity assumptions). This provides a certain uniform control on the small-scale geometry of the dynamics, and ensures pre-compactness of the renormalization sequence. In a sequel to this paper \cite{Y}, a priori bounds are used in the proof of the main results, including renormalization convergence, finite-time checkability of the required regularity conditions and regular unicriticality of the dynamics. 
\end{abstract}

\tableofcontents


\section{Introduction}\label{sec.intro}

In dynamics, {\it a priori} bounds refer to uniform geometric control over a certain class of dynamical systems at all scales. This concept can also be understood as the precompactness of the renormalization orbits within that class in an appropriate topology. Establishing a priori bounds is often the most challenging step in the development of renormalization theory, as it is essential for achieving asymptotic universality of the dynamical systems on small scales.

In one-dimensional dynamics, both real and complex, there have been many occasions in the past forty years when a priori bounds were successfully established and led to desired conclusions. This includes the theory of circle diffeomorphisms (\cite{Her1}, \cite{Yo}, \cite{GoYa}), real one-dimensional unimodal maps (\cite{Gu}, \cite{Su}, \cite{LYam}), critical circle maps (\cite{Sw}, \cite{Her2}, \cite{DF}, \cite{Yam}), and complex quadratic-like maps (\cite{L3}, \cite{KaL}, \cite{DuL2}), Siegel maps (\cite{Mc}, \cite{InSh}, \cite{DuL1}).

A priori bounds serve as a crucial first step in proving renormalization  convergence by providing a uniform control over the size of the iterates of the map across renormalization steps. Without a priori bounds, the iterated maps could grow unboundedly or deviate significantly at each scale, making it impossible to apply the renormalization operator in a consistent way. These bounds thus ensure that each rescaled map remains within a confined range, preserving the conditions necessary for the renormalization process to reveal stable, convergent patterns and self-similarity across iterations.

 In higher-dimensional settings, obtaining such bounds is even more challenging. Consequently, developments in two-dimensional renormalization theories so far rely either on perturbative methods based on the robustness of one-dimensional theories (see \cite{DCLMa} and the references therein) or the aid of computer assistance (see \cite{GaJoMa} and the references therein).

In this series of papers, we set the stage for theoretical non-perturbative a priori bounds machinery for real two-dimensional H\'enon-like maps. The work begins with \cite{CLPY1}, where we introduce a class of regularly unicritical, infinitely renormalizable diffeomorphisms. For maps in this class, we show that the return maps to renormalization domains are regular H\'enon-like, meaning they have topological 2D horsehoe shapes that converge super-exponentially fast to 1D parabolic shapes. This property is the entrance point to the current paper, where we establish crucial a priori bounds for this class of maps that provide us with the bounded distortion and uniform differentiable shapes of all the renormalizations. In the forthcoming paper (\cite{Y}), these results will be used for establishing renormalization convergence and universality in the class of H\'enon-like maps. Furthermore, it will be shown that the aforementioned assumptions on the return maps are only required up to some finite scale, as a priori bounds ensure that they are automatically satisfied for any number of deeper renormalizations. This gives an opening for a rigorous search (with some computer assistance) of parameters in the H\'enon family that are infinitely renormalizable in the Feigenbaum universality class.


\subsection{Statement of the Main Theorem}

A {\it H\'enon-like map} $F : B \to F(B) \Subset B$ is a diffeomorphism defined on a rectangle $B = I\times J \subset \bbR^2$ of the form
\begin{equation}\label{eq.henonlike}
F(x,y) = (f(x,y), x)
\matsp{for}
(x,y) \in B,
\end{equation}
such that for any $y \in J$, the map $f(\cdot,y) : I \to I$ is a unimodal map. One may visualize the action of $F$ as bending $B$ into a U-shape, and then turning it on its side. See \figref{fig.henontrans}.  We refer to $\Piod(F)(x) := f(x, 0)$ as the {\it 1D profile of $F$}.

\begin{figure}[h]
\centering
\includegraphics[scale=0.12]{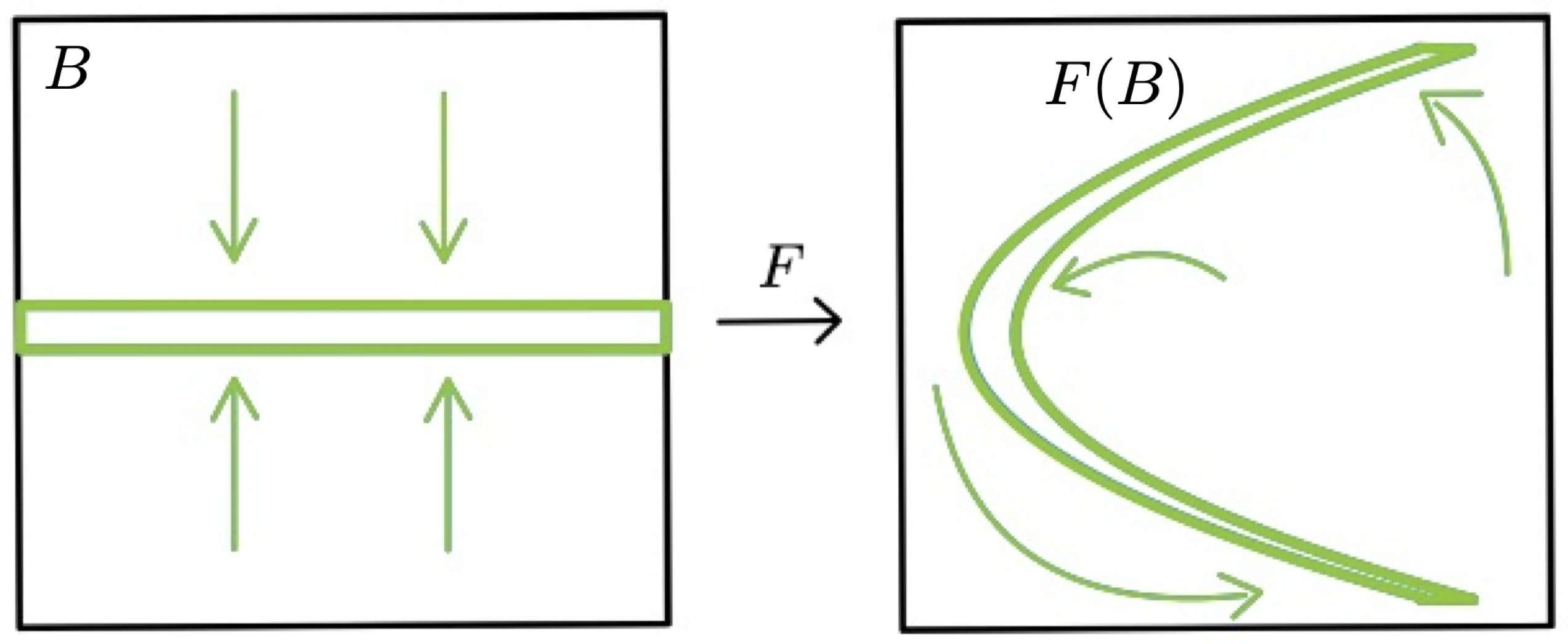}
\caption{H\'enon-like mapping.}
\label{fig.henontrans}
\end{figure}

We say that $F$ is {\it H\'enon-like renormalizable} if there is a subdomain $\cB^1 \Subset B$ that is {\it $R$-periodic}:
$$
F^i(\cB^1) \cap \cB^1 = \varnothing
\text{, when $1\leq i<R$ \quad and \quad }
F^R(\cB^1) \Subset \cB^1,
$$
and the return map $F^R|_{\cB^1}$ is again H\'enon-like after a smooth change-of-coordinates $\Phi : \cB^1 \to B^1$, referred to as a {\it straightening chart}, that preserve the genuine horizontal foliation. In this case, the pair $(F^R, \Phi)$ is referred to as a {\it H\'enon-like return}. We define the {\it (H\'enon-like) renormalization} $\cR(F)$ of $F$ as the H\'enon-like map obtained via a suitable affine rescaling of $\Phi\circ F^R \circ \Phi^{-1}$ that normalizes the width of the domain $B^1$. See \figref{fig.henonren}.

\begin{figure}[h]
\centering
\includegraphics[scale=0.06]{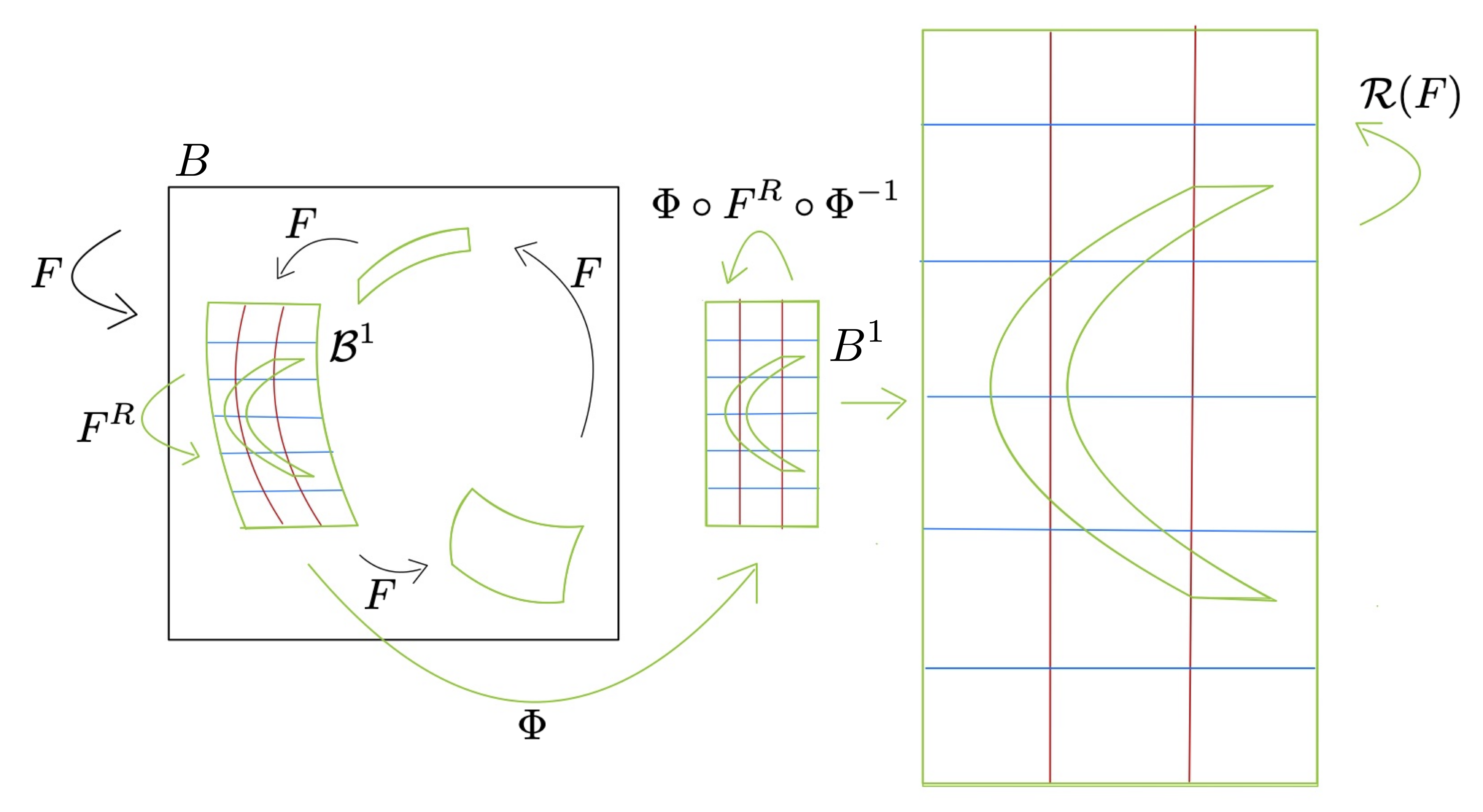}
\caption{H\'enon-like renormalization.}
\label{fig.henonren}
\end{figure}

As in the one dimensional setting, one is confronted with the problem that the geometry of the  new H\'enon-like map may not be  controlled. To address this issue, we incorporate Pesin theoretic ideas to the renormalization method. This involves keeping track of the {\it regularity} of points, which can then be used to control the geometry of dynamics in the higher-dimensional setting (see \appref{sec.pesin}). Loosely speaking, a H\'enon-like return $(F^R, \Phi : \cB^1 \to B^1)$ is {\it regular}, and $F$ is {\it regularly} H\'enon-like renormalizable if there is a sufficiently dominant exponential contraction of a vertical foliation of $\cB^1$ under $F^i$ for $1 \leq i \leq R$. For the precise definition, see \defnref{defn.reg henon return}.

\begin{rem}
As noted above, it is shown in \cite{CLPY1} that for a regularly unicritical infinitely renormalizable diffeomorphism, the return maps are eventually regular H\'enon-like.
\end{rem}

The H\'enon-like map $F : B \to B$ is $N$-times topologically renormalizable for some $N \in \bbN \cup \{\infty\}$ if there exist sequences
$$
B =: \cB^0 \Supset \cB^1\Supset \ldots
\matsp{and}
1 =: R_0 < R_1 < \ldots
$$
such that for $1 \leq n \leq N$, the set $\cB^n$ is an $R_n$-periodic Jordan domain. If there exists $\bfb \geq 2$ such that
$$
r_{n-1} := R_n/R_{n-1} \leq \bfb
\matsp{for all}
1\leq n \leq N,
$$
then we say that the combinatorics of renormalization for $F$ is {\it of ($\bfb$-)bounded type}. Suppose for $1 \leq n \leq N$, there exists a straightening chart $\Psi^n:\cB^n \to B^n$ such that $(F^{R_n}, \Psi^n)$ is a H\'enon-like return. Then the sequence
\begin{equation}\label{eq.returns}
\{(F^{R_n}, \Psi^n:\cB^n \to B^n)\}_{n=1}^N
\end{equation}
is said to be {\it nested}. Conjugating $\Psi^n \circ F^{R_n} \circ (\Psi^n)^{-1}$ by an affine map that normalizes the width of $B^n$, we obtain the $n$th renormalization $\cR^n(F)$ of $F$.

In the one dimensional context, a priori bound refers to a uniform control of the distortion of the derivatives along renormalization intervals. For the two dimensional context, we propose to consider the distortion of the derivative along some particular type of curves. More precisely,  consider a $C^1$-diffeomorphism $G : U \to G(U)$ defined on a domain $U \subset \bbR^2$. For a $C^1$-curve $\gamma \subset U$, let $\phi_\gamma : [0, |\gamma|] \to \gamma$ be the arc-length parameterization of $\gamma$. Denote $G_\gamma := \phi_{G(\gamma)}\circ G \circ \phi_\gamma^{-1}$. The {\it distortion of $G$ along $\gamma$} is defined as
$$
\Dis(G, \gamma) := \sup_{s, t \in (0, |\gamma|)}\frac{|G_\gamma'(s)|}{|G_\gamma'(t)|}.
$$

\begin{mainthm}
Consider a $C^6$-H\'enon-like map $F : B \to B$. Suppose for $N \in \bbN \cup \{\infty\}$, the map $F$ has $N$ nested regular H\'enon-like returns given by \eqref{eq.returns} with combinatorics of bounded type. For $1 \leq n \leq N$, let $\gamma^n$ be a genuine horizontal arc contained in $\cB^n$. Then $\Dis(F^{R_n}, \gamma^n)$ is uniformly bounded.
\end{mainthm}

\begin{rem} In the Main theorem, the uniform bound on $\Dis(F^{R_n}, \gamma^n)$ depends only on certain finite geometric data about the map and the parameters of regularity (listed in \eqref{eq.const 0} and \eqref{eq.const 1}). In particular, the bound does not depend on $n$ or $N$.
\end{rem}

Suppose that $N=\infty$. For $n \in \bbN$, consider the 1D profile $f_n := \Piod\circ\cR^n(F)$ of the $n$th renormalization of $F$. A consequence of the Main Theorem is that there exists a $C^2$-diffeomorphism $\phi_n$ and a constant $a_n \in \bbR$ such that the unimodal map $f_n$ decomposes into
$$
f_n(\cdot) = (\phi_n(\cdot))^2 + a_n.
$$
It follows that the sequence $\{f_n\}_{n=1}^\infty$ is pre-compact in the space of unimodal maps in the $C^1$-topology.

\subsection{Sketch of the Proof}

The proof of {\it a priori} bounds for 1D unimodal maps relies on the Koebe Distortion Principle that controls the ratio distortion for compositions of 1D diffeomorphisms. This principle combines the Denjoy Lemma (that controls the distortion away from the critical points) with the negative Schwarzian derivative property near the critical points (that gives control of the cross ratio distortion at these moments). In turn, the Denjoy Lemma fundamentally relies on the disjointness of the intervals in the cycle of a periodic subinterval, which ensures that their total length remains uniformly bounded.

Turning to the 2D case, consider the regular H\'enon-like returns given by \eqref{eq.returns}. For concreteness, assume that $N=\infty$. For $n \geq 0$, we analyze how much distortion is induced on a horizontal cross-section $\cI^n$ of the renormalization domain $\cB^n$ by the return map $F^{R_n}$. However, in this 2D setting, it is not obvious whether the 1D Koebe Distortion Principle can still be applied. For instance, a union of disjoint arcs in 2D space may have arbitrarily large total length. Moreover, since a H\'enon-like map is a diffeomorphism, it is not clear what, if anything, plays the role of its critical point. The following outlines our approach to overcoming these issues.

As our first task, we locate the {\it dynamical critical value} $v_0$ given by
$$
\{v_0\} = \bigcup_{n = 1}^\infty F^{R_n}(\cB^n).
$$
The point $v_0$ has a well-defined strong-stable manifold $W^{ss}(v_0)$ and center manifold $W^c(v_0)$ that have a quadratic tangency at $v_0$ (see \figref{fig.critval}). Using the quantitative estimates provided in \appref{sec.pesin}, we prove that the vertical foliations on $\cB^n$ induced by $\Psi^n$ converge super-exponentially fast to $W^{ss}(v_0)$. Moreover, the image $F^{R_n}(\gamma_h)$ of any horizontal arc $\gamma_h \subset \cB^n$ is super-exponentially close to a subarc of $W^c(v_0)$.

\begin{figure}[h]
\centering
\includegraphics[scale=0.07]{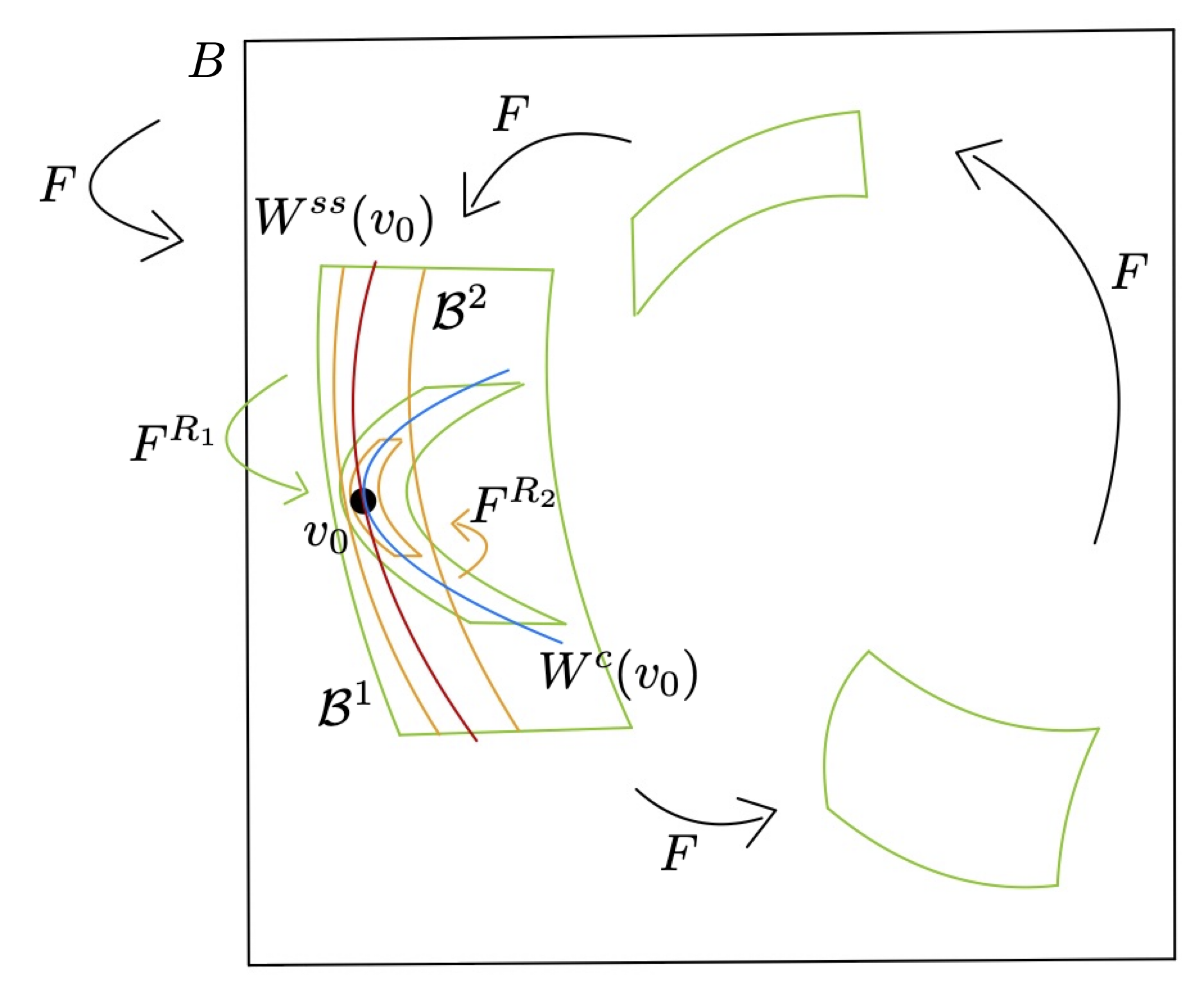}
\caption{The critical value $v_0$ of an infinitely regularly H\'enon-like renormalizable map $F$.}
\label{fig.critval}
\end{figure}

Assume that $\cI^m \supset \cI^n \ni v_0$ for $m \leq n$. For $n \geq n_0$ with $ n_0\geq 0$ sufficiently large, there exists a well-defined projection map $\cP^n : \cB^n \to \cI^n$ induced by the vertical foliation on $\cB^n$. Since the vertical foliation on $\cB^n$ converge super-exponentially to $W^{ss}(v_0)$, and $W^c(v_0)$ has a quadratic tangency with $W^{ss}(v_0)$, applying $\cP^n$ on a curve $\gamma$ sufficiently close to a subarc of $W^c(v_0)$ has the effect of applying a 1D quadratic map near its critical point.

By replacing $F$ with $F^{n_0}$, we can assume that $n_0 = 0$. Then by applying the projections $\cP^m$ for $0 \leq m < n$, we can confine the orbit of $\cI^n$ under $F$ to the fixed one-dimensional curve $\cI^0$. This reduces the 2D dynamics of $F$ on $B = \cB^0$ to the 1D mapping scheme on $\cI^0$ that gives the transitions from one projected iterated image of $\cI^n$ to another.

The main difficulty is that the foliation on $\cB^m$, through which $\cP^m$ is defined, is only invariant up to $R_m$ iterates. If $\cP^m$ is naively applied to the part of the orbit of $\cI^n$ contained in $\cB^m$, then the resulting projections to $\cI^0$ do not faithfully represent the original 2D dynamics of $F$ acting on $\cI^n$. For example, if $m \ll n$, then it is possible that the projections under $\cP^m$ of two iterated images of $\cI^n$ are not disjoint.

For this reason, the 1D reduction must be done in a more careful way. At various moments in the orbit of $\cI^n$, multiple projections from different scales of renormalization may be applied before they are ``undone'' to recover the original dynamics. In this procedure, we heavily rely on the fact that for bounded type combinatorics, the configuration of the cycle of $\cB^{m+s}$ for $s \in \{1, 2\}$ within $\cB^m$ resembles the 1D combinatorial structure (including some 2D version of the Denjoy disjointedness property). Then once the 2D dynamics of $F$ is properly reduced to a 1D mapping scheme, the Koebe Distortion Principle can be applied to obtain the desired bound on the distortion.

\subsection{Notations and conventions}

Unless otherwise specified, we adopt the following notations and conventions. 

Any diffeomorphism on a domain in $\bbR^2$ is assumed to be orientation-preserving. The projective tangent space at a point $p \in \bbR^2$ is denoted by $\bbP^2_p$.

Given a number $\eta > 0$, we use $\baeta$ to denote any number that satisfy
$$
\eta < \baeta < C\eta^D
$$
for some uniform constants $C > 1$ and $D > 1$ (if $\eta >1$) or $D \in (0, 1)$ (if $\eta < 1$) that are independent of the map being considered. Additionally, we allow $\baeta$ to absorb any uniformly bounded coefficient or power. So for example, if $\baeta >1$, then we may write
$$
\text{``}\;\; 10\baeta^5 = \baeta\;\;\text{''}.
$$
Similarly, we use $\ueta$ to denote any number that satisfy
$$
c\eta^d < \ueta < \eta
$$
for some uniform constants $c \in (0, 1)$ and $d \in (0, 1)$ (if $\eta >1$) or $d >1$ (if $\eta <1$) that are independent of the map being considered. As before, we allow $\ueta$ to absorb any uniformly bounded coefficient or power. So for example, if $\ueta >1$, then we may write
$$
\text{``}\;\;\tfrac{1}{3}\ueta^{1/4} = \ueta\;\;\text{''}.
$$
These notations apply to any positive real number: e.g. $\bepsilon > \epsilon$, $\udelta < \delta$, $\bL > L$, etc.

Note that $\baeta$ can be much larger than $\eta$ (similarly, $\ueta$ can be much smaller than $\eta$). Sometimes, we may avoid the $\baeta$ or $\ueta$ notation when indicating numbers that are somewhat or very close to the original value of $\eta$. For example, if $\eta \in (0,1)$ is a small number, then we may denote $\eta':=(1-\baeta)\eta$. Then $\ueta\ll \eta' < \eta$.

We use $n, m, i, j$ to denote integers (and less frequently $l, k$). The letter $i$ is never the imaginary number. Typically (but not always), $n \in\bbN$ and $m \in \bbZ$. We typically use $N, M$ to indicate fixed integers (often related to variables $n, m$).

We typically denote constants used for estimate bounds by $C, K \geq 1$ (less frequently $c > 0$).

We use calligraphic font $\cU, \cT, \cI,$ etc, for objects in the phase space and regular fonts $U, T, I,$ etc, for corresponding objects in the linearized/uniformized coordinates. A notable exception are for the invariant manifolds $W^{ss}, W^c$.

We use $p, q$ to indicate points in the phase space, and $z, w$ for points in linearized/uniformized coordinates.

For any set $X_m \subset \Omega$ with a numerical index $m \in \bbZ$, we denote
$$
X_{m+l} := F^l(X_m)
$$
for all $l \in \bbZ$ for which the right-hand side is well-defined. Similarly, for any direction $E_{p_m} \in \bbP^2_{p_m}$ at a point $p_m \in \Omega$, we denote
$$
E_{p_{m+l}} := DF^l(E_{p_m}).
$$

Define
$$
\pi_h(x,y) := x
\comma
\pi_v(x,y) := y
\comma
\Pi_h(x,y) := (x,0)
\matsp{and}
\Pi_v(x,y) := (0,y).
$$


 \section{Precise Definitions}\label{sec.defn}

\subsection{Charts}

For $z \in \bbR^2$, denote the space of tangent directions at $z$ by $\bbP^2_z$. Let $E^{gv}_z, E^{gh}_z\in \bbP^2_z$ be the genuine vertical and horizontal directions at $z$ respectively.

A {\it $C^r$-chart} is a $C^r$-diffeomorphism $\Phi : \cB \to B$ from a quadrilateral $\cB \subset \bbR^2$ to a rectangle $B = I \times J \subset \bbR^2$, where $I, J \subset \bbR$ are intervals. The {\it vertical/horizontal direction $E^{v/h}_p \in \bbP^2_p$ at $p\in\cB$ (associated to $\Phi$)} are given by
$$
E^{v/h}_p := D\Phi^{-1}\left(E^{gv/gh}_{\Phi(p)}\right).
$$
The chart $\Phi$ is said to be {\it genuinely vertical/horizontal} if $E^{v/h}_p = E^{gv/gh}_p$ for all $p \in\cB$.

A {\it vertical leaf in $\cB$} is a curve $l^v$ such that
$$
l^v \subseteq \Phi^{-1}(\{a\} \times \pi_v(B))
\matsp{for some}
a \in \pi_h(B).
$$
If the above containment is an equality, then $l^v$ is said to be {\it full}. A {\it (full) horizontal leaf $l^h$ in $\cB$} is defined analogously.

Let $p \in \cB$ and $E_p \in \bbP^2_p$. Denote
$
z := \Phi(p)
$ and $
E_z := D\Phi(E_p).
$
For $t > 0$, the direction $E_p$ is said to be {\it $t$-vertical in $\cB$} if 
$$
\frac{\measuredangle(E_z, E_z^{gv})}{\measuredangle(E_z, E_z^{gh})} < t.
$$
A {\it $t$-horizontal direction in $\cB$} is analogously defined.

A $C^0$-curve $\Gamma^v \subset \cB$ is said to be {\it vertical in $\cB$} if $\Phi(\Gamma^v)$ is a vertical graph in $B$ in the usual sense. That is, there exists an interval $I^v \subseteq \pi_v(B)$ and a map $g_v : I^v \to \pi_h(B)$ such that
$$
\Phi(\Gamma^v) = \cG^v(g_v) := \{(g_v(y), y) \; | \; y \in I^v\}.
$$
If $I^v = \pi_v(B)$, then $\Gamma^v$ is said to be {\it vertically proper in $\cB$}. A {\it horizontal} or a {\it horizontally proper curve $\Gamma^h$ in $\cB$} is analogously defined. If $\Gamma^v$ is $C^r$, and $\|g_v'\|_{C^{r-1}} \leq t$ for some $t \geq 0$, then we say that $\Gamma^v$ is {\it $t$-vertical (in $C^r$) in $\cB$}. Note that $\Gamma^v$ is a (vertically proper) $0$-vertical curve if and only if it is a (full) vertical leaf.

If $\Gamma^v$ is $C^2$, and $g_v$ has a unique critical point $c \in I^v$ of quadratic type: $g_v'(c) =0$ and
\begin{equation}\label{eq.val curv}
\kappa_\Phi(\Gamma^v) := g_v''(c) \neq 0,
\end{equation}
then $\Gamma^v$ is a {\it vertical quadratic curve in $\cB$}. We refer to $\kappa_\Phi(\Gamma^v)$ as the {\it valuable curvature of $\Gamma^v$ in $\cB$}.

Let $\cE^v : \cB \to T^1(\cB)$ be the $C^{r-1}$-unit vector field given by
$$
\cE^v(p) := D\Phi^{-1}(E^{gv}_{\Phi(p)}).
$$
A $C^{r-1}$-unit vector field $\ticE^v : \cU \to T^1(\cU)$ defined on a domain $\cU \subset \cB$ is said to be {\it $t$-vertical in $C^{r-1}$ in $\cB$} for some $t \geq 0$ if $\|\ticE^v - \cE^v\|_{C^{r-1}} \leq t$.

Let $\tiPhi : \ticB \to \tiB$ be another chart with $\ticB \subset \cB$. We define the following relations between $\Phi$ and $\tiPhi$.
\begin{itemize}
\item We say that $\ticB$ is {\it vertically proper in $\cB$} if every full vertical leaf in $\ticB$ is vertically proper in $\cB$.
\item We say that $\Phi$ and $\tiPhi$ are {\it horizontally equivalent on $\ticB$} if every horizontal leaf in $\ticB$ is a horizontal leaf in $\cB$.
\item For $t \geq 0$, we say that {\it $\ticB$ is $t$-vertical in $\cB$} if $\Phi$ and $\tiPhi$ are horizontally equivalent, and the unit vector field given by
$$
\ticE^v(p) := D\tiPhi^{-1}(E^{gv}_{\tiPhi(p)})
\matsp{for}
p \in \ticB
$$
is $t$-vertical in $C^{r-1}$ in $\cB$.
\item We say that $\Phi$ and $\tiPhi$ are {\it equivalent on $\ticB$} if $\ticB$ is $0$-vertical in $\cB$.
\end{itemize}

Let $\Psi : \cB \to B$ be a chart satisfying the following properties.
\begin{itemize}
\item There exists $q \in \cB$ such that $\Psi(q) = 0 \in B$.
\item Let
$$
\cI^h(t) := \Psi^{-1}(t, 0)
\matsp{for}
t \in \pi_h(B),
$$
and
$$
\cI^v(s) := \Psi^{-1}(0, s)
\matsp{for}
s \in \pi_v(B).
$$
Then $\|(\cI^{h/v})'\| \equiv 1$.
\end{itemize}
In this case, we say that $\Psi$ is {\it centered (at $q$)}. Clearly, for any chart $\Phi : \cB \to D$ and any point $q \in \cB$, there exists a unique chart $\Psi :\cB \to B$ equivalent to $\Phi$ that is centered at $q$.

Suppose that $\Psi : (\cB, q) \to (B, 0)$ is centered at some point $q \in \cB$. Let $\Gamma^h \subset \cB$ be a horizontal $C^r$-curve, so that $\Psi(\Gamma^h)$ is the horizontal graph in $B$ of a $C^r$-map $g_h : I^h \to \pi_v(B)$ defined on an interval $I^h \subset \pi_h(B)$. We say that $\Gamma^h$ is {\it $t$-horizontal in $C^r$ in $\cB$} if $\|g_h\|_{C^r} \leq t$. In particular, $\Gamma^h$ is $0$-horizontal in $\cB$ if and only if $\Gamma^h$ is a subarc of the full horizontal leaf containing $q$.

\subsection{Regularity}

For an integer $r \geq 2$, consider a $C^r$-diffeomorphism $F : B \to F(B) \Subset B$ defined on a domain $B\subset \bbR^2$. Let $L \geq 1$; $\epsilon, \lambda \in (0,1)$  and $M \in \bbN \cup \{\infty\}$. A point $p \in B$ is {\it $M$-times forward $(L, \epsilon, \lambda)$-regular along $E_p^+ \in \bbP^2_p$} if for $s \in \{0, 1\}$,  we have
\begin{equation}\label{eq.for reg}
L^{-1}\lambda^{(1+\epsilon)m}\leq \frac{\|DF^m|_{E^+_p}\|^{s+1}}{(\Jac_pF^m)^s} \leq L \lambda^{(1-\epsilon)m}
\matsp{for all}
1 \leq m \leq M.
\end{equation}
Intuitively, when $s = 0$, the condition gives exponential contraction along $E_p$, and when $s=1$, it gives the domination of this contraction (there is exponential repulsion in the projective tangent space of other directions from $E_p$). 
Similarly, $p$ is {\it $M$-times backward $(L, \epsilon, \lambda)$-regular along $E^-_p \in \bbP^2_p$} if for $s \in \{0, 1\}$,
\begin{equation}\label{eq.back reg}
L^{-1}\lambda^{(1+\epsilon)m}\leq \frac{(\Jac_pF^{-m})^s}{\|DF^{-m}|_{E^-_p}\|^{s+1}} \leq L \lambda^{(1-\epsilon)m}
\matsp{for all}
1 \leq m \leq M.
\end{equation}
The constants $L$, $\epsilon$ and $\lambda$ are referred to as an {\it irregularity factor}, a {\it marginal exponent} and a {\it contraction base} respectively.

If $\epsilon$ is sufficiently small (independently of $F$), then the local dynamics of $F$ near the forward (or backward resp.) orbit of $p$ can be quasi-linearized up to the $M$th iterate, see \thmref{reg chart}. If $M = \infty$, this implies in particular that $p$ has a $C^r$-smooth strong-stable manifold (or center manifold resp.), see \thmref{stable}. It should be noted that the center manifold at an infinitely backward regular point $p$ is not uniquely defined; however, its $C^r$-jet at $p$ is unique, see \thmref{center jet}. Henceforth, any marginal exponent will be assumed to be sufficiently small.

\begin{defn}\label{defn.reg henon return}
Let $F : B \to B$ be a H\'enon-like map. A H\'enon-like return $(F^R, \Phi : \cB^1 \to B^1)$ is said to be {\it $(L, \epsilon, \lambda)$-regular} if the following conditions hold. Let
$$
E_p^{v/h} := D\Psi^{-1}\left(E^{gv/gh}_{\Psi(p)}\right).
$$
\begin{itemize}
\item Every $p \in \cB^1$ is $R$-times forward $(L, \epsilon, \lambda)$-regular along $E^v_p$.
\item Every $q \in F^R(\cB^1) \Subset \cB^1$ is $R$-times backward $(L, \epsilon, \lambda)$-regular along $E^h_q$.
\item For any $p \in \cB^1$, we have $\measuredangle(E^v_p, E^h_p) > 1/L$.
\end{itemize}
In this case, we say that $F$ is {\it $(L, \epsilon, \lambda)$-regularly H\'enon-like renormalizable}.
\end{defn}


 \section{Convergence of the Straightening Charts}\label{sec.conv chart}
 
Let $r \geq 2$ be an integer, and consider a $C^{r+1}$-H\'enon-like map $F : B \to B$. For some $N \in \bbN \cup \{\infty\}$; $L \geq 1$ and $\epsilon, \lambda \in (0,1)$, suppose that $F$ has $N$ nested $(L, \epsilon, \lambda)$-regular H\'enon-like returns given by \eqref{eq.returns}. Furthermore, assume that $N$ is sufficiently large, so that for some smallest number $0 \leq n_0 \leq N$, we have
\begin{equation}\label{eq.proper depth 0}
\overline{K_0} \lambda^{\epsilon R_{n_0}} < \eta,
\end{equation}
where $\eta \in (0,1)$ is independent of $F$, and
\begin{equation}\label{eq.const 0}
{K_0} = {K_0}(L, \lambda, \epsilon, \lambda^{1-\epsilon}\|DF^{-1}\|, \|DF\|_{C^r}, r)\geq 1
\end{equation}
is a uniform constant.

For $n_0 \leq n \leq N$ and $m \in \bbZ$, denote $\cB^n_m := F^m(\cB^n)$. Observe that
$
\cB^{n+1}_{R_{n+1}} \Subset \cB^n_{R_n}.
$
Let
$$
v_0 \in \cZ_0 := \bigcap_{n=1}^N \cB^n_{R_n},
$$
be a point to be specified later (as the {\it critical value of $F$}). Without loss of generality, assume that $\Psi^n$ is centered at $v_0$.

In this section, we describe the asymptotic behavior of the centered straightening charts $\{\Psi^n\}_{n=1}^N$ for the renormalizations of $F$.

\begin{rem}
In this section, we do not assume that the combinatorics of the renormalizations of $F$ is necessarily of bounded type.
\end{rem}

Define
$$
I^n_0 := \pi_h(B^n_0)
\matsp{and}
\cI^n_0 := (\Psi^n)^{-1}(I^n_0 \times \{0\}).
$$
Then it follows that
$
I^n_0 \Subset I^1_0
$ and $
\Psi^n|_{\cI^n_0} =\Psi^1|_{\cI^n_0}.
$
Denote $\cI^n_m := F^m(\cI^n_0)$ for $m \in \bbZ$. 

For $p_0 \in \cB^n_0$, write $z_0 := \Psi^n(p_0)$, and let
$$
E^h_{p_0} := D(\Psi^n)^{-1}(E^{gh}_{z_0})
\matsp{and}
E^{v,n}_{p_0} := D(\Psi^n)^{-1}(E^{gv}_{z_0}).
$$
Additionally, let
$$
E^{h,n}_{p_{R_n-1}} := DF^{R_n-1}(E^h_{p_0})
\matsp{and}
E^v_{p_{R_n-1}} := DF^{-1}(E^h_{p_{R_n}}) = DF^{R_n-1}(E^{v,n}_{p_0}).
$$
By increasing $L$ by a uniform amount if necessary (see \propref{grow irreg}), we may assume that every $q \in \cB^n_{R_n-1}$ is $(R_n-1)$-times backward $(L, \epsilon, \lambda)$-regular along $E^v_q$.

\begin{prop}[Vertical extension of charts]\label{vert prop nest}
For $n_0 \leq n \leq N$, the $n$th centered straightening chart can be extended to $\Psi^n : \hcB^n_0 \to \hB^n_0$ such that the following properties hold.
\begin{enumerate}[i)]
\item The quadrilateral $\hcB^n_0$ is vertically proper and $\eta$-vertical in $\cB^{n_0}_0$.
\item We have $\|(\Psi^n)^{\pm 1}\|_{C^r} < {K_0}$, and
\begin{equation}\label{eq.psi conv}
\|\Psi^n\circ (\Psi^{n+1})^{-1}-\Id\|_{C^r} <\lambda^{(1-\bepsilon)R_n}.
\end{equation}
\item Every point $q_0 \in \hcB^n_0$ is $R_n$-times forward $({K_0}, \epsilon, \lambda)$-regular along $E^{v,n}_{q_0}$.
\end{enumerate}
\end{prop}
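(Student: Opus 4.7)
The plan is to invoke the Pesin-theoretic uniformization recalled in \appref{sec.pesin}. Since every $p_0 \in \cB^n_0$ is $R_n$-times forward $(L,\epsilon,\lambda)$-regular along $E^{v,n}_{p_0}$, the regular chart theorem (\thmref{reg chart}) supplies a quasi-linearizing chart on a definite neighborhood of the forward orbit $p_0, p_1, \ldots, p_{R_n}$. The centered straightening chart $\Psi^n$ is essentially the restriction to $\cB^n_0$ of such a uniformization, normalized so that $\Psi^n(v_0) = 0$ and so that the horizontal direction at $v_0$ coincides with $E^{gh}_{v_0}$ in arc-length. The extension $\Psi^n : \hcB^n_0 \to \hB^n_0$ is then obtained by saturating $\cB^n_0$ by the vertical foliation coming from the same regular chart, and taking $\hcB^n_0$ to be the maximal saturation whose full vertical leaves remain inside $\cB^{n_0}_0$ (i.e.\ are vertically proper in $\cB^{n_0}_0$).

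I would prove (i) and (iii) together. Clause (iii) is essentially tautological: the regular chart is built precisely out of the forward regularity data, so any point $q_0$ in its domain inherits $R_n$-times forward $(K_0,\epsilon,\lambda)$-regularity along $E^{v,n}_{q_0}$, the new irregularity factor $K_0$ absorbing the $C^r$-distortion of the uniformization (see \eqref{eq.const 0}). For clause (i), the angle between the extended vertical direction and $E^{gv}$ is controlled by the dominated-splitting estimate \eqref{eq.for reg}: at scale $n_0$ it decays like $\lambda^{\epsilon R_{n_0}}$ with uniform prefactor $\overline{K_0}$, so the hypothesis \eqref{eq.proper depth 0} yields the $\eta$-verticality of $\hcB^n_0$ in $\cB^{n_0}_0$, as well as the $C^r$ bound $\|(\Psi^n)^{\pm 1}\|_{C^r} < K_0$ after adjusting constants.

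For the convergence estimate \eqref{eq.psi conv} in clause (ii), the key observation is that $\Psi^n$ and $\Psi^{n+1}$ are both regular charts linearizing the most-contracted direction, on nested domains $\cB^{n+1}_0 \subset \cB^n_0$, and they share the same normalization at $v_0$. At a common point $p_0 \in \cB^{n+1}_0$, the two vertical directions $E^{v,n}_{p_0}$ and $E^{v,n+1}_{p_0}$ both asymptote to the most-contracted direction under forward iteration, but $\Psi^{n+1}$ exploits an additional $R_{n+1} - R_n \geq R_n$ contracting steps. A standard graph-transform / cone-field propagation using the domination \eqref{eq.for reg} (with exponent $1-\epsilon$ along $E^v$ versus $1+\epsilon$ transversely) then forces $\|E^{v,n+1} - E^{v,n}\|_{C^{r-1}} < \lambda^{(1-\bepsilon) R_n}$ throughout $\cB^{n+1}_0$. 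Integrating this vector-field difference along the common horizontal foliation, together with the common centering at $v_0$ and the $C^r$ bounds on both charts, yields \eqref{eq.psi conv}.

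The main obstacle is clause (i): I need to control the extended vertical foliation not just inside $\cB^n_0$ but all the way across the full height of $\cB^{n_0}_0$, which may be much larger. The argument must show that the vertical-direction field propagates from the deeply renormalized region $\cB^n_0$ outward through $\cB^{n_0}_0$ with angular deviation dominated by $\overline{K_0}\lambda^{\epsilon R_{n_0}}$. This is where the choice of $n_0$ via \eqref{eq.proper depth 0} is used crucially: the $R_{n_0}$ contracting iterates available at the coarsest relevant scale are what buy the factor $\eta$. All the $C^r$-distortion picked up during this propagation must be absorbed into the uniform constant $K_0$, which is why its dependence in \eqref{eq.const 0} involves both $\|DF\|_{C^r}$ and $\lambda^{1-\epsilon}\|DF^{-1}\|$, the latter bounding the worst-case expansion transverse to the dominated direction.
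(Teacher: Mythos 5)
Your proposal correctly identifies the toolbox (the regular charts of \thmref{reg chart} and the choice of $n_0$ via \eqref{eq.proper depth 0}), but it stops short of the actual mechanism, and the two places where you wave your hands are precisely where the content of the paper's proof lies. First, the extension itself: you propose to "saturate $\cB^n_0$ by the vertical foliation coming from the same regular chart", but the regular charts attached to the orbit of a point $p_0\in\cB^n_0$ are only defined on neighborhoods $\cU_{p_m}$ whose size has nothing to do with the vertical extent of $\cB^{n_0}_0$, and their vertical leaves away from $\cB^n_0$ come with no a priori closeness to the $n_0$-scale verticals. The paper's resolution is a definite two-step trick that is absent from your sketch: take $M\approx R_{n_0}/2$, push the full $n_0$-scale vertical leaf $l^{v,n_0}_{p_0}$ (more precisely the truncated regular neighborhoods $\cU^M_{q_0}$ of its points, via \lemref{trunc neigh fit}) forward $M$ times, and use the fact that the contraction $\sim\lambda^{(1-\epsilon)M}$ beats the size $\gtrsim\lambda^{\bepsilon M}$ of $\cU_{p_M}$ (\lemref{size reg nbh}) to land inside the domain where the $n$-scale vertical field $\cE^{v,n}_{p_M}$ is defined; then pull that field back $M$ steps and invoke the backward direction-field transform \propref{back dt}, whose $C^{r-1}$-contraction by $\lambda^{(1-\bepsilon)M}$ is what produces the $\eta$-closeness to $\cE^{v,n_0}$ (note also that the $\eta$-verticality in clause (i) is measured against the $\Psi^{n_0}$-vertical field, not against $E^{gv}$ as you state). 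Your "cone-field propagation using the domination" only yields a $C^0$ angle estimate (\propref{vert angle shrink}); upgrading to the $C^{r}$ statement needed for (i) and for \eqref{eq.psi conv} requires exactly this graph-transform machinery, and \eqref{eq.psi conv} is then obtained by rerunning the same extension argument with $n_0$ replaced by $n$, not by a separate integration argument.

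Second, clause (iii) is not tautological. A point $q_0\in\hcB^n_0\setminus\cB^n_0$ is only assumed $R_{n_0}$-times regular along $E^{v,n_0}_{q_0}$; its $R_n$-times regularity along the newly constructed $E^{v,n}_{q_0}$ must be proved. The paper does this by (a) the angle estimate $\measuredangle(E^{v,n}_{q_k},E^{v,n_0}_{q_k})<\lambda^{(1-\bepsilon)(R_{n_0}-k)}$ for $0\le k\le M$, which makes $\|DF^k|_{E^{v,n}_{q_0}}\|$ comparable (within $\sqrt2$) to $\|DF^k|_{E^{v,n_0}_{q_0}}\|$ over the first $M$ iterates, (b) comparability of derivative growth inside regular neighborhoods of the $p$-orbit (\corref{ver hor cons}, \propref{grow irreg}) for the remaining $R_n-M$ iterates, and (c) the minimality of $n_0$ in \eqref{eq.proper depth 0}, which is what bounds the accumulated loss $\lambda^{-\bepsilon M}$ by the uniform constant $K_0$ — without this last point your assertion that "$K_0$ absorbs the distortion" has no justification that the constant does not deteriorate with $n$. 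As written, your argument would establish the proposition only on $\cB^n_0$ itself, not on the vertically extended domain, which is the whole point of the statement.
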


\begin{proof}
For $p_0 \in \cB^n_0$, let
$$
\{\Phi_{p_m} : \cU_{p_m} \to U_{p_m}\}_{m=0}^{R_n}
$$
be a linearization of $F$ along the $R_n$ forward orbit of $p_0$ with vertical direction $E^{v,n}_{p_0}$. Let
$
\cE^{v,n}_{p_m} : \cU_{p_m} \to T^1(\cU_{p_m})
$
be the $C^r$-unit vector field given by
$
\cE^{v,n}_{p_m}(q) \in E^{v,n}_q
$ for $
q \in \cU_{p_m}.
$

Let $l^{v, {n_0}}_{p_0}$ be the full vertical leaf in $\cB^{n_0}_0$ containing $p_0$. For $q_0 \in l^{v, n_0}_{p_0}$, let
$$
\{\Phi_{q_m} : \cU_{q_m} \to U_{q_m}\}_{m=0}^{R_{n_0}}
$$
be a linearization of $F$ along the $R_{n_0}$ forward orbit of $q_0$ with vertical direction $E^{v, {n_0}}_{q_0}$ given by \thmref{reg chart}.

Let $M$ be a nearest integer to $R_{n_0}/2$. By \lemref{size reg nbh}, we see that $\cU_{p_M} \supset \bbD_{p_M}\left(\lambda^{\bepsilon M}\right).$ \lemref{trunc neigh fit} implies that \corref{ver hor cons} applies to all points in the $M$-times truncated regular neighborhood $\cU_{q_0}^M$ at $q_0$. The $R_{n_0}$-times forward regularity at all points in $\cB^{n_0}_0$ together with \eqref{eq.proper depth 0} implies that
$$
\chcU_{q_M} := F^M(\cU_{q_0}^M) \subset \cU_{p_M}.
$$

By \propref{grow irreg}, $q_M$ and $p_M$ are $M$-times forward $(\lambda^{-\bepsilon M}, \epsilon, \lambda)_v$-regular along $E^{v,{n_0}}_{q_M}$ and $E^{v,n}_{p_M}$ respectively. Hence, \propref{vert angle shrink} implies that $\cE^{v,n}_{p_M}|_{\chcU_{q_M}}$ is $\lambda^{(1-\bepsilon)M}$-vertical in $C^0$ in $\cU_{q_M}$. Moreover, the bounds on $\|\Phi_{p_M}\|_{C^r}$ and $\|\Phi_{q_M}\|_{C^r}$ given by \thmref{reg chart} imply that
$$
\|D\Phi_{q_M}(\cE^{v,n}_{p_M})\|_{C^{r-1}} < \lambda^{-\bepsilon M}.
$$
Extend $\cE^{v,n}_{p_0}$ to $\cU_{q_0}^M$ as
$$
\cE^{v,n}_{p_0}|_{\cU_{q_0}^{M}} := DF^{-M}(\cE^{v,n}_{p_M}|_{\chcU_{q_M}}).
$$
Then by \propref{back dt}, we have
$$
\|\cE^{v,n}_{p_0}- \cE^{v,{n_0}}_{q_0}\|_{C^r} \leq \lambda^{(1-\bepsilon) M}(1+\|D\Phi_{q_0}^{-1}\|_{C^{r-1}})(1+\|D\Phi_{q_M}(\cE^{v,n}_{p_M})\|_{C^{r-1}}) \leq \eta.
$$
Rectifying the vertical directions near $l^{v, {n_0}}_{p_0}$ given by $\cE^{v,n}_{p_0}$, we obtain the desired extension of $\Psi^n$.

Observe that for $0 \leq k \leq M$
$$
\measuredangle(E^{v,n}_{q_k}, E^{v, n_0}_{q_k}) < \lambda^{(1-\bepsilon)(R_{n_0}-k)}. 
$$
It follows that
$$
\frac{1}{\sqrt{2}}< \frac{\|DF^k|_{E^{v,n}_{q_0}}\|}{\|DF^k|_{E^{v,{n_0}}_{q_0}}\|} < \sqrt{2}.
$$
Concatenating with the forward $M$-times $(\lambda^{-\bepsilon M}, \epsilon, \lambda)$-regularity of $q_M$, we see that
$$
\frac{\lambda^{\bepsilon M}}{\sqrt{2}L}\lambda^{(1+\epsilon)(M+i)} \leq \|DF^{M+i}|_{E^{v,n}_{q_0}}\| \leq \sqrt{2} L \lambda^{-\bepsilon M}\lambda^{(1-\epsilon)(M+i)}.
$$
Since $n_0$ is assumed to be the smallest number that satisfies \eqref{eq.proper depth 0}, we have $\lambda^{-\bepsilon M} < K_0$. The claimed $R_n$-times forward regularity of $q_0$ along $E^{v,n}_{q_k}$ follows.

Lastly, replacing the renormalization depth ${n_0}$ in the above argument by $n$, we obtain \eqref{eq.psi conv}.
\end{proof}

Consider $C^r$-curves $\Gamma_1, \Gamma_2 \subset \bbR^2$ with $|\Gamma_1| \geq |\Gamma_2|$. For $i \in \{1, 2\}$, let $\phi_{\Gamma_i} : J_i \subset \bbR \to \Gamma_i$ be a parameterization of $\Gamma_i$ such that
\begin{itemize}
\item $|\phi_{\Gamma_i}'| \equiv 1$;
\item $J_1 \supset J_2$;
\item $\|\phi_{\Gamma_1}|_{J_2} - \phi_{\Gamma_2}\|_{C^r}$ is minimal.
\end{itemize}
In this case, define
\begin{equation}\label{eq.dist curve}
\|\Gamma_1\|_{C^r} := \|\phi_{\Gamma_1}\|
\matsp{and}
\dist_{C^r}(\Gamma_1, \Gamma_2) := \|\phi_{\Gamma_1}|_{J_2} - \phi_{\Gamma_2}\|_{C^r}.
\end{equation}

\begin{lem}\label{hor curvs conv}
For $n_0 \leq n \leq N$, let $l^n_0$ be a full horizontal leaf in $\hcB^n_0$. Denote $l^n_m := F^m(l^n_0)$ for $m\in\bbZ$. Then we have $\|l^n_{R_n-1}\|_{C^r} < {K_0}$; and
$$
\dist_{C^r}(l^n_{R_n-1}, l^{n+1}_{R_{n+1}-1}) < \lambda^{(1-\bepsilon)R_n}.
$$
\end{lem}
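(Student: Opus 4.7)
The strategy is to use Proposition~\ref{vert prop nest}, which provides the extended centered charts $\Psi^n$ with uniformly bounded $C^r$-norms and the super-exponential convergence estimate \eqref{eq.psi conv}, together with the $R_n$-times forward regularity along $E^{v,n}$ at every point of $\cB^n_0$. In $\Psi^n$-coordinates, $l^n_0$ is a horizontal line segment of bounded length in $\hB^n_0$, so the $C^r$-bounds on $\Psi^n$ reduce the problem to controlling $\Psi^n\circ F^{R_n-1}\circ(\Psi^n)^{-1}$ restricted to this segment.

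For the first assertion, the plan is to linearize $F$ along the orbit of any $p_0 \in l^n_0 \cap \cB^n_0$ using the regular charts $\{\Phi_{p_m}\}_{m=0}^{R_n}$ of \thmref{reg chart} supplied by the forward regularity of $p_0$ along $E^{v,n}$. In these charts $F$ essentially straightens the strong-stable foliation, so the tangent direction to $l^n_m$ at $p_m$, namely $DF^m(E^h_{p_0})$, aligns super-exponentially fast with $E^{v,n}_{p_m}$ by the domination in the regularity condition, exactly in the fashion of the argument for Proposition~\ref{vert prop nest}. The higher $k$-jets of $l^n_m$ along its arc-length parameterization satisfy a linear recursion driven by $DF$ whose dominant terms carry the same $\blambda^m$-contraction; absorbing this contraction together with the uniform bound $\|DF\|_{C^r}$ yields $\|l^n_m\|_{C^r} \leq K_0$ for all $0 \leq m \leq R_n - 1$.

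For the convergence estimate, I would use \eqref{eq.psi conv} to replace $l^{n+1}_0$ by a nearby $\Psi^n$-horizontal leaf: the bound $\|\Psi^n\circ(\Psi^{n+1})^{-1} - \Id\|_{C^r} < \lambda^{(1-\bepsilon)R_n}$ presents $l^{n+1}_0$ as a $C^r$-graph of slope $\lambda^{(1-\bepsilon)R_n}$ over some $\Psi^n$-horizontal leaf $\tilde l^n_0$ sitting in $\hcB^n_0$. Because the vertical foliation in $\hcB^n_0$ is strongly contracted under $F^{R_n-1}$ by a factor of order $\lambda^{(1-\epsilon)(R_n-1)}$, any two $\Psi^n$-horizontal leaves collapse together under this iterate, which puts $F^{R_n-1}(l^n_0)$, $F^{R_n-1}(\tilde l^n_0)$, and $F^{R_n-1}(l^{n+1}_0)$ all within $C^r$-distance $\lambda^{(1-\bepsilon)R_n}$ of one another. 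The further $R_{n+1}-R_n$ iterates that carry $l^{n+1}_{R_n-1}$ to $l^{n+1}_{R_{n+1}-1}$ stay trapped in the strongly contracting vertical foliation at depth $n+1$, so the $C^r$-distance from $l^{n+1}_{R_{n+1}-1}$ to $l^n_{R_n-1}$ does not grow beyond $\lambda^{(1-\bepsilon)R_n}$.

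The main obstacle will be the $C^r$-control of the tangent jet under the long iterate $F^{R_n-1}$: a naive Fa\`a di Bruno estimate gives the catastrophic bound $\|DF\|_{C^r}^{R_n}$, and the argument must instead absorb the stable-direction contraction from the regularity hypothesis into the jet recursion at every step, in exactly the same spirit as the telescoping estimate inside the proof of Proposition~\ref{vert prop nest}.
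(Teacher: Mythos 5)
There are genuine gaps. First, the directional claim driving your first step is backwards: forward regularity along $E^{v,n}$ gives \emph{dominated} contraction of the vertical direction, so the tangent directions $DF^m(E^h_{p_0})$ to $l^n_m$ are repelled from $E^{v,n}_{p_m}$ and align with the horizontal (center) direction of the regular charts — this is precisely why $l^n_{R_n-1}$ ends up a nearly horizontal curve close to a center manifold at the critical point; if the tangents aligned with the strong-stable direction as you assert, the conclusion of the lemma would be false. Second, your scheme does not produce the uniform constant $K_0$: the forward regular charts $\Phi_{p_m}$ along the orbit of a point $p_0\in l^n_0$ have $C^r$-norms degrading like $\lambda^{-\bepsilon m}$ and regular neighborhoods shrinking like $\lambda^{\bepsilon m}$ (\thmref{reg chart}, \lemref{size reg nbh}), so at time $m=R_n-1$ a single such chart neither contains the whole image leaf nor gives bounds independent of $n$; and the ``jet recursion absorbing the stable contraction,'' which you correctly identify as the main obstacle, is exactly the step you leave unproven (the $C^0$ collapse of two horizontal leaves under the vertical contraction does not by itself give a $C^r$ estimate).

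The paper resolves both points by changing the frame to the critical point. It takes $p_{-1}\in\cZ_{-1}$, uses its backward $(L,\epsilon,\lambda)$-regularity along $E^v_{p_{-1}}$ to build regular charts $\Phi_{p_{-m}}$ along the \emph{backward} orbit, writes
$$
H_n=\Phi_{p_{-1}}\circ F^{R_n-1}\circ(\Psi^n)^{-1}=F_{p_{-2}}\circ\cdots\circ F_{p_{-R_n}}\circ\bigl(\Phi_{p_{-R_n}}\circ(\Psi^n)^{-1}\bigr),
$$
where the change of charts only costs $\lambda^{-\bepsilon R_n}$, and then invokes the ready-made composition estimate \propref{cr lin comp} to get $\|e_n\|_{C^r}<\lambda^{(1-\bepsilon)R_n}$; this is the telescoping you were hoping to reconstruct by hand (the relevant citable tools are \propref{cr lin comp} or \propref{for gt}, not the backward direction-field estimate used in \propref{vert prop nest}). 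Crucially, the terminal chart $\Phi_{p_{-1}}$ sits at backward index one, so its $C^r$-norms are uniformly bounded; hence $l^n_{R_n-1}$ is a graph in this \emph{fixed} chart with graph function of size $\lambda^{(1-\bepsilon)R_n}$, which yields $\|l^n_{R_n-1}\|_{C^r}<K_0$, and applying the same statement at level $n+1$ in the same chart gives the distance bound by the triangle inequality. This also repairs the last step of your sketch: comparing $F^{R_n-1}(l^n_0)$ with $F^{R_n-1}(l^{n+1}_0)$ does not address the lemma's comparison with $l^{n+1}_{R_{n+1}-1}$, and the extra $R_{n+1}-R_n$ iterates do not ``stay trapped in the contracting vertical foliation'' — they pass through the fold at the critical value — so the comparison must go through the common reference frame at $p_{-1}$ rather than through iterating the closeness forward.
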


\begin{proof}
For $p_{-1} \in \cZ_{-1} := F^{-1}(\cZ_0),$
let
$$
\{\Phi_{p_{-m}} : \cU_{p_{-m}}\to U_{p_{-m}}\}_{m=1}^{R_N}
$$
be a linearization of $F$ along the $R_N$-times backward orbit of $p_{-1}$ with vertical direction $E^v_{p_{-1}}$ (if $N= \infty$, then $R_\infty = \infty$). Let $\cV_{-R_n}$ be the connected component of
$
F^{-R_n+1}(\cU_{p_{-1}}^{R_n}) \cap \hcB^n_0
$
containing $p_{-R_n}$. Note that $\Psi^n|_{\cV_{-R_n}}$ defines a chart on $\cV_{-R_n}$, so that $\cV_{-R_n}$ is $0$-vertical in $\hcB^n_0$. Moreover, arguing as in the proof of \propref{vert prop nest}, we see that $\cV_{-R_n}$ is also vertically proper in $\hcB^n_0$. 

Consider the map
$$
H_n(x,y) = (h_n(x), e_n(x,y)) := \Phi_{p_{-1}}\circ F^{R_n-1}\circ (\Psi^n)^{-1}(x,y)
$$
for $(x,y)\in \Psi^n(\cV_{-R_n})$. Denote
$$
F_{p_{-n}} := \Phi_{p_{-n+1}}\circ F \circ (\Phi_{p_{-n}})^{-1}.
$$
Then
\begin{equation}\label{eq.decomp proof}
H_n = F_{p_{-2}} \circ \ldots \circ F_{p_{-R_n}} \circ \Phi_{p_{-R_n}}\circ (\Psi^n)^{-1}.
\end{equation}
By \thmref{reg chart}, we see that
$$
\|\Phi_{p_{-R_n}}\circ (\Psi^n)^{-1}\|_{C^r} < \lambda^{-\bepsilon R_n}.
$$
Applying \propref{cr lin comp}, we conclude that
\begin{equation}\label{eq.en bound}
\|e_n\|_{C^r} < \lambda^{(1-\bepsilon)R_n}.
\end{equation}
The result follows.
\end{proof}

\begin{prop}[Locating the critical value]\label{crit value}
If $N = \infty$, then the following statements hold.
\begin{enumerate}[i)]
\item For any point $p_0 \in \cZ_0$, there exists a unique strong stable direction $E^{ss}_{p_0} \in \bbP^2_{p_0}$ such that
$$
\|E^{v, n}_{p_0}-E^{ss}_{p_0}\| < \lambda^{(1-\bepsilon)R_n}
\matsp{for}
n \geq n_0.
$$
Moreover, $p_0$ is infinitely forward $(L, \epsilon, \lambda)$-regular along $E^{ss}_{p_0}$.
\item Any point $p_{-1} \in \cZ_{-1} := F^{-1}(\cZ_0)$ is infinitely backward $(L, \epsilon, \lambda)$-regular along $E^v_{p_{-1}}$. Moreover, there exists a unique center direction $E^c_{p_{-1}} \in \bbP^2_{p_{-1}}$ such that
$$
\|E^{h, n}_{p_{-1}}-E^c_{p_{-1}}\| < \lambda^{(1-\bepsilon)R_n}
\matsp{for}
n \geq n_0.
$$
\item There exists a unique point $v_0 \in \cZ_0$ such that
$$
E^{ss}_{v_0} = DF(E^c_{v_{-1}}).
$$
Moreover, the strong stable manifold $W^{ss}(v_0)$ and the center manifold $F(W^c(v_{-1}))$ have a quadratic tangency at $v_0$.
\end{enumerate}
\end{prop}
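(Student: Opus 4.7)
The plan is to obtain parts (i) and (ii) almost directly from the convergence statements already established, and then in part (iii) to construct $v_0$ as a limit of finite-depth ``H\'enon-like critical values.''

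For (i), I observe that $\cZ_0 \subset \hcB^n_0$ for every $n \geq n_0$, so \propref{vert prop nest}(ii) gives
$$ \|E^{v,n}_{p_0} - E^{v,n+1}_{p_0}\| < \lambda^{(1-\bepsilon)R_n} $$
for every $p_0 \in \cZ_0$. The sequence $\{E^{v,n}_{p_0}\}$ is thus super-exponentially Cauchy in $\bbP^2_{p_0}$, and I let $E^{ss}_{p_0}$ be its limit. To upgrade the regularity from $R_n$-times along $E^{v,n}_{p_0}$ to infinite along $E^{ss}_{p_0}$, I fix $m \in \bbN$, choose $n$ with $R_n \geq m$, and use the super-exponential closeness of $E^{v,n}_{p_0}$ and $E^{ss}_{p_0}$ to transfer the estimate \eqref{eq.for reg} at depth $m$ without degrading $L$, $\epsilon$, $\lambda$. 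Part (ii) is parallel, with \lemref{hor curvs conv} replacing \propref{vert prop nest}: for $p_{-1} \in \cZ_{-1}$ setting $p_0 := F(p_{-1})$, the horizontal leaves of $\hcB^n_0$ through $p_0$ produce curves $l^n_{R_n-1}$ which all pass through $p_{-1}$ and converge super-exponentially in $C^r$, so their tangent directions $E^{h,n}_{p_{-1}}$ converge to a limit $E^c_{p_{-1}}$. Infinite backward regularity at $p_{-1}$ along $E^v_{p_{-1}} := DF^{-1}(E^{ss}_{p_0})$ follows from the finite-$n$ $(R_n-1)$-times backward regularity recorded just before \propref{vert prop nest}, combined with the convergence of the directions.

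The main content lies in part (iii). In chart $\Psi^n$, the straightened return $F_n := \Psi^n \circ F^{R_n} \circ (\Psi^n)^{-1}$ has the H\'enon-like form $(x,y) \mapsto (f_n(x,y), x)$. Since $f_n(\cdot, 0)$ is unimodal with a unique nondegenerate critical point, the image $F_n(\{y=0\} \cap B^n_0)$ is a vertical quadratic curve in the sense of \eqref{eq.val curv}, with a unique vertex $\tilde v^{(n)}_0 \in B^n_{R_n}$ at which the tangent is purely vertical in $\Psi^n$-coordinates. Transporting back yields a unique $v^{(n)}_0 \in \cB^n_{R_n}$ characterized by the parallelism $DF(E^{h,n}_{v^{(n)}_{-1}}) \parallel E^{v,n}_{v^{(n)}_0}$, where $v^{(n)}_{-1} := F^{-1}(v^{(n)}_0)$. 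Since $\{\cB^n_{R_n}\}$ is nested with intersection $\cZ_0$, the points $v^{(n)}_0$ converge to some $v_0 \in \cZ_0$; passing to the limit in the parallelism identity and using parts (i)--(ii) gives $E^{ss}_{v_0} = DF(E^c_{v_{-1}})$.

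I expect the most delicate step to be establishing the \emph{quadratic} nature of the tangency together with the implied uniqueness of $v_0$. My strategy is to transport the curves $F(l^n_{R_n-1})$ into the fixed chart $\Psi^{n_0}$, where by \lemref{hor curvs conv} and \propref{vert prop nest}(ii) they converge super-exponentially in $C^r$ to $F(W^c(v_{-1}))$. Each curve $F(l^n_{R_n-1})$ is a vertical quadratic curve whose valuable curvature $\kappa_{\Psi^{n_0}}$ is driven by the nondegenerate critical point of the 1D profile of $\cR^n(F)$; the uniform $C^r$-bounds on $\Psi^n$ and $(\Psi^n)^{-1}$ should then keep $|\kappa_{\Psi^{n_0}}|$ uniformly bounded away from zero in $n$. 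The limit curve then inherits a genuine nondegenerate vertex at $v_0$, giving a quadratic tangency with the nearly-vertical leaf $W^{ss}(v_0)$. Uniqueness of $v_0$ follows: a second point of $\cZ_0$ with the same parallelism would force the 1D profile $f_n(\cdot, 0)$, at large enough $n$, to possess two critical points, contradicting unimodality. The subtlety I anticipate is verifying, via the bounds from \propref{vert prop nest}, that the valuable curvature is indeed uniformly bounded below and does not degenerate through the non-affine straightening charts $\Psi^n$.
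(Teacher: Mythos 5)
Your overall strategy aligns with the paper's: parts (i) and (ii) are read off from the super-exponential convergence of the vertical and horizontal data in $\Psi^n$ (the paper cites Propositions \ref{vert angle shrink} and \ref{hor angle shrink} directly, you use \eqref{eq.psi conv} and \lemref{hor curvs conv} which are derived from those), and part (iii) constructs $v_0$ as the limit of the finite-depth critical values and uses the quadratic critical point to get both the tangency and uniqueness. However, there are two concrete problems.

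\textbf{Part (ii).} You define $E^v_{p_{-1}} := DF^{-1}(E^{ss}_{p_0})$, i.e.\ $E^{ss}_{p_{-1}}$. This is not the direction appearing in the statement: $E^v_{p_{-1}}$ was fixed just before \propref{vert prop nest} as $DF^{-1}(E^h_{p_0})$, the pull-back of the genuine \emph{horizontal} direction, which is an $n$-\emph{independent} direction. Consequently the infinite backward regularity requires \emph{no} ``convergence of directions'': $p_{-1}$ lies in $\cB^n_{R_n-1}$ for all $n$, and is $(R_n-1)$-times backward $(L,\epsilon,\lambda)$-regular along the fixed direction $E^v_{p_{-1}}$; letting $n\to\infty$ gives the claim immediately. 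The convergence of directions you invoke is a separate step — it is what gives the existence of $E^c_{p_{-1}}$ — and conflating the two suggests a misreading of the direction being tracked. Relatedly, the curve you want is $l^n_{R_n-1}=F^{R_n-1}(l^n_0)$ where $l^n_0$ is the horizontal leaf of $\hcB^n_0$ through $F^{-R_n+1}(p_{-1})$, not through $p_0 = F(p_{-1})$; the leaf through $p_0$ would be carried to $F^{R_n}(p_{-1})\neq p_{-1}$.

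\textbf{Part (iii).} You assert that ``Since $\{\cB^n_{R_n}\}$ is nested with intersection $\cZ_0$, the points $v^{(n)}_0$ converge to some $v_0\in\cZ_0$.'' Nestedness alone does not give this: at this point in the paper $\cZ_0$ is not known to be a singleton (that is \thmref{crit rec}, proved later), and a priori the sequence $v^{(n)}_0$ could have several accumulation points inside $\cZ_0$. The paper instead deduces the Cauchy estimate $\dist(v^n_{R_n},v^{n+1}_{R_{n+1}}) < \lambda^{(1-\bepsilon)R_n}$ from \propref{vert prop nest} and \lemref{hor curvs conv}, and this is what yields both the existence of the limit $v_0$ and the quantitative $C^r$ closeness of $\cI^n_{R_n}$ to $W^c(v_0)$ that you need for the quadratic tangency and uniqueness. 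Your subsequent discussion of the valuable curvature and of uniqueness via unimodality of the 1D profile is, once this estimate is in place, a correct variant of the paper's angle-based uniqueness argument; the nondegeneracy worry you flag is indeed the content that \eqref{eq.psi conv} is being used to control.
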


\begin{proof}
The first and second claim follow immediately from Propositions \ref{vert angle shrink} and \ref{hor angle shrink}.

For $n  \geq n_0$, recall that $\cI^n_{R_n}$ is a vertical quadratic curve in $\cB^n_0$. Let $v^n_0\in \cI^n_0$ be the unique point such that
$$
E^{v,n}_{v^n_{R_n}} = DF^{R_n}(E^h_{v^n_0}).
$$
By \propref{vert prop nest} and \lemref{hor curvs conv}, we have
$$
\dist(v^n_{R_n},v^{n+1}_{R_{n+1}}) < \lambda^{(1-\bepsilon)R_n}.
$$
Thus, there exists a unique point $v_0 \in \cZ_0$ such that
$$
\dist(v^n_{R_n}, v_0) < \lambda^{(1-\bepsilon)R_n}
\matsp{and}
\dist_{C^r}(\cI^n_{R_n}, W^c(v_0))  < \lambda^{(1-\bepsilon)R_n}.
$$
By \eqref{eq.psi conv}, we see that $W^{ss}(v_0)$ and $W^c(v_0)$ have a quadratic tangency at $v_0$.

Lastly, let $\cU_{v_0}$ be a neighborhood of $v_0$. Then there exists a uniform constant $k > 0$ such that for all $n$ sufficiently large, if $p_{R_n} \in \cI^n_{R_n} \setminus \cU_{v_0}$ then
$$
\measuredangle(E^{v,n}_{p_{R_n}}, DF^{R_n}(E^h_{p_0})) > k.
$$
Thus, $v_0$ is the unique point in $\cZ_0$ satisfying $E^{ss}_{v_0} = E^c_{v_0}.$
\end{proof}

We define the {\it critical value $v_0 \in \cZ_0$} as follows. If $N=\infty$, let $v_0$ be the point given in \propref{crit value} iii). Otherwise, let $v_0$ be the unique point in $\cI^N_{R_N}$ such that
$$
DF^{R_N}(E^h_{v_{-R_N}}) = E^{v,N}_{v_0}
$$
(recall that such a point exists since $\cI^N_{R_N}$ is a vertical quadratic curve in $\cB^N_0$). Define the {\it critical point} as $v_{-1} := F^{-1}(v_0)$.

\begin{thm}[Valuable charts]\label{crit chart}
Let ${K_0} \geq 1$ be the constant given in \eqref{eq.const 0}. There exist charts
$$
\Phi_0 : (\cB_0, v_0) \to (B_0, 0)
\matsp{and}
\Phi_{-1} : (\cB_{-1}, v_{-1}) \to (B_{-1}, 0)
$$
such that
\begin{itemize}
\item $\Phi_0$ is centered at $v_0$ and is genuine horizontal;
\item $\cB_0 \supset \cB^{n_0}_0$, $\cB_{-1} \supset \cB^{n_0}_{R_{n_0}-1}$ and $F(\cB_{-1}) \Subset \cB_0$;
\item $\|\Phi_i^{\pm 1}\|_{C^r} < {K_0}$ for $i \in \{0, -1\}$; and
\item we have
\begin{equation}\label{eq.henon trans}
\Phi_0 \circ F \circ \Phi_{-1}^{-1}(x,y) = (f_0(x) - \lambda y, x)
\matsp{for}
(x,y) \in B_{-1},
\end{equation}
where $f_0 : (\pi_h(B_{-1}), 0) \to (\pi_h(B_0), 0)$ is a $C^r$-map that has a unique critical point at $0$ such that
\begin{equation}\label{eq.second}
\|f_0''\|_{C^{r-2}} < {K_0}
\matsp{and}
\kappa_F:=\inf_{x\in\pi_h(B_{-1})} f_0''(x) > 0.
\end{equation} 
\end{itemize}
Moreover, the following properties hold for $n_0 \leq n \leq N$.
\begin{enumerate}[i)]
\item We have
$$
\|\Phi_0 \circ (\Psi^n)^{-1} - \Id\|_{C^r} <\lambda^{(1-\bepsilon)R_n}.
$$
\item Let
$$
H_n := \Phi_{-1}\circ F^{R_n-1}\circ (\Psi^n)^{-1}.
$$
Then $H_n(x,y) = (h_n(x), e_n(x,y))$, where $h_n : I^n_0 \to h_n(I^n_0)$ is a $C^r$-diffeomorphism and $e_n$ is a $C^r$-map such that
\begin{equation}\label{eq.first entry}
\lambda^{\bepsilon R_n} < |h_n'(x)| < \lambda^{-\bepsilon R_n}
\matsp{for}
x\in I^n_0
\matsp{and}
\|e_n\|_{C^r} <\lambda^{(1-\bepsilon)R_n}.
\end{equation}
\end{enumerate}
\end{thm}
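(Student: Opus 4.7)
The plan is to define $\Phi_0$ as the $C^r$-limit of the straightening charts $\Psi^n$, and then construct $\Phi_{-1}$ as the unique chart that puts $F$ into H\'enon normal form. The subtle step is securing uniform convexity of $f_0$.

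For $\Phi_0$, telescoping \eqref{eq.psi conv} yields $\|\Psi^n \circ (\Psi^m)^{-1} - \Id\|_{C^r} < \lambda^{(1-\bepsilon)R_n}$ for all $m \geq n \geq n_0$, so $\{\Psi^m\}$ is Cauchy in $C^r$ on every common subdomain. I would define $\Phi_0$ on $\cB_0 := \hcB^{n_0}_0$ to be the unique centered, genuinely horizontal chart whose vertical axis is $W^{ss}(v_0) \cap \cB_0$ (when $N = \infty$, from \propref{crit value}) or $\cI^N_{R_N}$ (when $N < \infty$); by \propref{vert prop nest} the vertical axes of $\Psi^n$ converge in $C^r$ to this axis. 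The bound $\|\Phi_0^{\pm 1}\|_{C^r} < K_0$ is inherited from \propref{vert prop nest} ii), and property (i) then follows because $\Phi_0 = \lim_{m \to \infty}\Psi^m$ on $\hcB^n_0$ for each fixed $n$, combined with the telescoping estimate.

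Next I would construct $\Phi_{-1}$. Writing $\Phi_{-1}(p) = (u(p), v(p))$, the identity $\Phi_0 \circ F \circ \Phi_{-1}^{-1}(x,y) = (f_0(x) - \lambda y, x)$ forces
$$
u(p) = \pi_v \Phi_0(F(p)), \qquad v(p) = \tfrac{1}{\lambda}\bigl(f_0(u(p)) - \pi_h \Phi_0(F(p))\bigr),
$$
leaving only $f_0$ to be chosen. I would set $f_0(x) := \pi_h \Phi_0(F(\gamma_{-1}(x)))$ where $\gamma_{-1}$ is a $C^r$-curve through $v_{-1}$ whose $C^r$-jet agrees with $W^c(v_{-1})$, parameterized so that $u(\gamma_{-1}(x)) = x$. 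Then $v \equiv 0$ on $\gamma_{-1}$, the H\'enon form holds by construction, and $\Phi_{-1}$ is a bounded $C^r$-diffeomorphism because a direct computation gives $\det D\Phi_{-1} = -f_y/\lambda \neq 0$ (using that $F$ is a diffeomorphism). The critical-point condition $f_0'(0) = 0$ follows from $\gamma_{-1}'(0) \in E^c_{v_{-1}}$ together with $DF(E^c_{v_{-1}}) = E^{ss}_{v_0}$ being tangent to the vertical axis of $\Phi_0$; non-degeneracy $f_0''(0) > 0$ is precisely the quadratic tangency of $W^c(v_0)$ with $W^{ss}(v_0)$ from \propref{crit value} iii) (or, when $N < \infty$, the vertical quadratic structure of $\cI^N_{R_N}$). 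The main technical point is to upgrade this to the uniform bound $\inf_x f_0''(x) > 0$ on all of $\pi_h(B_{-1})$: I would pre-compose $\Phi_{-1}$ with a $C^r$-diffeomorphism in the $x$-variable that convexifies $f_0$ by rectifying its derivative away from the critical point, using the non-degeneracy at $0$ together with the unimodal structure of $f(\cdot,y)$ inherent in the H\'enon-like form. This adjustment is finite-scale and preserves all previously established bounds.

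Property (ii) then follows from the same decomposition as in the proof of \lemref{hor curvs conv} with $\Phi_{-1}$ replacing $\Phi_{p_{-1}}$: since $\Phi_{-1}$ is a bounded $C^r$-chart at scale one, the same application of the composition estimate for linearized maps yields $\|e_n\|_{C^r} < \lambda^{(1-\bepsilon)R_n}$, while the bound $\lambda^{\bepsilon R_n} < |h_n'(x)| < \lambda^{-\bepsilon R_n}$ comes from the $R_n$-times forward $(K_0, \epsilon, \lambda)$-regularity of points in $\cI^n_0$ guaranteed by \propref{vert prop nest} iii), combined with the uniform angle bound on $\cB^n_0$.
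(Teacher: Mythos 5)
Your construction of $\Phi_0$ and your explicit formulas for $\Phi_{-1}$ essentially reproduce the paper's argument: the paper also takes $\Phi_0$ to be the centered limit of $\{\Psi^n\}$ (glued on annular shells using \eqref{eq.psi conv}), picks a curve $\cI^h_{-1}$ that equals $W^c(v_{-1})$ when $N=\infty$ and $\cI^N_{R_N-1}$ when $N<\infty$, reads $f_0$ off the vertical quadratic curve $\Phi_0\circ F(\cI^h_{-1})$, and then declares $\Phi_{-1}$ to be the unique chart satisfying \eqref{eq.henon trans}. Property (ii) is obtained, as you describe, from the factorization \eqref{eq.decomp proof} together with \propref{cr lin comp} and \thmref{reg chart}.

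The step that does not work is your ``convexification.'' Any reparameterization of $\Phi_{-1}$ in the $x$-variable, say replacing $\Phi_{-1}$ by $A\circ\Phi_{-1}$ with $A(x,y)=(\phi(x),y)$, changes the conjugacy to
$$
\Phi_0 \circ F \circ (A\circ\Phi_{-1})^{-1}(x,y) = \bigl(f_0(\phi^{-1}(x)) - \lambda y,\ \phi^{-1}(x)\bigr),
$$
so the second coordinate is $\phi^{-1}(x)$ rather than $x$, and the H\'enon normal form \eqref{eq.henon trans} is destroyed. You cannot fix this by also reparameterizing $\Phi_0$, because $\Phi_0$ is pinned down: it must be centered, genuinely horizontal, and within $\lambda^{(1-\bepsilon)R_n}$ of every $\Psi^n$; the only admissible post-compositions are trivial. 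Hence there is no free diffeomorphism left with which to rectify $f_0''$. The paper secures $\kappa_F=\inf f_0''>0$ in a different way: the curvature $f_0''(0)$ is bounded below uniformly by the quadratic-tangency estimate of \propref{crit value}/\lemref{hor curvs conv} (the valuable curvature of the vertical quadratic curve), $\|f_0'''\|$ is bounded above by $K_0$, and the interval $J^v_0=\pi_h(B_{-1})$ over which $f_0$ lives is shrunk (via the thin tube $D_0$ and the choice of $n_0$ through \eqref{eq.proper depth 0}--\eqref{eq.proper depth 1}) until the modulus of continuity of $f_0''$ forces it to stay positive on all of $J^v_0$. You should replace the coordinate-change argument with this smallness-of-domain argument; as written, your proof of \eqref{eq.second} has a gap.
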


\begin{figure}[h]
\centering
\includegraphics[scale=0.15]{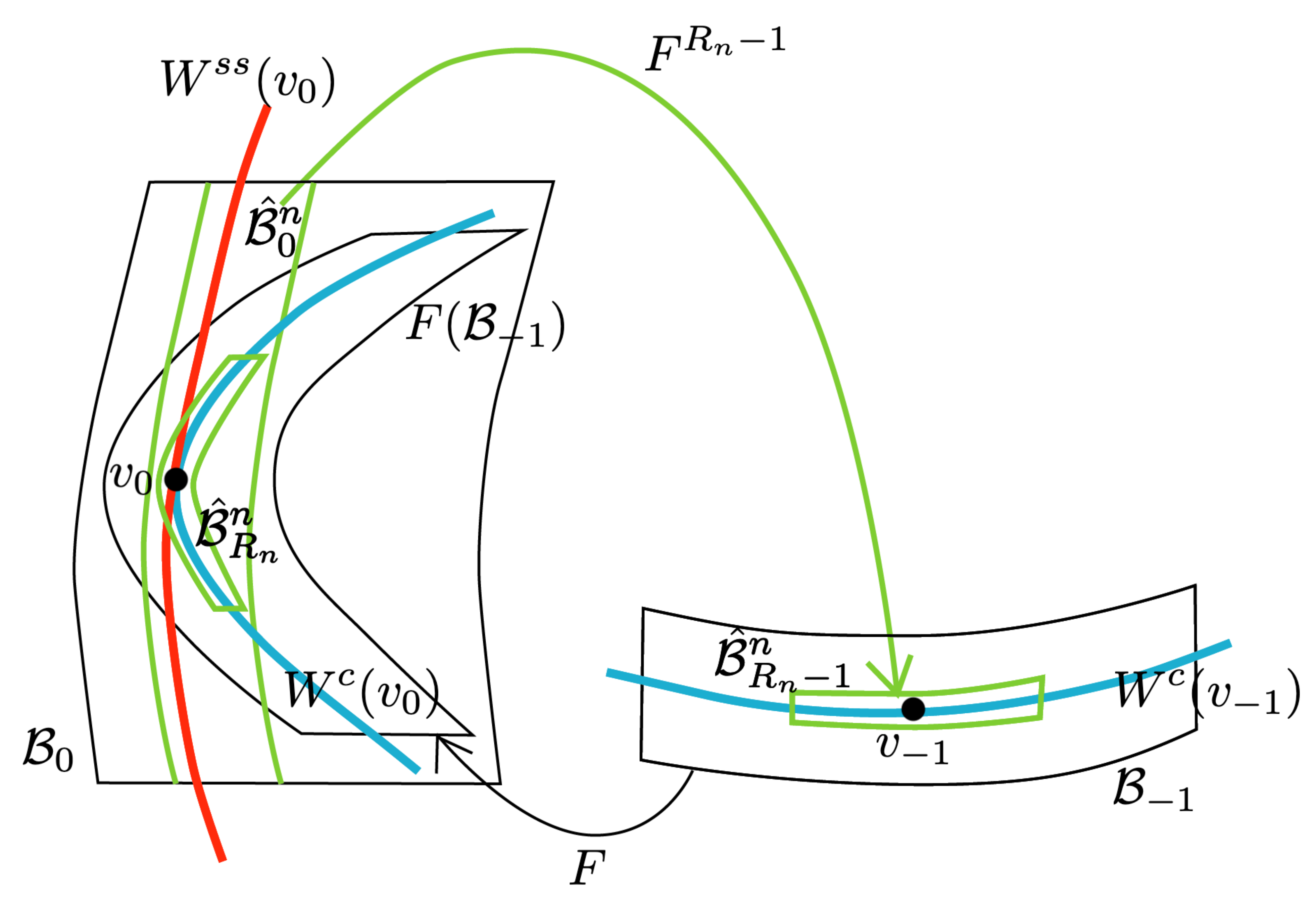}
\caption{Geometry near the critical value $v_0$ and the critical point $v_{-1}$ (if $N =\infty$). For $n \geq n_0$, we have $v_0 \in \hcB^n_0 \subset \cB_0$ and $v_{-1} \in \hcB^n_{R_n-1} \subset \cB_{-1}$. There exist charts $\Phi_0 : \cB_0 \to B_0$ and $\Phi_{-1} : \cB_{-1} \to B_{-1}$ such that $\Phi_0\circ F \circ \Phi_{-1}$ is H\'enon-like (see \eqref{eq.henon trans}). The charts $\Psi^n$ converges to $\Phi_0$}
\label{fig.chartconv}
\end{figure}

\begin{proof}
For $t \geq 0$ and $X \subset \bbR^2$, denote
$$
X(t) := \{p \in \bbR^2 \; | \; \dist(p, X) \leq t\}.
$$
Let
$$
\cB_0 := \cB^{n_0}_0(\lambda^{\bepsilon R_{n_0}})
\matsp{and}
\cC^n_0 := \hcB^n_0(\lambda^{\bepsilon R_n}) \setminus \hcB^n_0.
$$
By \eqref{eq.psi conv}, there exists a $C^r$-diffeomorphism $\Phi_0$ defined in a neighborhood of $\cZ_0$ such that
$$
\|\Phi_0\circ (\Psi^n)^{-1} - \Id\|_{C^r} < \lambda^{(1-\bepsilon)R_n}
\matsp{for all}
n_0 \leq n \leq N.
$$
Moreover, $\Phi_0$ can be extended a centered chart $\Phi_0 : (\cB_0, v_0) \to (B_0, 0)$ such that
$$
\Phi_0|_{\hcB^n_0 \setminus (\hcB^{n+1}_0 \cup \cC^{n+1}_0)} = \Psi^n|_{\hcB^n_0 \setminus (\hcB^{n+1}_0 \cup \cC^{n+1}_0)}
$$
and
$$
\|\Phi_0 \circ (\Psi^n|_{\cC^{n+1}_0})^{-1} -\Id \|_{C^r} <\lambda^{(1-\bepsilon)R_n}.
$$

Let $\cI^h_{-1} := \cI^N_{R_N-1} \ni v_{-1}$ if $N < \infty$, and $\cI^h_{-1} := W^c(v_{-1})$ if $N=\infty$. Observe that $F(\cI^h_{-1}) \ni v_0$ is a vertical quadratic curve in $\cB_0$. Denote
$$
J^v_0 := \pi_v\circ \Phi_0 \circ F(\cI^h_{-1}).
$$
Then there exists a $C^r$-map 
$$
f_0 : (J^v_0, 0) \to (\pi_h(B_0), 0)
$$
with a unique quadratic critical point at $0$ such that
$$
\Phi_0 \circ F(\cI^h_{-1}) = \{(f_0(y), y) \; | \; y \in J^v_0\}.
$$
The $C^r$-bound on $f_0$ follows from \propref{vert prop nest} and \lemref{hor curvs conv}.

Let $C \geq 1$ be the uniform constant given in \eqref{eq.proper depth 0}. Let
$$
D_0 := \{(f_0(y) + t, y) \in B_0 \; | \; |t| \leq \lambda {K_0}^{-1} \; \text{and} \; y \in J^v_0\},
$$
and
$$
\cB_{-1} := (\Phi_0\circ F)^{-1}(D_0).
$$
We define
$
\Phi_{-1} : (\cB_{-1}, v_{-1}) \to (B_{-1}, 0)
$
to be the unique chart satisfying
$$
\Phi_0 \circ F \circ \Phi_{-1}^{-1}(x,y) = (f_0(x) - \lambda y, x)
\matsp{for}
(x,y) \in B_{-1}.
$$

Consider the decomposition of $H_n$ given in \eqref{eq.decomp proof}. The second inequality in \eqref{eq.first entry} is given by \eqref{eq.en bound}. The upper bound in the first inequality follows immediately from \propref{cr lin comp}. For the lower bound, we observe that
$$
\|\left(\Phi_{p_{-R_n}}\circ (\Psi^n)^{-1}\right)^{-1}\|_{C^1} < \lambda^{-\bepsilon R_n}
$$
by \thmref{reg chart}. The lower bound now follows immediately from \thmref{reg chart} ii) and iii).
\end{proof}

Denote
\begin{equation}\label{eq.Ihv}
I^{h/v}_i := \pi_{h/v}(B_i)
\matsp{and}
\cI^h_i := \Phi_i^{-1}(I^h_i \times \{0\})
\matsp{for}
i \in \{0, -1\}.
\end{equation}
Observe that
$$
I^h_0 \Supset I^{n_0}_0 \Supset I^{n_0+1}_0 \Supset \ldots
\matsp{and}
I^h_{-1} \Supset h_{n_0}(I^{n_0}_0) \Supset h_{n_0+1}(I^{n_0+1}_0) \Supset \ldots.
$$
Moreover, if $X \subset \cB^n_0$, then \eqref{eq.first entry} implies
\begin{equation}\label{eq.first entry squeeze}
\Phi_{-1} \circ F^{R_n-1}(X) \subset h_n(I^n_0) \times [-\lambda^{(1-\bepsilon)R_n}, \lambda^{(1-\bepsilon)R_n}].
\end{equation}

We record the following consequences of \thmref{crit chart}.

\begin{lem}\label{est value}
Let $f_0 : I^h_{-1} \to I^h_0$ be the map with a unique critical point at $0$ given in \thmref{crit chart}. Then
$$
\frac{\kappa_F}{2} x^2 < f_0(x) < \frac{{K_0}}{2}x^2
\matsp{and}
\kappa_F |x| < |f_0'(x)| < {K_0} |x|.
$$
\end{lem}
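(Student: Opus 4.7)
The lemma is a direct consequence of the two-sided bounds on $f_0''$ obtained in Theorem \ref{crit chart}, combined with the fact that $f_0$ vanishes to order two at the origin. The plan is to apply Taylor's theorem with integral remainder twice.

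First, I would record the basic initial data coming from Theorem \ref{crit chart}: since $\Phi_0$ is centered at $v_0$ and $\Phi_{-1}$ is centered at $v_{-1}$, the formula \eqref{eq.henon trans} forces $f_0(0) = 0$, and the assertion that $0$ is the unique critical point gives $f_0'(0) = 0$. Moreover, the estimate \eqref{eq.second} states
\[
\kappa_F \;\leq\; f_0''(t) \;<\; {K_0} \qquad \text{for all } t \in I^h_{-1} = \pi_h(B_{-1}),
\]
where the left inequality is the definition of $\kappa_F$ and the right one is the $C^{r-2}$ bound on $f_0''$ (which in particular bounds its sup-norm strictly by $K_0$ after an arbitrarily small loss absorbed into the overlined notation).

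Next, for the derivative bound, I would write $f_0'(x) = \int_0^x f_0''(t)\,dt$ and apply the bounds on $f_0''$: for $x>0$ this gives $\kappa_F x \leq f_0'(x) \leq K_0 x$ directly, while for $x<0$ the same argument on $[x,0]$ yields $K_0 x \leq f_0'(x) \leq \kappa_F x$. In both cases we obtain
\[
\kappa_F |x| \;<\; |f_0'(x)| \;<\; {K_0} |x|,
\]
with the strict inequalities following from the strict bound on $f_0''$ (for the upper bound) and the fact that equality in the lower bound would force $f_0''\equiv \kappa_F$ and fail for $x\ne 0$ after a further integration if this extremal case did occur (but in any event the statement we need is the non-strict version, which suffices; we write strict for $x\ne 0$ as in the hypothesis).

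Finally, integrating once more, $f_0(x) = \int_0^x f_0'(t)\,dt$, and plugging in the sign-adjusted bounds on $f_0'$ above gives
\[
\frac{\kappa_F}{2} x^2 \;<\; f_0(x) \;<\; \frac{{K_0}}{2} x^2
\]
for all $x \in I^h_{-1}\setminus\{0\}$. There is no genuine obstacle in this argument; it is purely a one-variable Taylor estimate, and the only nontrivial input, namely the uniform two-sided control of $f_0''$, has already been established as part of Theorem \ref{crit chart}.
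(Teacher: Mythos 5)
Your proof is correct and is essentially the paper's intended argument: the paper records Lemma~\ref{est value} without proof as an immediate consequence of Theorem~\ref{crit chart}, namely $f_0(0)=f_0'(0)=0$ together with the bounds $\kappa_F=\inf f_0''>0$ and $\|f_0''\|_{C^{r-2}}<K_0$ from \eqref{eq.second}, integrated once and twice exactly as you do. The only (inessential) quibble is the strictness of the lower bounds in the degenerate case $f_0''\equiv\kappa_F$, which you already note and which is harmless for how the lemma is used.
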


\begin{lem}\label{root at value}
Let
$$
I^{h,\pm}_{-1} := \{x \in I^h_{-1} \; | \; \pm x > 0\}
\matsp{and}
g_\pm := \left(f_0|_{I^{h,\pm}_{-1}}\right)^{-1}.
$$
Denote $\theta := {K_0}/\kappa_F$. Then for $1 \leq i \leq r$, we have
$$
|g_\pm^{(i)}(t)| < \frac{\bar \theta}{|t|^{i-1/2}}
\matsp{for}
t > 0.
$$
\end{lem}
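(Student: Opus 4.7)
The plan is to derive an explicit formula for $g_\pm^{(i)}(t)$ in terms of the derivatives of $f_0$ evaluated at $g_\pm(t)$, and then bound each ingredient using \lemref{est value}. By symmetry, it suffices to treat $g := g_+$; the argument for $g_-$ is identical.

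From $g \circ f_0 = \Id$ on $I^{h,+}_{-1}$, differentiation yields $g'(t) = 1/f_0'(g(t))$. Proceeding by induction on $i$, I would show that for $1 \leq i \leq r$,
\begin{equation}\label{eq.inv form}
g^{(i)}(t) = \frac{Q_i\bigl(f_0'(g(t)), f_0''(g(t)), \dots, f_0^{(i)}(g(t))\bigr)}{f_0'(g(t))^{2i-1}},
\end{equation}
where $Q_i$ is a universal polynomial independent of $f_0$ (with $Q_1 \equiv 1$). The inductive step is mechanical: each factor $f_0^{(j)}(g(t))$ appearing inside $Q_i$ contributes, upon differentiating in $t$, a factor of $g'(t) = 1/f_0'(g(t))$ via the chain rule, while differentiation of the denominator brings down $-(2i-1) f_0''(g(t)) / f_0'(g(t))^{2i+1}$. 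Combining these and putting everything over the common denominator $f_0'(g(t))^{2(i+1)-1}$ yields the desired form, with $Q_{i+1} = \tilde{Q}_i \cdot f_0' - (2i-1)\, Q_i \cdot f_0''$ for a suitable polynomial $\tilde{Q}_i$ obtained from the chain rule applied to $Q_i$.

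To bound each ingredient, I would use \eqref{eq.second}, which gives $|f_0^{(j)}(x)| \leq K_0$ for $2 \leq j \leq r$, together with \lemref{est value}, which yields $|f_0'(x)| \leq K_0 |x|$ (hence uniformly bounded on the bounded domain $I^h_{-1}$) and the crucial lower bound
\[
|f_0'(g(t))| \geq \kappa_F\, g(t) \geq \kappa_F \sqrt{2t/K_0}.
\]
Consequently, $|Q_i(\dots)| \leq \bar\theta$ (the polynomial is evaluated at arguments that are uniformly bounded in absolute value), and plugging into \eqref{eq.inv form} gives
\[
|g^{(i)}(t)| \leq \frac{\bar\theta}{(\kappa_F\sqrt{2t/K_0})^{2i-1}} < \frac{\bar\theta}{t^{i-1/2}}.
\]

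The main effort is the inductive establishment of \eqref{eq.inv form} with the correct denominator exponent $2i-1$; this is classical Lagrange-inversion-type bookkeeping and essentially the only non-routine step. Once \eqref{eq.inv form} is in place, the singular rate $t^{1/2-i}$ is dictated entirely by the quadratic nature of the critical point, which forces $f_0'$ in the denominator to vanish like $\sqrt{f_0}$.
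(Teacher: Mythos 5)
Your proof is correct and follows essentially the same route as the paper's: both arguments combine the inversion formula for $g^{(i)}$ with the estimates $\|f_0''\|_{C^{r-2}}<K_0$, $|f_0'(x)|>\kappa_F|x|$, and $t<\tfrac{K_0}{2}x^2$ (hence $|g_\pm(t)|>\sqrt{2t/K_0}$) from Theorem~\ref{crit chart} and Lemma~\ref{est value}. The only cosmetic difference is that the paper outsources the Lagrange-inversion bookkeeping you carry out by hand to the pre-packaged Lemma~\ref{cr inverse} (Lemma B.4 of \cite{CLPY1}), applied with the local lower bound $\mu\asymp\kappa_F\sqrt{t}$ and with $r$ replaced by $i$ to obtain the $i$-dependent power of $t$.
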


\begin{proof}
By \thmref{crit chart} and \lemref{est value}, we have
$$
\|f_0''\|_{C^{r-2}} < {K_0}
\comma
t < \frac{{K_0} x^2}{2}
\matsp{and}
|f_0'(x)| > \kappa_F |x|.
$$
The result now follows from \lemref{cr inverse}.
\end{proof}

\section{1D-Like Reduction}

For some $N \in \bbN \cup \{\infty\}$, let $F$ be the $N$-times regularly H\'enon-like renormalizable map considered in \secref{sec.conv chart}.

\subsection{Valuable projections}

Consider the valuable charts $\Phi_0$ and $\Phi_{-1}$ given in \thmref{crit chart}. Define $P_{-1} : (\cB_{-1}, v_{-1}) \to (I^h_{-1}, 0)$ and $P^n_0 : (\hcB^n_0, v_0) \to (I^n_0, 0)$ for $n_0 \leq n \leq N$ by
$$
P_{-1} := \pi_h \circ \Phi_{-1}
\matsp{and}
P^n_0 := \pi_h \circ \Psi^n.
$$
Denote
$$
I^n_{R_n-1} := P_{-1}(\hcB^n_{R_n-1}) = P_{-1}(\cI^n_{R_n-1})= h_n(I^n_0).
$$
Define the {\it $n$th (valuable) projection map} $\cP^n_0 : \hcB^n_0 \to \cI^n_0$ by
$$
\cP^n_0 := (\Psi^n)^{-1} \circ \Pi_h \circ \Psi^n.
$$
Observe that $\cP^n_0|_{\cI^n_0} = \Id$.

We record the following immediate consequence of \thmref{reg chart} and Propositions \ref{for gt} and \ref{back dt}.

\begin{lem}\label{flat}
For $n_0 \leq n\leq N$, denote $\lambda_n := \lambda^{(1-\bepsilon)R_n}$. Then for $0 < t < \lambda^{-\bepsilon R_n}$, the following statements hold.
\begin{enumerate}[i)]
\item Let $\tiE_{p_0} \in \bbP^2_{p_0}$ be a $t$-horizontal direction at $p_0 \in \hcB^n_0$. Then $\tiE_{p_{R_n-1}}$ is $(1+t)\lambda_n$-horizontal in $\cB_{-1}$.
\item Let $E_{p_{R_n-1}} \in \bbP^2_{p_{R_n-1}}$ be a $t$-vertical direction at $p_{R_n-1} \in \hcB^n_{R_n-1}$. Then $E_{p_0}$ is $t\lambda_n$-vertical in $\hcB^n_0$.
\item Let $\Gamma^h_0$ be a curve that is $t$-horizontal in $C^r$ in $\hcB^n_0$. Then $F^{R_n-1}(\Gamma^h_0)$ is $(1+t)^r\lambda_n$-horizontal in $C^r$ in $\hcB_{-1}$.
\item Let $\Gamma^v_{R_n-1}$ be a curve that is $t$-vertical in $C^r$ in $\hcB^n_{R_n-1}$. Then $F^{-R_n+1}(\Gamma^v_{R_n-1})$ is $t\lambda_n$-vertical in $C^r$ in $\hcB^n_0$.
\end{enumerate}
\end{lem}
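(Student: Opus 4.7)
The plan is to reduce all four assertions to a single quantitative domination estimate: for every $p_0\in\hcB^n_0$,
\begin{equation}\label{eq.plan.ratio}
\frac{\|DF^{R_n-1}|_{E^{v,n}_{p_0}}\|}{\|DF^{R_n-1}|_{E^h_{p_0}}\|}
\;<\;\lambda_n.
\end{equation}
This follows from the forward $R_n$-times $(K_0,\epsilon,\lambda)$-regularity of every $p_0\in\hcB^n_0$ along $E^{v,n}_{p_0}$ supplied by \propref{vert prop nest}: the $s=1$ inequality in \eqref{eq.for reg}, together with the lower angle bound $\measuredangle(E^{v,n}_{p_0},E^h_{p_0})>1/L$ from \defnref{defn.reg henon return} and the multiplicativity of the Jacobian, gives $\|DF^{R_n-1}|_{E^{v,n}}\|/\|DF^{R_n-1}|_{E^h}\|<K_0^2\,\lambda^{(1-\epsilon)(R_n-1)}$, and the constants together with the one-step loss are absorbed into $\bepsilon$.

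For (i) and (ii) I would work in the charts $\Psi^n$ at $p_0$ (in which $E^{v,n}_{p_0}$ and $E^h_{p_0}$ are exactly vertical and horizontal) and $\Phi_{-1}$ at $p_{R_n-1}$ (in which, by the H\'enon form \eqref{eq.henon trans} of $\Phi_0\circ F\circ\Phi_{-1}^{-1}$ combined with the estimate $\|\Phi_0\circ(\Psi^n)^{-1}-\Id\|_{C^r}<\lambda^{(1-\bepsilon)R_n}$ from \thmref{crit chart}, the pair $(E^{h,n}_{p_{R_n-1}},E^v_{p_{R_n-1}})$ is $\baeta\lambda_n$-close to horizontal and vertical, respectively).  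A test direction decomposes canonically in the basis $\{E^h,E^{v,n}\}$ at $p_0$ (resp.\ $\{E^{h,n},E^v\}$ at $p_{R_n-1}$), and the linear map $DF^{\pm(R_n-1)}$ preserves this decomposition.  Converting back to a slope in the ambient chart using \eqref{eq.plan.ratio}, together with the $\baeta\lambda_n$ skewness at the $p_{R_n-1}$ end, produces the bound $t\lambda_n+\baeta\lambda_n\le(1+t)\lambda_n$ for (i), and a reciprocal vertical slope bounded below by $1/(t\lambda_n)$ for (ii), which is exactly $(t\lambda_n)$-verticality.

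For (iii) and (iv) I would upgrade this to $C^r$ by invoking the backward linearization charts $\{\Phi_{p_{-m}}\}_{m=1}^{R_n}$ along the orbit of $p_{R_n-1}$ from \thmref{reg chart}.  Following the factorization already used in the proof of \lemref{hor curvs conv},
\[
\Phi_{-1}\circ F^{R_n-1}\circ(\Psi^n)^{-1} = F_{p_{-2}}\circ\cdots\circ F_{p_{-R_n}}\circ\Phi_{p_{-R_n}}\circ(\Psi^n)^{-1},
\]
and applying \propref{cr lin comp} with the $C^r$-bounds on the $\Phi_{p_{-m}}^{\pm 1}$, the vertical component of the image of a $C^r$-graph of norm $\le t$ is controlled in $C^r$ by $(1+t)^r\lambda_n$; the Fa\`a di Bruno factor $(1+t)^r$ arises when composing the transfer map with the graph parametrization of $C^r$-norm $\le t$.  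Statement (iv) is the symmetric argument, run backward through the forward linearization charts of $p_0$; since the initial graph at $p_{R_n-1}$ is already $t$-vertical, the corresponding Fa\`a di Bruno factor is controlled directly by $t$, yielding the cleaner factor $t\lambda_n$.

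The main obstacle is bookkeeping the several competing notions of ``vertical'' near each reference point---the chart vertical from $\Psi^n$ or $\Phi_{-1}$, the linearization vertical from each local $\Phi_{p_m}$, and the dynamical vertical $E^{v,n}$ or $E^v$.  They agree only up to angles of order $\baeta\lambda_n$, and these small mismatches have to be absorbed into $\bepsilon$ consistently at every step so that the final bounds remain the clean $\lambda_n$, $(1+t)\lambda_n$, $t\lambda_n$, and $(1+t)^r\lambda_n$ stated in the lemma, rather than being decorated with extra $\baeta$ factors.
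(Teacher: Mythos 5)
Your parts (iii)--(iv) are essentially the paper's argument: the paper disposes of the whole lemma as an immediate consequence of \thmref{reg chart} together with the forward graph transform (\propref{for gt}) and the backward vertical direction-field transform (\propref{back dt}); your substitute of \propref{cr lin comp} plus a hand-rolled Fa\`a di Bruno estimate through the factorization from \lemref{hor curvs conv} accomplishes the same thing (the only bookkeeping point being that the left-hand chart in that factorization is the linearization chart $\Phi_{p_{-1}}$, not $\Phi_{-1}$, so one extra comparison via \thmref{crit chart} is needed, exactly as in the paper's own proof of \thmref{crit chart} ii)). The extra exponents of $\lambda^{-\bepsilon R_n}$ you pick up along the way are indeed absorbed by the $\bepsilon$-convention, so the clean factors $(1+t)^r\lambda_n$ and $t\lambda_n$ do come out.

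The gap is in (i)--(ii). Your frame argument needs to know where the image frame $\bigl(E^{h,n}_{p_{R_n-1}},E^v_{p_{R_n-1}}\bigr)$ sits relative to the genuine horizontal/vertical of the chart $\Phi_{-1}$, and you assert this alignment up to $\baeta\lambda_n$ as a consequence of the H\'enon normal form \eqref{eq.henon trans} together with $\|\Phi_0\circ(\Psi^n)^{-1}-\Id\|_{C^r}<\lambda^{(1-\bepsilon)R_n}$. For $E^v_{p_{R_n-1}}=DF^{-1}(E^h_{p_{R_n}})$ this works, because \eqref{eq.henon trans} lets you pull back the $\Phi_0$-horizontal through the single explicit iterate. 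But for $E^{h,n}_{p_{R_n-1}}:=DF^{R_n-1}(E^h_{p_0})$ the two cited facts say nothing: they concern one iterate of $F$ and the chart comparison at times $0$ and $R_n$, whereas near-horizontality of $DF^{R_n-1}(E^h_{p_0})$ in $\Phi_{-1}$ is precisely statement (i) with $t=0$, so at its key step your argument is circular. The domination ratio \eqref{eq.plan.ratio} cannot repair this by itself, since it compares norms along the two frame directions but carries no information about where the image of the horizontal direction lands. The missing ingredient is available in the paper: either quote \thmref{crit chart} ii), i.e.\ the skew form $H_n=\Phi_{-1}\circ F^{R_n-1}\circ(\Psi^n)^{-1}=(h_n(x),e_n(x,y))$ with $|h_n'|>\lambda^{\bepsilon R_n}$ and $\|e_n\|_{C^r}<\lambda^{(1-\bepsilon)R_n}$ --- from which (i) and (ii) follow by differentiating $H_n$ and $H_n^{-1}$, making the ratio \eqref{eq.plan.ratio} unnecessary --- or run at the $C^1$ level the same linearization-chart transfer (\propref{for gt}, or \propref{hor angle shrink}) that you already invoke for (iii). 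As it stands, the $C^0$ cases rest on an unproved alignment claim.
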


By \lemref{flat} iii), $\cI^n_{R_n-1}$ is $\eta_n$-horizontal in $\cB_{-1}$. Thus, there exists a $C^r$-map $g_n : I^n_{R_n-1} \to \bbR$ with $\|g_n\|_{C^r} < \lambda_n$ such that
$$
\Phi_{-1}(\cI^n_{R_n-1}) = \{(x, g_n(x)) \; | \; x \in I^n_{R_n-1}\}.
$$
Define $G_n : I^n_{R_n-1} \to \Phi_{-1}(\cI^n_{R_n-1})$ by $G_n(x) := (x, g_n(x)).$ Define the {\it $n$th critical projection map $\cP^n_{-1} : P_{-1}^{-1}(I^n_{R_n-1}) \to \cI^n_{R_n-1}$} by
\begin{equation}\label{eq.-1 proj}
\cP^n_{-1} := \Phi_{-1}^{-1}\circ G_n \circ P_{-1}.
\end{equation}

\begin{lem}\label{first entry hor graph}
For $n_0 \leq n \leq N$, let $\Gamma_0$ be a horizontal curve in $\hcB^n_0$. Then 
$$
F^{R_n-1}|_{\Gamma_0} = (\cP^n_{-1}|_{\Gamma_{R_n-1}})^{-1}\circ F^{R_n-1} \circ \cP^n_0|_{\Gamma_0}.
$$
\end{lem}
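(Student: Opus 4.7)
The key observation is that the statement reduces to the pointwise identity
$$
\cP^n_{-1}\circ F^{R_n-1}(p) = F^{R_n-1}\circ \cP^n_0(p)
\quad\text{for every } p\in \Gamma_0,
$$
together with the assertion that $\cP^n_{-1}|_{\Gamma_{R_n-1}}$ is a diffeomorphism (so the inverse on the right-hand side is well defined). The plan is to verify both parts by a direct coordinate computation in the charts $\Psi^n$ and $\Phi_{-1}$, using the normal form \eqref{eq.henon trans} implicit in the factorization $H_n = \Phi_{-1}\circ F^{R_n-1}\circ (\Psi^n)^{-1}$ from \thmref{crit chart}.

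Fix $p\in \Gamma_0$ with $\Psi^n(p) = (x,y)$. By definition of $\cP^n_0$, one has $\cP^n_0(p) = (\Psi^n)^{-1}(x,0)\in \cI^n_0$, so applying $\Phi_{-1}\circ F^{R_n-1}$ to both $p$ and $\cP^n_0(p)$ and using $H_n(x,y) = (h_n(x), e_n(x,y))$ from \thmref{crit chart} ii), I would compute
$$
\Phi_{-1}(F^{R_n-1}(p)) = (h_n(x), e_n(x,y)),\qquad
\Phi_{-1}(F^{R_n-1}(\cP^n_0(p))) = (h_n(x), e_n(x,0)).
$$
Both points have the same $\pi_h$-coordinate $h_n(x)$, hence
$$
P_{-1}(F^{R_n-1}(p)) = h_n(x) = P_{-1}(F^{R_n-1}(\cP^n_0(p))).
$$
Since $F^{R_n-1}(\cP^n_0(p))\in \cI^n_{R_n-1}$ and $\Phi_{-1}(\cI^n_{R_n-1})$ is precisely the graph of $g_n$, it follows that $e_n(x,0) = g_n(h_n(x))$, so $G_n(h_n(x)) = (h_n(x), e_n(x,0))$ and therefore
$$
\cP^n_{-1}(F^{R_n-1}(p)) = \Phi_{-1}^{-1}(h_n(x), g_n(h_n(x))) = F^{R_n-1}(\cP^n_0(p)),
$$
which is the desired identity.

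It remains to check that $\cP^n_{-1}|_{\Gamma_{R_n-1}}$ is a bijection onto its image, so that inverting makes sense. By \lemref{flat} iii), the curve $\Gamma_{R_n-1}$ is $(1+t)^r\lambda_n$-horizontal in $C^r$ in $\cB_{-1}$ (where $\Gamma_0$ is $t$-horizontal), in particular a graph over $\pi_h$ in $\Phi_{-1}$-coordinates for all sufficiently deep $n\geq n_0$. The map $\cP^n_{-1} = \Phi_{-1}^{-1}\circ G_n\circ \pi_h\circ \Phi_{-1}$ projects along genuine vertical lines in $\Phi_{-1}$-coordinates onto the graph $G_n$; restricted to another near-horizontal graph it is manifestly a diffeomorphism onto its image in $\cI^n_{R_n-1}$, with inverse given by projection back along vertical lines in $\Phi_{-1}$-coordinates. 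Combining this bijectivity with the pointwise identity above yields the lemma.

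There is essentially no obstacle: the lemma is a formal consequence of the diagonal structure $H_n(x,y) = (h_n(x), e_n(x,y))$ (i.e.\ that the first coordinate of $H_n$ is independent of $y$) combined with the definitions of $\cP^n_0$ and $\cP^n_{-1}$. The only mild technical point is verifying that the invertibility of $\cP^n_{-1}|_{\Gamma_{R_n-1}}$ actually holds, which is where \lemref{flat} iii) enters; this would be the single place where I would invoke a prior estimate rather than a pure definitional chase.
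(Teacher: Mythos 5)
Your proof is correct and is essentially the paper's own argument: the paper's one-line proof asserts exactly the foliation compatibility that your coordinate computation makes explicit, namely that the triangular form $H_n(x,y)=(h_n(x),e_n(x,y))$ of \thmref{crit chart} ii) means $F^{R_n-1}$ carries the $\Psi^n$-vertical foliation (defining $\cP^n_0$) into the $\Phi_{-1}$-vertical foliation (defining $\cP^n_{-1}$). One small remark: the invertibility of $\cP^n_{-1}|_{\Gamma_{R_n-1}}$ already follows from that same triangular form together with $\Gamma_0$ being a graph in $\Psi^n$-coordinates (since $h_n$ is a diffeomorphism, $\Phi_{-1}(\Gamma_{R_n-1})$ is a graph over $\pi_h$), so the appeal to \lemref{flat} iii) --- which assumes a quantitative $C^r$ horizontality bound not present in the lemma's hypothesis --- is unnecessary.
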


\begin{proof}
Note that $\cP^n_{-1}$ is a projection along the vertical foliation $\cF^v_{-1}$ on $\cB_{-1}$, and $\cP^n_0$ is a projection along the vertical foliation on $\hcB^n_0$ obtained by pulling back $\cF^v_{-1}$ by $F^{-R_n+1}$. The claim follows immediately.
\end{proof}

\begin{figure}[h]
\centering
\includegraphics[scale=0.2]{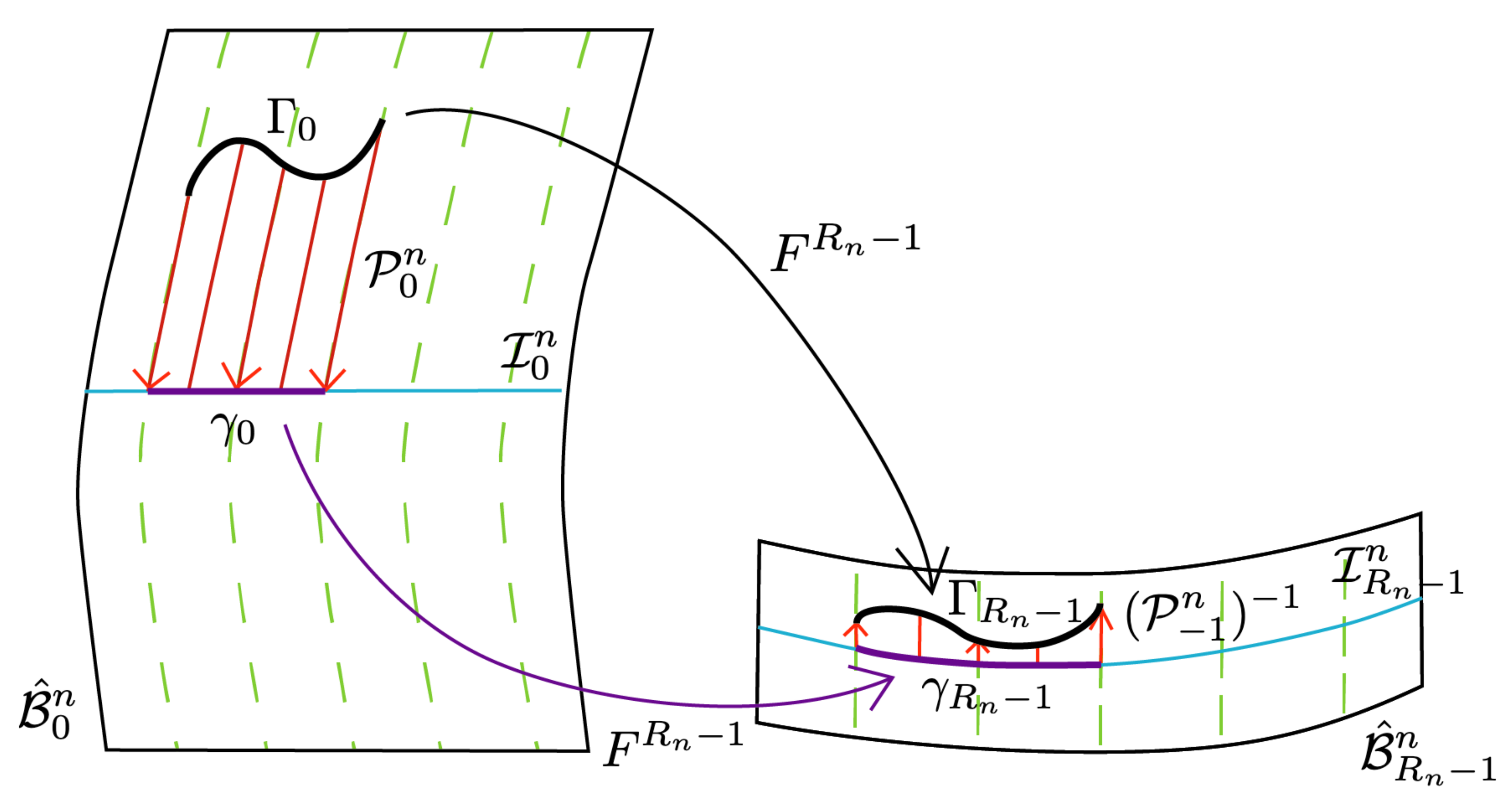}
\caption{Projections $\cP^n_0 : \hcB^n_0 \to \cI^n_0$ and $\cP^n_{-1} : \hcB^n_{R_n-1} \to \cI^n_{R_n-1}$ near the critical value $v_0$ and critical point $v_{-1}$ respectively. On any horizontal curve $\Gamma_0 \subset \hcB^n_0$, the iterate $F^{R_n-1}$ commutes with these projections.}
\label{fig.proj}
\end{figure}

\subsection{Passing near the critical value}

Let $K_0 \geq 1$ be the uniform constant given in \eqref{eq.const 0}. Assume that $n_0 \leq N$ is the smallest number such that
\begin{equation}\label{eq.proper depth 1}
\overline{K_1} \lambda^{\epsilon R_{n_0}} < \eta,
\end{equation}
where
\begin{equation}\label{eq.const 1}
K_1 = K_1({K_0}, \kappa_F) \geq 1
\end{equation}
is a uniform constant.

Let  $z = (a,b) \in B_0 = I^h_0 \times I^v_0$. For $t \geq 0$, define
$$
V_z(t) := [a - t, a+t] \times I^v_0.
$$
For $p \in \hcB^n_0$ and $t \geq 0$, denote
$$
\cV_p^n(t) := (\Psi^n)^{-1}(V_{\Psi^n(p)}(t)) \subset \hcB^n_0.
$$

We record the following immediate consequences of Lemmas \ref{est value} and \ref{root at value}, and  \eqref{eq.proper depth 1}.

\begin{lem}\label{quad flat}
For $n_0 \leq n \leq N$, let $E_{p_{-1}} \in \bbP^2_{p_{-1}}$ be a $\lambda^{\bepsilon R_n}$-horizontal direction at $p_{-1} \in \cB_{-1}$. If
$$
p_0 \in \hcB^n_0 \setminus \cV^n_{v_0}(\lambda^{\bepsilon R_n})
$$
then $E_{p_0}$ is $\lambda^{-\bepsilon R_n}$-horizontal in $\hcB^n_0$. Similarly, let $\Gamma_{-1}$ be $\lambda^{\bepsilon R_n}$-horizontal curve in $\cB_{-1}$. If
$$
\Gamma_0:= F(\Gamma_{-1}) \subset \hcB^n_0 \setminus \cV^n_{v_0}(\lambda^{\bepsilon R_n})
\matsp{with}
t > \lambda^{\bepsilon R_n},
$$
then $\Gamma_0$ is $\lambda^{-\bepsilon R_n}$-horizontal in $C^r$ in $\hcB^n_0$.
\end{lem}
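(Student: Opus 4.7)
The plan is to carry out the computation in the valuable charts of \thmref{crit chart}, where $F$ takes the explicit H\'enon normal form $(x,y)\mapsto (f_0(x)-\lambda y, x)$, so that
$$
DF \equiv \begin{pmatrix} f_0'(x_{-1}) & -\lambda \\ 1 & 0 \end{pmatrix}
\matsp{with}
(x_{-1}, y_{-1}) := \Phi_{-1}(p_{-1}),
$$
and to reduce everything to a pointwise lower bound on $|f_0'(x_{-1})|$ via \lemref{est value} and \lemref{root at value}. A $\lambda^{\bepsilon R_n}$-horizontal direction $E_{p_{-1}}$ in $\Phi_{-1}$-coordinates has the form $(1, s_h)$ with $|s_h| < \bar\lambda^{\bepsilon R_n}$, and its image in $\Phi_0$-coordinates at $p_0$ is $(f_0'(x_{-1}) - \lambda s_h, 1)$. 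Since the perturbation $|\lambda s_h|$ is of order $\lambda^{1+\bepsilon R_n}$, and the $C^r$-error between $\Phi_0$ and $\Psi^n$ guaranteed by \thmref{crit chart}(i) is only of order $\lambda^{(1-\bepsilon) R_n}$, the $\lambda^{-\bepsilon R_n}$-horizontality of $E_{p_0}$ in $\hcB^n_0$ will follow as soon as one establishes $|f_0'(x_{-1})| > \ulambda^{\bepsilon R_n / 2}$.

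To extract this bound, I write $\Phi_0(p_0) = (X, Y) = (f_0(x_{-1}) - \lambda y_{-1}, x_{-1})$ and translate the hypothesis $p_0 \notin \cV^n_{v_0}(\lambda^{\bepsilon R_n})$, via the proximity of $\Psi^n$ to $\Phi_0$, into $|X| > \ulambda^{\bepsilon R_n}$. The shear contribution $|\lambda y_{-1}|$ is controlled by the construction of $\cB_{-1}$ in \thmref{crit chart}, which enforces $|\lambda y_{-1}| \leq \lambda/K_0$. The main technical step is to calibrate the constant $K_1$ of \eqref{eq.const 1} in terms of $K_0$ and $\kappa_F$ so that \eqref{eq.proper depth 1} renders this shear negligible compared to $|X|$; the triangle inequality then gives $|f_0(x_{-1})| > \ulambda^{\bepsilon R_n}$, after which \lemref{root at value} yields $|x_{-1}| > \ulambda^{\bepsilon R_n/2}$ and \lemref{est value} produces $|f_0'(x_{-1})| > \kappa_F |x_{-1}| > \ulambda^{\bepsilon R_n/2}$, as required.

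For the curve statement, I represent $\Gamma_{-1}$ as the horizontal graph $\{(x, g_h(x))\}$ in $\Phi_{-1}$-coordinates with $\|g_h\|_{C^r} < \bar\lambda^{\bepsilon R_n}$, so that $\Gamma_0$ becomes the parametrized curve $\{(f_0(x) - \lambda g_h(x),\, x)\}$ in $\Phi_0$-coordinates. The same pointwise lower bound, applied uniformly along the curve, shows that the first coordinate is a $C^r$-diffeomorphism of $x$ with derivative of absolute value at least $\ulambda^{\bepsilon R_n / 2}$, so $\Gamma_0$ is indeed a horizontal graph, and its $C^r$-norm is controlled by combining the Fa\`a di Bruno formula with the $C^r$-bound on $f_0$ from \eqref{eq.second}, the bound on $g_h$, and the inverse-derivative estimate of \lemref{cr inverse}. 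The main obstacle I anticipate is the calibration of $K_1$: one has to certify that $K_1(K_0,\kappa_F)$ absorbs both the shear factor $1/K_0$ and the quadratic-inverse constants of $f_0$ near its critical point, so that \eqref{eq.proper depth 1} at the threshold $n_0$ genuinely forces $\lambda^{\bepsilon R_n} \gg \lambda/K_0$ for every $n \geq n_0$; all remaining steps are routine applications of the H\'enon normal form together with the preparatory lemmas.
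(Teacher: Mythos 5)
Your reduction to the valuable charts of \thmref{crit chart} and the H\'enon normal form, and the plan to use \lemref{est value} and \lemref{root at value} to lower-bound $|f_0'(x_{-1})|$, is the right one and is exactly what the paper means by ``an immediate consequence of Lemmas \ref{est value} and \ref{root at value} and \eqref{eq.proper depth 1}''. The curve case goes through cleanly under this plan: for a $\lambda^{\bepsilon R_n}$-horizontal curve the shear term is $|\lambda g_h(x_{-1})|\le\lambda\cdot\lambda^{\bepsilon R_n}$, which is a factor $\lambda<1$ below the lower bound $|X|>\ulambda^{\bepsilon R_n}$ coming from $p_0\notin\cV^n_{v_0}(\lambda^{\bepsilon R_n})$ and \thmref{crit chart}\,i), so the triangle inequality, \lemref{est value}, and \lemref{root at value} give $|x_{-1}|>\ulambda^{\bepsilon R_n/2}$ and hence $|f_0'(x_{-1})|>\ulambda^{\bepsilon R_n/2}$ uniformly along $\Gamma_{-1}$; the graph-transform computation with \lemref{cr inverse} then finishes the $C^r$-bound.

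The gap is in how you treat the shear for the direction statement. You bound the positional shear by $|\lambda y_{-1}|\le\lambda/K_0$ (which is correct: $B_{-1}=J^v_0\times[-1/K_0,1/K_0]$), and then propose to calibrate $K_1$ so that \eqref{eq.proper depth 1} ``forces $\lambda^{\bepsilon R_n}\gg\lambda/K_0$ for every $n\geq n_0$.'' This is backwards. Condition \eqref{eq.proper depth 1} reads $\overline{K_1}\lambda^{\epsilon R_{n_0}}<\eta$; enlarging $K_1$ only pushes $n_0$ up and makes $\lambda^{\epsilon R_{n_0}}$ \emph{smaller}, and for $n>n_0$ the quantity $\lambda^{\bepsilon R_n}$ decays super-exponentially further, while $\lambda/K_0$ is a fixed constant of the map. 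So $\lambda^{\bepsilon R_n}\ll\lambda/K_0$ for every $n\geq n_0$, and the triangle inequality $|f_0(x_{-1})|\geq|X|-|\lambda y_{-1}|$ produces nothing. Indeed, for a generic $p_{-1}\in\cB_{-1}$ with $x_{-1}=0$ and $y_{-1}$ chosen so that $\lambda^{\bepsilon R_n}<|\lambda y_{-1}|<1/K_0\cdot\lambda$, the image $p_0$ lies outside $\cV^n_{v_0}(\lambda^{\bepsilon R_n})$ yet $f_0'(x_{-1})=0$ and $DF(1,s_h)=(-\lambda s_h,1)$ is essentially vertical, so no lower bound on $|f_0'|$ is available from the hypothesis as you have parsed it. The resolution is that the direction statement must be read as the infinitesimal version of the curve statement, i.e.\ with $p_{-1}$ lying on a $\lambda^{\bepsilon R_n}$-horizontal curve so that $|y_{-1}|\lesssim\lambda^{\bepsilon R_n}$ (as is the case in every application of \lemref{quad flat} in the paper); with that constraint your triangle-inequality step closes exactly as in the curve case, and no calibration of $K_1$ against $\lambda/K_0$ is needed or possible.
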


\begin{lem}\label{quad straight}
For $n_0 \leq n \leq N$, let $\tiE_{p_0} \in \bbP^2_{p_0}$ be a $\lambda^{\bepsilon R_n}$-vertical direction at $p_0 \in \hcB^n_0$. If
$$
p_0 \in \hcB^n_{R_n} \setminus \cV^n_{v_0}(\lambda^{\bepsilon R_n}),
$$
then $\tiE_{p_0}$ is $\lambda^{-\bepsilon R_n}$-vertical in $\cB_{-1}$. Similarly, let $\tiGamma_0$ be $\lambda^{\bepsilon R_n}$-vertical curve in $\hcB^n_0$. If
$$
\tiGamma_0 \subset \hcB^n_{R_n} \setminus \cV^n_{v_0}(\lambda^{\bepsilon R_n}),
$$
then $\tiGamma_{-1} := F^{-1}(\tiGamma_0)$ is $\lambda^{-\bepsilon R_n}$-vertical in $C^r$ in $\cB_{-1}$.
\end{lem}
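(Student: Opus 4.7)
The plan is to pull everything back into the valuable charts $\Phi_0, \Phi_{-1}$ of \thmref{crit chart}, where $F$ takes the H\'enon normal form \eqref{eq.henon trans} and $F^{-1}$ corresponds to $G^{-1}(x,y) = (y, (f_0(y)-x)/\lambda)$. The derivative
$$
DG^{-1}(x,y) = \begin{pmatrix} 0 & 1 \\ -1/\lambda & f_0'(y)/\lambda \end{pmatrix}
$$
shows that the action of $DF^{-1}$ on a vertical direction is essentially controlled by $|f_0'(y_0)|$, where $y_0 := \pi_v \circ \Phi_0(p_0)$. Thus the whole lemma reduces to ensuring that $|y_0|$ is comfortably bounded below on the set in question.

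I would start by establishing the key geometric fact: if $p_0 \in \hcB^n_{R_n}\setminus \cV^n_{v_0}(\lambda^{\bepsilon R_n})$, then $|y_0|\gtrsim \lambda^{\bepsilon R_n/2}$. Writing $p_0 = F(p_{-1})$ with $p_{-1}\in \hcB^n_{R_n-1}$, the squeeze \eqref{eq.first entry squeeze} forces the second $\Phi_{-1}$-coordinate of $p_{-1}$ to have modulus at most $\lambda^{(1-\bepsilon)R_n}$; the H\'enon form then yields $|x_0 - f_0(y_0)| \leq \lambda^{(1-\bepsilon)R_n}$, where $x_0 := \pi_h \circ \Phi_0(p_0)$. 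Combining with \thmref{crit chart} i) (which gives $|x_0 - \pi_h\circ\Psi^n(p_0)|\leq \lambda^{(1-\bepsilon)R_n}$) and the hypothesis $|\pi_h\circ \Psi^n(p_0)| > \lambda^{\bepsilon R_n}$, I get $|f_0(y_0)| \geq \lambda^{\bepsilon R_n}/2$. The quadratic upper bound $|f_0(y_0)| \leq K_0 y_0^2/2$ from \lemref{est value} then forces $|y_0| \gtrsim \lambda^{\bepsilon R_n/2}$, and hence $|f_0'(y_0)| \geq \kappa_F|y_0| \gtrsim \lambda^{\bepsilon R_n/2}$, which is much larger than $\lambda^{\bepsilon R_n}$.

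For the direction statement, I would use \thmref{crit chart} i) to express $\tiE_{p_0}$ in $\Phi_0$-coordinates as $(s, 1)$ with $|s|\lesssim \lambda^{\bepsilon R_n}$. The pullback $DG^{-1}(s, 1)^T = (1, (f_0'(y_0) - s)/\lambda)^T$ has ratio of horizontal to vertical component equal to $\lambda/|f_0'(y_0) - s| \lesssim \lambda^{1-\bepsilon R_n/2}$, confirming that $\tiE_{p_{-1}}$ is $\lambda^{-\bepsilon R_n}$-vertical (in fact considerably more so).

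For the curve statement, I would write $\Phi_0(\tiGamma_0)$ as a vertical graph $x = g(y)$ with $\|g'\|_{C^{r-1}} \lesssim \lambda^{\bepsilon R_n}$, using the $C^r$-closeness of $\Phi_0$ and $\Psi^n$. Transferring via $G^{-1}$ gives $\Phi_{-1}(\tiGamma_{-1}) = \{(y, \phi(y))\}$ with $\phi(y) := (f_0(y) - g(y))/\lambda$. On the relevant interval $|y|\gtrsim \lambda^{\bepsilon R_n/2}$, the bound $|\phi'(y)| \geq \kappa_F|y|/(2\lambda)\gtrsim \lambda^{\bepsilon R_n/2-1}$ ensures strict monotonicity, so $\tiGamma_{-1}$ can be rewritten as a vertical graph $x = \psi(y')$ via $\psi := \phi^{-1}$. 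Using the inverse function formula together with $|\phi^{(j)}| \leq 2K_0/\lambda$ for $j\geq 1$ yields $|\psi^{(i)}| \lesssim \lambda^{i - (2i-1)\bepsilon R_n/2}$ for $1\leq i \leq r$. The hard part will be this deterioration of the bound with $i$; however, the worst-case exponent $-(2r-1)\bepsilon R_n/2$ is still a bounded multiple of $\bepsilon R_n$, which is absorbed into the final $\bepsilon$ by the standing convention, giving $\|\psi'\|_{C^{r-1}} \leq \lambda^{-\bepsilon R_n}$ as required.
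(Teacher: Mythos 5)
Your proof is correct and follows the same strategy the paper has in mind when it states the lemma as ``an immediate consequence of Lemmas~\ref{est value}, \ref{root at value} and \eqref{eq.proper depth 1}'': pass to the valuable charts where $F$ takes the normal form \eqref{eq.henon trans}, deduce from \eqref{eq.first entry squeeze} and the exclusion of the strip $\cV^n_{v_0}(\lambda^{\bepsilon R_n})$ that the relevant $\Phi_{-1}$-horizontal coordinate satisfies $|y_0|\gtrsim \lambda^{\bepsilon R_n/2}$, and then control the inverse branch of the nearly-quadratic map (you via \lemref{cr inverse}, the paper via \lemref{root at value}, yielding the same exponents). Your final absorption of the factor $(2r-1)/2$ into $\bepsilon$ is the standard use of the notational convention; the argument is complete.
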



\subsection{1D-Like Structure}\label{subsec.combin}

Henceforth, suppose that the combinatorics of renormalizations of $F$ are of $\bfb$-bounded type for some integer $\bfb \geq 2$. Moreover, assume that $\epsilon$ is sufficiently small so that $\bfb\bepsilon < 1$. By only considering every other returns if necessary, we may also assume without loss of generality that $r_n := R_{n+1}/R_n \geq 3$ for ${n_0}\leq n \leq N$.

Let $z = (a,b), w = (c,d) \in B_0 = I^h_0 \times I^v_0$. Denote
$$
m := \min\{a, c\}
\matsp{and}
M := \max\{a, c\}.
$$
For $t \geq 0$, define
$$
V_{[z,w]}(t) := [m -t, M + t] \times I^v_0.
$$
For $n_0 \leq n \leq N$; $p, q \in \hcB^n_0$ and $t \geq 0$, denote
$$
\cV_{[p, q]}^n(t) := (\Psi^n)^{-1}(V_{[\Psi^n(p), \Psi^n(q)]}(t)).
$$

Let $s \in \{0, 1, 2\}$. For $n_0 \leq n \leq N-s$ and $k \geq -1$, denote
$$
a^n_k := P^n_0(v_{kR_n})
\matsp{and}
b^{n,s}_k:= P^n_0(v_{kR_n+R_{n+s}}) = a^n_{k+R_{n+s}/R_n}.
$$
Define
$$
\hB^{n,s}_{kR_n} := V_{[a^n_k, b^{n,s}_k]}(\lambda^{\bepsilon R_n})
\matsp{and}
\hcB^{n,s}_{kR_n} := (\Psi^n)^{-1}(\hB^{n,s}_{kR_n}).
$$
In particular, we have
$$
\hcB^n_0 \supset \hcB^{n,0}_0 := \cV^n_{[v_0, v_{R_n}]}(\lambda^{\bepsilon R_n}).
$$
See \figref{fig.valuestruct}.

\begin{defn}
For $n_0 \leq n < N$ and $1 \leq s \leq N-n$, we say that the H\'enon-like return $(F^{R_n}, \Psi^n)$ has {\it 1D-like structure of depth $s$} if:
\begin{enumerate}[i)]
\item $\hcB^{n, s}_{lR_n} \cap \hcB^{n, s}_{kR_n} = \varnothing$ for $0 \leq l,k < R_{n+s}/R_n$ with $l \neq k$;
\item $a^n_0 = 0< b^{n, s}_0< a^n_k  \, ,\, b^{n, s}_k< b^{n,s}_1  < a^n_1  $ for $2 \leq k < R_{n+s}/R_n$; and
\item $F^{R_n}(\hcB^{n,s}_{kR_n}) \Subset \hcB^{n,s}_{(k+1)R_n \md{R_{n+s}}}$ for $0 \leq k < R_{n+s}/R_n$.
\end{enumerate}
\end{defn}

\begin{prop}\cite[Proposition 6.5]{Y}\label{value struct bound}
Let $n_0 \leq n \leq N$. Suppose that $F^{R_n}|_{\cB^n_0}$ is twice non-trivially topological renormalizable with combinatorics of $\bfb$-bounded type. Then for $m = n-s$ with $s = O(1)$, the H\'enon-like return $(F^{R_m}, \Psi^m)$ has 1D-like structure of depth $s$. In particular, $\hcB^{n, 0}_0$ is $R_n$-periodic.
\end{prop}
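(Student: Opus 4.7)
The plan is to reduce conditions (i)--(iii) to their one-dimensional analogs and then lift the combinatorial structure back to two dimensions using the super-exponential approximation of the straightening charts established in \secref{sec.conv chart}. By the convergence estimate \eqref{eq.psi conv} and \propref{crit value}, the centered charts $\Psi^k$ approximate the limit chart $\Phi_0$ near $v_0$ with $C^r$-error $\lambda^{(1-\bepsilon)R_k}$. Consequently, at level $m = n - s$ with $s = O(1)$, the horizontal coordinates $a^m_k, b^{m,s}_k \in I^m_0$ of the critical orbit are super-exponentially close (in $R_m$) to the corresponding coordinates for the 1D unimodal profile of $\cR^m(F)$.

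In the one-dimensional setting, standard unimodal renormalization theory applies: since the combinatorics is of $\bfb$-bounded type and the hypothesis provides twice non-trivial renormalizability at level $n$, the cycle of critical-orbit points at period $R_n = R_{m+s}$ viewed inside the depth-$m$ renormalization interval consists of $R_n/R_m \leq \bfb^s$ pairwise disjoint subintervals arranged exactly as in (ii) and cyclically permuted as in (iii). Moreover, for bounded $s$, the gaps between consecutive such intervals, and between them and the endpoints $a^m_0, a^m_1$, are bounded below by a positive constant depending only on $\bfb$ and $s$.

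To lift this to 2D, we observe that the thickening $\lambda^{\bepsilon R_m}$ in the definition of $\hB^{m,s}_{kR_m}$ is super-exponentially smaller than the uniform 1D gaps just described. Combined with the $C^r$-closeness of $\Psi^m$ to $\Phi_0$, this immediately yields the disjointness (i) and ordering (ii). For (iii), the image $F^{R_m}(\hcB^{m,s}_{kR_m})$ is, up to super-exponentially small error, a vertical strip centered at $v_{(k+1)R_m}$: horizontal arcs push forward to nearly horizontal arcs by \lemref{flat}, while vertical strips passing near $v_0$ stretch in the controlled quadratic fashion given by \lemref{quad straight}. Since in 1D the image lands inside the $(k+1 \md{R_{m+s}/R_m})$-th interval with gap bounded below, the corresponding 2D strips inherit the cyclic mapping. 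The \emph{in particular} statement follows by specializing $s = n-m$, observing $\hcB^{n,0}_0$ is $\bepsilon$-close to $\hcB^{m,s}_0$, and iterating (iii) exactly $R_n/R_m$ times.

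The main obstacle is verifying (iii), because the vertical foliation defining $\Psi^m$ is only approximately $F^{R_m}$-invariant, and its defect must be controlled through up to $\bfb^s$ compositions of $F^{R_m}$. Since $s$ is bounded, only finitely many compositions occur, and the super-exponential smallness in $R_m$ of the $C^r$-discrepancy between $\Psi^m$ and $\Phi_0$ absorbs any bounded-power blow-up. A further subtlety concerns strips whose horizontal coordinate in $\Psi^m$ crosses near $0$: they are stretched by $F$ according to the quadratic profile near $v_0$, and one must verify that after this stretching they still land in the predicted 1D piece. This is precisely the content of \lemref{quad flat} and \lemref{quad straight}, both of which rely on the quadratic tangency between $W^{ss}(v_0)$ and $W^c(v_0)$ provided by \propref{crit value}.
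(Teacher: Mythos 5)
The statement is cited in the paper as \cite[Proposition~6.5]{Y} and no proof is reproduced here, so there is no ``paper's own proof'' to compare against; I can only assess your proposal on its merits.

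Your overall scheme --- identify the 1D combinatorial picture through the straightening charts and then lift it back using the super-exponential closeness $\|\Psi^m\circ(\Psi^{m'})^{-1}-\Id\|_{C^r}<\lambda^{(1-\bepsilon)R_m}$ together with Lemmas~\ref{flat}, \ref{quad flat}, \ref{quad straight} --- is in the right spirit and is plausibly how the proof in \cite{Y} is organized. The genuine gap is the sentence in which you assert that ``standard unimodal renormalization theory'' gives that the gaps between consecutive cycle pieces in $I^m_0$ are ``bounded below by a positive constant depending only on $\bfb$ and $s$.'' This is not a standard corollary of topological renormalizability; it is precisely a statement of 1D \emph{real bounds} (\emph{a priori} bounds in the sense of Sullivan), which is a non-trivial theorem requiring additional hypotheses (and is itself the 1D shadow of what this entire paper is trying to establish in 2D). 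Topological renormalizability with $\bfb$-bounded combinatorics puts no lower bound whatsoever on the gaps in the cycle: a $C^r$ unimodal map arbitrarily close to a saddle-node or period-doubling boundary of its renormalization window has cycle pieces at depth $s$ with arbitrarily small gaps, even with $\|f\|_{C^r}$ controlled. Invoking this here is therefore both unjustified and dangerously close to circular, since one of the consequences drawn from the Main Theorem (via \corref{a priori cor} and the decomposition $f_n=(\phi_n)^2+a_n$) is essentially the 1D \emph{a priori} bound for the profiles $f_n$. What is actually needed for item~(i) of the 1D-like structure is much weaker --- gaps exceeding $2\lambda^{\bepsilon R_m}$ --- and the source of that bound in this framework must be the quantitative Pesin regularity of the returns (the $(L,\epsilon,\lambda)$-regularity hypothesis, through \propref{ext deriv bound}, \thmref{crit chart} and \lemref{est value}), not imported 1D theory. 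Your proposal does not supply that argument. Secondary points: you do not verify that the 1D profile $\Piod\circ\cR^m(F)$ is itself (twice) renormalizable in the 1D sense with the same combinatorics --- this is an extra step, since the hypothesis concerns the 2D return map, and ``close to renormalizable'' does not imply renormalizable when the map is near a combinatorial boundary; and your sketch of item~(iii) is too coarse, since $F^{R_m}$ sends a $\Psi^m$-vertical strip to a (nearly degenerate) horizontal strip, so ``vertical strips inherit the cyclic mapping'' requires an explicit argument that the thin image lies in the next vertical strip with room to spare of order $\lambda^{\bepsilon R_m}$, again hinging on the very gap estimate that is missing.
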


\begin{figure}[h]
\centering
\includegraphics[scale=0.25]{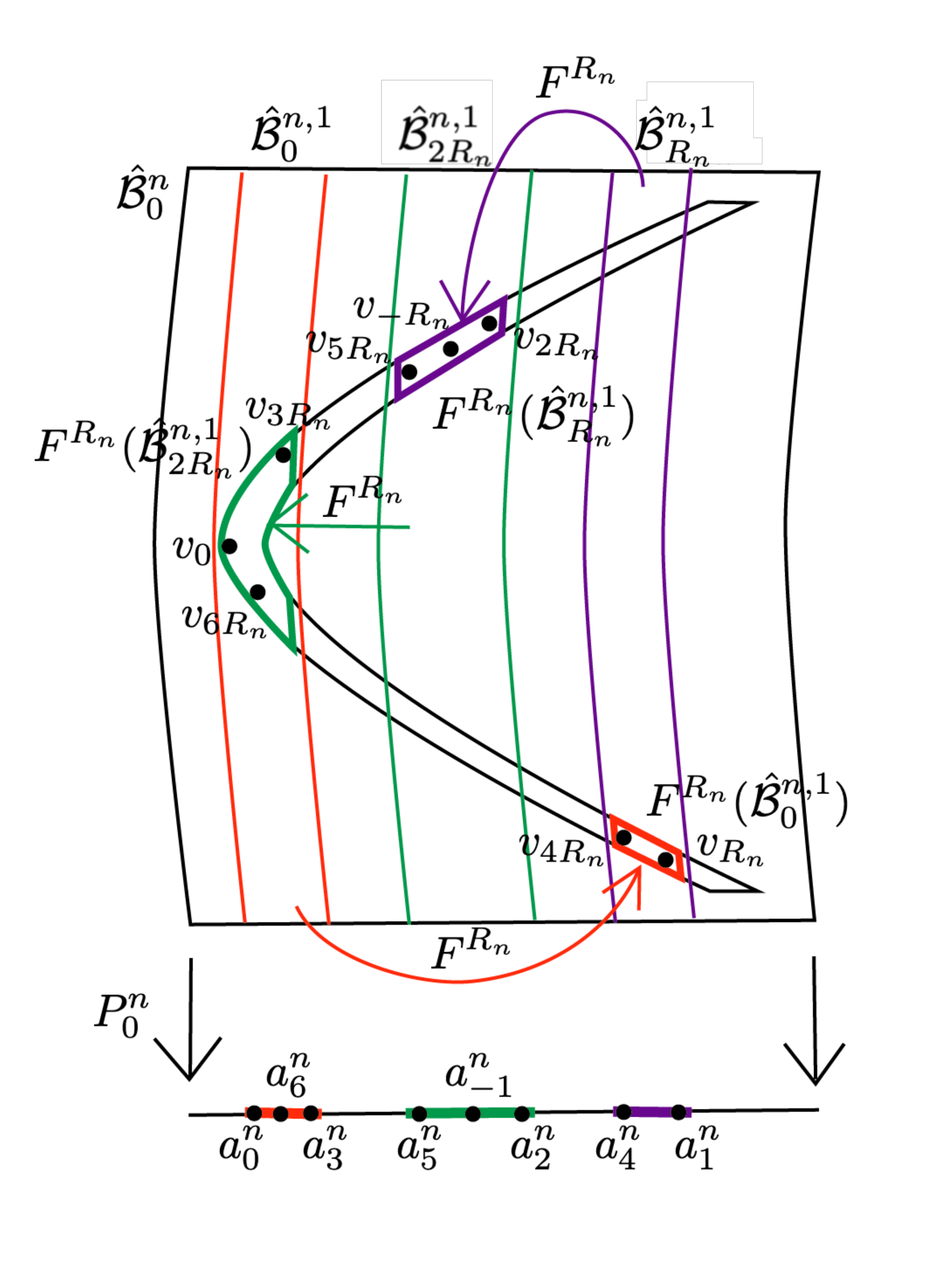}
\caption{The 1D-like structure of depth $1$ of $(F^{R_n}, \Psi^n)$ for $n \geq n_0$ (for $r_n := R_{n+1}/R_n = 3$). The $R_{n+1}$-periodic domains $\hcB^{n,1}_0$, $\hcB^{n,1}_{R_n}$ and $\hcB^{n,1}_{2R_n}$ containing $v_0$, $v_{R_n}$ and $v_{2R_n}$ respectively are vertically proper and pairwise disjoint in $\hcB^n_0$. Moreover, $F^{R_n}(\hcB^{n,1}_{kR_n}) \Subset \hcB^{n,1}_{(k+1)R_n \md{R_{n+1}}}$. Under the projection $P^n_0 : \hcB^n_0 \to I^n_0 \subset \bbR$, the orbit $\{v_{kR_n}\}_{k=-1}^{r_n}$ of the critical value are mapped to $\{a^n_k\}_{k=-1}^{r_n}$.}
\label{fig.valuestruct}
\end{figure}

By \propref{value struct bound}, we may henceforth assume without loss of generality that for all $n_0 \leq n \leq N$ such that $F^{R_n}|_{\cB^n_0}$ is twice non-trivially renormalizable, we have
\begin{equation}\label{eq.min domain}
\cB^n_0 := \hcB^{n,0}_0 := \cV^n_{[v_0, v_{R_n}]}(\lambda^{\bepsilon R_n}).
\end{equation}

\begin{lem}\label{unproject}
Let $n_0 \leq n \leq N$. Suppose that $F^{R_n}|_{\cB^n_0}$ is twice topological renormalizable with combinatorics of $\bfb$-bounded type. For $s\in \{1, 2\}$ and $m := n-s$, let $\Gamma_0$ be a $\lambda^{-\bepsilon R_m}$-horizontal curve in $\cB^n_0$. Then the following statements hold for $1 \leq k \leq R_n/R_m$:
\begin{enumerate}[i)]
\item $\Gamma_{(k-1)R_m}$ is $\lambda^{-\bepsilon R_m}$-horizontal in $\cB^m_0$; and
\item $\Gamma_{kR_m-1}$ is $\lambda^{(1-\bepsilon)R_m}$-horizontal in $\cB_{-1}$.
\end{enumerate}
\end{lem}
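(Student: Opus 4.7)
The plan is to run an interlocking induction on $k$, establishing (i) and (ii) in tandem. The two inductive implications are: \textbf{(a)} (i)$_k \Rightarrow$ (ii)$_k$ by a direct application of \lemref{flat}(iii) at scale $m$; and \textbf{(b)} (ii)$_k \Rightarrow$ (i)$_{k+1}$ (for $k < R_n/R_m$) by a single $F$-step using \lemref{quad flat}, after checking that $\Gamma_{kR_m}$ stays clear of a $\lambda^{\bepsilon R_m}$-neighborhood of $v_0$.

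For the base case $k=1$, first observe from \eqref{eq.min domain} together with the 1D-like structure of depth $s$ at scale $m$ (\propref{value struct bound}) that one has the chain of containments
\[
\Gamma_0 \subset \cB^n_0 \;=\; \hcB^{n,0}_0 \;\subset\; \hcB^{m,s}_0 \;\subset\; \hcB^{m,0}_0 \;=\; \cB^m_0.
\]
To transfer horizontality from the $\Psi^n$-chart to the $\Psi^m$-chart, I telescope \eqref{eq.psi conv} along the scales $m,m+1,\ldots,n-1$ (noting that the normalization $r_j\geq 3$ gives geometric decay of the $R_j$) to obtain $\|\Psi^m\circ(\Psi^n)^{-1}-\Id\|_{C^r}<\lambda^{(1-\bepsilon)R_m}$. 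A routine change-of-variables computation then shows that a $\lambda^{-\bepsilon R_m}$-horizontal graph in $\cB^n_0$ becomes a $\lambda^{-\bepsilon R_m}$-horizontal graph in $\cB^m_0$, the correction $\lambda^{(1-\bepsilon)R_m}\cdot\lambda^{-(r+1)\bepsilon R_m}$ being absorbed by the $\bepsilon$-notation. This gives (i)$_1$.

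Implication (a) is immediate: applying \lemref{flat}(iii) with $n$ replaced by $m$ to $\Gamma_{(k-1)R_m}\subset\hcB^m_0$, one gets that $\Gamma_{kR_m-1}$ is $(1+\lambda^{-\bepsilon R_m})^r\lambda^{(1-\bepsilon)R_m}$-horizontal in $\cB_{-1}$, which simplifies to $\lambda^{(1-\bepsilon)R_m}$-horizontal after absorbing the $(r+1)$-th power into $\bepsilon$. For implication (b), the curve $\Gamma_{kR_m-1}$ is $\lambda^{\bepsilon R_m}$-horizontal in $\cB_{-1}$ (since $\bepsilon<1/2$), so \lemref{quad flat} at scale $m$ will deliver (i)$_{k+1}$ provided that
\[
\Gamma_{kR_m}\cap\cV^m_{v_0}(\lambda^{\bepsilon R_m})=\varnothing.
\]

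This disjointness is the main geometric content and constitutes the principal obstacle. It is supplied by the 1D-like structure: by \propref{value struct bound}, the cylinders $\hcB^{m,s}_{jR_m}$ for $0\leq j<R_n/R_m$ are pairwise disjoint and cyclically permuted by $F^{R_m}$. Since $\Gamma_0\subset\hcB^{m,s}_0\ni v_0$, it follows that $\Gamma_{kR_m}\subset\hcB^{m,s}_{kR_m\md{R_n}}$; for $1\leq k<R_n/R_m$ this cylinder is distinct from $\hcB^{m,s}_0$. Each cylinder carries an intrinsic $\lambda^{\bepsilon R_m}$-padding in the $\Psi^m$-chart, so the $\Psi^m$-distance from $v_0$ to any disjoint cylinder is at least $2\lambda^{\bepsilon R_m}$, comfortably excluding $\cV^m_{v_0}(\lambda^{\bepsilon R_m})$. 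Running the induction up to $k=R_n/R_m$ requires (b) only at $k\leq R_n/R_m-1$, which is exactly the combinatorial regime in which the cylinder visited by $\Gamma_{kR_m}$ is distinct from the one containing $v_0$, so no case at the terminal index is missed.
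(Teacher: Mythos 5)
Your proposal is correct and follows exactly the route the paper intends: its proof is a one-line citation of \lemref{flat}~iii), \lemref{quad flat} and \propref{value struct bound}, and your interlocking induction simply spells out how these three ingredients combine, with the disjointness of the padded cylinders $\hcB^{m,s}_{kR_m}$ from $\hcB^{m,s}_0 \supset \cV^m_{v_0}(\lambda^{\bepsilon R_m})$ supplying the hypothesis of \lemref{quad flat}. The only (harmless) imprecision is the claimed distance ``at least $2\lambda^{\bepsilon R_m}$'' from $v_0$ to a disjoint cylinder — disjointness of the padded strips only gives distance $>\lambda^{\bepsilon R_m}$ to the padded cylinder, which is still exactly what is needed.
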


\begin{proof}
The result is an immediate consequence of Lemmas \ref{flat} iii) and \ref{quad flat}, and \propref{value struct bound}.
\end{proof}


\section{Critical Recurrence}\label{sec.crit rec}

Let $F$ be the infinitely regularly H\'enon-like renormalizable map with combinatorics of bounded type considered in \subsecref{subsec.combin} (with $N=\infty$). In this section, we prove the following result.

\begin{thm}\label{crit rec}
We have
$$
\cZ_0 := \bigcap_{n=1}^\infty \cB^n_{R_n} = \{v_0\}.
$$
Consequently, the orbit of $v_0$ is recurrent.
\end{thm}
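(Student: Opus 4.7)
The goal is to prove $\cZ_0 = \{v_0\}$. Once this is established, the recurrence of the orbit of $v_0$ follows from a compactness argument: for any fixed $n' \geq n_0$ and every $n \geq n'$, one has $v_{R_n} \in \cB^{n'}_{R_{n'}}$, since $R_{n'} \mid R_n$ and $\cB^{n'}_{kR_{n'}} \subset \cB^{n'}_{R_{n'}}$ for every $k \geq 1$ by iterating $F^{R_{n'}}(\cB^{n'}_0) = \cB^{n'}_{R_{n'}} \subset \cB^{n'}_0$. Thus every accumulation point of $\{v_{R_n}\}$ lies in $\bigcap_{n' \geq n_0}\cB^{n'}_{R_{n'}} = \{v_0\}$, forcing $v_{R_n} \to v_0$.

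To show $\cZ_0 \subseteq \{v_0\}$, take $p_0 \in \cZ_0$ and set $p_{-1} := F^{-1}(p_0)$, so that $p_{-1} \in \cB^n_{R_n-1}$ for every $n \geq n_0$. By the squeeze estimate \eqref{eq.first entry squeeze} applied with $X = \cB^n_0$,
$$
\Phi_{-1}(\cB^n_{R_n-1}) \subseteq h_n(I^n_0) \times [-\lambda^{(1-\bepsilon)R_n},\lambda^{(1-\bepsilon)R_n}].
$$
Letting $n \to \infty$ pins $\Phi_{-1}(p_{-1}) = (x^*, 0)$ with $x^* \in \bigcap_{n \geq n_0} h_n(I^n_0)$; in particular $p_{-1} \in \cI^h_{-1} = W^c(v_{-1})$ and $p_0 \in V := F(W^c(v_{-1}))$. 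The remaining task is to show $\bigcap_{n \geq n_0} h_n(I^n_0) = \{0\}$, which yields $x^* = 0$ and hence $p_{-1} = v_{-1}$, $p_0 = v_0$.

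For this, I plan to carry out the 1D-like reduction of Section 4. Iterating \propref{value struct bound} downward through the renormalization levels (each step of depth $s = O(1)$), the orbit of $\cB^n_0$ under $F^{R_{n_0}}$ decomposes into $R_n/R_{n_0}$ pairwise disjoint vertically proper strips inside $\cB^{n_0}_0$. Their $\cP^{n_0}_0$-projections are then pairwise disjoint intervals in $\cI^{n_0}_0$ with total length at most $|I^{n_0}_0|$. Applying \lemref{first entry hor graph}, \lemref{unproject}, and the quadratic control near $v_0$ of Lemmas \ref{est value} and \ref{root at value}, the pigeonhole on these 1D projections transfers into the shrinking of the critical-level intervals $h_n(I^n_0) \subset I^h_{-1}$ via the 1D mapping scheme on $\cI^{n_0}_0$ sketched in the introduction.

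The main obstacle is this last transfer. The valuable projections $\cP^m_0, \cP^m_{-1}$ are only $F^{R_m}$-invariant, so the 1D reduction must patch together projections at multiple scales without losing track of the 2D disjointedness, as noted in the proof sketch. Moreover, without the full \emph{a priori} bounds (established only later, using the present theorem), the propagation of the pigeonhole shrinking from the projected strips in $\cI^{n_0}_0$ to the critical-level intervals $h_n(I^n_0)$ around $0$ must rely on the quadratic geometry at $v_0$ rather than on Koebe distortion; the bounded combinatorics hypothesis ensures that only uniformly many levels intervene between consecutive critical passages, which should suffice to close the argument.
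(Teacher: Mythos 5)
Your reduction is a legitimate variant of the paper's: instead of working with $\cY_0:=\bigcap_n \cB^n_0$ and proving $\cY_0 = W^{ss}_{\loc}(v_0)$ in the critical-value chart, you invoke the squeeze estimate \eqref{eq.first entry squeeze} in the critical-point chart $\Phi_{-1}$ to constrain $\cZ_0$ to $F(W^c(v_{-1}))$ and reduce the theorem to showing $\bigcap_{n\ge n_0} h_n(I^n_0)=\{0\}$. The recurrence argument at the start is also fine. But the final paragraph, which is where essentially all the work lies, contains a genuine gap.

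Concretely: the claim that the forward iterates $F^{kR_{n_0}}(\cB^n_0)$ form \emph{vertically proper} strips in $\cB^{n_0}_0$ is false. Forward iteration by $F^{R_{n_0}}$ contracts the vertical leaves of $\cB^{n_0}_0$, so the vertical extent of $F^{kR_{n_0}}(\cB^n_0)$ strictly shrinks with $k$ and these sets are not vertically proper. Disjointness of these two-dimensional sets therefore does \emph{not} yield disjointness of their $\cP^{n_0}_0$-projections: a vertical leaf can perfectly well pass through two disjoint sets stacked vertically, so the pigeonhole on total projected length collapses. The paper's proof confronts exactly this point by inductively constructing \emph{backward} iterates $\cB^n_{-kR_M}$ (pulling back by $F^{-R_M+1}$, which expands vertically), verifying that these are vertically proper in $\cB^M_0$ with $\lambda^{(1-\bepsilon)R_M}$-vertical side boundaries, and only then concluding that the projected intervals $I^n_{kR_M}$ are disjoint. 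The argument is then closed not by a direct shrinking estimate but by a contradiction: one applies the Denjoy estimate (\thmref{denjoy}), not Koebe, to the reduced 1D composition $G^n$ of the maps $g^n_k$ to get uniformly bounded distortion, and observes this is incompatible with $G^n$ sending the shrinking interval $I^{n+1}_{-R_n}$ onto $I^{n+1}_0$ of length bounded below while also sending $I^{n+1}_0$ onto the shrinking $I^{n+1}_{R_n}$. Your sketch neither reproduces this Denjoy-and-contradiction mechanism nor proposes a substitute; the remark that ``only uniformly many levels intervene between consecutive critical passages'' does not by itself produce the shrinking of $h_n(I^n_0)$, and the vertical-properness step it implicitly relies on is the one that fails for forward iterates.
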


\begin{proof}
Let
$$
\cY_0 := \bigcap_{n=1}^\infty \cB^n_0
\comma
\cI^\infty_0 := \cI^h_0 \cap \cY_0
\matsp{and}
I^\infty_0 := \pi_h\circ \Phi_0(\cI^\infty_0).
$$
Note that every point $p_0 \in \cY_0$ is infinitely forward $(L, \epsilon, \lambda)$-regular. Moreover, arguing as in the proof of \propref{vert prop nest}, we see that $W^{ss}(p_0)$ is vertically proper in $\cB^1_0$. Let $W^{ss}_{\loc}(p_0)$ be the connected component of $W^{ss}(p_0) \cap \cB^1_0$ containing $p_0$. Then we have
$$
\cY_0 = \bigcup_{p_0 \in \cI^\infty_0} W^{ss}_{\loc}(p_0).
$$
Since $\cY_i := F^i(\cY_0) \subset \cB^n_i$ for all $n \in \bbN$ such that $0\leq i < R_n$, we see that
$$
\cY_i \cap \cY_0 = \varnothing
\matsp{for}
i \in\bbN.
$$
We claim that $\cY_0 = W^{ss}_{\loc}(v_0)$. Suppose towards a contradiction that this is not true. By \eqref{eq.henon trans} and \eqref{eq.first entry squeeze}, this means that there exists a uniform constant $b > 0$ such that $(0, b) \subset I^\infty_0$.

Recall that for $n \in \bbN$, the curve $\cI^n_{R_n}$ is vertical quadratic in $\cB^n_0$. Let $c^n_0 \in \cI^n_0$ be the unique point such that
$$
E^{gv}_{\Psi^n\left(c^n_{R_n}\right)} = D(\Psi^n\circ F^{R_n})\left(E^h_{c^n_0}\right).
$$
By \thmref{crit chart} ii), we see that $\Phi_{-1}(\cI^n_{R_n-1})$ is $\lambda^{(1-\bepsilon)R_n}$-horizontal in $\cB_{-1}$. Hence, by \thmref{crit chart} i), we have
$$
\|c^n_{R_n} -v_0\| < \lambda^{(1-\bepsilon)R_n}.
$$

Let $M \in \bbN$ be sufficiently large so that for $n \geq M$, we have
$$
 P^n_0(c^n_{R_n})<\lambda^{(1-\bepsilon)R_n}<\lambda^{\bepsilon R_M}<b/2.
$$
Note that for $0\leq k < R_n/R_M$, we have
$$
\cB^n_{kR_M} \cap \cB^n_0 = \varnothing.
$$
Thus, applying \lemref{quad flat} and proceeding by induction, we see that the curve $\cI^n_{k R_M}$ is $\lambda^{-\bepsilon R_M}$-horizontal in $\cB^M_0$, and $\cI^n_{(k+1)R_M-1}$ is $\lambda^{(1-\bepsilon)R_M}$-horizontal in $\cB_{-1}$.

Define $\cB^n_{-kR_M}$ for $0\leq k < R_n/R_M$ inductively as follows. Suppose that
\begin{itemize}
\item $F^{kR_M}(\cB^n_{kR_M}) \Subset \cB^n_0$;
\item $\Psi^M(\cB^n_{-kR_M})$ is a vertically proper quadrilateral in $B^M_0$, whose side boundaries are $\lambda^{(1-\bepsilon)R_M}$-vertical; and
\item $\cB^n_{-kR_M} \supset \cI^n_{R_n-kR_M}$.
\end{itemize}
Since $\cB^n_0$ is $R_n$-periodic (see \propref{value struct bound} iii)), property i) implies that
$$
\cB^n_{-kR_M} \cap \cB^n_{-iR_M} = \varnothing
\matsp{for}
0\leq i < k.
$$
This, together with property ii) ensure that
$$
F^{-1}(\cB^n_{-kR_M}) \cap \cB^M_{R_M-1}
$$
consists of exactly two connected components (unless $k=0$, in which case there is only one connected component). Let $\cB^n_{-kR_M-1}$ be the component containing $\cI^n_{R_n-kR_M-1}$. Define
$$
\cB^n_{-(k+1)R_M} := F^{-R_M+1}(\cB^n_{-kR_M-1}).
$$
By \lemref{quad straight}, we see that
$$
\partial \cB^n_{-kR_M-1} \setminus \partial \cB^M_{R_M-1}
$$
consists of two $\lambda^{-\bepsilon R_M}$-vertical curves $\Gamma^{n, \pm}_{-kR_M-1}$ in $\cB_{-1}$, and
$$
\Gamma^{n,\pm}_{-(k+1)R_M} := F^{-R_M+1}(\Gamma^{n, \pm}_{-kR_M-1})
$$
are $\lambda^{(1-\bepsilon)R_M}$-vertical and vertically proper in $\cB^M_0$.

Since the sets
$$
\cB^n_{-(k+1)R_M} \supset \cI^n_{R_n-(k+1)R_M}
\matsp{for}
0\leq k < R_n/R_M
$$
are disjoint, the intervals
$$
I^n_{kR_M} := P^M_0(\cI^n_{kR_M})
$$
must be disjoint in $I^M_0$.

Consider the diffeomorphism $h_M$ given in \thmref{crit chart} ii). Define
$$
g_k^n(x):= \cP^M_0 \circ F \circ (\cP^M_{-1}|_{\cI^n_{(k+1)R_M-1}})^{-1}(h_M(x),0)
\matsp{for}
x \in I^n_{kR_M}.
$$
Since $\cI^n_{(k+1)R_M-1}$ and $\cI^n_{(k+1)R_M}$ are uniformly horizontal in $\cB_{-1}$ and $\cB_0$ respectively, it follows that $\|g_k^n\|_{C^r}$ is uniformly bounded. Moreover, 
$$
\sum_{k=0}^{R_n/R_M-1} |I^n_{kR_M}| < |I^M_0|.
$$
Thus, we conclude from \thmref{denjoy} that
$$
G^n:= g_{R_n/R_M-1}^n\circ \ldots \circ g_0^n
$$
has uniformly bounded distortion.

Let
$$
I^{n+1}_{-R_n} = P^M_0(\cB^{n+1}_{-R_n}).
$$
Then $I^{n+1}_{-R_n}$ and $I^{n+1}_0$ are disjoint intervals in $I^n_0$. Moreover, $|I^{n+1}_0|$ is uniformly bounded below, while
$$
|I^{n+1}_{-R_n}|, |I^{n+1}_{R_n}| \to 0
\matsp{as}
n \to \infty.
$$
However,
$$
G^n(I^{n+1}_{-R_n}) = I^{n+1}_0
\matsp{and}
G^n(I^{n+1}_0) = I^{n+1}_{R_n}.
$$
This contradicts the fact that $G^n$ has uniformly bounded distortion. The result follows.
\end{proof}


 \section{A Priori Bounds}\label{sec.a priori}

Let $r \geq 2$ be an integer, and consider a $C^{r+4}$-H\'enon-like map $F : B \to B$. For some $N \in \bbN \cup \{\infty\}$; $L \geq 1$ and $\epsilon, \lambda \in (0,1)$, suppose that $F$ has $N$ nested $(L, \epsilon, \lambda)$-regular H\'enon-like returns given by \eqref{eq.returns} with combinatorics of $\bfb$-bounded type for some integer $\bfb \geq 3$. By only considering every other returns if necessary, we may also assume without loss of generality that $r_n := R_{n+1}/R_n \geq 3$ for ${n_0}\leq n \leq N$. Assume that $\epsilon$ is sufficiently small so that $\bfb\bepsilon < 1$. Also assume that $N$ is sufficiently large, so that for some smallest number $0 \leq n_0 \leq N$, we have \eqref{eq.proper depth 1}. Lastly, suppose that $F^{R_N}|_{\cB^N_0}$ is twice non-trivially topologically renormalizable (so that the H\'enon-like returns of $F$ have 1D-like structure of depth $2$ by \propref{value struct bound}).

\subsection{The outline of strategy}

For $n_0 \leq n \leq N$, consider the horizontal cross-section of the $n$th renormalization domain $\cB^n_0$:
$$
\cI^n_0 := (\Psi^n)^{-1}(I^n_0 \times \{0\}) =\cI^h_0 \cap \cB^n_0 \ni v_0.
$$
See \eqref{eq.Ihv}. We want to prove that $\Dis(F^{R_n}, \cI^n_0)$ is uniformly bounded.

The general strategy is to reduce the 2D dynamics of $F$ acting on $\cI^n_0$ to a 1D mapping scheme for which standard 1D arguments can be applied to control distortion. Below we give a broad description of this 1D mapping scheme using simpler notations to better convey the main ideas. In the actual proof, the 1D scheme is derived from the 2D dynamics it is modeling, which forces the notations to become more complicated.

Fix some intervals $I_0, I_{-1} \subset \bbR$. For $1 \leq n \leq N$, let $\{A^n_i\}_{i=0}^{R_n-1}$ and $\{\chA^n_i\}_{i=0}^{R_n-1}$ be collections of pairwise disjoint subintervals in $I_0$ and $I_{-1}$ respectively so that $0 \in \chA^n_{R_n-1} \subset I_{-1}$. Consider the following mapping scheme for $1 \leq i \leq R_n$:
\begin{itemize}
\item a $C^2$-diffeomorphism
$$
\phi^n_i : A^n_{i-1} \to \chA^n_{i-1} := \phi^n_i (A^n_{i-1})
$$
with uniformly bounded $C^2$-norm; and
\item a quadratic power map 
$$
g^n_i : \chA^n_{i-1} \to A^n_i := g^n_i(\chA^n_{i-1})
$$
given by
$$
g^n_i(x) = x^2 + a^n_i
\matsp{for some}
a^n_i \in \bbR.
$$
\end{itemize}
Define
$$
H_i := \phi^n_i \circ g^n_{i-1} \circ \phi^n_{i-1} \circ \ldots \circ g^{n}_1 \circ \phi^{n}_1.
$$

Suppose that the domains of $\phi^n_i$ for $1\leq i\leq R_n$ can be extended so that $g^n_i \circ H_i$ maps a strictly larger interval $\tiA^n_{0, i} \Supset A^n_0$ diffeomorphically onto an image $g^n_i \circ H_i(\tiA^n_{0, i})$ that contains the two adjacent neighbors $A^n_{\iota_-(i)}$ and $A^n_{\iota_+(i)}$ of $A^n_i$ (or at least subintervals in $A^n_{\iota_\pm(i)}$ of commensurate lengths). Then we can apply Koebe distortion theorem (see \secref{sec.koebe}) to conclude that $H_{R_n}|_{A^n_0}$ has uniformly bounded distortion. 

We now give a brief description of how the 2D dynamics of $F$ acting on the curve $\cI^n_0$ is reduced to the above 1D mapping scheme. The main idea is to weave into the dynamics of $F$ systematic applications of projections near the critical value $v_0$. This confines the orbit of $\cI^n_0$ to lie in a fixed union of curves $\{\cI^{n_0}_i\}_{i=0}^{R_{n_0}-1}$. These projections are then ``undone’’ near the critical point $v_{-1}$ to recover the original dynamics. See the definitions of the maps $H^n_i$ and $\hH_i$, as well as Lemmas \ref{H diffeo near tip} and \ref{lem unproj}. See also \figref{fig.hdiffeo}.

The pairwise disjointedness of the collection of images $\{\cJ^n_i\}_{i=0}^{R_n-1}$ of $\cI^n_0$ under $\hH_i$ relies on the 1D combinatorial structures of the renormalizations of the 2D map $F$ established in \secref{subsec.combin}. See \figref{fig.valuestruct} and \lemref{J order}.

Contributions by quadratic power maps in the composition $\hH_i$ arise in the following way. When the inverse projection is applied near the critical point $v_{-1}$, it is onto a nearly horizontal curve (approximating a subarc of a center manifold of $v_{-1}$). Under one iterate of $F$, this curve is mapped to a subarc of a vertical quadratic curve near the critical value $v_0$. Then projecting along a vertical foliation to the transverse horizontal arc $\cI^{n_0}_0 \supset \cI^n_0$ produces the effect of applying a quadratic power map. See \propref{H decomp fine}.

Lastly, the extension of $\hH_i$ to a strictly larger domain $\ticI^n_{0, i} \Supset \cI^n_0$ so that the image $\hH_i(\ticI^n_{0, i})$ covers (commensurate portions of) the adjacent neighbors $\cJ^n_{\iota_\pm(i)}$ of $\cJ^n_i$ is done in Propositions \ref{cK} and \ref{ticI}.

\subsection{The proof of the Main Theorem}

First, we need the following lemma (which requires the 3 additional degrees of smoothness assumed in this section). Recall that $P^n_0 := \pi_h \circ \Psi^n$ for $n_0 \leq n \leq N$.

\begin{lem}\label{quad mapping}
Let $\kappa_F, K_1 > 0$ be the constants given in \thmref{crit chart} and \eqref{eq.const 1} respectively. Consider a $C^{r+3}$-map $g : I \to \bbR$ on an interval $I \subset I^h_{-1}$ such that $\|g\|_{C^2} < \underline{\kappa_F}$. Denote $G(x) := (x,g(x)).$ Then there exist $a \in I^h_0$ and a $C^r$-diffeomorphism $\psi: I \to \psi(I)$ with
$$
\|\psi^{\pm 1}\|_{C^r} < K_1(1+\|g\|_{C^{r+3}})
$$
such that we have
\begin{equation}\label{eq.quad map}
Q(x) := P^n_0 \circ F \circ \Phi_{-1}^{-1}\circ G(x) = \kappa_F \cdot (\psi(x))^2+a
\end{equation}
where defined.
\end{lem}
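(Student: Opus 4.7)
The plan is to write $Q$ explicitly via the H\'enon normal form of \thmref{crit chart}, verify that $Q$ is uniformly strictly convex on $I$, and then extract $\psi$ by a Hadamard-type square-root factorization.

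First, I factor $P^n_0 = \pi_h \circ \Psi^n = \pi_h \circ (\Psi^n \circ \Phi_0^{-1}) \circ \Phi_0$ and invoke the normal form \eqref{eq.henon trans} to write
\[
Q(x) = \pi_h \circ (\Psi^n \circ \Phi_0^{-1})\bigl(f_0(x) - \lambda g(x),\, x\bigr).
\]
Because $F \in C^{r+4}$, applying \thmref{crit chart}~i) with the role of $r$ played by $r+3$ shows that $\Psi^n \circ \Phi_0^{-1}$ is $\lambda^{(1-\bepsilon)R_n}$-close to the identity in $C^{r+3}$. Writing it as $(u,v) \mapsto (u + \alpha(u,v),\, v + \beta(u,v))$ with $\alpha$ of super-exponentially small $C^{r+3}$-norm, we obtain
\[
Q(x) = f_0(x) - \lambda g(x) + \alpha\bigl(f_0(x) - \lambda g(x),\, x\bigr).
\]
Differentiating twice and using $f_0'' > \kappa_F$ from \eqref{eq.second}, the hypothesis $\|g\|_{C^2} < \underline{\kappa_F}$, and the smallness of $\alpha$, we get $Q''(x) \geq \kappa_F/2 > 0$ on $I$, so $Q \in C^{r+3}$ is strictly convex.

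Next, let $c_*$ be the unique critical point of $Q$, extending $Q$ to a slightly larger subinterval of $I^h_{-1}$ if $c_* \notin I$, and set $a := Q(c_*)$. Taylor's theorem with integral remainder yields
\[
Q(x) - a = (x - c_*)^2\, H(x), \qquad H(x) := \int_0^1 (1-u)\, Q''\bigl(c_* + u(x - c_*)\bigr)\, du,
\]
so $H \in C^{r+1}$ with $H \geq \kappa_F/4 > 0$ on $I$. Define $\psi(x) := (x - c_*)\sqrt{H(x)/\kappa_F}$. By construction $\kappa_F \psi(x)^2 + a = Q(x)$; and since $\psi'(c_*) = \sqrt{H(c_*)/\kappa_F} > 0$ while $Q$ is strictly convex, $\psi$ is strictly monotone on $I$, hence a $C^{r+1}$-diffeomorphism onto its image.

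Finally, the norm bounds come from composition estimates and tame interpolation. Standard $C^{r+3}$-composition bounds give $\|Q\|_{C^{r+3}} \leq K_1(1 + \|g\|_{C^{r+3}})$. Since $H$ is uniformly bounded below, the Fa\`a di Bruno formula for $\sqrt{H}$ produces products of derivatives of $H$ (hence of $g$); using Gagliardo-Nirenberg interpolation together with $\|g\|_{C^2} < \underline{\kappa_F}$ tames these products to linear order, yielding $\|\sqrt{H/\kappa_F}\|_{C^{r+1}} \leq K_1(1 + \|g\|_{C^{r+3}})$. Multiplying by the bounded factor $(x - c_*)$ and applying the standard $C^r$-inverse function estimate (with $|\psi'|$ bounded below by $\tfrac12 \sqrt{\kappa_F/\kappa_F}$ on $I$) gives $\|\psi^{\pm 1}\|_{C^r} \leq K_1(1 + \|g\|_{C^{r+3}})$ as required. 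The main obstacle is securing the \emph{linear} dependence on $\|g\|_{C^{r+3}}$: the nonlinear square-root naively produces polynomial bounds through Fa\`a di Bruno, and it is only the $C^2$-smallness of $g$---used via Gagliardo-Nirenberg---that reduces higher-order products of derivatives to a single linear factor in $\|g\|_{C^{r+3}}$.
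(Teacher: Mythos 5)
Your proposal is correct and follows essentially the same route as the paper: reduce via \thmref{crit chart}~i) and the normal form \eqref{eq.henon trans} to $f_0 - \lambda g$ (the straightening-chart correction being super-exponentially small in $C^{r+3}$), verify uniform convexity from $\kappa_F = \inf f_0''$ together with the $C^2$-smallness of $g$, and then factor the convex map as $\kappa_F\,\psi^2 + a$ with controlled $\|\psi^{\pm 1}\|_{C^r}$. The only real difference is presentational: where the paper invokes its factorization \lemref{factor} (proved in the appendix via the ansatz $\psi_f(x)=x\sqrt{1+g(x)}$, with a three-derivative loss), you re-derive that lemma inline using Taylor's integral remainder and justify the linear dependence on $\|g\|_{C^{r+3}}$ by Fa\`a di Bruno plus interpolation --- slightly more explicit than the paper on that last point, but the same argument in substance.
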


\begin{proof}
By \thmref{crit chart} i), it suffices to show that there exists $\tipsi_g$ with
$$
\|\tipsi^{\pm 1}\|_{C^r} < {K_1}(1+\|g\|_{C^{r+3}})
$$
such that
$$
\tiQ(x) := \pi_h \circ \Phi_0 \circ F \circ \Phi_{-1}^{-1} \circ G(x) = \kappa_F \cdot (\tipsi(x))^2 +\tia.
$$
By \eqref{eq.henon trans}, we have $\tiQ = f_0 - \lambda \cdot g.$ By the bound on $\|g\|_{C^2}$, we see that $\tiQ$ has a unique critical point, $\tiQ''(x)$ is bounded below by $c\kappa_F$ for some uniform constant $c > 0$, and
$$
\|\tiQ''\|_{C^{r+1}} < {K_1}(1+\|g\|_{C^{r+3}}).
$$
The result now follows from \lemref{factor}.
\end{proof}

For $n_0 \leq n \leq N$, define a sequence of maps $\{H^n_i\}_{i=0}^\infty$ as follows. First, let $H^{n_0}_i := F^i.$ Proceeding inductively, suppose $H^{n-1}_i$ is defined. Write $i = j+kR_n$ with $k \geq 0$ and $0\leq j < R_n$. Define
$$
H^n_i := H^{n-1}_j \circ \cP^n_0 \circ F^{kR_n},
$$
where
$$
\cP^n_0 := (\Psi^n)^{-1} \circ \Pi_h \circ \Psi^n
$$
is the $n$th projection map near the critical value $v_0$. Observe that $H^n_i$ is well-defined on $\cB^n_0$.

\begin{lem}\label{H diffeo near tip}
Let $s \in \{1, 2\}$ and $n_0 \leq n \leq N-s$. Then $H^n_i|_{\cI^{n+s}_1}$ is a diffemorphism for $0\leq i < R_{n+s}$.
\end{lem}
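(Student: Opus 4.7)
My plan is to prove the lemma by induction on $n$. The base case $n = n_0$ is immediate: by definition $H^{n_0}_i = F^i$, which is a global diffeomorphism, so its restriction to any curve, in particular $\cI^{n_0+s}_1$, is a diffeomorphism onto its image.

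For the inductive step, I write $i = j + kR_n$ with $0 \leq j < R_n$ and $0 \leq k < R_{n+s}/R_n$, so that
$$
H^n_i = H^{n-1}_j \circ \cP^n_0 \circ F^{kR_n},
$$
and I would verify that each factor restricts to a diffeomorphism on the appropriate curve. The outer iterate $F^{kR_n}$ is a global diffeomorphism, carrying $\cI^{n+s}_1$ onto $\cI^{n+s}_{kR_n+1}$. For the middle factor $\cP^n_0$, I would use the 1D-like structure of depth $s$ from Proposition~\ref{value struct bound} to see that the iterates of $\cB^{n+s}_0$ at multiples of $R_n$ cycle through pairwise disjoint components of $\hcB^n_0$, placing $\cI^{n+s}_{kR_n+1}$ in a predictable component of the dynamics relative to $\hcB^n_0$. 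Combined with the near-horizontality estimates of Lemmas~\ref{flat}(iii), \ref{quad flat}, and \ref{unproject}, this would show that $\cI^{n+s}_{kR_n+1}$ is sufficiently transverse to the vertical foliation of $\hcB^n_0$ defining $\cP^n_0$, so that $\cP^n_0$ restricts to a diffeomorphism with image a subarc of $\cI^n_0$. The inner factor $H^{n-1}_j$ is then handled by the inductive hypothesis at level $n-1$: the projected subarc matches (up to the commutation relations of Lemma~\ref{first entry hor graph}) a piece of the appropriate $\cI^{(n-1)+s'}_1$ for some $s' \in \{1,2\}$, using the containment $\cI^{n+2}_1 \subset \cI^{n+1}_1$ when $s=2$, and the index satisfies $j < R_n \leq R_{n-1+s'}$.

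The main obstacle I anticipate is the middle step, namely the injectivity of $\cP^n_0$ on the portion of $\cI^{n+s}_{kR_n+1}$ that passes close to the critical value $v_0$. Near $v_0$ the curve becomes nearly tangent to the vertical foliation defining $\cP^n_0$, so injectivity is delicate. I would leverage the quadratic tangency of $W^{ss}(v_0)$ with $W^c(v_0)$ from Proposition~\ref{crit value}(iii) and the super-exponential convergence of $\Psi^n$ to the chart $\Phi_0$ from Proposition~\ref{vert prop nest} and Theorem~\ref{crit chart}(i), to argue that on each side of the critical locus the projection restricts monotonically to a single branch of the curve. A secondary subtlety is the index-shift between the natural codomain $\cI^n_0$ of $\cP^n_0$ and the input set $\cI^{(n-1)+s'}_1$ required by the inductive hypothesis; reconciling these appears to require a careful identification through the commutation described in Lemma~\ref{first entry hor graph}, together with the fact that along a near-horizontal arc both sides of that commutation are well-defined diffeomorphisms.
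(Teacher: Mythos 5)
Your proposal follows the same route as the paper: induction on $n$, the decomposition $H^n_i = H^{n-1}_j\circ\cP^n_0\circ F^{kR_n}$, Lemma~\ref{unproject}(i) as the key input for the middle factor, and the inductive hypothesis applied to $H^{n-1}_j$. However, the ``main obstacle'' you anticipate does not actually arise in this lemma, and the machinery you would bring in to address it is not needed. For $k\geq 1$, the arc $\cI^{n+s}_{kR_n}$ lies in the window $\hcB^{n,s}_{kR_n}$ of the 1D-like structure (Proposition~\ref{value struct bound}), which is \emph{disjoint} from the window $\hcB^{n,s}_0$ containing $v_0$. In particular the arc never approaches the vertical strip through the critical value, so it never comes close to being tangent to the vertical foliation of $\Psi^n$; the transition through the near-tangency region happens on the iterates $F^{kR_n-1}$ (near $v_{-1}$) and $F^{kR_n}$ (near $v_0$), and Lemmas~\ref{flat}(iii) and \ref{quad flat} are precisely what Lemma~\ref{unproject}(i) packages to show that the \emph{result} of this transition is again a $\lambda^{-\bepsilon R_n}$-horizontal graph in the $\Psi^n$-coordinates, i.e.\ transverse to the foliation throughout. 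Being a horizontal graph is, by definition of $\cP^n_0=(\Psi^n)^{-1}\circ\Pi_h\circ\Psi^n$, exactly the statement that $\cP^n_0$ restricts to a diffeomorphism onto a subarc of $\cI^n_0$. So the quadratic tangency of $W^{ss}(v_0)$ with $W^c(v_0)$ (Proposition~\ref{crit value}(iii)), the super-exponential chart convergence, and the branch-by-branch monotonicity argument are all superfluous here; the quadratic geometry is used in other parts of the paper, e.g.\ Lemma~\ref{quad mapping} and Proposition~\ref{H decomp fine}, where arcs \emph{do} pass through $v_0$. Your ``secondary subtlety'' about index matching is a fair observation --- the paper's own proof is written with the input arc at time $0$ (it uses $\cI^{n+s}_{kR_n}$, matching the hypothesis of Lemma~\ref{unproject}(i)), while the statement is phrased at time $1$ --- but this is resolved simply by conjugating by one iterate of $F$ and then applying the level-$(n-1)$ hypothesis with $s'=1$ to the projected subarc of $\cI^n_0=\cI^{(n-1)+1}_0$; no appeal to Lemma~\ref{first entry hor graph} is required.
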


\begin{proof}
The statement is clearly true for $n = {n_0}$. Suppose the statement is true for $n -1$. If $i < R_n$, then
$$
H^n_i|_{\cI^{n+s}_1} = H^{n-1}_i|_{\cI^{n+s}_1}
$$
is a diffeomorphism. Suppose the same is true for $i < (k-1)R_n$ with $2 \leq k < R_{n+s}/R_n$. Observe that
$$
H^n_{kR_n} =\cP^n_0 \circ F^{kR_n}.
$$
By \lemref{unproject} i), the map $\cP^n_0|_{\cI^{n+s}_{kR_n}}$ is a diffeomorphism. For $i = j + kR_n$ with $j < R_n$, we have
$$
H^n_i := H^{n-1}_j \circ \cP^n_0 \circ F^{kR_n}.
$$
Since
$$
\cP^n_0(\cI^{n+s}_{kR_n}) \subset \cI^n_0,
$$
the result follows.
\end{proof}

Recall the definition of $\cP^n_{-1}$ for $n_0 \leq n \leq N$ given in \eqref{eq.-1 proj}. 

\begin{lem}\label{lem unproj}
For $s\in \{1, 2\}$ and $n_0 \leq n \leq N-s$, let $\Gamma_0$ be a $C^r$-curve which is $\lambda^{-\bepsilon R_n}$-horizontal in $\cB^{n+s}_0$. Then for $1 \leq k \leq R_{n+s}/R_n$, we have
$$
F^{kR_n-1}|_{\Gamma_0} = \left(\cP^{n_0}_{-1}|_{\Gamma_{kR_n-1}}\right)^{-1}\circ H^n_{kR_n-1}|_{\Gamma_0}.
$$
\end{lem}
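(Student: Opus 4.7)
The plan is to proceed by induction on $n \geq n_0$, with the understanding that a symbol of the form $(\cP^m_{-1}|_\Gamma)^{-1}$ denotes the \emph{vertical lift} onto $\Gamma$ along the common vertical foliation of $\Phi_{-1}$ on $\cB_{-1}$ -- this is well-defined regardless of the specific target curve $\cI^m_{R_m-1}$, because every $\cP^m_{-1}$ projects along the same foliation. The base case $n = n_0$ is then immediate: by definition $H^{n_0}_i = F^i$, so $F^{kR_{n_0}-1}(p)$ already lies on $\Gamma_{kR_{n_0}-1}$ and the lift of this point onto $\Gamma_{kR_{n_0}-1}$ is trivial.

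For the inductive step, fix $n > n_0$ and assume the statement at level $n-1$. Writing $kR_n - 1 = (k-1)R_n + (R_n - 1)$, the recursive definition of $H^n_i$ gives
\begin{equation*}
H^n_{kR_n-1} \;=\; H^{n-1}_{R_n-1} \circ \cP^n_0 \circ F^{(k-1)R_n}.
\end{equation*}
By \lemref{unproject}, the iterate $\Gamma_{(k-1)R_n} = F^{(k-1)R_n}(\Gamma_0)$ is $\lambda^{-\bepsilon R_n}$-horizontal in $\cB^n_0$, so \lemref{first entry hor graph} applied at scale $n$ yields
\begin{equation*}
F^{R_n-1}|_{\Gamma_{(k-1)R_n}} \;=\; \bigl(\cP^n_{-1}|_{\Gamma_{kR_n-1}}\bigr)^{-1} \circ F^{R_n-1} \circ \cP^n_0|_{\Gamma_{(k-1)R_n}}.
\end{equation*}
The projected curve $\cP^n_0(\Gamma_{(k-1)R_n}) \subset \cI^n_0$ sits inside the genuinely horizontal arc $\cI^h_0$, hence is trivially horizontal in $\cB^n_0 = \cB^{(n-1)+1}_0$. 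The inductive hypothesis, applied at depth $s' = 1$ with $k' = r_{n-1}$ (so that $k'R_{n-1} - 1 = R_n - 1$), therefore gives
\begin{equation*}
F^{R_n-1}|_{\cP^n_0(\Gamma_{(k-1)R_n})} \;=\; \bigl(\cP^{n_0}_{-1}|_\Upsilon\bigr)^{-1} \circ H^{n-1}_{R_n-1}|_{\cP^n_0(\Gamma_{(k-1)R_n})},
\end{equation*}
where $\Upsilon := F^{R_n-1} \circ \cP^n_0(\Gamma_{(k-1)R_n}) \subset \cI^n_{R_n-1}$.

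Concatenating these two identities produces
\begin{equation*}
F^{kR_n-1}|_{\Gamma_0} \;=\; \bigl(\cP^n_{-1}|_{\Gamma_{kR_n-1}}\bigr)^{-1} \circ \bigl(\cP^{n_0}_{-1}|_\Upsilon\bigr)^{-1} \circ H^n_{kR_n-1}|_{\Gamma_0}.
\end{equation*}
The composed vertical lift -- first onto $\Upsilon \subset \cI^n_{R_n-1}$ and then onto $\Gamma_{kR_n-1}$ -- collapses, by transitivity of lifts along the common foliation, into the single vertical lift $\bigl(\cP^{n_0}_{-1}|_{\Gamma_{kR_n-1}}\bigr)^{-1}$ that sends a point of $\cI^{n_0}_{R_{n_0}-1}$ directly to the unique point on $\Gamma_{kR_n-1}$ with the same $P_{-1}$-coordinate. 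This closes the induction.

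The main technical obstacle I anticipate is the bookkeeping of horizontality bounds ensuring that both \lemref{first entry hor graph} and the inductive hypothesis remain applicable at every scale -- specifically that $F^{(k-1)R_n}(\Gamma_0)$ stays sufficiently horizontal in $\cB^n_0$ throughout the allowed range of $k$, and that the projected curves in $\cI^n_0$ qualify as admissible inputs at the next deeper scale. This propagation is exactly what \lemref{unproject} supplies, and it rests on the 1D-like combinatorial structure of the renormalization cycles established in \propref{value struct bound}.
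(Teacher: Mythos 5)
Your argument is correct in substance and rests on exactly the same ingredients as the paper's proof: \lemref{first entry hor graph} to commute $F^{R_n-1}$ with the two projections on a horizontal curve, \lemref{unproject} to propagate horizontality to the return times, and the observation that composed inverse projections collapse into a single one because every $\cP^m_{-1}$ projects along the one vertical foliation of $\Phi_{-1}$ (this collapse is precisely the last equality in the paper's proof). The only real difference is bookkeeping: the paper runs a double induction, on $k$ within a fixed level via $H^n_{(k+1)R_n-1}=H^n_{kR_n-1}\circ F^{R_n}$ and on the level via the case $k=r_n$, whereas you induct on $n$ alone, peeling off the last block through $H^n_{kR_n-1}=H^{n-1}_{R_n-1}\circ\cP^n_0\circ F^{(k-1)R_n}$ and invoking the level-$(n-1)$ statement once, at $s'=1$, $k'=r_{n-1}$, on the projected curve $\cP^n_0(\Gamma_{(k-1)R_n})\subset\cI^n_0$. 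This is tidy, and it has the mild advantage that the curve you feed to the inductive hypothesis is $0$-horizontal in $\cB^n_0$, so its admissibility is immediate, whereas the paper's $k$-step applies the hypothesis to $\Gamma_{R_n}$, which strictly speaking no longer lies in $\cB^{n+s}_0$ as the lemma's hypothesis is phrased.

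One caveat concerns your base case. If one takes $H^{n_0}_i=F^i$ literally, the $n=n_0$ identity is indeed vacuous under your ``vertical lift'' reading of $(\cP^{n_0}_{-1}|_\Gamma)^{-1}$, but that is not the statement the lemma is used for later: by \lemref{H decomp} and \eqref{eq.proj to original}, the operative meaning of $H^{n_0}_i$ for $i\geq R_{n_0}$ is $F^j\circ\cP^{n_0}_0\circ F^{kR_{n_0}}$, so the genuine base case is one application of \lemref{first entry hor graph} at level $n_0$ to $\Gamma_{(k-1)R_{n_0}}$ (horizontal there by \lemref{unproject}), composed on the right with $F^{(k-1)R_{n_0}}$ --- exactly the move you already make at every higher level, with the inner $H^{n-1}_{R_n-1}$ replaced by plain $F^{R_{n_0}-1}$. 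So nothing essential is missing from your induction, but the base case should be phrased that way rather than dismissed as trivial, since otherwise the $H^n$ you end up proving the identity for differs from the one used in the rest of the paper by the innermost $\cP^{n_0}_0$ factor.
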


\begin{proof}
If $k=1$, then the result follows immediately from \lemref{first entry hor graph}. Suppose the result is true for some $n_0 \leq n < N-s$ and $1 \leq k < R_{n+s}/R_n$. By definition, we have
$$
H^n_{(k+1)R_n-1} = H^n_{kR_n-1} \circ F^{R_n}.
$$
If $\Gamma_0$ is a $C^r$-curve which is $\lambda^{-\bepsilon R_n}$-horizontal in $\cB^{n+s}_0$, then by \lemref{unproject} i), we see that
$
\Gamma_{R_n} := F^{R_n}(\Gamma_0)
$
is a $C^r$-curve which is $\lambda^{-\bepsilon R_n}$-horizontal in $\cB^n_0$. Thus, by induction, we have
$$
F^{kR_n-1}|_{\Gamma_{R_n}} = \left(\cP^{n_0}_{-1}|_{\Gamma_{(k+1)R_n-1}}\right)^{-1}\circ H^n_{kR_n-1}|_{\Gamma_{R_n}}.
$$
Composing on the right by $F^{R_n}|_{\Gamma_0}$, the result is true in this case.

Finally, suppose that the result is true for some $n_0 \leq n < N-s$ and $k = R_{n+1}/R_n$. Let $\gamma_0 := \cP^{n+1}_0(\Gamma_0)$. By the induction hypothesis, we have:
$$
F^{R_{n+1}-1}|_{\gamma_0} = \left(\cP^{n_0}_{-1}|_{\gamma_{R_{n+1}-1}} \right)^{-1} \circ H^n_{R_{n+1}-1}|_{\gamma_0}.
$$
Applying \lemref{first entry hor graph}:
\begin{align*}
F^{R_{n+1}-1}|_{\Gamma_0} &= \left(\cP^{n+1}_{-1}|_{\Gamma_{R_{n+1}-1}} \right)^{-1} \circ \left(\cP^{n_0}_{-1}|_{\gamma_{R_{n+1}-1}} \right)^{-1} \circ H^n_{R_{n+1}-1} \circ \cP^{n+1}_0|_{\Gamma_0}\\
&= \left(\cP^{n_0}_{-1}|_{\Gamma_{R_{n+1}-1}} \right)^{-1} \circ H^{n+1}_{R_{n+1}-1}|_{\Gamma_0}.
\end{align*}
\end{proof}

\begin{figure}[h]
\centering
\includegraphics[scale=0.2]{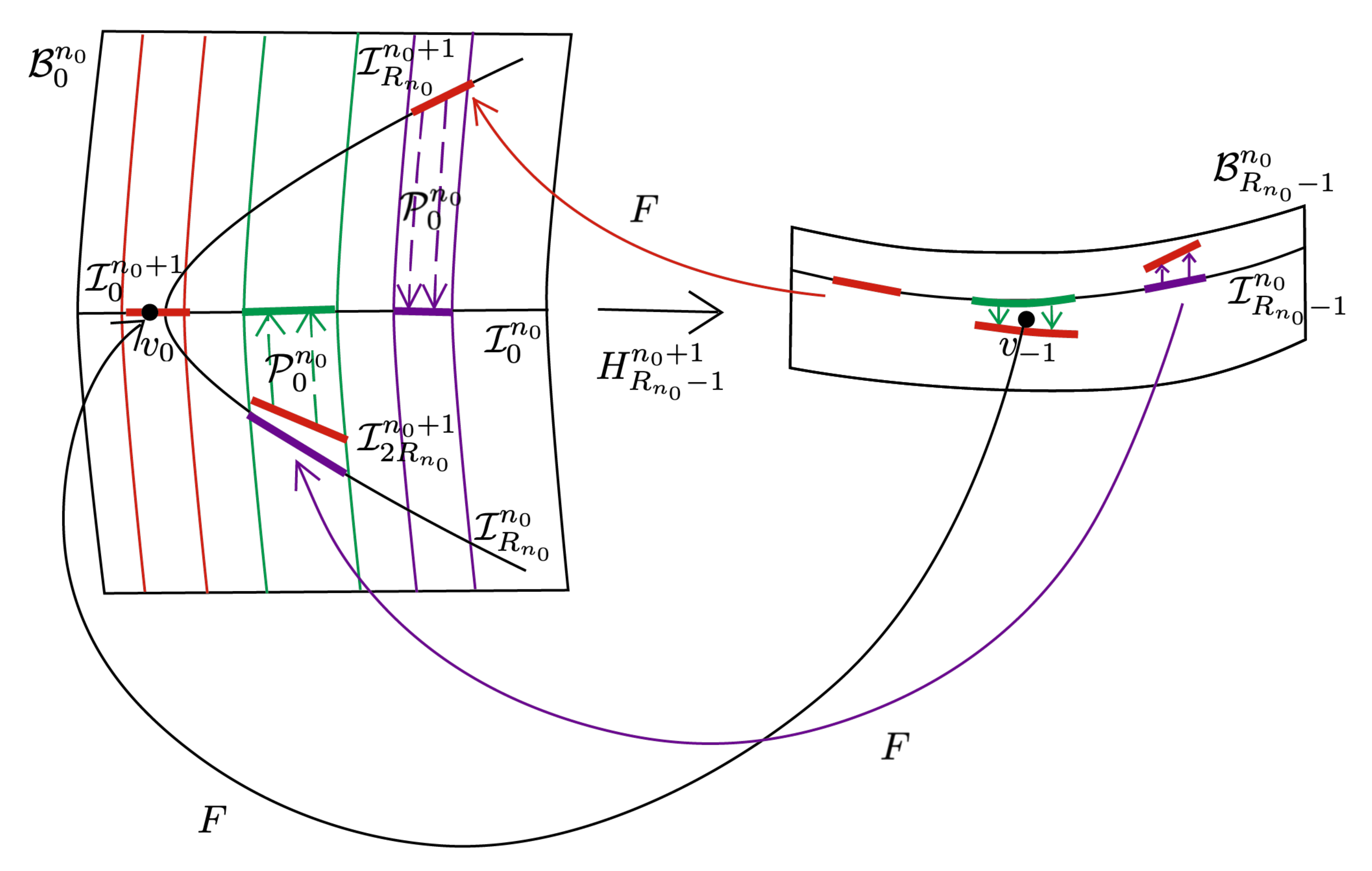}
\caption{Visualization of the map $H^{n_0}_i$ for $0 \leq i < R_{n_0+1}$ acting on the horizontal curve $\cI^{n_0+1}_0 \subset \cI^{n_0}_0$ (for $r_{n_0} := R_{n_0+1}/R_{n_0} = 3$). The orbit of $\cI^{n_0+1}_0$ makes returns to $\cB^{n_0}_0 \ni v_0$ under $F^{kR_n}$ for $0 \leq k < r_{n_0}$. At these moments, the projection map $\cP^{n_0}_0$ is applied to $\cI^{n_0+1}_{kR_{n_0}}$ to bring it down to $\cI^{n_0}_0$. These projections are then ``undone'' in $\cB^{n_0}_{R_{n_0}-1} \ni v_{-1}$ to return to $\cI^{n_0+1}_{(k+1)R_{n_0-1}}$. For $n > n_0$, the multiple projections (at various depths) can be applied to the orbit of $\cI^n_0$ near $v_0$ before they are undone near $v_{-1}$.}
\label{fig.hdiffeo}
\end{figure}

We also define another sequence of maps $\{\hH_i\}_{i=0}^{R_N-1}$ as follows (if $N=\infty$, then $R_N = \infty$). If $i < 2R_{n_0}$, let
$
\hH_i := F^i.
$
Otherwise, let $n_0 \leq n < N$ be the largest number such that $i \geq 2R_n$, and define
$
\hH_i := H^n_i.
$
Observe that by \lemref{lem unproj}, we have
\begin{equation}\label{eq.proj to original}
\hH_{R_n-1}|_{\cI^n_0} = H^{n-1}_{R_n-1}|_{\cI^n_0} = \cP^{n_0}_{-1}|_{\cI^n_{R_n-1}}\circ F^{R_n-1}|_{\cI^n_0}.
\end{equation}

\begin{rem}
In the definition of $\hH_i := H^n_i$, we set $n$ to be the largest number such that $i \geq 2 R_n$ rather than $i \geq R_n$ for the following technical reason. Observe that
$$
H^n_{R_n}(\cI^{n+1}_0) = \cP^n_0(\cI^{n+1}_{R_n}).
$$
The domain of $\cP^n_0$ is equal to $\cB^n_0$, whose right vertical boundary is $\lambda^{\bepsilon R_n}$ distance away from $\cI^{n+1}_{R_n}$. Hence, $H^n_{R_n}|_{\cI^{n+1}_0}$ does not extend to a horizontal curve $\ticI^{n+1}_{0, i}$ substantially larger than $\cI^{n+1}_0$ (so that its image would cover the adjacent neighbors of $H^n_{R_n}(\cI^{n+1}_0)$), since if it did, then $F^{R_n}(\ticI^{n+1}_{0, i})$ would lie outside of the domain $\cB^n_0$ of $\cP^n_0$.
\end{rem}

The remainder of the section is devoted to the proof of the following theorem, whose corollary immediately implies the Main Theorem.

\begin{thm}\label{a priori}
There exists a uniform constant
$$
\bfK = \bfK(L, \lambda, \epsilon, \lambda^{1-\epsilon}\|DF^{-1}\|, \|DF\|_{C^5}, \|F^{R_{n_0}}|_{\cB^{n_0}}\|_{C^6}, \kappa_F)\geq 1
$$
such that for all $n_0 \leq n \leq N$, we have
$$
\Dis(\hH_i, \cI^n_0) < \bfK
\matsp{for}
0\leq i < R_n.
$$
\end{thm}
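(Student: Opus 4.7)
The plan is to exhibit $\hH_i|_{\cI^n_0}$, read in the chart $P^{n_0}_0$, as a 1D composition of uniformly $C^r$-bounded diffeomorphisms interleaved with normalized quadratic power maps, and then invoke the Koebe Distortion Principle on a disjoint cycle of image intervals. The subsection outline already advertises this strategy; here is how I would turn it into a proof.

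\emph{Step 1 (Decomposition).} First I would track the orbit of $\cI^n_0$ under $\hH_i$ scale by scale. Between two consecutive moments when the orbit visits the projection domain of $\cP^m_0$ near $v_0$ for some intermediate depth $m$, the corresponding piece of $H^m_j$ is an honest $C^r$-diffeomorphism whose $C^r$-norm is bounded uniformly in terms of $\|F^{R_{n_0}}|_{\cB^{n_0}}\|_{C^6}$ and $K_0$, using \lemref{flat}, \lemref{unproject}, and the bounds on $\Psi^n, \Phi_0, \Phi_{-1}$ from \thmref{crit chart}. Each return near $v_{-1}$ followed by one iterate of $F$ and the projection $\cP^{n_0}_0$ contributes, by \lemref{quad mapping}, a block of the form $x \mapsto \kappa_F(\psi(x))^2 + a$ with $\psi$ a uniformly $C^r$-bounded diffeomorphism. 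Thus, conjugated by $P^{n_0}_0$, the map $\hH_i|_{\cI^n_0}$ takes an alternating form $\phi_k \circ Q_k \circ \cdots \circ Q_1 \circ \phi_0$ with each $\phi_j$ uniformly $C^r$-bounded and each $Q_j$ a normalized quadratic map.

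\emph{Step 2 (Disjoint cycle).} Set $\cJ^n_i := \hH_i(\cI^n_0)$. I would show that the projected intervals $P^{n_0}_0(\cJ^n_i) \subset I^{n_0}_0$, $0 \le i < R_n$, are pairwise disjoint. This is the 1D analogue of the classical Denjoy disjointness and follows by a telescoping argument from \propref{value struct bound}: at each intermediate scale $m \in [n_0, n]$ the renormalization cycle $\{\hcB^{m,1}_{kR_m}\}$ is disjoint, and the projections $\cP^m_0$ built into $\hH_i$ collapse this cycle faithfully onto $\cI^m_0 \subset \cI^{n_0}_0$. In particular, the total length $\sum_i |P^{n_0}_0(\cJ^n_i)|$ is bounded by $|I^{n_0}_0|$, uniformly in $n$.

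\emph{Step 3 (Enlargement for Koebe space).} I would next construct an enlargement $\ticI^n_{0,i} \Supset \cI^n_0$ on which $\hH_i$ remains a $C^r$-diffeomorphism and such that $P^{n_0}_0 \circ \hH_i(\ticI^n_{0,i})$ covers intervals of commensurate size on both sides of $P^{n_0}_0(\cJ^n_i)$ inside the cycle of Step 2. The construction is inductive on renormalization depth: at depth $m$, the 1D-like structure of depth $2$ (\propref{value struct bound}) identifies the two immediate neighbors of the current image inside $I^{n_0}_0$, and the quadratic tangency at $v_0$ (Lemmas \ref{est value}, \ref{root at value}, \ref{quad flat}, \ref{quad straight}) together with \lemref{unproject} let me pull the covering relation back through one return. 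The key invariant to maintain is that the enlarged curves remain $\lambda^{-\bepsilon R_m}$-horizontal, so that Lemmas \ref{flat}, \ref{unproject}, and \ref{quad mapping} continue to apply one scale deeper. This is the main obstacle, and it is exactly what Propositions \ref{cK} and \ref{ticI} advertised in the outline are designed to handle.

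\emph{Step 4 (Koebe).} With Steps 1--3 in place, the bound on $\Dis(\hH_i, \cI^n_0)$ is a 1D matter: each $\phi_j$ has bounded distortion by a Denjoy-type estimate (\thmref{denjoy}, with the length budget controlled by Step 2), each $Q_j$ has definite Koebe space on both sides by Step 3, and the Koebe Distortion Principle applied through the telescoping composition yields a distortion bound depending only on the data listed in the statement. The number of scales $n - n_0$ traversed enters only through the summable length estimate of Step 2 and is therefore absorbed into the uniform constant $\bfK$.
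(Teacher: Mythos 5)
Your overall strategy (decompose $\hH_i$ into $C^r$-bounded diffeomorphisms interleaved with quadratic power blocks, control total length of the image intervals, enlarge domains to create Koebe space, then apply Koebe) is the same one the paper follows, and your references to Lemma~\ref{quad mapping}, Propositions~\ref{cK}, \ref{ticI}, \ref{value struct bound}, and Proposition~\ref{length bound} are the right ones. But there is a genuine gap in how you arrive at the Koebe space, and the final distortion argument in Step~4 is not set up correctly.

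In Step~3 you assert that the enlarged image ``covers intervals of commensurate size on both sides of $P^{n_0}_0(\cJ^n_i)$.'' What Propositions~\ref{cK} and \ref{ticI} actually give is that $\hH_i(\ticI^n_{0,i})$ covers the neighbors $\cJ^n_{\iota^n_\pm(i)}$; they say nothing about whether $|\cJ^n_{\iota^n_\pm(i)}|$ is comparable to $|\cJ^n_i|$. That comparability is exactly the missing Koebe space, and it does not follow from the enlargement alone. The paper obtains it by a separate pigeonhole/propagation argument at the end: among $\{\cJ^n_i\}$ there must exist \emph{some} moment $i_1$ at which the neighbors have commensurate length (this is the pigeonhole step); then, because $\hH_{i_1}$ and $\hH_{R_n-1}\circ\hH_{i_1}^{-1}$ both have uniformly bounded cross-ratio distortion (Proposition~\ref{bound crd}), Koebe lets one pull that definite space back to $\cI^n_0$ and push it forward to $\cJ^n_{R_n-1}$, using that $\cJ^n_{\chi_{n-1}R_{n-1}}$ is the unique neighbor of $\cI^n_0$ inside $\cI^{n_0}_0$ and Lemma~\ref{close pieces near crit} to transfer space across the final return. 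Without this step your Step~3 does not deliver the hypothesis Koebe needs.

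Step~4 is also framed in a way that does not compose cleanly: applying Denjoy on each $\phi_j$ and Koebe on each $Q_j$ separately would give a bound that degrades with the number of factors. The correct bookkeeping, as in Proposition~\ref{bound crd}, is to bound the \emph{cross-ratio distortion} of the entire composition $\hH_i$ (using $\CrD$-multiplicativity from \eqref{eq.crd product}, Lemma~\ref{power crd} for the quadratic factors, Lemma~\ref{d crd} for the diffeomorphism factors, and the summable length bound of Proposition~\ref{length bound}), and then apply Koebe (Theorem~\ref{koebe}) \emph{once} at the end to the whole map using the Koebe space from the pigeonhole/propagation step. A minor further inaccuracy: in Step~2 the arcs $\cJ^n_i$ live in $\cI^{n_0}_{i \bmod R_{n_0}}$, not all in $\cI^{n_0}_0$, and by Lemma~\ref{J order} they may overlap up to three times; the bounded-total-length conclusion still holds but not because they are pairwise disjoint.
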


\begin{cor}\label{a priori cor}
For $n_0 \leq n \leq N$, let $h_n : I^n_0 \to h_n(I^n_0)$ be the diffeomorphism given in \thmref{crit chart} ii). Then $\Dis(h_n, I^n_0) < \bfK$, where $\bfK > 1$ is the uniform constant given in \thmref{a priori}.
\end{cor}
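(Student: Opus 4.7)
The plan is to extract the one-dimensional distortion bound on $h_n$ directly from the two-dimensional bound $\Dis(\hH_{R_n-1}, \cI^n_0) < \bfK$ supplied by \thmref{a priori}, via a short arc-length reparameterization argument.

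First, I would combine \eqref{eq.proj to original}, the formula $H_n(x,0) = (h_n(x), e_n(x,0))$ from \thmref{crit chart}(ii), and the definition \eqref{eq.-1 proj} of $\cP^{n_0}_{-1}$ to unwind the restriction explicitly. Since $\Psi^n$ is centered at $v_0$, the parameterization $\gamma_n(x) := (\Psi^n)^{-1}(x,0)$ of $\cI^n_0$, for $x \in I^n_0$, has unit speed, i.e.\ it \emph{is} the arc-length parameterization. A direct computation then yields
$$
\hH_{R_n-1}(\gamma_n(x)) \;=\; \Phi_{-1}^{-1}\bigl(h_n(x),\, g_{n_0}(h_n(x))\bigr),
$$
so that the image arc, which lies inside $\cI^{n_0}_{R_{n_0}-1}$, is parameterized in the variable $x$ by the composition $\beta \circ h_n$, where $\beta(y) := \Phi_{-1}^{-1}(y, g_{n_0}(y))$.

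The key step is then to pass from this parameterization of the image to its arc-length parameterization. The speed of $\beta \circ h_n$ at $x$ equals $|h_n'(x)| \cdot \rho(h_n(x))$, where
$$
\rho(y) := \bigl\|D\Phi_{-1}^{-1}\bigl(y, g_{n_0}(y)\bigr)\cdot\bigl(1,\, g_{n_0}'(y)\bigr)\bigr\|.
$$
The uniform bound $\|\Phi_{-1}^{\pm 1}\|_{C^1} < K_0$ from \thmref{crit chart} and the smallness of $\|g_{n_0}\|_{C^1}$ from \lemref{hor curvs conv} together trap $\rho$ between two uniform positive constants, say $\rho \in [C^{-1}, C]$. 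Reparameterizing the image by arc-length therefore multiplies $|h_n'|$ by a factor in $[C^{-1}, C]$, so the ratios defining $\Dis(h_n, I^n_0)$ and $\Dis(\hH_{R_n-1}, \cI^n_0)$ differ at worst by a multiplicative factor of $C^2$. Invoking \thmref{a priori} at $i = R_n-1$ and absorbing $C^2$ into $\bfK$ (whose dependencies already include the data controlling $K_0$ and $g_{n_0}$) yields the stated bound.

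I do not expect a genuine obstacle here: once the explicit formula for $\hH_{R_n-1}|_{\cI^n_0}$ is written out, the corollary reduces to a bookkeeping conversion between the intrinsic arc-length distortion on the 2D curves and the 1D distortion of the graphing function $h_n$. The two points requiring verification, namely that $\gamma_n$ has unit speed (from the centered property of $\Psi^n$) and that $\cI^{n_0}_{R_{n_0}-1}$ has uniform $C^1$ geometry in $\Phi_{-1}$-coordinates (giving $\rho \asymp 1$), are already built into the preceding sections.
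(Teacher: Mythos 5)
Your argument is correct and is exactly the bookkeeping the paper has in mind; the corollary is stated without proof precisely because it follows immediately from \thmref{a priori} with $i = R_n-1$ by the reparameterization you carry out. Two tiny remarks: the smallness $\|g_{n_0}\|_{C^r}<\lambda_{n_0}$ is recorded in the text right after \lemref{flat} (via \lemref{flat}~iii)) rather than in \lemref{hor curvs conv}, and the resulting bound is literally $C^2\bfK$ rather than $\bfK$, but that is just the paper's usual absorption of uniform multiplicative constants. Both of the ``points requiring verification'' you flag — unit speed of $x\mapsto(\Psi^n)^{-1}(x,0)$ from the centering normalization, and the uniform comparability $\rho\asymp 1$ from $\|\Phi_{-1}^{\pm1}\|_{C^r}<K_0$ together with $\|g_{n_0}'\|\ll1$ — are indeed already in place.
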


Observe that any number $2R_{n_0} \leq i <R_N$ can be uniquely expressed as
$$
i = j + a_{n_0}R_{n_0} + a_{n_0+1} R_{n_0+1} + \ldots + a_nR_n
$$
for some $n_0 \leq n < N$, where
\begin{enumerate}[i)]
\item $0 \leq j < R_{n_0}$;
\item $0 \leq a_m < r_m$ for ${n_0} \leq m < n$; and
\item $2 \leq a_n < 2r_n$.
\end{enumerate}
In this case, we denote
$$
i := j + [a_{n_0}, a_{n_0+1}, \ldots, a_n].
$$
We extend this notation to $i < 2R_{n_0}$ by writing
$$
i = j + [a_{n_0}]
\matsp{for some}
a_{n_0} \in \{0, 1\}
$$

We record the following easy observation.

\begin{lem}\label{H decomp}
Let $2R_{n_0} \leq i <R_N$ be given by
$$
i = j + [a_{n_0}, \ldots, a_n].
$$
Then we have
$$
\hH_i = H^n_i = F^j\circ \left(\cP^{n_0}_0 \circ F^{a_{n_0}R_{n_0}}\right) \circ \ldots \circ \left(\cP^n_0 \circ F^{a_nR_n}\right).
$$
\end{lem}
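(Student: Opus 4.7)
My plan is to prove the lemma by a direct induction on $n - n_0$, combined with a verification that the $n$ determined by the mixed-radix expansion agrees with the $n$ appearing in the definition of $\hH_i$.

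I would first verify the equality $\hH_i = H^n_i$ by confirming that $n$ is the largest integer with $i \geq 2R_n$. The lower bound $i \geq a_n R_n \geq 2 R_n$ is immediate from $a_n \geq 2$. For the upper bound $i < 2R_{n+1}$, I would telescope using $r_m R_m = R_{m+1}$:
$$
\sum_{m=n_0}^{n-1}(r_m - 1)R_m \;=\; \sum_{m=n_0}^{n-1}(R_{m+1} - R_m) \;=\; R_n - R_{n_0},
$$
so $j + \sum_{m < n} a_m R_m \leq (R_{n_0} - 1) + (R_n - R_{n_0}) = R_n - 1$. Combining with $a_n < 2r_n$ gives $i \leq (R_n - 1) + (2r_n - 1)R_n = 2R_{n+1} - 1$. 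Thus $2R_n \leq i < 2R_{n+1}$, and the first equality follows directly from the definition of $\hH_i$.

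The second equality is obtained by induction on $n - n_0 \geq 0$ by unwinding the recursive definition of $H^n_i$. In the base case $n = n_0$, the expansion reads $i = j + a_{n_0}R_{n_0}$ and $H^{n_0}_i := F^i = F^j \circ F^{a_{n_0} R_{n_0}}$, which matches the displayed formula on the natural domain (the projection $\cP^{n_0}_0$ is understood as acting tautologically at this outermost level). For the inductive step, I set $j^{(n-1)} := j + \sum_{m=n_0}^{n-1} a_m R_m$, which lies in $[0, R_n)$ by the telescoping bound above, and I write $i = j^{(n-1)} + a_n R_n$. The recursive definition gives
$$
H^n_i \;=\; H^{n-1}_{j^{(n-1)}} \circ \cP^n_0 \circ F^{a_n R_n}.
$$
Applying the inductive hypothesis to $H^{n-1}_{j^{(n-1)}}$ produces the product formula through level $n-1$, and composition on the right with $\cP^n_0 \circ F^{a_n R_n}$ appends the level-$n$ factor, yielding the claim.

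The only bookkeeping obstacle is that, at level $n-1$, the top-order digit of $j^{(n-1)}$ is $a_{n-1} \in [0, r_{n-1})$ rather than the constraint $a_{n-1} \in [2, 2r_{n-1})$ imposed in the lemma. I would resolve this by proving a mildly stronger inductive statement that drops the lower bound on the top-order digit (allowing it to be any element of $[0, 2r_n)$), since the formula is self-consistent when the top digit is $0$ or $1$: the factor $\cP^n_0 \circ F^{a_n R_n}$ with $a_n = 0$ reduces to $\cP^n_0$, matching the recursion $H^n_i = H^{n-1}_i \circ \cP^n_0$ in that case. With this strengthened statement the induction closes cleanly, and specialising to $a_n \geq 2$ recovers the lemma.
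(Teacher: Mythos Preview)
Your inductive approach is correct and is precisely the intended argument; the paper records this lemma as an ``easy observation'' with no further proof, and what you wrote is the natural unwinding of the recursion $H^n_i = H^{n-1}_{j}\circ\cP^n_0\circ F^{kR_n}$ together with the verification that the expansion's top index $n$ matches the $n$ in the definition of $\hH_i$. Your telescoping bound and your strengthening of the inductive hypothesis to allow $a_{n-1}\in[0,r_{n-1})$ at intermediate levels are both right.

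There is one genuine slip. Your assertion that $\cP^{n_0}_0$ ``acts tautologically at this outermost level'' is not correct: after the innermost projection $\cP^{n_0+1}_0$ the orbit lies in $\cI^{n_0+1}_0\subset\cI^{n_0}_0$, but once you apply $F^{a_{n_0}R_{n_0}}$ with $a_{n_0}\geq 1$ you leave $\cI^{n_0}_0$, so $\cP^{n_0}_0$ is a nontrivial projection there. The real point is that the recursion bottoms out at $H^{n_0}_i=F^i$ with \emph{no} projection, so the unwinding actually yields
\[
H^n_i = F^j\circ F^{a_{n_0}R_{n_0}}\circ\bigl(\cP^{n_0+1}_0\circ F^{a_{n_0+1}R_{n_0+1}}\bigr)\circ\cdots\circ\bigl(\cP^n_0\circ F^{a_nR_n}\bigr),
\]
i.e.\ the displayed formula with the outermost factor $\cP^{n_0}_0$ omitted. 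This is a minor imprecision in the paper's statement rather than an error in your method; but you should state the formula you have actually proved rather than paper over the discrepancy with an incorrect justification. (When $a_{n_0}=0$ the two formulas do agree, since then $\cP^{n_0}_0$ is applied directly to the image of $\cP^{n_0+1}_0$ in $\cI^{n_0+1}_0\subset\cI^{n_0}_0$, where it is indeed the identity; this is the situation in the applications such as \propref{H decomp fine}.)
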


For $n_0 \leq n \leq N$, we define a collection of arcs $\{\cJ^n_i\}_{i=0}^{R_n-1}$ by
\begin{equation}\label{eq.Js}
\cJ^n_i := \hH_i(\cI^n_0)
\matsp{for}
0 \leq i < R_n.
\end{equation}
See \figref{fig.Js}.

\begin{lem}\label{J spread}
Let $n_0 \leq n \leq N$ and $0\leq i < R_n$. If 
$$
i = [0, \ldots, 0, a_m, a_{m+1}, \ldots, a_k]
$$
for some ${n_0} \leq m \leq k < n$, then we have
$
\cJ^n_i \subset \cI^m_0.
$
Moreover, we have
$$
 \cJ^n_{i + l} = H^{m-1}_l(\cJ^n_{i})
 \matsp{for}
 0 \leq l < R_m.
 $$
\end{lem}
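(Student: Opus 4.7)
The plan is to invoke Lemma \ref{H decomp} to expand both $\hH_i$ and $\hH_{i+l}$ as explicit alternating compositions of projections $\cP^j_0$ and iterates $F^{a R_j}$, and then read off both claims by tracking which factors act nontrivially on the image. Since $i = [0, \ldots, 0, a_m, \ldots, a_k]$ has $j = 0$ and $a_{n_0} = \cdots = a_{m-1} = 0$, Lemma \ref{H decomp} (applied with top index $k$) yields
\[
\hH_i \;=\; \cP^{n_0}_0 \circ \cdots \circ \cP^{m-1}_0 \circ \bigl(\cP^m_0 \circ F^{a_m R_m}\bigr) \circ \cdots \circ \bigl(\cP^k_0 \circ F^{a_k R_k}\bigr).
\]

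For the inclusion $\cJ^n_i \subset \cI^m_0$, I would process this product from the innermost (rightmost) factor outward. After the innermost block $\bigl(\cP^m_0 \circ F^{a_m R_m}\bigr) \circ \cdots \circ \bigl(\cP^k_0 \circ F^{a_k R_k}\bigr)$ is applied to $\cI^n_0$, the final operation performed is $\cP^m_0$, so the image lies in $\cI^m_0$. Each remaining prepended factor $\cP^{m-1}_0, \ldots, \cP^{n_0}_0$ then acts as the identity on that image: indeed, $\cI^m_0 = \cI^h_0 \cap \cB^m_0 \subset \cI^h_0 \cap \cB^j_0 = \cI^j_0$ for every $j \leq m$, and $\cP^j_0|_{\cI^j_0} = \mathrm{Id}$ directly from the definition $\cP^j_0 = (\Psi^j)^{-1} \circ \Pi_h \circ \Psi^j$ and the fact that $\Psi^j$ is centered at $v_0$. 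This proves Claim 1.

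For Claim 2, write $l = j_l + [b_{n_0}, \ldots, b_{m-1}]$. Because $l < R_m$ while the nonzero digits of $i$ sit at positions $\geq m$, no carry arises and $i + l = j_l + [b_{n_0}, \ldots, b_{m-1}, a_m, \ldots, a_k]$, still with top index $k$. Expanding $\hH_{i+l}$ via Lemma \ref{H decomp} produces a chain whose right portion (from $(\cP^m_0 \circ F^{a_m R_m})$ inward) is literally identical to the right portion of $\hH_i$, while the left portion
\[
F^{j_l} \circ \bigl(\cP^{n_0}_0 \circ F^{b_{n_0}R_{n_0}}\bigr) \circ \cdots \circ \bigl(\cP^{m-1}_0 \circ F^{b_{m-1}R_{m-1}}\bigr)
\]
is precisely $H^{m-1}_l$ (a second application of Lemma \ref{H decomp}, this time at depth $m-1$). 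Using Claim 1 to discard the $\cP^j_0$ with $j < m$ prepended to the right portion of $\hH_i$ — which act as identity on $\cJ^n_i \subset \cI^m_0$ — gives $\hH_{i+l}(\cI^n_0) = H^{m-1}_l(\cJ^n_i)$, as required.

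The main bookkeeping point is the no-carry property when assembling $i + l$: it relies crucially on the constraint $l < R_m$ together with the fact that the decomposition digits of $i$ begin at position $m$, and without either of these the factorization $\hH_{i+l} = H^{m-1}_l \circ (\text{right portion of } \hH_i)$ breaks down. Once this combinatorial alignment is in place, the rest is a routine identification of factors — no analytic estimates and no use of the 1D-like structure beyond what is already encoded in Lemma \ref{H decomp} is required.
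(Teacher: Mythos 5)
Your proof is correct and follows essentially the same route as the paper's terse one-line argument: expand $\hH_i$ and $\hH_{i+l}$ via Lemma~\ref{H decomp}, use that $\cP^j_0|_{\cI^j_0}=\mathrm{Id}$ together with $\cI^m_0 \subset \cI^j_0$ to discard the leading projections, and rely on the no-carry property of the mixed-radix expansion to split the chain for $\hH_{i+l}$ into the $H^{m-1}_l$ block followed by the chain for $\hH_i$. The one small imprecision is calling the identification of the left block with $H^{m-1}_l$ a ``second application of Lemma~\ref{H decomp}'' (which, as stated, produces $\hH_l$ rather than $H^{m-1}_l$); it in fact follows directly by unwinding the defining recursion $H^n_i = H^{n-1}_j \circ \cP^n_0 \circ F^{kR_n}$, so this is cosmetic.
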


\begin{proof}
Observe that
$$
\cP^k_1 \circ F^{a_kR_k}(\cI^{k+1}_1) \subset \cI^k_1.
$$
By \lemref{H decomp}, the result follows from induction.
\end{proof}

\begin{figure}[h]
\centering
\includegraphics[scale=0.2]{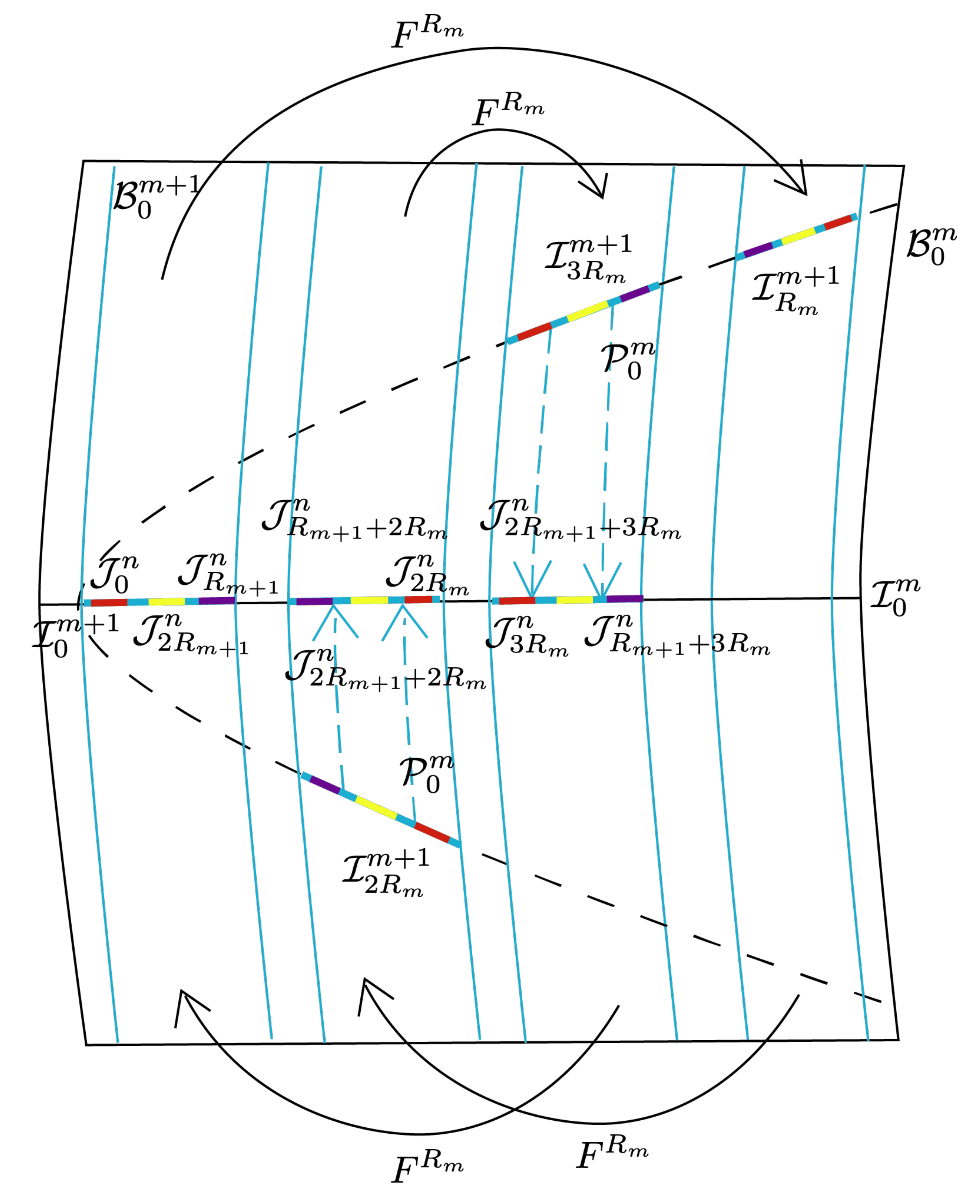}
\caption{Arcs $\cJ^n_i := \hH_i(\cI^n_0)$ with $0 \leq i <R_n$ that are contained in $\mathcal{I}^m_0$ for some $m < n$. For $0 \leq k < r_{m+1}$, we have $\cJ^n_{kR_{m+1}} \subset \cI^{m+1}_0$. For $2 \leq l < r_m$, we have $\cJ^n_{kR_{m+1}+lR_m}= \cP^m_0\circ F^{R_m}(\cJ^n_{kR_{m+1}})$.}
\label{fig.Js}
\end{figure}

\begin{lem}\label{on center track}
For $n_0 \leq n \leq N$ and $0 \leq i < R_n$, we have
$
\cJ^n_{i} \subset \cI^{n_0}_{i \md{R_{n_0}}}.
$
\end{lem}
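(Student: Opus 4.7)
The plan is to prove the containment by induction on the combinatorial depth of $i$ in the representation $i = j + [a_{n_0},\ldots,a_n]$, reducing at each level via the decomposition of $\hH_i$ given by \lemref{H decomp}. The base case $0 \leq i < R_{n_0}$ is immediate: here $\hH_i = F^i$, and since $\Psi^n$ agrees with $\Psi^{n_0}$ on $\cI^n_0$, one has $\cI^n_0 \subset \cI^{n_0}_0$, whence
\[
\cJ^n_i \;=\; F^i(\cI^n_0) \;\subset\; F^i(\cI^{n_0}_0) \;=\; \cI^{n_0}_i \;=\; \cI^{n_0}_{i \md R_{n_0}}.
\]

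For $i \geq 2R_{n_0}$, I would write $i = j + [a_{n_0},\ldots,a_n]$ with $j = i \md R_{n_0}$ and invoke \lemref{H decomp} to express
\[
\hH_i \;=\; F^j \circ \bigl(\cP^{n_0}_0 \circ F^{a_{n_0}R_{n_0}}\bigr) \circ \cdots \circ \bigl(\cP^n_0 \circ F^{a_n R_n}\bigr).
\]
Then, reading the composition from right to left, I would prove by downward induction on the level index $m$ that after applying all factors of index $\geq m$ to $\cI^n_0$, the resulting image is a subset of $\cI^m_0$. The inductive step uses exactly two ingredients at each level $n_0 \leq m \leq n$: first, the $R_m$-periodicity of $\cB^m_0$, which guarantees that $F^{a_mR_m}$ sends $\cB^m_0$ into itself; second, the definition $\cP^m_0 = (\Psi^m)^{-1}\circ\Pi_h\circ\Psi^m$, which restricts to a retraction of $\hcB^m_0$ onto $\cI^m_0$. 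Combined with the nesting $\cI^{m+1}_0 \subset \cB^{m+1}_0 \subset \cB^m_0 \subset \hcB^m_0$ (inherited from the renormalization hierarchy and \eqref{eq.min domain}), each composition is well-defined and lands on $\cI^m_0$. After the outermost projection $\cP^{n_0}_0$, the image lies in $\cI^{n_0}_0$, and a final application of $F^j$ places it in $\cI^{n_0}_j = \cI^{n_0}_{i \md R_{n_0}}$.

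The principal obstacle is verifying domain compatibility between successive factors in the composition, i.e.\ that the output of one stage is contained in the domain of the next projection; this is exactly what the combination of $R_m$-periodicity and the nesting $\cB^{m+1}_0 \subset \cB^m_0 \subset \hcB^m_0$ is designed to provide, so the induction closes cleanly. The remaining range $R_{n_0} \leq i < 2R_{n_0}$ (where $\hH_i = F^i$ but $i \md R_{n_0}$ has already wrapped) can be absorbed by using \lemref{J spread} to write $\cJ^n_i$ as the image under a partial $H^{n_0}_{\cdot}$-composition of a previously-projected arc, reducing it to the inductive step above.
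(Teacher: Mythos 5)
Your argument for $i \geq 2R_{n_0}$ is correct, and it is in fact the same route the paper takes: the paper's one-line proof just cites \lemref{J spread}, whose own proof is exactly the right-to-left induction through the \lemref{H decomp} factorization, using the observation $\cP^m_0 \circ F^{a_m R_m}(\cI^{m+1}_0) \subset \cI^m_0$ coming from $R_m$-periodicity of $\cB^m_0$, the nesting $\cI^{m+1}_0 \subset \cB^{m+1}_0 \subset \cB^m_0 \subset \hcB^m_0$, and the retraction property of $\cP^m_0$. So you are inlining the proof of \lemref{J spread} rather than citing it, which is fine. One notational slip: you write $i = j + [a_{n_0},\ldots,a_n]$ using the same $n$ as in $\cJ^n_i$, but since $i < R_n$ forces $a_n = 0$, the top bracket index is some $k < n$; reuse of the symbol obscures this.

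The genuine gap is the range $R_{n_0} \leq i < 2R_{n_0}$ (nonempty whenever $n > n_0$), which you try to dismiss in one sentence. For such $i$ the definition gives $\hH_i = F^i$, with no projection, so $\cJ^n_i = F^i(\cI^n_0)$. In particular $\cJ^n_{R_{n_0}} = \cI^n_{R_{n_0}}$ is a subarc of the vertical quadratic curve $\cI^{n_0}_{R_{n_0}}$ through $v_{R_{n_0}}$, and that is not a subarc of the horizontal leaf $\cI^{n_0}_0$ (the two curves meet essentially transversally near $v_{R_{n_0}}$). Your fallback — invoke \lemref{J spread} and speak of a ``previously-projected arc'' — does not close this: \lemref{J spread}'s proof runs through \lemref{H decomp}, which only factors $\hH_i$ for $i \geq 2R_{n_0}$, so it provides no independent handle on $R_{n_0} \leq i < 2R_{n_0}$, and in this range nothing has in fact been projected, so there is no ``previously-projected arc'' to transport. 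Note that the paper only ever invokes \lemref{on center track} for $2R_{n_0} \leq i < R_n$ (in the construction of $\iota^n_\pm$ and $\hcJ^n_i$), so the safe fix is to restrict the induction, and hence the statement you prove, to $i \geq 2R_{n_0}$ together with the trivial range $i < R_{n_0}$, rather than to assert the intermediate range by a vague reduction.
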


\begin{proof}
The result follows immediately from \lemref{J spread}.
\end{proof}

Let $\Gamma : [0, 1] \to \bbR^2$ be a parameterized Jordan arc. For
$$
0 \leq a < b < c < d \leq 1,
$$
consider the subarcs $\Gamma_1 := \Gamma(a,b)$ and $\Gamma_2 := \Gamma(c,d)$ of $\Gamma$. We denote $\Gamma_1 <_\Gamma \Gamma_2.$ Let $\Gamma_3$ be another subarc of $\Gamma$. We denote
$
\Gamma_1 \leq_\Gamma \Gamma_3
$
if either
$
\Gamma_1 <_\Gamma \Gamma_3
$ or $
\Gamma_1 = \Gamma_3.$

Henceforth, we consider $\cI^{n_0}_0$ with parameterization given by
$$
\cI^{n_0}_0(t) := (\Psi^{n_0})^{-1}(t,0)
\matsp{for}
t \in I^{n_0}_0.
$$
Note that
$\cI^{n_0}_0 \circ P^{n_0}_0 = \cP^{n_0}_0.$
Moreover,
$$
P^{n_0}_0(v_0) = 0 < P^{n_0}_0(v_{R_{n_0}}).
$$

\begin{lem}\label{J order}
For $s \in \{1, 2\}$; $n_0 \leq n \leq N-s$ and $1 < k < R_{n+s}/R_n$, we have
$$
\cJ^{n+s}_0  <_{\cI^{n_0}_0} \cJ^{n+s}_{kR_n} <_{\cI^{n_0}_0}\cJ^{n+s}_{R_n}.
$$
\end{lem}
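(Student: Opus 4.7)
The plan is to read off the ordering on $\cI^{n_0}_0$ directly from the 1D-like combinatorial data supplied by Proposition~\ref{value struct bound}. Under the running assumption that $F^{R_N}|_{\cB^N_0}$ is twice non-trivially topologically renormalizable, $(F^{R_n}, \Psi^n)$ carries a 1D-like structure of depth $s$, so the horizontal coordinates $a^n_j := P^n_0(v_{jR_n})$ obey
\[
0 = a^n_0 < b^{n,s}_0 < a^n_k \, , \, b^{n,s}_k < b^{n,s}_1 < a^n_1
\quad\text{for}\quad 2 \leq k < R_{n+s}/R_n,
\]
and the cycle domains $\{\hcB^{n,s}_{jR_n}\}$ are pairwise disjoint. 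This is the only combinatorial input; the rest of the argument consists of transporting these facts to the reference arc $\cI^{n_0}_0$.

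\textbf{Step 1 (locating base points).} For each $k \in [0, R_{n+s}/R_n)$, I would track the ``base point'' $\hH_{kR_n}(v_0) \in \cJ^{n+s}_{kR_n}$ through the composition in Lemma~\ref{H decomp}. When $k \in [2, 2r_n)$, which covers every $k$ when $s=1$ and the small-$k$ range when $s=2$, Lemma~\ref{H decomp} gives $\hH_{kR_n} = \cP^n_0 \circ F^{kR_n}$, so $\hH_{kR_n}(v_0) = \cP^n_0(v_{kR_n})$ sits on $\cI^n_0$ at $P^n_0$-coordinate exactly $a^n_k$. When $s=2$ and $k \geq 2r_n$, writing $k = q r_n + l$ with $q \geq 2$, the composition passes through $\cP^{n+1}_0$ as well; here the chart convergence \eqref{eq.psi conv} together with Lemma~\ref{flat} shows that the $\Psi^{n+1}$- and $\Psi^n$-vertical leaves through $v_{kR_n}$ are $\lambda^{(1-\bepsilon)R_{n+1}}$-close, so the base point still has $P^n_0$-coordinate $a^n_k$ modulo an error of size $\lambda^{(1-\bepsilon)R_n}$. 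The boundary arcs are treated analogously: $\cJ^{n+s}_0 = \cI^{n+s}_0$ has base point $v_0$ with coordinate $0$, and $\cJ^{n+s}_{R_n}$, using the decomposition $R_n = [0,\dots,0,r_{n-1}]$ with top index $n-1$ for $n > n_0$, has base point $\cP^{n-1}_0(v_{R_n})$ whose $P^{n-1}_0$-coordinate $a^{n-1}_{r_{n-1}}$ agrees with $a^n_1$ up to the same super-exponential error, again via \eqref{eq.psi conv}.

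\textbf{Step 2 (ordering and disjointness).} All three arcs lie in $\cI^{n_0}_0$ by Lemma~\ref{on center track}. The gaps $b^{n,s}_0 - 0$, $a^n_k - b^{n,s}_0$, $b^{n,s}_1 - a^n_k$ and $a^n_1 - b^{n,s}_1$ produced in Step~1 above are of macroscopic size (comparable to $|I^n_0|$), and therefore dominate the super-exponential chart-change errors. Hence the three base points are strictly ordered on $\cI^{n_0}_0$ as claimed. Disjointness of the arcs themselves follows because each $\cJ^{n+s}_{jR_n}$ projects into the pairwise disjoint vertical strip $\Psi^n(\hcB^{n,s}_{jR_n})$, either directly via $\cP^n_0$ or, in the $s=2$/large-$k$ case, up to a super-exponentially small vertical perturbation controlled as in Step~1. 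Combining strict ordering of base points with pairwise disjointness of the arcs yields $\cJ^{n+s}_0 <_{\cI^{n_0}_0} \cJ^{n+s}_{kR_n} <_{\cI^{n_0}_0} \cJ^{n+s}_{R_n}$.

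\textbf{Main obstacle.} The principal technicality is the case $s = 2$ with $k \geq 2r_n$, in which $\hH_{kR_n}$ interlaces projections and iterates across three consecutive renormalization scales $n-1, n, n+1$. The entire chain of coordinate transitions must be controlled simultaneously by \eqref{eq.psi conv}, but the cumulative errors remain super-exponential in $R_n$ and are absorbed by the macroscopic gaps coming from the 1D-like ordering.
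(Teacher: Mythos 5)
Your overall strategy — locate each arc $\cJ^{n+s}_{kR_n}$ inside the (slightly enlarged) vertical strip $\hcB^{n,s}_{kR_n}$ and read off the ordering from properties i) and ii) of the 1D-like structure — is viable, and for the ``direct'' cases ($s=1$, and $s=2$ with $1<k<2r_n$, where $\hH_{kR_n}=\cP^n_0\circ F^{kR_n}$) it is essentially what the paper means by ``follows immediately from \propref{value struct bound}.'' Where you genuinely diverge from the paper is the hard case $s=2$, $k\geq 2r_n$: the paper avoids all quantitative error estimates there by writing $\cJ^{n+2}_{lR_{n+1}+kR_n}=H^n_{kR_n}|_{\cI^{n+1}_0}(\cJ^{n+2}_{lR_{n+1}})$, invoking injectivity of $H^n_{kR_n}$ on $\cI^{n+1}_0$ (\lemref{H diffeo near tip}) together with the already-established $s=1$ orderings at levels $n$ and $n+1$ (so that the image curve $\cJ^{n+1}_{kR_n}\supset\cJ^{n+2}_{lR_{n+1}+kR_n}$ is trapped strictly between $\cJ^{n+1}_0\supset\cJ^{n+2}_0$ and $\cJ^{n+1}_{R_n}\supset\cJ^{n+2}_{R_n}$). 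Your replacement by an error-propagation argument can be made to work, but as written it has two concrete soft spots.

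First, the claim that the gaps between the relevant coordinate intervals are ``macroscopic (comparable to $|I^n_0|$)'' is not justified by anything available at this stage: \propref{value struct bound} only guarantees that the $\lambda^{\bepsilon R_n}$-padded strips $\hcB^{n,s}_{jR_n}$ are pairwise disjoint, so the guaranteed separation is of order $\lambda^{\bepsilon R_n}$, not of order $|I^n_0|$. (Fortunately $\lambda^{(1-\bepsilon)R_n}\ll\lambda^{\bepsilon R_n}$ when $\bfb\bepsilon<1$, so the weaker, correct bound still dominates your errors — but the statement as made is an unproven assertion and should be replaced by the padding bound.) Second, and more seriously, the error mechanism you give at the crux is not the right one. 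Writing $kR_n=qR_{n+1}+lR_n$ with $q\geq2$, the intermediate projection $\cP^{n+1}_0$ displaces points by a distance that is \emph{not} a priori small (it can be comparable to the size of $\cB^{n+1}_0$), so it is not ``a super-exponentially small perturbation,'' and closeness of the $\Psi^{n+1}$- and $\Psi^n$-vertical leaves \emph{through $v_{kR_n}$} neither is the relevant statement nor implies your error bound. What actually saves the argument is that the displacement occurs along a $\Psi^{n+1}$-vertical leaf: if $l\geq1$, the $R_{n+1}$-times forward regularity along $E^{v,n+1}$ contracts this leaf under the remaining iterates $F^{lR_n}$ by a factor $\lesssim\lambda^{(1-\bepsilon)R_n}$ before $\cP^n_0$ is applied; if $l=0$, one instead uses \eqref{eq.psi conv} to bound the variation of the $P^n_0$-coordinate along a $\Psi^{n+1}$-leaf by $\lambda^{(1-\bepsilon)R_n}$ times the (order-one) displacement. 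You cite \eqref{eq.psi conv} and \lemref{flat}, which do contain these ingredients, but the argument connecting them to the claimed $\lambda^{(1-\bepsilon)R_n}$ coordinate error is missing and the stated justification would not survive scrutiny; this is exactly the point where the paper's structural route via \lemref{H diffeo near tip} is cleaner and needs no estimates at all.
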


\begin{proof}
Observe that
\begin{itemize}
\item For $s\in\{1, 2\}$:
$$
\cJ^{n+s}_{R_n} = H^{n-1}_{R_n}(\cI^{n+s}_0) = \cP^{n-1}_0\circ F^{R_n}(\cI^{n+s}_0).
$$
\item For $1 < k < sr_n$:
$$
\cJ^{n+s}_{kR_n} = H^n_{kR_n}(\cI^{n+s}_0) = \cP^n_0\circ F^{kR_n}(\cI^{n+s}_0).
$$
\end{itemize}
In the case $s=1$, and the case $s=2$ and $1 < k < 2r_n$ follow immediately from \propref{value struct bound}.

Replacing $n$ by $n+1$ and applying the above conclusion, we see that for $1 < l < r_{n+1}$:
$$
\cJ^{n+2}_0 <_{\cI^{n_0}_0} \cJ^{n+2}_{lR_{n+1}} <_{\cI^{n_0}_0} \cJ^{n+2}_{R_{n+1}}.
$$
Note that for $2 < k < r_n$:
$$
\cJ^{n+2}_{lR_{n+1} + kR_n} = H^n_{kR_n}|_{\cI^{n+1}_0}(\cJ^{n+2}_{lR_{n+1}}).
$$
The result now follows from \lemref{H diffeo near tip}.
\end{proof}

Let $\Gamma_0 : [0, |\Gamma_0|] \to \bbR^2$ be a $C^1$-curve parameterized by its arclength. Let $\Gamma_1 := \Gamma_0(l, |\Gamma_0|-l)$ for some $0 < l < |\Gamma_0|/2$ be a subarc of $\Gamma_0$. We denote $\Gamma_1 = \Gamma_0[-l]$ and $\Gamma_0 = \Gamma_1[+l]$. Let $\Gamma_2$ be a $C^1$-curve such that $\Gamma_1 \subset \Gamma_2 \subset \Gamma_0$. We denote
$$
\Gamma_0\{-l\}= \Gamma_2 = \Gamma_1\{+l\}.
$$
Lastly, if $\Gamma_3$ and $\Gamma_4$ are $C^1$-curves in $\bbR^2$ and we have
$
\Gamma_3[-l] \subset \Gamma_4 \subset \Gamma_3[+l],
$
then we denote
$
\Gamma_4 = \Gamma_3\{\sim l\}.$ See \figref{fig.int}. These notations can be extended to intervals in $\bbR$ in the obvious way.

\begin{figure}[h]
\centering
\includegraphics[scale=0.25]{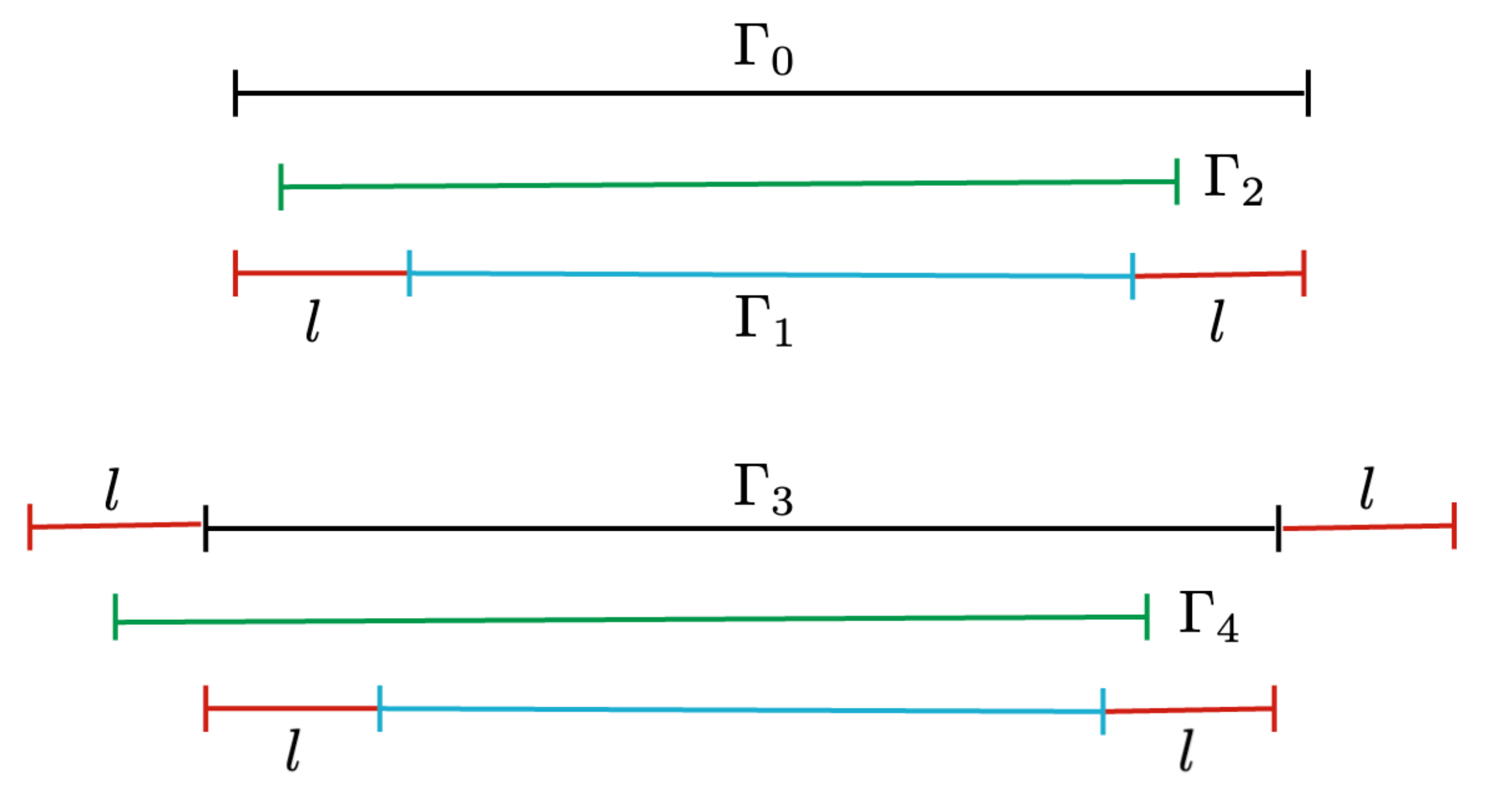}
\caption{Illustration of the relations between $\Gamma_0 = \Gamma_1[+l]$, $\Gamma_1 = \Gamma_0[-l]$, $\Gamma_0\{-l\} = \Gamma_2 = \Gamma_1\{+l\}$ (above); and $\Gamma_3$ and $\Gamma_4 = \Gamma_3\{\sim l\}$ (below).}
\label{fig.int}
\end{figure}

Let ${n_0} < n \leq N$, and consider the collection of arcs $\{\cJ^n_i\}_{i=0}^{R_n-1}$. By \lemref{on center track} and \lemref{J order}, for $2R_{n_0} \leq i < R_n$, there exist unique numbers $0 \leq \iota^n_-(i), \iota^n_+(i) < R_n$ such that
$$
\iota^n_\pm(i) = i \md{R_{n_0}},
$$
and the arcs $\cJ^n_{\iota^n_-(i)}$ and $\cJ^n_{\iota^n_+(i)}$ are the two nearest neighbors of $\cJ^n_i$ (one on each side) in $\cI^{n_0}_{i \md{R_{n_0}}}$. Define $\hcJ^n_i$ as the convex hull of $\cJ^n_{\iota^n_-(i)} \cup\cJ^n_i \cup \cJ^n_{\iota^n_+(i)}$ in $\cI^{n_0}_{i \md{R_{n_0}}}$.

We also define a subarc $\ticJ^n_i$ of $\cI^{n_0}_{i \md{R_{n_0}}}$ containing $\cJ^n_i$ as follows. Write
$$
i = j + [a_{n_0}, a_{n_0+1}, \ldots, a_m]
$$
for some ${n_0}\leq m < n$. If $m < n-1$, define
$$
\ticJ^n_i := \hcJ^n_i[+\lambda^{\bepsilon R_m}].
$$
Otherwise, define
$$
\ticJ^n_i := \hcJ^n_i[-\lambda^{\bepsilon R_{n-1}}].
$$

\begin{prop}\label{length bound}
There exists a uniform constant $K>0$ such that for $n_0 \leq n \leq N$, we have
$$
\sum_{i=2R_{n_0}}^{R_n-1} |\ticJ^n_i| < K.
$$
\end{prop}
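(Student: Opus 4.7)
The plan is to exploit the 1D-like combinatorial structure (\propref{value struct bound}) to organize the arcs $\cJ^n_i$ sharing the same residue modulo $R_{n_0}$ along the fixed finite family of horizontal curves $\{\cI^{n_0}_j\}_{j=0}^{R_{n_0}-1}$, then to bound the sum by a covering-multiplicity estimate and absorb the small extensions in the definition of $\ticJ^n_i$ via a rapidly convergent tail series.

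My first task is to establish pairwise disjointness: for any distinct $i_1,i_2\in[0,R_n)$ with $i_1\equiv i_2 \md{R_{n_0}}$, the arcs $\cJ^n_{i_1}$ and $\cJ^n_{i_2}$ are disjoint subarcs of $\cI^{n_0}_{i_1\md{R_{n_0}}}$ (recall $\cJ^n_i\subset \cI^{n_0}_{i \md{R_{n_0}}}$ by \lemref{on center track}). I would argue by induction on the highest level $p$ where the base-$R$ expansions $i_l = j + [a^l_{n_0},\ldots,a^l_{m_{i_l}}]$ of \lemref{H decomp} disagree: at that level, \propref{value struct bound} puts $\cJ^n_{i_1}$ and $\cJ^n_{i_2}$ into distinct cells of the 1D-like cycle $\{\hcB^{p,s}_{kR_p}\}$, and the ordering provided by \lemref{J order} (applied with $s\in\{1,2\}$) propagates through the downward projections $\cP^m_0$ to produce a strict ordering $<_{\cI^{n_0}_j}$ along the ambient curve, which by the convention attached to $<_\Gamma$ is a disjointness statement.

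Next, fixing $j$, I would enumerate $\{\cJ^n_i : i\in[0,R_n),\ i\equiv j \md{R_{n_0}}\}$ in position order along $\cI^{n_0}_j$ as $\cK_1<_{\cI^{n_0}_j}\cdots<_{\cI^{n_0}_j}\cK_q$ and observe that, by definition, if $\cJ^n_i=\cK_l$ then $\hcJ^n_i$ is the union of $\cK_{l-1},\cK_l,\cK_{l+1}$ together with the two gaps between them. Any point of $\cI^{n_0}_j$ thus lies in at most three of these convex hulls, so
\begin{equation*}
\sum_{\substack{2R_{n_0}\leq i<R_n\\ i\equiv j \md{R_{n_0}}}}|\hcJ^n_i| \leq 3|\cI^{n_0}_j|.
\end{equation*}
Summing over the $R_{n_0}$ residues and using $|\cI^{n_0}_j|\leq \|DF\|^{R_{n_0}}|\cI^{n_0}_0|$ yields a uniform bound on $\sum_{i=2R_{n_0}}^{R_n-1}|\hcJ^n_i|$ depending only on the data in \eqref{eq.const 1} (note that $R_{n_0}$ is itself determined by \eqref{eq.proper depth 1}).

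Finally, writing $i = j + [a_{n_0},\ldots,a_m]$, the definition of $\ticJ^n_i$ gives $|\ticJ^n_i|-|\hcJ^n_i|\leq 2\lambda^{\bepsilon R_m}$ (the case $m=n-1$ contributes nonpositively). Since the number of indices with leading level $m$ is at most $R_{n_0}\cdot(R_m/R_{n_0})\cdot(2r_m-2)\leq 2\bfb R_m$, and $r_k\geq 3$ forces $R_m\geq 3^{m-n_0}R_{n_0}$, the total correction
\begin{equation*}
\sum_{i=2R_{n_0}}^{R_n-1}\bigl(|\ticJ^n_i|-|\hcJ^n_i|\bigr)_+ \leq 4\bfb\sum_{m\geq n_0} R_m\,\lambda^{\bepsilon R_m}
\end{equation*}
is a convergent uniform constant, which combined with the previous bound completes the proof. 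The main obstacle is the disjointness claim of the first step: while \propref{value struct bound} and \lemref{J order} deliver disjointness within a single renormalization scale, assembling these into a simultaneous pairwise disjointness of $\{\cJ^n_i\}_{i\equiv j}$ across arbitrarily many depths of renormalization requires careful bookkeeping of the compositions of projections defining $\hH_i$ at different levels.
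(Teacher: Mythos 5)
Your argument is essentially the paper's proof: the paper writes $\sum |\ticJ^n_i| < \sum |\hcJ^n_i| + \sum_{m} 2R_{m+1}\lambda^{\bepsilon R_m}$, bounds the first sum by noting (via \lemref{J order}) that the arcs $\{\hcJ^n_i\}$ have overlap multiplicity at most three, and observes the second sum converges uniformly, which is exactly your covering-multiplicity estimate and tail series. The disjointness/ordering concern you flag at the end is also implicit in the paper's appeal to \lemref{J order}, so your proposal is both correct and at the same level of rigor.
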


\begin{proof}
Observe that
$$
\sum_{i=2R_{n_0}}^{R_n-1} |\ticJ^n_i| < \sum_{i=2R_{n_0}}^{R_n-1} |\hcJ^n_i| + \sum_{m={n_0}}^{n-1} 2R_{m+1}\lambda^{\bepsilon R_m}.
$$
By \lemref{J order}, the maximum number of overlaps among arcs in $\{\hcJ^n_i\}_{2R_{n_0}}^{R_n-1}$ is three. Hence, the above sum has a uniform upper bound.
\end{proof}

\begin{lem}\label{H length change}
For $n_0 \leq n \leq N$, let $\Gamma_0 \subset \cI^n_0$ be an arc. Then we have
$$
{K_0}^{-1}\lambda^{\bepsilon i} < \frac{\left|H^n_i(\Gamma_0)\right|}{\left|\Gamma_0\right|} < {K_0}\lambda^{-\bepsilon i}
\matsp{for}
0 \leq i < R_n,
$$
where $K_0 \geq 1$ is the uniform constant given in \eqref{eq.const 0}.
\end{lem}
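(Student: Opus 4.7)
The plan is to decompose $H^n_i$ via Lemma~\ref{H decomp} into a product of ``modules'' and bound each factor's contribution to the length. We write
$$H^n_i \;=\; F^j \,\circ\, \bigl(\cP^{n_0}_0 \circ F^{a_{n_0}R_{n_0}}\bigr) \,\circ\, \cdots \,\circ\, \bigl(\cP^n_0 \circ F^{a_n R_n}\bigr),$$
and treat the residual $F^j$ with $j < R_{n_0}$ as a constant factor in $[\|DF^{-1}\|^{-R_{n_0}},\,\|DF\|^{R_{n_0}}]$, absorbed into $K_0$ via its dependence on $\|DF\|_{C^r}$ and $\lambda^{1-\epsilon}\|DF^{-1}\|$.

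For each module $\cP^m_0 \circ F^{a_m R_m}$: in the chart $\Psi^m$, the return map $\tilde F^m := \Psi^m \circ F^{R_m} \circ (\Psi^m)^{-1}$ has the H\'enon-like form
$$\tilde F^m(x,y) = \bigl(\tilde f^m(x,y),\,x\bigr), \qquad D\tilde F^m = \begin{pmatrix}\tilde f^m_x & \tilde f^m_y \\ 1 & 0\end{pmatrix}.$$
Theorem~\ref{crit chart} together with the super-exponential convergence $\Psi^m \to \Phi_0$ from Proposition~\ref{vert prop nest}(ii) supplies a uniform bound $|\tilde f^m_x| \leq \bar K_1$ depending only on $\kappa_F$ and the chart norms, while the $s=1$ case of forward $(L,\epsilon,\lambda)$-regularity yields $|\tilde f^m_y| = |\Jac \tilde F^m| \leq \bar L\,\lambda^{(1-\bepsilon)R_m}$. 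Lemma~\ref{unproject}(i) guarantees that the intermediate iterates of $\Gamma_0 \subset \cI^n_0 \subset \cB^n_0$ remain $\lambda^{-\bepsilon R_m}$-horizontal in $\cB^m_0$ at each of the $a_m \leq 2r_m = O(\bfb)$ return times, so applying $D\tilde F^m$ to a tangent of this form gives a per-iterate length-change factor lying in $[\lambda^{\bepsilon R_m}/\bar K_2,\,\bar K_1]$. The projection $\cP^m_0 = (\Psi^m)^{-1}\circ \Pi_h\circ\Psi^m$ contributes an upper factor of $K_0^2$ and, since $\Pi_h$ can shrink the length of a $\lambda^{-\bepsilon R_m}$-horizontal curve by at most $\lambda^{\bepsilon R_m}$, a lower factor of $\lambda^{\bepsilon R_m}/K_0^2$.

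Accumulating over all $n - n_0 + 1$ modules, the upper product is at most $\bar K^{n-n_0+1}$, which is polynomial in $R_n$ since $r_m \geq 3$ forces $n - n_0 \leq \log_3(R_n/R_{n_0})$; this is absorbed into $K_0\,\lambda^{-\bepsilon i}$. For the lower bound, the geometric summation $\sum_{m=n_0}^{n} R_m \leq \tfrac{3}{2}R_n$ (again using $r_m \geq 3$) gives total shrinkage at least $\lambda^{C\bepsilon R_n}$ for some uniform constant $C$, and the $\bepsilon$-notation's freedom to absorb bounded coefficients yields the required $K_0^{-1}\,\lambda^{\bepsilon i}$. The main obstacle will be establishing the uniform bound $|\tilde f^m_x| \leq \bar K_1$ at every renormalization level $m$ without appealing to renormalization convergence; this relies on Theorem~\ref{crit chart} providing the fixed local H\'enon factorization $\Phi_0 \circ F \circ \Phi_{-1}^{-1}(x,y) = (f_0(x) - \lambda y, x)$ near $v_0$, Proposition~\ref{vert prop nest}(ii)'s super-exponential approximation $\Psi^m \to \Phi_0$, and the quantitative estimates from the forward $(L,\epsilon,\lambda)$-regularity assumed on the renormalization sequence.
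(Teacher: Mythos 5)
Your overall architecture --- decomposing $H^n_i$ via \lemref{H decomp} into blocks $\cP^m_0\circ F^{a_m R_m}$ plus a residual $F^j$, estimating each block and each projection, and accumulating --- is the same as the paper's. The genuine gap is your per-block upper estimate. You claim a uniform bound $|\tilde f^m_x|\le \bar K_1$ for the straightened return map $\Psi^m\circ F^{R_m}\circ(\Psi^m)^{-1}$ at every level $m$, citing \thmref{crit chart} and \propref{vert prop nest}. But writing this return map as $(\Psi^m\circ\Phi_0^{-1})\circ(\Phi_0\circ F\circ\Phi_{-1}^{-1})\circ H_m$ with $H_m=(h_m,e_m)$ as in \thmref{crit chart} ii), its horizontal derivative is essentially $f_0'(h_m(x))\,h_m'(x)-\lambda\,\partial_x e_m$, and the only control on $h_m'$ available at this stage is $\lambda^{\bepsilon R_m}<|h_m'|<\lambda^{-\bepsilon R_m}$; nothing you cite bounds $(f_0\circ h_m)'$ uniformly. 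In fact, uniform boundedness (or bounded distortion) of the straightened return maps at all depths is essentially what the a priori bounds are proving (cf.\ \thmref{a priori} and \corref{a priori cor}, which yield $\Dis(h_m,I^m_0)<\bfK$), so invoking it inside this lemma is circular. A secondary point: in the chart $\Psi^m$ the second coordinate of the return map is close to $h_m(x)$, not $x$, so the clean H\'enon-like matrix form you compute with is not literally available either.

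The lemma does not need any uniform bound, and this is where the paper's route differs: since every $p_0\in\cI^n_0$ is $R_n$-times forward regular along $E^v_{p_0}$, \thmref{reg chart}, \corref{ver hor cons} and \thmref{crit chart} give $K_0^{-1}\lambda^{\bepsilon l}<\|DF^l|_{E_{p_0}}\|<K_0\lambda^{-\bepsilon l}$ along the tangent direction of $\Gamma_0$, applied to each whole block $F^{a_mR_m}$ (and to $F^j$); the projections contribute factors in $[K_0^{-1}\lambda^{\bepsilon R_m},K_0]$ exactly as you argue, using \propref{value struct bound} and \lemref{unproject}. With these weaker but available block bounds the exponents sum to a bounded multiple of $\bepsilon i$ and the boundedly many constants are absorbed into the bar notation, which is all the statement asks. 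One further bookkeeping slip in your lower bound: you conclude shrinkage at least $\lambda^{C\bepsilon R_n}$ from $\sum_{m\le n}R_m\le\frac32 R_n$, but for $i\ll R_n$ this is weaker than the required $K_0^{-1}\lambda^{\bepsilon i}$; you must use that the expansion of $i$ in \lemref{H decomp} only involves levels up to $\hk(i)$, whose periods sum to $O(i)$ because $a_{\hk(i)}\ge 2$ forces $R_{\hk(i)}\le i/2$.
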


\begin{proof}
For $p_0 \in \Gamma_0$, let $E_{p_0} \in \bbP^2_{p_0}$ be the direction tangent to $\Gamma_0$ at $p_0$. Note that $p_0$ is $R_n$-times forward $(L, \epsilon, \lambda)$-regular along $E^v_{p_0}$. Thus, by Theorems \ref{crit chart} and \ref{reg chart}, and \corref{ver hor cons}, we have
$$
{K_0}^{-1}\lambda^{\bepsilon l} < \|DF^l|_{E_{p_0}}\| < {K_0}\lambda^{-\bepsilon l}
\matsp{for}
0\leq l <R_n.
$$

By \propref{value struct bound} and \lemref{unproject} i), the curve
$
\Gamma_{kR_m} := F^{kR_m}(\Gamma_0)
$
for $0\leq k < r_m$ is $\lambda^{-\bepsilon R_m}$ horizontal in $\cB^m_0$. Hence, by \thmref{crit chart}, we see that
$$
{K_0}^{-1}\lambda^{\bepsilon R_m}< \|D\cP^m_0|_{E_{p_{kR_m}}}\| < {K_0}.
$$

Write
$$
i = j + [a_{n_0}, \ldots, a_m]
$$
for some ${n_0}\leq m < n$. Then by \lemref{H decomp} we have
$$
H^n_i = F^j\circ \cP^{n_0}_0 \circ F^{a_{n_0}R_{n_0}} \circ \ldots \circ \cP^m_0 \circ F^{a_mR_m}.
$$
Concatenating the previous estimates, we obtain the desired result.
\end{proof}

\begin{lem}\label{close pieces near crit}
For $s \in \{1, 2\}$; $n_0 \leq n \leq N-s$ and $2 \leq k < 2r_n$, let $X_{-1} \subset \cB^n_{R_n-1}$ be a set such that
$$
\cP^{n_0}_{-1}(X_{-1}) = \cJ^{n+s}_{kR_n-1}.
$$
Then
$$
\cP^n_0 \circ F(X_{-1}) =\cJ^{n+s}_{kR_n}\{\sim\lambda^{(1-\bepsilon) R_n}\}.
$$
\end{lem}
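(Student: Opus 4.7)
The plan is to compare $X_{-1}$ with the reference curve $\cI^{n+s}_{kR_n-1}$, and then to push this comparison through $F$ and $\cP^n_0$ using the explicit H\'enon-like form of $F$ near $v_{-1}, v_0$ together with the super-exponential convergence $\Psi^n \to \Phi_0$ given in \thmref{crit chart}.

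First, I would identify $\cJ^{n+s}_{kR_n-1}$ with $\cP^{n_0}_{-1}(\cI^{n+s}_{kR_n-1})$. For $3 \leq k < 2r_n$ one has $\hH_{kR_n-1} = H^n_{kR_n-1}$, so \lemref{lem unproj} applied to $\Gamma_0 = \cI^{n+s}_0$ (with the lemma's depth parameter equal to $n$) immediately gives $\cJ^{n+s}_{kR_n-1} = H^n_{kR_n-1}(\cI^{n+s}_0) = \cP^{n_0}_{-1}(\cI^{n+s}_{kR_n-1})$. For $k=2$ one has $\hH_{2R_n-1} = H^{n-1}_{2R_n-1}$, and the same identity follows by reapplying \lemref{lem unproj} with depth parameter $n-1$, its $k$-parameter taken to be $2r_{n-1}$, and its $s$-parameter equal to $2$: indeed $2r_{n-1} \leq R_{n+1}/R_{n-1} = r_{n-1}r_n$ since $r_n \geq 3$, and $\cI^{n+s}_0 \subset \cB^{n+1}_0$ is genuinely horizontal. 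Consequently the hypothesis becomes $\cP^{n_0}_{-1}(X_{-1}) = \cP^{n_0}_{-1}(\cI^{n+s}_{kR_n-1})$, so, setting $J := P_{-1}(\cI^{n+s}_{kR_n-1}) \subset I^h_{-1}$, the sets $X_{-1}$ and $\cI^{n+s}_{kR_n-1}$ share the same $\Phi_{-1}$-horizontal coordinate range $J$.

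Second, I would quantify how close $X_{-1}$ and $\cI^{n+s}_{kR_n-1}$ are in the $\Phi_{-1}$-vertical direction. By \eqref{eq.first entry squeeze} the strip $\cB^n_{R_n-1}$ containing $X_{-1}$ has $\Phi_{-1}$-vertical extent at most $2\lambda^{(1-\bepsilon)R_n}$, and by \lemref{unproject} ii) the curve $\cI^{n+s}_{kR_n-1}$ is a $\lambda^{(1-\bepsilon)R_n}$-horizontal graph $\{(x,g(x))\}_{x\in J}$ in $\Phi_{-1}$-coordinates. Hence any point of $X_{-1}$ can be written as $(x,y)$ in $\Phi_{-1}$-coordinates with $x \in J$ and $|y - g(x)| \leq \blambda^{(1-\bepsilon)R_n}$.

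Third, I would transport the comparison through $F$ and $\cP^n_0$. By \eqref{eq.henon trans}, $\Phi_0\circ F\circ\Phi_{-1}^{-1}(x,y) = (f_0(x)-\lambda y, x)$, so points of $X_{-1}$ and $\cI^{n+s}_{kR_n-1}$ sharing an $x\in J$ map to points with identical $\Phi_0$-second-coordinate and $\Phi_0$-first-coordinates differing by $\lambda|y-g(x)| \leq \blambda^{(1-\bepsilon)R_n}$. Since $\cP^n_0 = (\Psi^n)^{-1}\circ\Pi_h\circ\Psi^n$ and \thmref{crit chart} i) gives $\|\Psi^n\circ\Phi_0^{-1}-\Id\|_{C^r}<\lambda^{(1-\bepsilon)R_n}$, the projection $\cP^n_0$ is $C^1$-close to $\Phi_0^{-1}\circ\Pi_h\circ\Phi_0$; therefore, after applying $\cP^n_0\circ F$, the $x$-parameterized points of $\cP^n_0\circ F(X_{-1})$ and of $\cJ^{n+s}_{kR_n} = \cP^n_0\circ F(\cI^{n+s}_{kR_n-1})$ stay within $\blambda^{(1-\bepsilon)R_n}$ along $\cI^{n_0}_0$. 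Both sets are subarcs of $\cI^{n_0}_0$ parameterized by the same $J$, so their endpoints (images of $\partial J$) differ along $\cI^{n_0}_0$ by at most $\blambda^{(1-\bepsilon)R_n}$, yielding $\cJ^{n+s}_{kR_n}[-\lambda^{(1-\bepsilon)R_n}] \subset \cP^n_0\circ F(X_{-1}) \subset \cJ^{n+s}_{kR_n}[+\lambda^{(1-\bepsilon)R_n}]$, which is the definition of $\cP^n_0\circ F(X_{-1}) = \cJ^{n+s}_{kR_n}\{\sim\lambda^{(1-\bepsilon)R_n}\}$. The main obstacle is the bookkeeping in the first step, where the boundary case $k=2$ requires a different application of \lemref{lem unproj} than $k\geq 3$; once the identification $\cJ^{n+s}_{kR_n-1} = \cP^{n_0}_{-1}(\cI^{n+s}_{kR_n-1})$ is established, the remainder is a direct H\'enon-form calculation propagating a single $\lambda^{(1-\bepsilon)R_n}$ error through $F$ and $\cP^n_0$.
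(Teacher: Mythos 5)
Your proof is correct and takes essentially the same route as the paper's: identify $\cJ^{n+s}_{kR_n-1}=\cP^{n_0}_{-1}(\cI^{n+s}_{kR_n-1})$ via \lemref{lem unproj} so that $X_{-1}$ and $\cI^{n+s}_{kR_n-1}$ share the same $\Phi_{-1}$-horizontal projection, observe that they then differ only in the thin $\Phi_{-1}$-vertical direction by \eqref{eq.first entry squeeze} and \lemref{unproject}~ii), and propagate the $O(\lambda^{(1-\bepsilon)R_n})$ error through $F$ and $\cP^n_0$ using the explicit H\'enon form \eqref{eq.henon trans} together with \thmref{crit chart}~i). Your separate treatment of $k=2$ (where $\hH_{2R_n-1}=H^{n-1}_{2R_n-1}$ rather than $H^n_{2R_n-1}$, forcing an application of \lemref{lem unproj} at depth $n-1$ with $s$-parameter $2$ and $k$-parameter $2r_{n-1}$) correctly fills in a detail the paper's one-line citation compresses; the only thing to add is that when $n=n_0$ one has $\hH_{2R_{n_0}-1}=F^{2R_{n_0}-1}=H^{n_0}_{2R_{n_0}-1}$, so the depth-$n_0$ application of \lemref{lem unproj} already suffices in that boundary case, since depth $n-1<n_0$ is unavailable.
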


\begin{proof}
By \lemref{lem unproj}, we have
$$
\cI^{n+s}_{kR_n-1} = \left(\cP^{n_0}_{-1}|_{\cI^{n+s}_{kR_n-1}}\right)^{-1}(\cJ^{n+s}_{kR_n-1}) =  \left(\cP^{n_0}_{-1}|_{\cI^{n+s}_{kR_n-1}}\right)^{-1}\circ\cP^{n_0}_{-1}(X_{-1}).
$$
Since
$$
\cJ^{n+s}_{kR_n} = \cP^n_0 \circ F(\cI^{n+s}_{kR_n-1}),
$$
the claim follows from \eqref{eq.henon trans} and \eqref{eq.first entry squeeze}.
\end{proof}

\begin{prop}\label{cK}
For $n_0 \leq n \leq N-2$ and $2R_n \leq i < 2R_{n+1}$, there exists an arc $\cK_{0,i}$ containing $\cI^{n+2}_0$ such that the following properties are satisfied.
\begin{enumerate}[i)]
\item We have $\cK_{0, i} \supset \cK_{0, i+1}$.
\item The map $\hH_i|_{\cK_{0, i}}$ is a diffeomorphism.
\item We have $\hH_i(\cK_{0, i}) \supset \ticJ^{n+1}_{ i}$.
\item Denote
$
\cK_i := F^i(\cK_{0, i}).
$
Then for $2 < k \leq 2r_n$, the arc $\cK_{kR_n-1}$ is $\lambda^{(1-\bepsilon)R_n}$-horizontal in $\cB_{-1}$, and 
$$
\cK_{kR_n} \subset \cB^n_{R_n} \setminus \cV_{v_0}(\lambda^{\bepsilon R_n}).
$$
\end{enumerate}
\end{prop}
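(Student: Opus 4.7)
The plan is to construct $\cK_{0,i}$ by pulling the target arc $\ticJ^{n+1}_i$ back through the composition
\[
\hH_i = H^n_i = H^{n-1}_j \circ \cP^n_0 \circ F^{a_n R_n}, \qquad i = j + a_n R_n,\ \ 2 \leq a_n < 2r_n,
\]
using the 1D-like structure from \propref{value struct bound} to guarantee that the pullback extends diffeomorphically within the horizontal arc $\cI^{n_0}_0$ and contains $\cI^{n+2}_0$. I proceed by joint induction on the renormalization depth $n$ and on the time $i$: first I construct $\cK_{0,2R_n}$, then obtain $\cK_{0,i}$ for $i > 2R_n$ by successive shrinking, which automatically secures property (i).

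For the main construction, I extend $\cI^{n+1}_0$ within the ambient horizontal arc $\cI^n_0$ to a slightly larger auxiliary arc $\cK^\ast$. The 1D-like structure guarantees that the periodic pieces $F^{kR_n}(\cB^{n+1}_0)$ for $0 \leq k < 2r_n$ are pairwise disjoint subsets of $\cB^n_0$; combined with \lemref{unproject}, this implies that $F^{R_n}$ acts diffeomorphically on $\cK^\ast$ and its image remains a $\lambda^{-\bepsilon R_n}$-horizontal arc disjoint from the other pieces. Iterating, $F^{a_n R_n}|_{\cK^\ast}$ is a diffeomorphism. I then apply $\cP^n_0$, whose horizontal-smoothness preservation off the tip is provided by Lemmas \ref{quad flat} and \ref{quad straight}, and whose quadratic behavior near $v_0$ is captured by \lemref{quad mapping}. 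Finally, I apply the inner composition $H^{n-1}_j$ via the induction hypothesis at depth $n-1$ (or directly via \lemref{H diffeo near tip} at the base step $n = n_0$), and define $\cK_{0,i}$ as the preimage of $\ticJ^{n+1}_i$ under the resulting diffeomorphism.

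Properties (i) and (ii) are then automatic from the nested restriction and the diffeomorphic construction. Property (iii) asks that $\cK^\ast$ be extended within $\cI^n_0$ beyond $\cI^{n+1}_0$ enough that the final image reaches into the two neighbors of $\cJ^{n+1}_i$ identified by \lemref{J order}; since $\ticJ^{n+1}_i = \hcJ^{n+1}_i[-\lambda^{\bepsilon R_n}]$ is a slightly shaved version of the convex hull of $\cJ^{n+1}_i$ with its neighbors, the required extension only needs to cover the neighbors up to a super-exponentially thin margin. For property (iv), the iterates $\cK_{kR_n} = F^{kR_n}(\cK_{0,i})$ inherit their $\lambda^{(1-\bepsilon)R_n}$-horizontality at time $kR_n - 1$ in $\cB_{-1}$ from \lemref{flat} iii), and the tip-avoidance $\cK_{kR_n} \subset \cB^n_{R_n} \setminus \cV_{v_0}(\lambda^{\bepsilon R_n})$ for $2 < k \leq 2r_n$ is imposed directly when choosing $\cK^\ast$, by excluding source points whose orbit enters the forbidden zone at these intermediate returns.

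The hardest part will be the quantitative balance in the choice of $\cK^\ast$: it must be extended enough beyond $\cI^{n+1}_0$ to cover the macroscopic target $\ticJ^{n+1}_i$, yet restricted enough that all intermediate iterates $\cK_{kR_n}$ avoid the super-exponentially thin tip $\cV_{v_0}(\lambda^{\bepsilon R_n})$. Both constraints can be satisfied simultaneously because the quadratic tangency at $v_0$ (\propref{crit value}) turns small horizontal extensions on the preimage side into macroscopic coverage after passing through $\cP^n_0 \circ F^{a_n R_n}$, while the super-exponential gap between the commensurate 1D-like spacings from \propref{value struct bound} and the tip radius $\lambda^{\bepsilon R_n}$ leaves ample room for the needed restriction.
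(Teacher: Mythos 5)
Your overall plan—induct jointly on the scale $n$ and the time $i$, build $\cK_{0,i}$ by iterating an extension of $\cI^{n+1}_0$ forward, cut away the tip at each return time $kR_n$, and verify coverage of $\ticJ^{n+1}_i$—is the same strategy as the paper, which constructs $\cK_{0,2R_{n_0}}$ explicitly and then proceeds by induction on $k$ (holding $\cK_{0,i}$ constant on blocks $[(k-1)R_n, kR_n)$ and trimming $\cK_{kR_n}$ to the component of $F(\cK_{kR_n-1})\setminus\cV_{v_0}(\lambda^{\bepsilon R_n})$ containing $\cI^{n+2}_{kR_n}$). However, two points in your proposal need to be fixed.

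First, the ``pullback'' framing in your opening and closing cannot be carried out as stated: $\cP^n_0$ is a projection onto $\cI^n_0$, not a diffeomorphism, so the preimage of an arc under $\hH_i$ is not determined until you have already fixed the one-dimensional curve on which the dynamics is being tracked. Saying that tip avoidance is ``imposed directly when choosing $\cK^\ast$, by excluding source points whose orbit enters the forbidden zone'' sidesteps both problems at once—you don't know in advance which points to exclude, and excluding them a priori need not leave a connected arc. What actually makes the construction work is the forward trimming that keeps the component containing the reference orbit $\cI^{n+2}_{kR_n}$; this preserves connectivity automatically, exactly as the paper does.

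Second, your quantitative justification of property~iii) is addressing the wrong direction. The statement ``the quadratic tangency at $v_0$ turns small horizontal extensions into macroscopic coverage after passing through $\cP^n_0\circ F^{a_nR_n}$'' explains why a modest enlargement of the domain can produce a large image, but the delicate step is the opposite one: after cutting at the tip, has too much been removed for the image to still contain $\ticJ^{n+1}_{kR_n}$? The paper isolates precisely this point in \lemref{close pieces near crit}, which says the post-cut image is $\cJ^{n+s}_{kR_n}\{\sim\lambda^{(1-\bepsilon)R_n}\}$—a loss of order $\lambda^{(1-\bepsilon)R_n}$, much smaller than the shave $\lambda^{\bepsilon R_n}$ built into $\ticJ^{n+1}_{kR_n}=\hcJ^{n+1}_{kR_n}[-\lambda^{\bepsilon R_n}]$. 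You do not invoke this lemma (nor \lemref{H length change}, which the paper also uses for iii)), and your appeal to \lemref{quad mapping} and \propref{crit value} is out of place here: those enter the later cross-ratio distortion argument (\propref{bound crd}), not the construction of the domains $\cK_{0,i}$.

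<br>
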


\begin{proof}
We first extend $\cI^{n_0+1}_{2{n_0}-1}$ to an arc $\cK_{2R_{n_0}-1} \subset \cB_{-1}$ such that $\cK_{2R_{n_0}-1}$ is  $\lambda^{(1-\bepsilon)R_{n_0}}$-horizontal in $\cB_{-1}$, and the curve
$
\cK_{2R_{n_0}} := F(\cK_{2R_{n_0}-1})
$
maps diffeomorphically onto
$
\cI^{n_0}_0 \setminus \cV_{v_0}(\lambda^{\bepsilon R_{n_0}})
$
under
$\cP^{n_0}_0|_{\cK_{2R_{n_0}}}$. We define
$$
\cK_{0, 2R_{n_0}} := F^{-2R_{n_0}}(\cK_{2R_{n_0}}).
$$

Proceeding by induction, suppose the result holds for $i \leq (k-1)R_n$ with $2 < k \leq 2r_n$. For $0 \leq l < R_n$, define
$$
\cK_{0, (k-1)R_n+l} := \cK_{0, (k-1)R_n}.
$$
Observe that
$$
\hH_{(k-1)R_n+l} = H^n_l \circ F^{(k-1)R_n}.
$$
Thus, property ii) follows from \lemref{H diffeo near tip}; property iii) follows from Lemmas \ref{J spread} and \ref{H length change}; and property iv) for $\cK_{kR_n-1}$ follows from \lemref{unproject} ii).

If $k < 2r_n$, then define $\cK_{kR_n}$ to be the component of
$
F(\cK_{kR_n-1}) \setminus \cV_{v_0}(\lambda^{\bepsilon R_n})
$
containing $\cI^{n+2}_{kR_n}$. By \lemref{unproject} i), $\cK_{kR_n}$ maps injectively under $\cP^n_0$. Lastly, property iii) follows from \lemref{close pieces near crit}.

If $k = 2r_n$, then define $\cK_{2R_{n+1}}$ to be the component of
$$
F(\cK_{2R_{n+1}-1})  \cap \left(\cB^{n+1}_0\setminus \cV_{v_0}(\lambda^{\bepsilon R_{n+1}})\right)
$$
containing $\cI^{n+3}_{2R_{n+1}}$. Properties ii) and iii) for $\cK_{2R_{n+1}}$ can be checked similarly as above.
\end{proof}

By \lemref{J order}, for $n_0 \leq n \leq N-2$, there exists a unique number $2 \leq \chi_n < r_n$ such that
$$
\cJ^{n+1}_0 <_{\cI^{n_0}_0} \cJ^{n+1}_{\chi_nR_n} \leq_{\cI^{n_0}_0} \cJ^{n+1}_{kR_n} 
\matsp{for all}
1\leq k < r_n.
$$
After relabelling $\iota^n_\pm$ if necessary, the following results hold.

\begin{lem}\label{real neigh}
Let $n_0 \leq n \leq N-2$. Then
$$
\iota^{n+1}_+(i) = i+\chi_nR_n
\matsp{for}
2R_{n_0} \leq i < R_n.
$$
\end{lem}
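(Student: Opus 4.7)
The plan is to prove the claim by combining a factorization identity derived from \lemref{H decomp} with the hierarchical combinatorics of the 1D-like structure from \propref{value struct bound}.

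First, I would let $m = m(i) < n$ be the largest integer satisfying $i \geq 2R_m$, so that $\hH_i = H^m_i$. Since the mixed-base expansion of $i + \chi_n R_n$ extends that of $i$ by zeros at levels $m+1,\ldots,n-1$ and the digit $\chi_n$ at level $n$, applying \lemref{H decomp} to both $\hH_i$ and $\hH_{i+\chi_n R_n}$ gives the telescoping decomposition
\begin{equation*}
\hH_{i+\chi_n R_n} \;=\; \hH_i \circ \cP^{m+1}_0 \circ \cdots \circ \cP^{n-1}_0 \circ \cP^n_0 \circ F^{\chi_n R_n}.
\end{equation*}
Each intermediate projection $\cP^l_0$ (with $m+1 \leq l \leq n-1$) restricts to the identity on $\cI^l_0 \supset \cI^n_0$, and $\cP^n_0 \circ F^{\chi_n R_n}(\cI^{n+1}_0) = \cJ^{n+1}_{\chi_n R_n} \subset \cI^n_0$. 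The chain therefore collapses, yielding the key identity $\cJ^{n+1}_{i+\chi_n R_n} = \hH_i\bigl(\cJ^{n+1}_{\chi_n R_n}\bigr)$; the same reasoning applied with $kR_n$ in place of $\chi_n R_n$ yields $\cJ^{n+1}_{i+kR_n} = \hH_i(\cJ^{n+1}_{kR_n})$ for every $2 \leq k < r_n$.

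Next, I would transfer the ordering from the preimage to the image of $\hH_i$. By \lemref{J spread}, the arcs $\cJ^{n+1}_j$ contained in $\cI^n_0$ are precisely $\{\cJ^{n+1}_{kR_n}\}_{0 \leq k < r_n}$; by the definition of $\chi_n$ combined with \lemref{J order}, $\cJ^{n+1}_{\chi_n R_n}$ is the immediate right neighbor of $\cJ^{n+1}_0$ in this family. By arguments analogous to those behind \propref{cK} and \lemref{H diffeo near tip}, $\hH_i$ extends to an orientation-preserving diffeomorphism on a neighborhood of $\cI^n_0$ whose image is an arc in $\cI^{n_0}_{i \md{R_{n_0}}}$. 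The ordering is therefore preserved, so among the subfamily $\{\cJ^{n+1}_{i+kR_n}\}_{0 \leq k < r_n}$, the immediate right neighbor of $\cJ^{n+1}_i$ is $\cJ^{n+1}_{i+\chi_n R_n}$.

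Finally, I would rule out any competitor $j$ with $j \equiv i \md{R_{n_0}}$ but $j \not\equiv i \md{R_n}$. Let $l^* \in [n_0,n-1]$ denote the largest level at which the mixed-base digit of $j$ differs from that of $i$. Then the forward orbits of $\cI^{n+1}_0$ producing $\cJ^{n+1}_i$ and $\cJ^{n+1}_j$ pass through distinct level-$l^*$ boxes $\hcB^{l^*,2}_{kR_{l^*}}$ (pairwise disjoint by the depth-$2$ 1D-like structure at level $l^*$), while $\cJ^{n+1}_i$ and $\cJ^{n+1}_{i+\chi_n R_n}$ both arise from a single level-$(n-1)$ box and together lie in the single arc $\hH_i(\cI^n_0) \subset \cI^{n_0}_{i \md{R_{n_0}}}$. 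Propagating this disjointedness forward through the composition defining $\hH_i$ (using the geometric control from \lemref{quad flat}), the spatial gap between $\cJ^{n+1}_i$ and $\cJ^{n+1}_j$ strictly dominates that between $\cJ^{n+1}_i$ and $\cJ^{n+1}_{i+\chi_n R_n}$, so no such $j$ can interpose. The hard part will be precisely this last quantitative comparison: verifying that the inter-cluster gap at every lower level $l^*$ strictly exceeds the intra-cluster gap at level $n$ will require carefully exploiting both the hierarchical disjointedness of the renormalization boxes and the super-exponential convergence of the straightening charts from \propref{vert prop nest}.
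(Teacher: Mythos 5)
Your first two paragraphs are, in essence, the paper's own argument: transport the level-$n$ configuration $\{\cJ^{n+1}_{kR_n}\}_{0\le k<r_n}$ together with $\cJ^{n+1}_0$ by a single diffeomorphism (this is exactly what \lemref{J spread} and \lemref{H diffeo near tip} are invoked for), and use the definition of $\chi_n$ plus \lemref{J order} to see that adjacency is preserved (orientation is irrelevant, since the paper relabels $\iota^{n+1}_\pm$). The genuine gap is the step you yourself flag as the unresolved ``hard part'': excluding competitors $j\equiv i \md{R_{n_0}}$ with $j\not\equiv i\md{R_n}$. Your proposed route --- showing that the ``inter-cluster gap strictly dominates the intra-cluster gap'' by propagating box-disjointness forward with \lemref{quad flat} and the super-exponential convergence of the charts --- is the wrong tool: nearest-neighborship is a containment statement, not a comparison of distances, and there is no a priori metric domination of inter-cluster gaps over intra-cluster gaps to be had. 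What actually closes the argument is purely combinatorial and requires no estimate: $\cJ^{n+1}_i$ and $\cJ^{n+1}_{i+\chi_nR_n}$ both lie in the image of the single connected cross-section $\cI^n_0$ under the diffeomorphism furnished by \lemref{H diffeo near tip}/\lemref{J spread} (i.e.\ in the parent arc of the class of $i$ mod $R_n$), so the subarc of $\cI^{n_0}_{i\md{R_{n_0}}}$ between them is contained in that parent arc; every $\cJ^{n+1}_j$ with $j\not\equiv i\md{R_n}$ is contained in the parent arc of its own class, which is disjoint from that of $i$, and hence cannot meet the gap. Since you leave this step unproved and aim it at an estimate that neither suffices as stated (sidedness and betweenness, not size, are what matter) nor is clearly true, the proposal is incomplete at its decisive point.

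A secondary but real slip occurs in your first paragraph. With the paper's numeration, the leading digit of $i$ at level $m=\hk(i)$ lies in $[2,2r_m)$; when it is at least $r_m$, i.e.\ when $R_{m+1}\le i<2R_{m+1}$ (which does occur in the range $2R_{n_0}\le i<R_n$), the expansion of $i+\chi_nR_n$ is \emph{not} obtained by appending zeros and $\chi_n$ --- a carry produces the digit $1$ at level $m+1$ --- and your telescoping identity $\hH_{i+\chi_nR_n}=\hH_i\circ\cP^{m+1}_0\circ\cdots\circ\cP^{n}_0\circ F^{\chi_nR_n}$ fails as an identity of maps, because \lemref{H decomp} then inserts $\cP^{m+1}_0$ at an interior moment where the image curve does not lie in $\cI^{m+1}_0$. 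The safe statement is the one recorded in \lemref{J spread}, namely $\cJ^{n+1}_{i+kR_n}=H^{n-1}_i(\cJ^{n+1}_{kR_n})$, and $H^{n-1}_i$ coincides with $\hH_i$ on $\cI^n_0$ only in the carry-free case $i<R_{m+1}$; so you should either cite \lemref{J spread} directly, as the paper does, or restrict your factorization argument accordingly. Likewise, your parenthetical claim that the arcs $\cJ^{n+1}_j$ contained in $\cI^n_0$ are ``precisely'' $\{\cJ^{n+1}_{kR_n}\}_{0\le k<r_n}$ is more than \lemref{J spread} gives (and is not needed once the containment argument above is used).
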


\begin{proof}
The claim follows immediately from Lemmas \ref{H diffeo near tip} and \ref{J spread}.
\end{proof}

\begin{lem}\label{fake neigh}
Let ${n_0}+2 \leq n \leq N$. For $1 \leq m \leq n-2$ and $2 \leq k < 2r_m$, there exists $1 \leq i < 2r_m$ such that
$$
\iota_-^n(kR_m)= \iota_-^{m+2}(kR_m) = iR_m.
$$
\end{lem}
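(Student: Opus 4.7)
The plan is to combine a reduction from depth $n$ down to depth $m+2$ with a careful dissection of the depth-2 $1$D-like structure at level $m$, in the spirit of \lemref{real neigh}.

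First, I would establish $\iota_-^n(kR_m) = \iota_-^{m+2}(kR_m)$ by nesting. For $k \in [2, 2r_m)$ one has $kR_m \in [2R_m, 2R_{m+1})$, so $\hH_{kR_m} = H^m_{kR_m} = \cP^m_0 \circ F^{kR_m}$ regardless of $n$. Since $\cI^n_0 \subset \cI^{m+2}_0$, this immediately gives $\cJ^n_{kR_m} \subset \cJ^{m+2}_{kR_m}$, and by the same observation applied to any index $jR_m$ (multiple of $R_m$ so that $\cJ^n_j$ lies in $\cI^m_0$) we get $\cJ^n_j \subset \cJ^{m+2}_j$. The nearest neighbor relation in $\cI^{n_0}_0$ is thus inherited from level $m+2$: indeed, two arcs at level $n$ cannot be separated by an arc at level $m+2$ in whose envelope both would have to sit, and conversely the $\iota^{m+2}_-$-neighbor's level-$n$ descendant still sits on the same side with no level-$n$ arc in between (using \lemref{J order} and \lemref{J spread}). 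This reduces everything to proving the claim at $n = m+2$.

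Next, I would identify the two nearest neighbors of $\cJ^{m+2}_{kR_m}$ inside $\cI^{n_0}_0$. By \lemref{on center track} and \lemref{J spread}, $\cJ^{m+2}_{kR_m} \subset \cI^m_0$, and since $\cI^m_0$ sits at the extreme end of the chain $\cI^m_0 \subset \cI^{m-1}_0 \subset \cdots \subset \cI^{n_0}_0$ (because $\hcB^{m',1}_0$ is the extremal cell in the depth-$1$ $1$D-like structure at level $m'$ for each $n_0 \le m' \le m-1$), no arc $\cJ^{m+2}_j$ with first nonzero digit at a level $m' < m$ can lie on the ``outer'' side of $\cJ^{m+2}_{kR_m}$. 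Hence both neighbors have indices that are multiples of $R_m$.

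Now I invoke \lemref{real neigh}: when $k \in [2, r_m)$ we have $kR_m < R_{m+1}$, so one of the two neighbors is $\iota_+^{m+2}(kR_m) = kR_m + \chi_{m+1}R_{m+1}$, corresponding to the second-innermost arc in the depth-$1$ level-$(m+1)$ structure inside the same depth-$1$ cell $\hcB^{m,1}_{(k \bmod r_m)R_m}$. By definition of $\iota_-$ (``the other nearest neighbor''), $\iota_-^{m+2}(kR_m)$ must then lie in an \emph{adjacent} depth-$1$ cell at level $m$, and by the depth-$1$ structure at level $m$ this adjacent cell is indexed by some $l \in [0, r_m)$; the arc in that cell nearest to $\cJ^{m+2}_{kR_m}$ is one of its two extremal arcs $\cJ^{m+2}_{lR_m}$ or $\cJ^{m+2}_{(l+r_m)R_m}$, both of which have index in $[0, 2r_m)R_m$. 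For the remaining range $k \in [r_m, 2r_m)$, which is not covered by \lemref{real neigh} directly, I would run the same bookkeeping after noting that $\cJ^{m+2}_{kR_m}$ sits at the matching extremal position $b^{m,1}_{k-r_m}$ inside the cell $\hcB^{m,1}_{(k-r_m)R_m}$; its nearest neighbor on one side is the inward-pushed $\iota_+^{m+2}$-companion (still furnished by an analogue of \lemref{real neigh} after applying $F^{R_{m+1}}$), while $\iota_-^{m+2}(kR_m)$ is forced into an adjacent cell, again yielding an index in $[1, 2r_m)R_m$.

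The main obstacle I expect is the orientation bookkeeping inside each depth-$1$ level-$m$ cell: the depth-$1$ $1$D-like structure at level $m$ only orders the four extremes $a^m_0, b^{m,1}_0, b^{m,1}_1, a^m_1$, while the precise interleaving of the middle cells $\hcB^{m,1}_{lR_m}$, $l \in [2, r_m)$, and the orientation with which they embed into $\cI^m_0$, determine \emph{which} extremal arc of the adjacent cell is seen first. This is where the assumption that we are in bounded-type combinatorics with the $1$D-like structure of depth $2$ at level $m$ is essential, via \propref{value struct bound}, and where a careful case analysis (parallel to that of \lemref{real neigh}) is needed to pin down the index $i$ uniformly in $[1, 2r_m)$.
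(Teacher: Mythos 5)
Your skeleton is the paper's: push the $\iota_+$-neighbor inside the containing block via \lemref{real neigh}, force the $\iota_-$-neighbor into an adjacent block, and note that the extremal arcs of any block are indexed by multiples of $R_m$. But as written there is a genuine gap at your reduction step, and it sits exactly where the content of the lemma lies. You assert $\iota_-^n(kR_m)=\iota_-^{m+2}(kR_m)$ on the grounds that the $\iota^{m+2}_-$-neighbor's level-$n$ descendant has ``no level-$n$ arc in between.'' That clause is not automatic and is the crux: inside the adjacent level-$(m{+}2)$ arc $\cJ^{m+2}_{iR_m}$ there are many level-$n$ arcs, and its two extremal ones are $\cJ^n_{iR_m}$ and $\cJ^n_{iR_m+R_{m+2}}$; if the end of $\cJ^{m+2}_{iR_m}$ facing $\cJ^n_{kR_m}$ were the one occupied by $\cJ^n_{iR_m+R_{m+2}}$, then $\iota_-^n(kR_m)$ would differ from $\iota_-^{m+2}(kR_m)$ and the reduction would fail. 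The clean way to exclude this --- and it is how the paper argues --- is to phrase everything in terms of the level-$(m{+}1)$ blocks $\cJ^{m+1}_{l'R_m}$: by \lemref{J order}, \lemref{H diffeo near tip} and \lemref{J spread}, for each relevant level $n'$ (both $n'=m+2$ and $n'=n$) the two extremal arcs of $\cJ^{m+1}_{l'R_m}$ in the level-$n'$ collection are $\cJ^{n'}_{l'R_m}$ and $\cJ^{n'}_{l'R_m+R_{m+1}}=\cJ^{n'}_{(l'+r_m)R_m}$, and these are nested over $n'$ and occupy the same two ends of the block; hence whichever end faces $\cJ_{kR_m}$ produces the same index $iR_m$, $1\le i<2r_m$, at both levels simultaneously. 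Your separate observation that no arc with a nonzero digit below level $m$ can interpose is needed and is in the right spirit, but the justification should be that such arcs lie in the non-central cells at their level, hence off the span of $\cI^m_0$, while for $k\ge 2$ the $\iota_-$-direction stays inside that span until it meets another block.

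Your closing paragraph is then a red herring: no orientation or interleaving case analysis is required, because the lemma only asserts the \emph{existence} of $i$, and --- as you yourself note mid-proof --- both candidate extremal arcs of the adjacent block already have indices in $[1,2r_m)R_m$; this is precisely why the statement is weaker than \lemref{real neigh}. Likewise your special treatment of $k\in[r_m,2r_m)$ via ``an analogue of \lemref{real neigh} after applying $F^{R_{m+1}}$'' is not developed; in the paper's formulation the cases $kR_m=l R_m$ and $kR_m=lR_m+R_{m+1}$ (i.e.\ $j\in\{0,1\}$) are handled by the same two facts (extremal arcs of the block, plus the $\iota_+$-neighbor lying inside the block), so nothing new is needed there beyond the index-range bookkeeping for \lemref{real neigh}.
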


\begin{proof}
By Lemmas \ref{J order}, \ref{H diffeo near tip} and \ref{J spread}, we see that the extremal intervals in $\cJ^{m+1}_{lR_m}$ for $0\leq l< r_m$ are $\cJ^n_{lR_m}$ and $\cJ^n_{lR_m+R_{m+1}}$. Moreover, by \lemref{real neigh}, we have
$$
\cJ^n_{\iota_+^n(lR_m + jR_{m+1})} \subset \cJ^{m+1}_{lR_m}
\matsp{for}
j \in \{0, 1\}.
$$
The claim follows.
\end{proof}

\begin{prop}\label{ticI}
For ${n_0}+2 \leq n \leq N$ and $2R_{n_0}\leq i < R_n$, there exists an arc $\ticI^n_{0,i}$ such that the following conditions hold for all $2R_{n_0} \leq j \leq i$.
\begin{enumerate}[i)]
\item We have $\cI^n_0 \subset \ticI^n_{0,i} \subset \cK_{0, i}$.
\item Denote
$$
\ticJ^n_{j, i-j} := \hH_j(\ticI^n_{0,i}).
$$
Then we have
$$
\ticJ^n_{j, i-j} \subset \ticJ^n_j
\matsp{and}
\ticJ^n_{i, 0} \supset \ticJ^n_i.
$$
\end{enumerate}
\end{prop}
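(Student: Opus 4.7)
The plan is to define $\ticI^n_{0,i}$ as the preimage of $\ticJ^n_i$ under the diffeomorphism $\hH_i|_{\cK_{0,i}}$, and then verify condition (ii) by reverse induction on $j$.

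Let $m$ be the largest integer with $2R_m \leq i$; since $i < R_n$, this forces $m + 2 \leq n$. Proposition \ref{cK} (with $m$ playing the role of the renormalization depth) gives that $\hH_i|_{\cK_{0,i}}$ is a diffeomorphism and $\hH_i(\cK_{0,i}) \supset \ticJ^{m+1}_i$. A preliminary check is the nesting $\ticJ^n_i \subset \ticJ^{m+1}_i$: this follows from $\cJ^n_\star \subset \cJ^{m+1}_\star$ (obtained from $\cI^n_0 \subset \cI^{m+1}_0$ together with \lemref{H diffeo near tip}), a comparison of the nearest-neighbor convex hulls $\hcJ^n_i$ and $\hcJ^{m+1}_i$ via \lemref{fake neigh}, and the fact that the tiny extension/contraction parameters $\lambda^{\bepsilon R_{m}}$ and $\lambda^{\bepsilon R_{n-1}}$ appearing in the definitions of $\ticJ^n_i$ and $\ticJ^{m+1}_i$ are hierarchically ordered. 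This justifies setting
$$
\ticI^n_{0,i} := (\hH_i|_{\cK_{0,i}})^{-1}(\ticJ^n_i).
$$
Condition (i) is then immediate: $\ticI^n_{0,i} \subset \cK_{0,i}$ by construction, and $\cI^n_0 \subset \ticI^n_{0,i}$ because $\hH_i(\cI^n_0) = \cJ^n_i \subset \hcJ^n_i \subset \ticJ^n_i$.

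For condition (ii), the case $j = i$ holds with equality $\ticJ^n_{i, 0} = \ticJ^n_i$ by construction. For $j < i$, I would argue by reverse induction on $j$, comparing successive maps $\hH_j$ and $\hH_{j+1}$. Two cases arise. When $\hH_{j+1} = F \circ \hH_j$ (i.e., no projection is inserted at this transition), the inclusion $\hH_j(\ticI^n_{0,i}) \subset \ticJ^n_j$ follows by pulling $\hH_{j+1}(\ticI^n_{0,i}) \subset \ticJ^n_{j+1}$ back through $F$, using that the controlling segments $\cK_{kR_m-1}$ are $\lambda^{(1-\bepsilon)R_m}$-horizontal in $\cB_{-1}$ by \propref{cK} (iv), together with the local form $(x,y) \mapsto (f_0(x) - \lambda y, x)$ of $F$ in the valuable charts of \thmref{crit chart}. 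When the transition has the form $\hH_{j+1} = \cP^k_0 \circ F \circ \hH_j$ for some depth $k$ (a return moment near $v_0$), \lemref{close pieces near crit} shows that the discrepancy between $F(\hH_j(\ticI^n_{0,i}))$ and its projected version $\hH_{j+1}(\ticI^n_{0,i})$ is of size $\lambda^{(1-\bepsilon)R_k}$, which is absorbed by the $\lambda^{\bepsilon R_k}$-buffer built into the definition of $\ticJ^n_j$.

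The main obstacle will be the bookkeeping at moments when several projections at depths $k_1 < k_2 < \cdots$ are simultaneously inserted into the transition from $\hH_j$ to $\hH_{j+1}$ (namely when $j+1$ is a common multiple of several $R_{k_\ell}$). The cumulative discrepancy between the genuine $F$-iterate of a point and its projected image $\hH_j(p)$ must be controlled uniformly across all such $\ell$; this requires a careful synchronization of the reverse induction with the 1D-like combinatorial structure of \propref{value struct bound}, and the assumption $\bfb\bepsilon < 1$ is used to ensure that the geometric series of successive error terms $\lambda^{\bepsilon R_{k_\ell}}$ remains absorbed by the buffer at the shallowest depth. Carrying the induction all the way down to $j = 2R_{n_0}$ then completes the verification of (ii).
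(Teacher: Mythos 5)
Your plan has a genuine gap at its core: the reverse induction is built on a mischaracterization of how $\hH_j$ and $\hH_{j+1}$ are related. At a rollover moment, e.g. $j+1=2R_k$, one has $\hH_{j+1}=\cP^k_0\circ F^{2R_k}$, while $\hH_j=F^{R_{n_0}-1}\circ\cP^{n_0}_0\circ\cdots$ already contains projections at the depths $n_0,\dots,k-1$; neither $\hH_{j+1}=F\circ\hH_j$ nor $\hH_{j+1}=\cP^k_0\circ F\circ\hH_j$ holds there. The correct relation, valid only on suitable horizontal curves, is
$$
\hH_{j+1}|_{\Gamma_0}=\cP^{k}_0\circ F\circ\left(\cP^{n_0}_{-1}|_{\Gamma_j}\right)^{-1}\circ\hH_j|_{\Gamma_0},
\qquad \Gamma_j:=F^j(\Gamma_0),
$$
by \lemref{lem unproj}: the earlier projections must be \emph{undone} onto the genuine orbit curve $\Gamma_j$ near $v_{-1}$ before $F$ and the new projection are applied. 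Consequently, pulling the containment back from $j+1$ to $j$ requires knowing where $F^j(\ticI^n_{0,i})$ actually sits (its horizontality near $v_{-1}$, its position relative to $\cV_{v_0}(\lambda^{\bepsilon R_k})$, and its length), which is forward information. This is precisely why the paper runs the induction forward in time: at each return $kR_m$ it selects a subarc $\cI'_{kR_m}\subset\cK_{kR_m}$ of the genuine orbit of $\cK_{0,i}$ whose projection equals $\ticJ^{m+2}_{kR_m}$, and then uses \lemref{H length change}, \lemref{lem unproj}, \lemref{close pieces near crit}, \lemref{real neigh} and \lemref{fake neigh} to show that $\cP^{n_0}_{-1}\circ F^{R_m-1}(\cI'_{kR_m})=\hcJ^{m+2}_{(k+1)R_m-1}[+\lambda^{\bepsilon R_m}]$ and hence $\cP^m_0\circ F^{R_m}(\cI'_{kR_m})\supset\hcJ^{m+2}_{(k+1)R_m}$, with a separate treatment of the top block $i\geq 2R_{n-1}$ where a component away from $\cV_{v_0}(\lambda^{\bepsilon R_n})$ must be chosen. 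Your closing "main obstacle" paragraph defers exactly this content, which \emph{is} the proof, not a bookkeeping afterthought.

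Two further points would also need repair even granting the induction scheme. First, the pullback definition $\ticI^n_{0,i}:=(\hH_i|_{\cK_{0,i}})^{-1}(\ticJ^n_i)$ presupposes $\ticJ^n_i\subset\hH_i(\cK_{0,i})$, whereas \propref{cK} only gives covering of $\ticJ^{m+1}_i$; since $\hk(i)=m$, that set is the \emph{shrunk} hull $\hcJ^{m+1}_i[-\lambda^{\bepsilon R_m}]$ while $\ticJ^n_i$ is the \emph{extended} hull $\hcJ^n_i[+\lambda^{\bepsilon R_m}]$, so the claimed nesting requires a quantitative lower bound on the gaps between neighboring arcs, not merely the ordering statements of \lemref{J order}, \lemref{real neigh} and \lemref{fake neigh}. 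Second, the buffers in the definition of $\ticJ^n_j$ vary with $j$ (and are negative at the top depth), and backward steps under $F^{-1}$ distort arclength by definite factors, so "the discrepancy is absorbed by the buffer" is not automatic; the paper sidesteps this by proving exact forward identities at the return times rather than estimating pullbacks of the buffers.
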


\begin{proof}
First consider the case when $i <2R_{n-1}$. Proceeding by induction, suppose that the result is true for $j \leq kR_m$ with ${n_0} \leq m \leq n-2$ and $2 \leq k < 2r_m$. Then the result holds for $kR_m < j < (k+1)R_m$ by Lemmas \ref{H diffeo near tip} and \ref{J spread}.

Note that we have,
$$
\cP^m_0(\cK_{kR_m}) \supset \ticJ^{m+2}_{kR_m}\supset \cJ^{m+2}_{\iota^{m+2}_-(kR_m)}\cup \cJ^{m+2}_{kR_m}\cup \cJ^{m+2}_{\iota^{m+2}_+(kR_m)},
$$
where by Lemmas \ref{real neigh} and \ref{fake neigh}, we have
$$
\cJ^{m+2}_{\iota^{m+2}_-(kR_m)} = \cJ^{m+2}_{\iota^n_-(kR_m)} \supset \cJ^n_{\iota^n_-(kR_m)}
\matsp{and}
\cJ^{m+2}_{kR_m} \supset \cJ^n_{kR_m} \cup \cJ^n_{\iota^n_+(kR_m)}.
$$
Hence, there exists an arc $\cI_{kR_m}' \subset \cK_{kR_m}$ such that
$$
\cP^m_0(\cI_{kR_m}') = \ticJ^{m+2}_{kR_m}.
$$
By Lemmas \ref{H length change} and \ref{lem unproj}, we have
$$
\cP^{n_0}_{-1} \circ F^{R_m-1}(\cI_{kR_m}') = \hcJ^{m+2}_{(k+1)R_m-1}[+\lambda^{\bepsilon R_m}].
$$
Thus, by Lemmas \ref{close pieces near crit} and \ref{real neigh}, we see that 
$$
\cP^m_0 \circ F^{R_m}(\cI_{kR_m}') \supset \hcJ^{m+2}_{(k+1)R_m},
$$
and hence, the result holds for $j = (k+1)R_m$.

Next, consider the case when $i \geq 2R_{n-1}$. For $j < 2R_{n-1}$, the result follows by the same argument as in the previous case. Proceeding by induction, suppose that the result is true for $j \leq kR_{n-1}$ with $2 \leq k < r_{n-1}$. Then the result holds for $kR_{n-1} < j < (k+1)R_{n-1}$ by Lemmas \ref{H diffeo near tip}, \ref{J spread} and \lemref{H length change}.

Similar to the previous case, there exists an arc $\cI_{kR_{n-1}}' \subset \cK_{kR_{n-1}}$ such that
$$
\cP^{n-1}_0(\cI_{kR_{n-1}}') \supset \ticJ^n_{kR_{n-1}}
$$
and
$$
\cP^{n_0}_{-1} \circ F^{R_{n-1}-1}(\cI_{kR_{n-1}}') = \hcJ^{m+2}_{(k+1)R_{n-1}-1}[-\lambda^{\bepsilon R_n}].
$$
Let $\cI_{(k+1)R_{n-1}}''$ be the connected component of
$$
F(\cI_{(k+1)R_{n-1}}') \setminus \cV_{v_0}(\lambda^{\bepsilon R_n})
$$
containing $\cI^n_{(k+1)R_{n-1}}$. By \lemref{close pieces near crit}, we have
$$
\cP^{n-1}_0 (\cI_{(k+1)R_{n-1}}'') \supset \hcJ^n_{(k+1)R_{n-1}}[-\lambda^{\bepsilon R_n}].
$$
Thus, the result holds for $j = (k+1)R_{n-1}$.
\end{proof}

Let $i \geq 2R_{n_0}$ be a number given by
$$
i = [0, \ldots, 0, a_m, a_{m+1}, \ldots, a_k]
$$
for some ${n_0} \leq m \leq k$ so that $a_m > 0$. Denote
$$
\hm(i) := m,
\hspace{5mm}
\hk(i) := k
\matsp{and}
\ha(i) := a_m.
$$
We extend this notation to the case when
$
i = a_{n_0}R_{n_0}
$ with $
a_{n_0} \in \{0, 1\}
$
by letting
$$
\hm(i) := 1,
\hspace{5mm}
\hk(i) := 1
\matsp{and}
\ha(i) := a_{n_0}.
$$

\begin{prop}\label{H decomp fine}
Let $n_0 \leq n \leq N$ and $i = j+sR_{n_0}$ with $0\leq j < R_{n_0}$ and $0 \leq s <R_n/R_{n_0}$. For $0\leq l \leq s$, denote
$$
\hm_l := \hm(lR_{n_0}),
\hspace{5mm}
\hk_l := \hk(lR_{n_0})
\matsp{and}
\ha_l := \ha(lR_{n_0}).
$$
If $\hm_l = \hk_l$, let
$$
\chcI^n_l := F^{lR_{n_0}-1}(\ticI^n_{0, i}).
$$
Otherwise, let
$$
\chcI^n_l := \cI^{\hm_l+1}_{\ha_lR_{\hm_l}-1}.
$$
Then $\chcI^n_l$ is $\lambda^{(1-\bepsilon)R_{\hm_l}}$-horizontal. Moreover, define
$$
\chH_l :=  \cP^{\hm_l}_0 \circ F \circ \left(\cP^{n_0}_{-1}|_{\chcI^n_l}\right)^{-1} \circ F^{R_{n_0}-1}|_{\cI^{n_0}_0}.
$$
Then we have
$$
\hH_i|_{\ticI^n_{0, i}} = F^j|_{\cI^{n_0}_0} \circ \chH_s \circ \ldots\circ \chH_4 \circ \chH_3 \circ \cP^{n_0}_0 \circ F^{2R_{n_0}}|_{\ticI^n_{0, i}}.
$$
\end{prop}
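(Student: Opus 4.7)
The plan is to prove the identity by induction on $l$ running from $2$ up to $s$, establishing at each stage that the composition $\chH_l \circ \ldots \circ \chH_3 \circ \cP^{n_0}_0 \circ F^{2R_{n_0}}|_{\ticI^n_{0,i}}$ agrees with the appropriate piece of $\hH_{lR_{n_0}}|_{\ticI^n_{0,i}}$ (obtained from \lemref{H decomp}), so that postcomposition with $F^j$ on $\cI^{n_0}_0$ at $l=s$ yields the final identity. Before starting the induction, one must verify that the $\chcI^n_l$ are $\lambda^{(1-\bepsilon)R_{\hm_l}}$-horizontal in $\cB_{-1}$, so that the projections $(\cP^{n_0}_{-1}|_{\chcI^n_l})^{-1}$ appearing in each $\chH_l$ are well-defined.

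The horizontal-ness is established separately in the two cases. When $\hm_l = \hk_l$, the curve $\chcI^n_l = F^{lR_{n_0}-1}(\ticI^n_{0,i})$ is the actual image; iterating \lemref{unproject} i) at scales $n_0, n_0+1,\ldots, \hm_l-1$ (justified by \propref{value struct bound}) keeps the curve $\lambda^{-\bepsilon R_m}$-horizontal inside each $\cB^m_0$, and a single application of \lemref{unproject} ii) at the final $R_{\hm_l}$-iterate upgrades this to $\lambda^{(1-\bepsilon)R_{\hm_l}}$-horizontal in $\cB_{-1}$. When $\hm_l \neq \hk_l$, we instead have $\chcI^n_l = \cI^{\hm_l+1}_{\ha_l R_{\hm_l}-1}$, which is a reference curve in $\cB_{-1}$: the bound follows directly from \lemref{unproject} ii) applied to the horizontal curve $\cI^{\hm_l+1}_0 \subset \cB^{\hm_l+1}_0$.

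For the inductive step, I expand $\hH_{lR_{n_0}}$ via \lemref{H decomp} and isolate the block that advances from $(l-1)R_{n_0}$ to $lR_{n_0}$. This block consists of one application of $F^{R_{n_0}}$ followed by the projection $\cP^{\hm_l}_0$ whose scale is dictated by the combinatorics of $lR_{n_0}$. Writing $F^{R_{n_0}} = F \circ F^{R_{n_0}-1}$ and inserting the identity $(\cP^{n_0}_{-1}|_{\chcI^n_l})^{-1} \circ \cP^{n_0}_{-1}|_{\chcI^n_l}$ between the two factors (the insertion is legitimate by \lemref{lem unproj}, since on horizontal curves $\cP^{n_0}_{-1}$ commutes with the appropriate $F$-iterates), the leading segment $\cP^{n_0}_{-1}|_{\chcI^n_l} \circ F^{R_{n_0}-1}$ assembles with the inductive hypothesis to a composition landing on $\cI^{n_0}_0$, while the trailing segment $\cP^{\hm_l}_0 \circ F \circ (\cP^{n_0}_{-1}|_{\chcI^n_l})^{-1} \circ F^{R_{n_0}-1}|_{\cI^{n_0}_0}$ is exactly $\chH_l$.

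The main obstacle is the combinatorial matching in the case $\hm_l \neq \hk_l$: the lifting target is the reference curve $\cI^{\hm_l+1}_{\ha_l R_{\hm_l}-1}$ rather than the genuine image $F^{lR_{n_0}-1}(\ticI^n_{0,i})$, because at this moment the iterate has not yet completed a full $R_{\hm_l}$-cycle. To bridge between the two, one applies \lemref{lem unproj} at scale $\hm_l$ with $k = \ha_l$ to the horizontal curve $\cI^{\hm_l+1}_0$, which shows that the projection $\cP^{n_0}_{-1}$ sends $\cI^{\hm_l+1}_{\ha_l R_{\hm_l}-1}$ to the same subarc of $\cI^{n_0}_{R_{n_0}-1}$ as it would send the genuine image; combined with \propref{value struct bound}, this identifies $(\cP^{n_0}_{-1}|_{\chcI^n_l})^{-1}$ with the correct lift and completes the inductive step.
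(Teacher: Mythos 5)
Your induction on the block index $l$ runs into a genuine problem at the scale roll-over moments, and this is precisely where the content of the proposition lies. The key claim in your inductive step --- that ``the block that advances from $(l-1)R_{n_0}$ to $lR_{n_0}$ consists of one application of $F^{R_{n_0}}$ followed by the projection $\cP^{\hm_l}_0$'' --- is false when $lR_{n_0}$ crosses a higher-scale boundary. For instance, if $lR_{n_0}=2R_{k+1}$, then by \lemref{H decomp} we have $\hH_{lR_{n_0}}=\cP^{k+1}_0\circ F^{2R_{k+1}}$, a single deep projection after a long stretch of genuine dynamics, whereas $\hH_{(l-1)R_{n_0}}$ is built from a whole cascade of projections at the scales $n_0,\dots,k$; the two maps do not differ by one $F^{R_{n_0}}$-block and one projection. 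To make your scheme work at such an $l$ you must show that the entire accumulated cascade of projections in the partial $\chH$-composition can be ``undone'', i.e.\ that it recombines (on the relevant horizontal curves) into the single factor $\cP^{k+1}_0\circ F\circ(\cP^{n_0}_{-1})^{-1}\circ F^{R_k-1}$ applied after the projected dynamics; this requires \lemref{lem unproj} applied at scale $k$ together with \lemref{first entry hor graph} at scale $k+1$, and an application of the inductive hypothesis to the intermediate map $H^k_{R_k-1}|_{\cI^k_0}$ --- exactly the three-case structural induction on the combinatorial representation of $i$ that the paper carries out ($i=2R_{k+1}$, $i=a_kR_k$ with $a_k\geq 3$, and the mixed case). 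Your sketch cites the right lemmas but never performs this cancellation, and the one mechanism you do offer in its place is incorrect: $\cP^{n_0}_{-1}$ does not send the reference curve $\cI^{\hm_l+1}_{\ha_lR_{\hm_l}-1}$ to ``the same subarc of $\cI^{n_0}_{R_{n_0}-1}$ as the genuine image'' (the two curves have very different sizes); what is actually needed is that the projected orbit lies in the reference curves (\lemref{J spread}, \lemref{on center track}) so that the unprojection $(\cP^{n_0}_{-1}|_{\chcI^n_l})^{-1}$ is the correct lift, and that the genuine iterate equals the projected iterate followed by this single unprojection (\lemref{lem unproj}).

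A second, smaller omission: even granting the block-by-block identity away from roll-overs, you never verify that the combinatorial data of the intermediate indices match, i.e.\ that $\hm$, $\hk$, $\ha$ evaluated along the sub-blocks agree with those of the shifted indices (the paper's check that $\hm(e)=\hm(e-e_0)$ and $\ha(e)=\ha(e-e_0)$ for $e_0=(2r_k-1)R_k<e<2R_{k+1}$). Without this, the induction hypothesis applied to $\hH_{a_kR_k}$ and to $H^{k-1}_{i-a_kR_k}|_{\cI^k_0}$ does not automatically produce factors of the exact form $\chH_l$ demanded by the statement. Your treatment of the horizontality of $\chcI^n_l$ via \lemref{unproject} is essentially fine (the paper uses \propref{cK} iv) for the same purpose), but the core inductive step needs to be rebuilt along the lines above.
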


\begin{proof}
We proceed by induction. Clearly, the result is true for $i < 2R_{n_0}$. Suppose that the result is true for all $i' < i$.

First, suppose $i = 2R_{k+1}$ for some ${n_0} \leq k+1 < n$. Denote
$$
\Gamma_d := F^d(\ticI^n_{0, i})
\matsp{for}
0\leq d \leq i.
$$
By \lemref{H decomp}:
\begin{equation}\label{eq.decomp 1}
\hH_{2R_{k+1}}|_{\Gamma_0} = \cP^{k+1}_0 \circ F^{2R_{k+1}} = \cP^{k+1}_0 \circ F \circ F^{R_k-1} \circ F^{(2r_k-1)R_k}|_{\Gamma_0}.
\end{equation}

By \propref{cK} iv), $\Gamma_{(2r_k-1)R_k}$ is $\lambda^{-\bepsilon R_k}$-horizontal in $\cB^k_0$. So it follows from \lemref{first entry hor graph} that
$$
F^{R_k-1}|_{\Gamma_{(2r_k-1)R_k}}= \left(\cP^{n_0}_{-1}|_{\Gamma_{2R_{k+1}-1}}\right)^{-1} \circ F^{R_k-1} \circ  \cP^k_0|_{\Gamma_{(2r_k-1)R_k}}.
$$
Note that
$$
\hH_{(2r_k-1)R_k} = H^k_{(2r_k-1)R_k} = \cP^k_0 \circ F^{(2r_k-1)R_k}.
$$
Substituting into \eqref{eq.decomp 1}, we obtain
$$
\hH_{2R_{k+1}}|_{\Gamma_0} =\cP^{k+1}_0 \circ F \circ \left(\cP^{n_0}_{-1}|_{\Gamma_{2R_{k+1}-1}}\right)^{-1} \circ F^{R_k-1} \circ \hH_{(2r_k-1)R_k}|_{\Gamma_0}.
$$
By \lemref{lem unproj}, we have
$$
F^{R_k-1}|_{\cI^k_0} =\left(\cP^{n_0}_{-1}|_{\cI^k_{R_k-1}}\right)^{-1} \circ H^k_{R_k-1}|_{\cI^k_0}.
$$
Thus, we conclude:
$$
\hH_{2R_{k+1}}|_{\Gamma_0}=\cP^{k+1}_0 \circ F \circ \left(\cP^{n_0}_{-1}|_{\Gamma_{2R_{k+1}-1}}\right)^{-1} \circ H^k_{R_k-1}|_{\cI^k_0} \circ \hH_{(2r_k-1)R_k}|_{\Gamma_0}.
$$

We can apply the induction hypothesis to decompose $\hH_{(2r_k-1)R_k}$ into factors of the form $\chH_l$. Observe that for
$$
e_0:= (2r_k-1)R_k < e < 2R_{k+1},
$$
we have
$$
\hm(e) = \hm(e - e_0)< \hk(e) \leq k
\matsp{and}
\ha(e) = \ha(e-e_0).
$$
Hence, we can also apply the induction hypothesis to $H^k_{R_k-1}|_{\cI^k_1}$ to decompose them into factors of the form $\chH_l$. The claim follows.

Next, suppose that
$
i = a_kR_k
$
for some ${n_0} \leq k < n$ and $a_k \geq 3$. Proceeding in the same way as in the previous case, we obtain (in place of \eqref{eq.decomp 1}):
$$
\hH_i|_{\Gamma_0} = \cP^k_0 \circ F^{a_kR_k} = \cP^k_0 \circ F \circ F^{R_k-1}\circ F^{(a_k-1)R_k}|_{\Gamma_0}.
$$
The rest of the argument is identical {\it mutatis mutandis}.

Lastly, suppose that
$$
i = j + [a_{n_0}, \ldots, a_k]
$$
for some ${n_0}< k < n$ such that
$$
\hm(i) < k = \hk(i) < n.
$$
Then
$$
\hH_i =  H^{k-1}_{i-a_kR_k} \circ \cP^k_0 \circ F^{a_k R_k} = H^{k-1}_{i-a_kR_k}|_{\cI^k_0} \circ \hH_{a_kR_k}.
$$
Applying the induction hypothesis to $\hH_{a_kR_k}$ and $H^{k-1}_{i-a_kR_k}|_{\cI^k_0}$ and arguing as above, we obtain the desired result.
\end{proof}

Let $G : U \to G(U)$ be a $C^1$-diffeomorphism defined on a domain $U \subset \bbR^2$. For a $C^1$-curve $\Gamma \subset U$, we define the {\it cross-ratio distortion $\CrD(G, \Gamma)$ of $G$ on $\Gamma$} as the cross-ratio distortion of 
$$
G_\Gamma := \phi_{G(\Gamma)}^{-1}\circ G \circ \phi_\Gamma,
$$
where
$
\phi_\Gamma
$ and $
\phi_{G(\Gamma)}
$
are parameterizations of $\Gamma$ and $G(\Gamma)$ by their respective arclengths (see \secref{sec.koebe}).

\begin{prop}\label{bound crd}
Let $n_0 \leq n \leq N$ and $1 \leq i < R_n$. Then there exists a uniform constant $\nu > 0$ such that the maps $\hH_i$ and $\hH_{R_n-1}\circ \hH_i^{-1}$ have $\nu$-bounded cross-ratio distortion on $\ticI^n_{0, i}$ and $\hH_i(\ticI^n_{0, R_n-1})$ respectively.
\end{prop}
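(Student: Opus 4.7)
The plan is to reduce both $\hH_i$ and $\hH_{R_n-1} \circ \hH_i^{-1}$ to 1D compositions to which the Koebe cross-ratio distortion principle of Section~\ref{sec.koebe} applies. The starting point is Proposition~\ref{H decomp fine}, which factors
$$
\hH_i|_{\ticI^n_{0, i}} = F^j|_{\cI^{n_0}_0} \circ \chH_s \circ \chH_{s-1} \circ \ldots \circ \chH_3 \circ \cP^{n_0}_0 \circ F^{2R_{n_0}}|_{\ticI^n_{0, i}},
$$
where each intermediate curve $\chcI^n_l$ is $\lambda^{(1-\bepsilon)R_{\hm_l}}$-horizontal. The first step is to identify each core block $\cP^{\hm_l}_0 \circ F \circ (\cP^{n_0}_{-1}|_{\chcI^n_l})^{-1}$ with a pure quadratic power map via Lemma~\ref{quad mapping}: parametrizing $\chcI^n_l$ as the graph of a $C^{r+3}$-function through $\Phi_{-1}$, and absorbing the boundedly-distorted chart change $\Phi_0 \circ (\Psi^{\hm_l})^{-1}$ from Theorem~\ref{crit chart}~i), one writes the block as $x \mapsto \kappa_F (\psi_l(x))^2 + a_l$ for a $C^r$-diffeomorphism $\psi_l$ of uniformly bounded $C^r$-norm. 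The diffeomorphic bridges $F^{R_{n_0}-1}|_{\cI^{n_0}_0}$ between consecutive quadratic blocks, as well as the terminal factor $F^j$, carry a uniform $C^r$-bound depending only on $\|F^{R_{n_0}}|_{\cB^{n_0}}\|_{C^6}$. Thus, read through the fixed parametrization of $\cI^{n_0}_0$, the composition $\hH_i$ is a concatenation of bounded-norm $C^r$-diffeomorphisms interleaved with pure quadratic factors.

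Next I would verify the two hypotheses needed by the 1D Koebe principle. The Koebe space at every intermediate level is provided by Proposition~\ref{ticI}: each image $\ticJ^n_{j, i-j} = \hH_j(\ticI^n_{0, i})$ is strictly contained in the slightly larger $\ticJ^n_j$. The bounded total length of disjoint images follows from Lemma~\ref{J order} (which controls the $\cI^{n_0}_*$-ordering of the $\cJ^n_*$'s with multiplicity at most three) together with Proposition~\ref{length bound} (uniform bound on $\sum_j |\ticJ^n_j|$). Since pure quadratic maps $x \mapsto x^2 + a$ are cross-ratio contracting and bounded-$C^2$-norm diffeomorphisms have cross-ratio distortion controlled by the exponential of a constant times the total length they traverse, chaining the estimates yields a uniform bound $\CrD(\hH_i, \ticI^n_{0, i}) < \nu$. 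The bound for $\hH_{R_n-1} \circ \hH_i^{-1}$ on the subarc $\hH_i(\ticI^n_{0, R_n-1}) = \ticJ^n_{i, R_n-1-i} \subset \ticJ^n_i$ follows by exactly the same argument, applied to the tail of the decomposition of $\hH_{R_n-1}$ from index $i$ onward: this tail composition involves only the sub-collection $\{\ticJ^n_j\}_{j > i}$ of pairwise-disjoint arcs, whose total length still obeys Proposition~\ref{length bound}.

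The main obstacle is ensuring that Lemma~\ref{quad mapping} applies uniformly at every factor $\chH_l$: one needs the $C^{r+3}$-norm of the graphing function $g_l$ of $\chcI^n_l$ to be under control (which is precisely why this section assumes three extra degrees of smoothness, $C^{r+4}$, on $F$) as well as the $C^2$-smallness $\|g_l\|_{C^2} < \underline{\kappa_F}$, both of which flow from the super-exponential $\lambda^{(1-\bepsilon)R_{\hm_l}}$-horizontality established in Proposition~\ref{H decomp fine}. A secondary combinatorial subtlety arises when a factor $\chH_l$ transitions between distinct renormalization depths $\hm_l < \hk_l$, where the identification of $\chcI^n_l$ with $\cI^{\hm_l+1}_{\ha_l R_{\hm_l}-1}$ must remain compatible with the disjointness structure underlying the 1D Koebe principle; this compatibility is guaranteed by Lemmas~\ref{real neigh} and \ref{fake neigh} together with Proposition~\ref{ticI}.
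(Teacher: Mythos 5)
Your proposal is correct and follows essentially the same route as the paper: the decomposition from Proposition~\ref{H decomp fine}, the quadratic factorization of each block via Lemma~\ref{quad mapping}, negative-Schwarzian control of the power factors (Lemma~\ref{power crd}), and a Denjoy-type bound on the diffeomorphic factors using bounded second derivatives together with the total-length control from Propositions~\ref{length bound} and~\ref{ticI} and Lemma~\ref{d crd}, with $\hH_{R_n-1}\circ\hH_i^{-1}$ handled by the tail of the same decomposition. The only quibble is directional: with the paper's convention, $\nu$-bounded means $\CrD > \nu$, so the quadratic factors are cross-ratio non-decreasing and the conclusion is a lower bound rather than the upper bound ``$\CrD(\hH_i,\ticI^n_{0,i})<\nu$'' you wrote.
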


\begin{proof}
Consider the decomposition of $\hH_i$ given in \propref{H decomp fine}:
$$
\hH_i|_{\ticI^n_{0, i}} = F^j|_{\cI^{n_0}_0} \circ \chH_s \circ \ldots \circ \chH_3 \circ \cP^{n_0}_0 \circ F^{2R_{n_0}}|_{\ticI^n_{0, i}}.
$$
Denote
$$
\cJ := \cP^{n_0}_0 \circ F^{2R_{n_0}}(\ticI^n_{0,i})
\matsp{and}
\chH := \chH_s \circ \ldots \circ \chH_3.
$$
To prove the cross-ratio distortion bound for $\hH_i$, it suffices to prove it for $\chH$ on $\cJ$.

The maps
$$
\phi_0 := (P^{n_0}_0|_{\cI^{n_0}_0})^{-1} : I^{n_0}_0 \to \cI^{n_0}_0
\matsp{and}
\phi_{-1} := (P_{-1}|_{\cI^{n_0}_{R_{n_0}-1}})^{-1} : I^{n_0}_{R_{n_0}-1} \to \cI^{n_0}_{R_{n_0}-1}
$$
give parameterizations of $\cI^{n_0}_0$ and $\cI^{n_0}_{R_{n_0}-1}$. Denote
$$
J_2 := \phi_0^{-1}(\cJ)
\matsp{and}
h_1 := \phi_{-1}^{-1} \circ F^{R_{n_0}-1}|_{\cI^{n_0}_0} \circ \phi_0.
$$

For $3 \leq l \leq s$, let
$$
H_l := \phi_0^{-1} \circ \chH_l \circ \ldots \circ \chH_3 \circ \phi_0;
$$
and
$$
J_l' := h_1(J_{l-1})
\matsp{and}
J_l := H_l(J_2).
$$
By \propref{H decomp fine} and \lemref{quad mapping}, there exist a diffeomorphism $\psi_l : J_l' \to \psi_l(J_l')$ and a constant $a_l \in \bbR$ such that 
$$
H_l(x) = (\psi_l \circ h_1 \circ H_{l-1}(x))^2+a_l.
$$

By \eqref{eq.crd product} and \lemref{power crd}, we see that
$$
\CrD(\chH, \cJ) \asymp \CrD(H_s, J_2) > \left(\prod_{l=2}^{s-1} \CrD(h_1, J_l)\right) \cdot \left(\prod_{l=3}^s \CrD(\psi_l, J_l')\right).
$$
Note that the diffeomorphisms $h_1$ and $\{\psi_l\}_{l=3}^s$ have uniformly bounded second derivatives. Moreover, Propositions \ref{length bound} and \ref{ticI} implies that the total length of $\{J_l, J_l'\}_{l=3}^s$ is uniformly bounded. The bound on the cross ratio distortion of $\hH_i$ now follows from \lemref{d crd}.

Now, consider the decomposition of $\hH_{R_n-1}$ on $\ticI^n_{0, R_n-1}$:
$$
\hH_{R_n-1}|_{\ticI^n_{0, R_n-1}} =F^{R_{n_0}-1}|_{\cI^{n_0}_0} \circ \chH_S \circ \ldots \circ \chH_3 \circ \cP^{n_0}_0 \circ F^{2R_{n_0}}|_{\ticI^n_{0, R_n-1}},
$$
where $S := R_n/R_{n_0}-1$. The same argument as above implies the bound on the cross ratio distortion of
$$
\hH_{R_n-1} \circ \hH_i^{-1}|_{\cI} = F^{R_{n_0}-1}|_{\cI^{n_0}_0} \circ \chH_S \circ \ldots \circ \chH_{S-s} \circ F^{R_{n_0}-1-j}|_{\cI}
$$
on $\cI := \hH_i(\ticI^n_{0, R_n-1})$.
\end{proof}

\begin{proof}[Proof of \thmref{a priori}]
Consider the arcs $\{\cJ^n_i\}_{i=0}^{R_n-1}$. There exists $2R_{n_0}\leq i_1 <R_n$ such that
$$
|\cJ^n_{\iota^n_+(i_1)}|\; ,\; |\cJ^n_{\iota^n_-(i_1)}| > k|\cJ^n_{i_1}|
$$
for some uniform constant $k >0$. By \propref{ticI}, there exists an arc $\ticI^n_{0, i_1} \supset \cI^n_0$ which is mapped diffeomorphically onto $\ticJ^n_{i_1}$ by $\hH_{i_1}$.

Recall that the nearest neighbor of $\cI^n_0$ in $\cI^{n_0}_0$ is given by $\cJ^n_{\chi_{n-1}R_{n-1}}$. Let $\hcI^n_0$ be the convex hull of $\cI^n_0 \cup \cJ^n_{\chi_{n-1}R_{n-1}}$. Then
$$
(\ticI^n_{0, i_1} \cap \cI^{n_0}_0) \setminus \cI^n_0 \subset \hcI^n_0 \setminus \cI^n_0.
$$
Hence, \propref{bound crd} and \thmref{koebe} imply
$$
\left|\hcI^n_0 \setminus \cI^n_0\right| > k \left| \cI^n_0\right|.
$$
By \lemref{close pieces near crit}, we conclude that the two components of $\ticJ^n_{R_n-1} \setminus \cJ^n_{R_n-1}$ have lengths greater than $k\left|\cJ^n_{R_n-1}\right|$. By \propref{ticI}, $\hH_{R_n-1}$ maps $\ticI^n_{0, R_n-1} \supset \cI^n_0$ diffeomorphically onto $\ticJ^n_{R_n-1}$. The result now follows from \propref{bound crd} and \thmref{koebe}.
\end{proof}


\appendix


\section{Quantitative Pesin Theory}\label{sec.pesin}

In this section, we summarize the results in \cite{CLPY1}. Let $r \geq 2$ be an integer, and consider a $C^{r+1}$-diffeomorphism $F : \cB \to F(\cB) \Subset \cB$, where $\cB \subset \bbR^2$ is a bounded domain. Let $\lambda, \epsilon \in (0,1)$ with $\bepsilon < 1$.

Let $p_0 \in \cB$ and $E^v_{p_0} \in \bbP^2_{p_0}$. For $m \in \bbZ$, decompose the tangent space at $p_m$ as
$$
\bbP^2_{p_m} = (E^v_{p_m})^\perp \oplus E^v_{p_m}.
$$
In this decomposition, we have
$$
D_{p_m}F =: \begin{bmatrix}
\alpha_m & 0\\
\zeta_m & \beta_m
\end{bmatrix},
$$
where $\alpha_m, \beta_m >0$ and $\zeta_m \in \bbR$.

For some $M, N \in \bbN \cup \{0, \infty\}$ and $L \geq 1$, suppose for $s \in \{0, 1\}$, we have
$$
L^{-1}\lambda^{(1+\epsilon)n} \leq \frac{\beta_{0} \ldots \beta_{{n-1}}}{(\alpha_0 \ldots \alpha_{n-1})^s}\leq L\lambda^{(1-\epsilon)n}
\matsp{for}
1 \leq n \leq N,
$$
and
$$
L^{-1}\lambda^{(1+\epsilon)n}\leq \frac{\beta_{{-n}} \ldots \beta_{{-1}}}{(\alpha_{{-n}} \ldots \alpha_{{-1}})^s}
\leq L\lambda^{(1-\epsilon)n}
\matsp{for}
1 \leq n \leq M.
$$
Then we say that $p_0$ is {\it $(M, N)$-times $(L, \epsilon, \lambda)$-regular along $E^v_{p_0}$}.

\begin{prop}[Growth in irregularity]\cite[Proposition 5.5]{CLPY1}\label{grow irreg}
For $-M \leq m \leq N$, let $\cL_{p_m} \geq 1$ be the minimum value such that $p_m$ is $(M+m, N-m)$-times $(\cL_{p_m}, \epsilon, \lambda)$-regular along $E^v_{p_m}$. Then
$$
\cL_{p_m} < \bL\lambda^{-\bepsilon |m|}.
$$
\end{prop}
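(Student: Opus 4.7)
The plan is to express the products of $\beta$'s and $\alpha$'s appearing in the regularity condition at the shifted base point $p_m$ as quotients (or products) of analogous expressions at $p_0$, to which the given hypothesis applies directly. For $s \in \{0,1\}$ and integers $a \leq b$, abbreviate
$$
P^{(s)}_{a,b} := \frac{\beta_a \beta_{a+1} \cdots \beta_{b-1}}{(\alpha_a \alpha_{a+1} \cdots \alpha_{b-1})^s},
$$
with the convention $P^{(s)}_{a,a} := 1$. The hypothesis that $p_0$ is $(M,N)$-times $(L,\epsilon,\lambda)$-regular along $E^v_{p_0}$ is precisely $L^{-1}\lambda^{(1+\epsilon)n} \leq P^{(s)}_{0,n} \leq L\lambda^{(1-\epsilon)n}$ for $1 \leq n \leq N$ together with the analogous bound on $P^{(s)}_{-n,0}$ for $1 \leq n \leq M$, while the quantities to be controlled at $p_m$ are $P^{(s)}_{m,m+k}$ for $1 \leq k \leq N-m$ (forward) and $P^{(s)}_{m-k,m}$ for $1 \leq k \leq M+m$ (backward).

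Assume first $0 \leq m \leq N$. Using the multiplicativity $P^{(s)}_{a,c} = P^{(s)}_{a,b}\cdot P^{(s)}_{b,c}$ for $a \leq b \leq c$, I would write $P^{(s)}_{m,m+k} = P^{(s)}_{0,m+k}/P^{(s)}_{0,m}$ and divide the upper bound on the numerator by the lower bound on the denominator (and vice versa). The key exponent computation
$$
(1-\epsilon)(m+k) - (1+\epsilon)m = (1-\epsilon)k - 2\epsilon m
$$
produces an extra factor $\lambda^{-2\epsilon m}$ in the upper bound (and $\lambda^{2\epsilon m}$ in the lower), with a constant factor $L^2$, yielding
$$
L^{-2}\lambda^{2\epsilon m}\lambda^{(1+\epsilon)k} \;\leq\; P^{(s)}_{m,m+k} \;\leq\; L^2\lambda^{-2\epsilon m}\lambda^{(1-\epsilon)k}.
$$
For the backward quantity I split on whether the interval $[m-k,m]$ straddles $0$: if $k \leq m$ it lies in the forward orbit of $p_0$ and the same telescoping applies; if $k > m$ then $P^{(s)}_{m-k,m} = P^{(s)}_{-(k-m),0}\cdot P^{(s)}_{0,m}$ is a \emph{product} (not a quotient), and direct multiplication of the upper (resp.\ lower) bounds on the two factors gives the strictly stronger estimate, with constant $L^2$ and no $\lambda^{-2\epsilon m}$ factor. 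The case $-M \leq m < 0$ is the mirror image: the backward quantity stays inside the backward orbit of $p_0$ and is a quotient (with the same $\lambda^{-2\epsilon|m|}$ cost), while the forward quantity either stays inside the backward orbit (quotient case) or straddles $0$ (product case, giving the stronger bound).

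Combining all cases, the worst constant obtained is $L^2\lambda^{-2\epsilon|m|}$, which is of the required form $\bL\lambda^{-\bepsilon|m|}$ since $\bL$ absorbs the factor $L^2$ and $\bepsilon$ absorbs the coefficient $2\epsilon$. There is no genuine obstacle; the only step requiring care is the exponent bookkeeping in the quotient-of-products step, and the rest of the argument is mechanical telescoping.
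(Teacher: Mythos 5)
Your proof is correct: the telescoping identity $P^{(s)}_{m,m+k}=P^{(s)}_{0,m+k}/P^{(s)}_{0,m}$ (resp.\ the product decomposition across the base point when the index range straddles $0$), together with the exponent bookkeeping $(1-\epsilon)(m+k)-(1+\epsilon)m=(1-\epsilon)k-2\epsilon|m|$, yields the bound $\cL_{p_m}\le L^2\lambda^{-2\epsilon|m|}$, which the $\bL$ and $\bepsilon$ conventions absorb into the stated form. The present paper only quotes this proposition from \cite{CLPY1} without reproducing its proof, and your argument is the standard one that this statement is based on, so there is nothing further to reconcile.
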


\subsection{Linearization}

For $w, l > 0$, denote
$$
\bbB(w, l) := (-w, w)\times (-l, l) \subset \bbR^2
\matsp{and}
\bbB(l) := \bbB(l, l).
$$

\begin{thm}[Regular charts]\cite[Theorem 6.1]{CLPY2}\label{reg chart}
There exists a uniform constant
$$
C = C(\|DF\|_{C^r}, \lambda^{-\epsilon}) \geq 1
$$
such that the following holds. For $-M \leq m \leq N$, let
$$
\omega := \frac{\lambda^{1-\epsilon}}{1-\lambda^{1-\epsilon}}\cdot \|DF^{-1}\|\cdot\|DF\|
\matsp{and}
\cK_{p_m} := \bL(1+\omega)^5\|DF^{-1}\|\lambda^{1-\bepsilon |m|}.
$$
Define
$$
U_{p_m} := \bbB(l_{p_m})
\matsp{where}
l_{p_m} := \lambda^{1+\bepsilon}(C\cK_{p_m})^{-1}.
$$
Then there exists a $C^r$-chart $\Phi_{p_m} : (\cU_{p_m}, p_m) \to (U_{p_m}, 0)$ such that $D\Phi_{p_m}(E^v_{p_m}) = E^{gv}_0$, 
$$
\|D\Phi_{p_m}^{-1}\|_{C^{r-1}} < C(1+\omega)
\comma
\|D\Phi_{p_m}\|_{C^s} < C\cK_{p_m}^{s+1}
\matsp{for}
0 \leq s <r,
$$
and the map $\Phi_{p_{m+1}} \circ F|_{\cU_{p_m}} \circ \Phi_{p_m}^{-1}$ extends to a globally defined $C^r$-diffeomorphism $F_{p_m} : (\bbR^2, 0) \to (\bbR^2, 0)$ satisfying the following properties:
\begin{enumerate}[i)]
\item $\displaystyle \|DF_{p_m}\|_{C^{r-1}} \leq \|DF\|_{C^r}$;
\item we have
$$
D_0F_{p_m} =\begin{bmatrix}
a_m & 0\\
0 & b_m
\end{bmatrix},
\matsp{where}
\lambda^{\bepsilon} < a_m < \lambda^{-\bepsilon}
\matsp{and}
\lambda^{1+\bepsilon} < b_m < \lambda^{1-\bepsilon}.
$$
\item $\|D_z F_{p_m} - D_0F_{p_m}\|_{C^0} < \lambda^{1+\bepsilon}$ for $z \in \bbR^2$;
\item we have
$$
F_{p_m}(x,y) = (f_{p_m}(x), e_{p_m}(x,y))
\matsp{for}
(x,y) \in \bbR^2,
$$
where $f_{p_m}:(\bbR, 0) \to (\bbR, 0)$ is a $C^r$-diffeomorphism, and $e_{p_m} : \bbR^2 \to \bbR$ is a $C^r$-map such that for all $0 \leq s \leq r$, we have
$$
\partial_x^s e_{p_m}(\cdot, y) \leq \|DF\|_{C^r} |y|
\matsp{for}
y \in \bbR.
$$
\end{enumerate}
\end{thm}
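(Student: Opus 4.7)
The plan is to construct $\Phi_{p_m}$ by combining standard non-stationary Pesin linearization with a straightening of the strong-stable leaf through $p_m$ and of an auxiliary center transversal. The hypothesis that $p_0$ is $(M,N)$-times $(L,\epsilon,\lambda)$-regular along $E^v_{p_0}$ propagates along the orbit by \propref{grow irreg}: each $p_m$ inherits the analogous regularity with irregularity factor $\bL\lambda^{-\bepsilon|m|}$. The $s=0$ bound supplies contraction along $E^v_{p_m}$ at rate $\approx\lambda$, and the $s=1$ bound supplies the dominated splitting separating $E^v_{p_m}$ from transversal directions. This combined factor is precisely what is absorbed into $\cK_{p_m}$.

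The first step is to introduce, at each $p_m$, a Lyapunov-adapted inner product. On $E^v_{p_m}$ one uses the weighted forward sum
\[
\|u\|^v_{p_m} := \Bigl(\sum_{n\geq 0}\lambda^{-2(1-\bepsilon)n}\|D_{p_m}F^n u\|^2\Bigr)^{1/2},
\]
which converges thanks to the $s=0$ estimate and yields a contraction rate $b_m\in(\lambda^{1+\bepsilon},\lambda^{1-\bepsilon})$. The dual construction on an orthogonal transversal direction, using the $s=1$ estimate, gives the neutral rate $a_m\in(\lambda^{\bepsilon},\lambda^{-\bepsilon})$. The linear change of coordinates from the ambient metric to this Lyapunov metric has operator norm $\lesssim\cK_{p_m}$ and inverse norm $\lesssim(1+\omega)$; this is the source of the asymmetry between the two bounds on $\|D\Phi_{p_m}^{\pm 1}\|$.

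The second step promotes the linear construction to a genuine $C^r$ chart. Using the domination, a contracting graph-transform in $C^r$ sections produces, on scale $l_{p_m}$, a $C^r$ strong-stable leaf $W^{ss}_{p_m}$ tangent to $E^v_{p_m}$. A center transversal $\gamma_{p_m}$ at $p_m$ is chosen \emph{consistently along the orbit}: one fixes a $C^r$ transversal at a reference point and propagates it by $F$, the dominated splitting keeping the successive images uniformly transverse to $E^v$ with controlled $C^r$-geometry. The chart $\Phi_{p_m}$ is the composition of $(\gamma_{p_m},W^{ss})$-coordinates with the linear Lyapunov rescaling of each axis. By construction, $F_{p_m}$ sends the $x$-axis $\Phi_{p_m}(\gamma_{p_m})$ into the $x$-axis at step $m+1$, and the $y$-axis $\Phi_{p_m}(W^{ss}_{p_m})$ into the $y$-axis. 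This forces the triangular form $F_{p_m}(x,y)=(f_{p_m}(x),e_{p_m}(x,y))$ with $e_{p_m}(x,0)\equiv 0$, whence property (iv) follows from Taylor expansion in $y$ and the $C^r$ bound on $F$. Property (ii) is tautological from the normalization, and (iii) is achieved by shrinking the domain to the stated size $l_{p_m}=\lambda^{1+\bepsilon}(C\cK_{p_m})^{-1}$. The global extension of $F_{p_m}$ to $\bbR^2$ is then a routine cut-off interpolating with the affine map $D_0F_{p_m}$, preserving (i)--(iv).

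The main obstacle is quantitative: the $C^r$-regularity of $W^{ss}_{p_m}$ and of the transversal family $\{\gamma_{p_m}\}$ must be propagated along the orbit with sharp constants that grow no faster than $\cK_{p_m}^{s+1}$ at each order $s<r$. This requires the graph-transform contraction to dominate the expansion rate in the $s$-jet bundle for every $0\leq s<r$, which is where the assumption $\bepsilon<1$ and the $C^{r+1}$ smoothness of $F$ enter: they leave a margin between the contraction rate $\lambda^{s+1}$ along $y$ and the neutral rate along $x$ throughout the $r$-jet tower. The bookkeeping is a Fa\`a di Bruno computation whose loss at each iterate is precisely one factor of $\cK_{p_m}$. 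Once this regularity is in place, properties (i)--(iv) are direct consequences of the construction.
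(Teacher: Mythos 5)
This theorem is stated in the paper as a citation to \cite[Theorem 6.1]{CLPY2}; the current paper contains no proof of it, so I cannot compare your argument against one. On its own merits, your proposal is in the right Pesin-theoretic spirit (Lyapunov-adapted metrics, graph transforms, propagating constants along the orbit via \propref{grow irreg}), but there is a genuine gap at the step where you deduce the triangular structure of property (iv).

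You argue that because $F_{p_m}$ sends the $x$-axis $\Phi_{p_m}(\gamma_{p_m})$ into the $x$-axis and the $y$-axis $\Phi_{p_m}(W^{ss}_{p_m})$ into the $y$-axis, ``this forces the triangular form $F_{p_m}(x,y)=(f_{p_m}(x),e_{p_m}(x,y))$.'' That inference is false: preserving both coordinate axes does not force the first coordinate of the image to be independent of $y$. For instance $(x,y)\mapsto(x+xy^2,\,y/2)$ fixes each axis yet is not of the required form. What property (iv) actually encodes --- and what the rest of the paper uses, e.g.\ the remark following \thmref{reg chart} that $DF(E^v_q)=E^v_{F(q)}$ for all $q\in\cU_{p_m}$ --- is that the chart straightens an entire $C^r$ \emph{vertical foliation} on $\cU_{p_m}$ that is invariant under $F$ from one regular neighborhood to the next, not merely a single strong-stable leaf through $p_m$. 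Your construction produces ``$(\gamma_{p_m},W^{ss})$-coordinates,'' i.e.\ one transversal and one leaf, which does not determine a coordinate system on a two-dimensional neighborhood, and your argument never constructs the invariant foliation whose leaves become the lines $\{x=\mathrm{const}\}$. Once one does construct such a foliation (say by pulling back a smooth vertical direction field from $p_N$ under $DF^{-1}$, so that domination makes the pullbacks contract in the $C^{r-1}$ topology of direction fields), the triangular form and the Taylor bound $|\partial_x^s e_{p_m}(\cdot,y)|\leq\|DF\|_{C^r}|y|$ follow as you suggest --- but this foliation is the key object, and your sketch never builds it.

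A secondary but related imprecision: you invoke ``a $C^r$ strong-stable leaf $W^{ss}_{p_m}$'' as though it is canonical. For a finite orbit segment ($N<\infty$) the classical strong-stable manifold does not exist, and even when $N=\infty$ the strong-stable \emph{foliation} is only H\"older transversally in general. The theorem's vertical foliation is a $C^r$ object that is invariant only over the finite window $-M\leq m\leq N$; it is not the same as the global stable foliation. Your global cutoff argument for extending $F_{p_m}$ also needs to be checked to preserve the triangular structure (iv), which is not automatic for an arbitrary interpolation with $D_0F_{p_m}$; the cutoff must respect the vertical foliation.
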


The construction in \thmref{reg chart} is referred to as {\it a linearization of $F$ along the $(M,N)$-orbit of $p_0$ with vertical direction $E^v_{p_0}$}. For $-M \leq m \leq N$, we refer to $l_{p_m}$, $\cU_{p_m}$, $\Phi_{p_m}$ and $F_{p_m}$ as a {\it regular radius}, a {\it regular neighborhood}, a {\it regular chart} and a {\it linearized map at $p_m$} respectively.

For $p \in \bbR^2$ and $t > 0$, let
$$
\bbD_p(t) := \{\|q - p\| < t\}.
$$

\begin{lem}\cite[Lemma 6.2]{CLPY1}\label{size reg nbh}
For $-M \leq m \leq N$, we have
$$
\cU_{p_m} \supset \bbD_{p_m}\left(\frac{\lambda^{1+\bepsilon}}{C^2\cK_{p_m}^2}\right),
$$
where $C, \cK_{p_m} \geq 1$ are given in \thmref{reg chart}.
\end{lem}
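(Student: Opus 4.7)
The plan is to invert the control provided by \thmref{reg chart}: the chart $\Phi_{p_m}$ satisfies the bound $\|D\Phi_{p_m}\|_{C^0} < C\cK_{p_m}$ (take $s=0$ in the $C^s$-estimate from that theorem), while its image is the square $U_{p_m}=\bbB(l_{p_m})$ with $l_{p_m} = \lambda^{1+\bepsilon}(C\cK_{p_m})^{-1}$. Since the claimed radius $r := \lambda^{1+\bepsilon}/(C^2\cK_{p_m}^2)$ satisfies exactly $r\cdot C\cK_{p_m} = l_{p_m}$, the main task is to convert the derivative bound into a lower bound on the preimage of the chart.

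The first step is to extract from the construction of $\Phi_{p_m}$ in \cite{CLPY1} the fact that the chart is actually defined and $C^1$-bounded on a neighborhood of $p_m$ that is a priori larger than the ball $\bbD_{p_m}(r)$, so that the $C^0$-bound on $D\Phi_{p_m}$ is genuinely available on the segment joining $p_m$ to any point of that ball. (The set $\cU_{p_m}$ itself is just the subset on which the chart image lies in $\bbB(l_{p_m})$; the chart extends as a diffeomorphism to a slightly larger set.)

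The second step is then routine: for any $q \in \bbD_{p_m}(r)$, apply the mean value inequality along the straight segment $[p_m, q]$ to obtain
\begin{equation*}
\|\Phi_{p_m}(q)\| = \|\Phi_{p_m}(q) - \Phi_{p_m}(p_m)\| \leq \|D\Phi_{p_m}\|_{C^0}\cdot \|q - p_m\| < C\cK_{p_m}\cdot r = l_{p_m}.
\end{equation*}
Since the Euclidean ball of radius $l_{p_m}$ is contained in the square $\bbB(l_{p_m})$, this gives $\Phi_{p_m}(q) \in U_{p_m}$, hence $q \in \Phi_{p_m}^{-1}(U_{p_m}) = \cU_{p_m}$, as required.

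The only nontrivial point is the first step, i.e. verifying that the chart indeed extends with the same derivative bound beyond the restricted domain $\cU_{p_m}$. This is a matter of bookkeeping inside the Pesin-type construction of \cite{CLPY1}, where the coordinates are built on a slightly enlarged tubular neighborhood before being cut down to the square $\bbB(l_{p_m})$; no new dynamical input is needed. Once this extension is in hand, the mean value estimate above is immediate and the lemma follows.
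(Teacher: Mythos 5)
Your core computation is exactly right: the claimed radius $r$ satisfies $r \cdot C\cK_{p_m} = l_{p_m}$, so the $C^0$-bound $\|D\Phi_{p_m}\|_{C^0} < C\cK_{p_m}$ from \thmref{reg chart} (taking $s=0$) combined with the mean value inequality is the whole proof. However, the concern that drives your ``first step'' --- that you must extend the chart beyond $\cU_{p_m}$ to justify the mean value estimate along the straight segment $[p_m,q]$ --- is unfounded, and you should not defer it to unseen details of the construction in \cite{CLPY1}. The estimate is valid entirely within $\cU_{p_m}$ via a standard first-exit argument. For $q$ with $\|q-p_m\|<r$, set $\gamma(t)=(1-t)p_m+tq$ and $T:=\sup\{t\in[0,1]:\gamma([0,t])\subset\cU_{p_m}\}$; since $\cU_{p_m}$ is open and contains $p_m$, $T>0$. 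For all $t<T$ the integral estimate along $\gamma|_{[0,t]}$ gives $\|\Phi_{p_m}(\gamma(t))\| \le C\cK_{p_m}\,t\,\|q-p_m\| < l_{p_m} - \delta$ for a fixed $\delta>0$, so $\Phi_{p_m}(\gamma([0,T)))$ stays in the compact subset $\overline{\bbD_0(l_{p_m}-\delta)}\subset U_{p_m}$. Continuity of $\Phi_{p_m}^{-1}$ on $U_{p_m}$ then forces $\gamma(T)\in\cU_{p_m}$, and openness of $\cU_{p_m}$ forces $T=1$, so $q\in\cU_{p_m}$. This makes the proof self-contained; nothing about the construction beyond what \thmref{reg chart} already states is needed, and you should not characterize the domain issue as ``nontrivial bookkeeping'' requiring inspection of the Pesin-theoretic construction. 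One minor caution: the bound you want is on $\|D\Phi_{p_m}\|$, not $\|D\Phi_{p_m}^{-1}\|$; the latter (bounded by $C(1+\omega)$) would only give an \emph{upper} bound on the size of $\cU_{p_m}$, which goes the wrong way --- you correctly use the former, but it is worth being aware of the distinction.
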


\subsection{$C^1$-estimates}

\begin{prop}[Jacobian bounds]\cite[Proposition 6.14]{CLPY2}\label{jac bound}
We have
$$
\bL^{-1}\lambda^{(1+\bepsilon)n} \leq \Jac_{p_0}F^n \leq \bL\lambda^{(1-\bepsilon)n}
\matsp{for}
1 \leq n \leq N,
$$
and
$$
\bL^{-1}\lambda^{-(1-\bepsilon)n}\leq \Jac_{p_0}F^{-n} \leq \bL\lambda^{-(1+\bepsilon)n}
\matsp{for}
1 \leq n \leq M.
$$
\end{prop}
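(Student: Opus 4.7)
The plan is to read off the Jacobian from the triangular representation of $DF$ in the regularity frame. In the orthonormal decomposition $\bbP^2_{p_m} = (E^v_{p_m})^\perp \oplus E^v_{p_m}$, the matrix of $D_{p_m}F$ is lower triangular with diagonal entries $\alpha_m, \beta_m$, so $\Jac_{p_m} F = \alpha_m \beta_m$. By the chain rule, this gives
$$
\Jac_{p_0} F^n = \prod_{m=0}^{n-1} \alpha_m \beta_m
\matsp{and}
\Jac_{p_0} F^{-n} = \Big(\prod_{m=-n}^{-1} \alpha_m \beta_m\Big)^{-1},
$$
so it suffices to bound $\prod \alpha_m$ and $\prod \beta_m$ separately and then multiply.

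The next step is to extract those bounds directly from the two clauses of the regularity definition. For $1 \leq n \leq N$, applying the $s=0$ clause gives
$$
L^{-1}\lambda^{(1+\epsilon)n} \leq \prod_{m=0}^{n-1}\beta_m \leq L\lambda^{(1-\epsilon)n},
$$
while the $s=1$ clause gives the same inequality with $\prod_{m=0}^{n-1}\beta_m$ replaced by $\prod_{m=0}^{n-1}\beta_m/\alpha_m$. Dividing the second bound by the first yields
$$
L^{-2}\lambda^{2\epsilon n} \leq \prod_{m=0}^{n-1}\alpha_m \leq L^{2}\lambda^{-2\epsilon n},
$$
and multiplying the resulting bounds on $\prod\alpha_m$ and $\prod\beta_m$ produces
$$
L^{-3}\lambda^{(1+3\epsilon)n} \leq \Jac_{p_0}F^n \leq L^3\lambda^{(1-3\epsilon)n}.
$$
Absorbing $L^3$ into $\bL$ and $3\epsilon$ into $\bepsilon$, per the notational conventions fixed in the introduction, gives the forward claim. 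The backward statement follows by the identical argument applied to the negative-index clauses of the regularity definition, followed by the inversion recorded above.

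No genuine obstacle is anticipated: this is essentially a linear-algebraic consequence of the fact that the two values of $s$ in the regularity definition are precisely tailored to control the two diagonal entries of $DF$ in the regularity frame. The only mild point to be careful about is the orientation of inequalities when inverting for $F^{-n}$: since $\lambda < 1$, inverting the lower bound on $\prod_{m=-n}^{-1}\alpha_m\beta_m$ yields the upper bound $\bL\lambda^{-(1+\bepsilon)n}$ on $\Jac_{p_0}F^{-n}$, matching the direction claimed in the statement.
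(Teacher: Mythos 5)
Your proof is correct, and it is essentially the unique sensible approach: read off $\Jac_{p_m}F = \alpha_m\beta_m$ from the lower-triangular form of $DF$ in the (orthonormal) regularity frame, chain-rule to get the Jacobian of $F^n$ as $\prod\alpha_m\beta_m$, and then combine the $s=0$ and $s=1$ clauses of the regularity definition to bound it. This is the computation the cited proposition in \cite{CLPY2} performs; the only cosmetic difference is that one can shortcut the intermediate bound on $\prod\alpha_m$ by writing $\Jac_{p_0}F^n = \bigl(\prod\beta_m\bigr)^2 / \bigl(\prod \beta_m/\alpha_m\bigr)$ and bounding numerator and denominator directly, but that yields the same $L^3$ and $3\epsilon$, absorbed into $\bL$ and $\bepsilon$ exactly as you note. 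The inversion for $F^{-n}$ and the resulting reversal of the inequality directions are handled correctly.
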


\begin{prop}[Derivative bounds]\cite[Proposition 6.15]{CLPY2}\label{ext deriv bound}
Let $C \geq 1$ and $\omega >0$ be the uniform constants given in \thmref{reg chart}. For $E_{p_0} \in \bbP^2_{p_0}$, we have
$$
\frac{\lambda^{(1+\bepsilon)n}}{C\bL(1+\omega)^2} \leq \|DF^n|_{E_{p_0}}\| \leq C(1+\omega)^2\lambda^{-\bepsilon n}
\matsp{for}
1 \leq n \leq N,
$$
and
$$
\frac{\lambda^{\bepsilon n}}{{C\bL}(1+\omega)^2}\leq \|DF^{-n}|_{E_{p_0}}\| \leq C(1+\omega)^2\lambda^{-(1+\bepsilon)n}
\matsp{for}
1 \leq n \leq M.
$$
\end{prop}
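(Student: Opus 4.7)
The plan is to linearize $F$ along the orbit via Theorem~\ref{reg chart} and read off the upper bound from the diagonal structure of the derivative at the orbit, then deduce the lower bound from the Jacobian estimates of Proposition~\ref{jac bound}.

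First I would apply Theorem~\ref{reg chart} to obtain charts $\Phi_{p_m}:(\cU_{p_m},p_m)\to(U_{p_m},0)$ and straightened maps $F_{p_m}$ along the $(M,N)$-orbit of $p_0$. Property (ii) there gives $D_0F_{p_m}=\mathrm{diag}(a_m,b_m)$ with $\lambda^{\bepsilon}<a_m<\lambda^{-\bepsilon}$ and $\lambda^{1+\bepsilon}<b_m<\lambda^{1-\bepsilon}$, so the composite
$$
D_0(F_{p_{n-1}}\circ\cdots\circ F_{p_0})=\mathrm{diag}\!\left(\textstyle\prod_{m=0}^{n-1}a_m,\ \prod_{m=0}^{n-1}b_m\right)
$$
has operator norm at most $\lambda^{-\bepsilon n}$. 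Writing $DF^n$ through the conjugation $D\Phi_{p_n}^{-1}\circ D_0(F_{p_{n-1}}\circ\cdots\circ F_{p_0})\circ D\Phi_{p_0}$ and feeding in the chart-distortion bounds of Theorem~\ref{reg chart}, one reads off
$$
\|DF^n|_{E_{p_0}}\|\leq \|DF^n\|_{\mathrm{op}}\leq C(1+\omega)^2\lambda^{-\bepsilon n}
$$
for every $E_{p_0}\in\bbP^2_{p_0}$; any uniform constants that arise from transferring through $\|D\Phi_{p_0}\|$ are to be absorbed into the $\bepsilon$ exponent.

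For the lower bound I would use the two-dimensional identity $\sigma_{\min}(DF^n)=|\Jac_{p_0}F^n|/\|DF^n\|_{\mathrm{op}}$. Proposition~\ref{jac bound} supplies $|\Jac_{p_0}F^n|\geq \bL^{-1}\lambda^{(1+\bepsilon)n}$, and combined with the upper bound just derived this gives
$$
\|DF^n|_{E_{p_0}}\|\;\geq\;\sigma_{\min}(DF^n)\;\geq\;\frac{\lambda^{(1+\bepsilon)n}}{C\bL(1+\omega)^2},
$$
where again the new $\bepsilon$-terms are collected into a single $\bepsilon$. The backward bounds for $1\leq n\leq M$ follow by running exactly the same argument for $F^{-1}$ along the backward orbit, using the backward regularity hypothesis and the backward half of Proposition~\ref{jac bound}.

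The delicate step is the upper bound: naively combining $\|D\Phi_{p_n}^{-1}\|\lesssim 1+\omega$ with $\|D\Phi_{p_0}\|\lesssim \cK_{p_0}$ would produce an $\bL$-factor that is not present in the claimed form. The resolution is that $\Phi_{p_m}$ is chosen so that $D\Phi_{p_m}$ sends $E^v_{p_m}$ to the standard vertical $E^{gv}_0$, aligning the contracting direction of the dynamics with the weak direction of the chart; the anisotropies therefore cancel, and the residual growth is captured by the $(1+\omega)^2$-factor together with an infinitesimal widening of the $\bepsilon$ in the exponent.
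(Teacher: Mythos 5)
You should note first that this paper contains no proof of the proposition: it is quoted verbatim from [CLPY2, Proposition 6.15] in the appendix, so your argument can only be judged on its own terms, and as written it has a genuine gap exactly at the step you yourself flag as delicate. Your upper bound rests on writing $D_{p_0}F^n = D_0\Phi_{p_n}^{-1}\circ D_0(F_{p_{n-1}}\circ\cdots\circ F_{p_0})\circ D_{p_0}\Phi_{p_0}$ and then ``absorbing the constants coming from $\|D\Phi_{p_0}\|$ into $\bepsilon$''. But $\|D\Phi_{p_0}\|$ is not a uniform constant: \thmref{reg chart} only gives $\|D\Phi_{p_0}\|_{C^0} < C\cK_{p_0}$ with $\cK_{p_0} = \bL(1+\omega)^5\|DF^{-1}\|\lambda$, so the factor you must absorb is of size $\bL$, i.e.\ it depends on the irregularity constant $L$ of the hypothesis, not only on the map-independent constants entering $\bepsilon$. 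Such a factor cannot be hidden in $\lambda^{-\bepsilon n}$ uniformly in $n$: for $n$ up to order $\log L$ the surplus $\lambda^{-(\bepsilon-\epsilon)n}$ is far smaller than any fixed power of $L$. Your closing ``resolution'' --- that $D\Phi_{p_m}$ sends $E^v_{p_m}$ to the genuine vertical, so the anisotropies cancel --- is an assertion, not an argument: aligning $E^v$ with the chart vertical says nothing about the stretch of $D\Phi_{p_0}$ on the complementary (horizontal) directions, and that is precisely where the $\cK_{p_0}$-bound, hence the $\bL$, lives.

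The gap is not cosmetic bookkeeping. Working directly in the splitting $(E^v_{p_m})^\perp\oplus E^v_{p_m}$ with $A_n:=\alpha_0\cdots\alpha_{n-1}$ and $B_n:=\beta_0\cdots\beta_{n-1}$, the regularity inequalities at time $n$ only give $A_n = B_n\,/\,(B_n/A_n)\le L^2\lambda^{-2\epsilon n}$ for the transverse expansion, and diagonal cocycles satisfying $(L,\epsilon,\lambda)$-regularity at all intermediate times can essentially saturate this when $n\asymp\log L$; so an $L$-free upper bound (if that is indeed the sharp form of the cited statement) cannot follow from the conjugation estimate you describe, and your sketch cannot be completed along the indicated lines without identifying concretely where the $L$-dependence is cancelled --- which is the entire content of the cited result. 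Your lower bound, via $\sigma_{\min}(D_{p_0}F^n)=\Jac_{p_0}F^n/\|D_{p_0}F^n\|$ together with \propref{jac bound}, is a sound reduction, but it inherits whatever upper bound you have actually established (an extra $\bL$ there would be harmless, since $\bL$ absorbs powers). The backward half has the same structure and the same gap.
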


Consider the sequence of linearized maps $\{F_{p_m}\}_{-M}^N$ given in \thmref{reg chart}. For $1 \leq n \leq N-m$, we denote
\begin{equation}\label{eq.compose}
F_{p_m}^n = (f_{p_m}^n, e_{p_m}^n):= F_{p_{m +n-1}} \circ \ldots \circ F_{p_{m+1}}\circ F_{p_m}.
\end{equation}

\begin{prop}\cite[Proposition 6.4]{CLPY1}\label{lin comp}
For $-M \leq m \leq N$ and $0 \leq n \leq N-m$, consider the $C^r$-diffeomorphism $F_{p_m}^n$ given in \eqref{eq.compose}. Let $z = (x, y) \in U_{p_m}$, and suppose that
$$
z_i = (x_i, y_i) := F_{p_m}^i(z) \in U_{p_{m+i}}
\matsp{for}
0 \leq i \leq n.
$$
Denote
$$
D_zF_{p_m}^n =: \begin{bmatrix}
a_m^n(z) & 0\\
c_m^n(z) & b_m^n(z)
\end{bmatrix}.
$$
Define
$$
\chl_h := \sup_n n\lambda^{\bepsilon n} < \infty
\matsp{and}
\chl_v := (1-\lambda^{1-\bepsilon})^{-1};
$$
and
$$
\chi_h := \exp\left(\frac{\chl_h\|F\|_{C^3}}{\lambda^{\bepsilon}}\right)
\matsp{and}
\chi_v := \exp\left(\frac{(\chl_h+\chl_v)\|F\|_{C^3}}{\lambda^{\bepsilon}}\right).
$$
Then
$$
\frac{1}{\chi_h}\leq \frac{a_m^n(z)}{a_m^n(0)} \leq \chi_h
\comma
\frac{1}{\chi_v}\leq \frac{b_m^n(z)}{b_m^n(0)} \leq \chi_v
\matsp{and}
\|\gamma_m^n\| < \lambda^{(1-\bepsilon)n}.
$$
\end{prop}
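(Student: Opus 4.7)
The plan is to apply the chain rule to derive recursions for the entries of $D_z F_{p_m}^n$ and then bound each using the properties of the linearized maps furnished by \thmref{reg chart}. Writing $D_{z_n} F_{p_{m+n}}$ in lower-triangular form and multiplying by $D_z F_{p_m}^n$ gives, by induction,
\begin{align*}
a_m^{n+1}(z) &= f'_{p_{m+n}}(x_n)\, a_m^n(z),\\
b_m^{n+1}(z) &= \partial_y e_{p_{m+n}}(z_n)\, b_m^n(z),\\
c_m^{n+1}(z) &= \partial_x e_{p_{m+n}}(z_n)\, a_m^n(z) + \partial_y e_{p_{m+n}}(z_n)\, c_m^n(z),
\end{align*}
with $c_m^0 \equiv 0$. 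Since $F_{p_m}$ fixes $0$ with diagonal derivative $\operatorname{diag}(a_m, b_m)$ by \thmref{reg chart} ii), one has $a_m^n(0) = \prod_{i=0}^{n-1} a_{m+i}$ and $b_m^n(0) = \prod_{i=0}^{n-1} b_{m+i}$.

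For the diagonal ratios, take logs:
$$\log\frac{a_m^n(z)}{a_m^n(0)} = \sum_{i=0}^{n-1}\log\frac{f'_{p_{m+i}}(x_i)}{a_{m+i}},$$
and similarly for $b_m^n$. The $C^r$-bound $\|DF_{p_m}\|_{C^{r-1}} \leq \|DF\|_{C^r}$ from \thmref{reg chart} i) yields Lipschitz bounds of order $\|F\|_{C^3}$ on $f'_{p_m}$ and on $\partial_y e_{p_m}$, which combined with the lower bounds $a_{m+i} \geq \lambda^{\bepsilon}$ and $b_{m+i} \geq \lambda^{1+\bepsilon}$ from ii) control each summand in terms of $|x_i|$ and $|y_i|$. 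The claimed inequalities for $\chi_h$ and $\chi_v$ thus reduce to sum estimates $\sum_{i=0}^{n-1}|x_i| \lesssim \chl_h$ and $\sum_{i=0}^{n-1}|y_i| \lesssim \chl_v$, up to constants absorbed in the overline notation.

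The vertical sum is easy. Item iv) with $s=0$ forces $e_{p_m}(x, 0) \equiv 0$, and the pointwise estimate $|\partial_y e_{p_m}(z) - b_m| \leq \lambda^{1+\bepsilon}$ from iii) gives $|y_{i+1}| \leq \lambda^{1-\bepsilon}(1+\lambda^{2\bepsilon})|y_i|$, hence a geometric decay with total mass controlled by $\chl_v = (1-\lambda^{1-\bepsilon})^{-1}$. The horizontal sum is the main obstacle, since $a_{m+i}$ may be either contracting or expanding and $|x_i|$ enjoys no automatic decay. The resolution is structural: each $z_i$ lies in $U_{p_{m+i}} = \bbB(l_{p_{m+i}})$, so $|x_i| \leq l_{p_{m+i}}$, and by \thmref{reg chart} together with $\cK_{p_m} \asymp \lambda^{-\bepsilon|m|}$, we have $l_{p_{m+i}} \asymp \lambda^{\bepsilon|m+i|}$ up to uniform factors. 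The resulting two-sided series $\sum_{i=0}^{n-1}\lambda^{\bepsilon|m+i|}$ is bounded uniformly by $\chl_h = \sup_n n\lambda^{\bepsilon n}$, whose finite supremum reflects the balance between a linear-in-$n$ number of terms near the extremal index $i \approx -m$ and the geometric decay away from it.

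Finally, unrolling the $c_m^n$ recursion from $c_m^0 = 0$ yields
$$c_m^n(z) = \sum_{i=0}^{n-1}\partial_x e_{p_{m+i}}(z_i)\, a_m^i(z) \prod_{j=i+1}^{n-1}\partial_y e_{p_{m+j}}(z_j).$$
By iv), $|\partial_x e_{p_{m+i}}(z_i)| \leq \|F\|_{C^r}|y_i| \lesssim \lambda^{(1-\bepsilon)i}$; the product of $\partial_y e$ factors is at most $\lambda^{(1-\bepsilon)(n-1-i)}$; and $|a_m^i(z)|$ is bounded in the worst case by $\lambda^{-\bepsilon i}$ via the diagonal estimate just established. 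Multiplying and summing in $i$ yields $\|\gamma_m^n\| \lesssim \lambda^{(1-\bepsilon)n}$, with the polynomial-in-$n$ factor and all uniform constants absorbed into the overline notation.
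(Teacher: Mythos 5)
This proposition is quoted in Appendix A from \cite[Proposition 6.4]{CLPY1}; the present paper gives no proof of it, so there is no in-paper argument to compare against. Judged on its own, your reconstruction is the natural one and is essentially sound: the lower-triangular chain-rule recursions are exactly right (and you correctly read the symbol $\gamma_m^n$ in the statement as the off-diagonal entry $c_m^n$); the two ratio bounds reduce, after taking logarithms and using the Lipschitz control on $DF_{p_m}$ from \thmref{reg chart} i), to the uniform summability of $\sum_i|x_i|$ and $\sum_i|y_i|$; the horizontal sum is controlled by the decay of the regular radii, $|x_i|\le l_{p_{m+i}}\le\lambda^{\bepsilon|m+i|}$; the vertical sum by the contraction of the second coordinate (using that iv) with $s=0$ forces $e_{p_m}(\cdot,0)\equiv 0$); and the unrolled formula for $c_m^n$, combined with iv) and the already-established diagonal bounds, gives $\|c_m^n\|\le\lambda^{(1-\bepsilon)n}$ once the polynomial factor in $n$ is absorbed into $\bepsilon$.

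Two pieces of bookkeeping should be tightened, though neither changes the structure of the argument. First, the horizontal sum is bounded simply because it is dominated by a two-sided geometric series, $\sum_{i=0}^{n-1}\lambda^{\bepsilon|m+i|}\le(1+\lambda^{\bepsilon})/(1-\lambda^{\bepsilon})$; your explanation via ``a linear-in-$n$ number of terms near the extremal index'' is not how the bound arises, and identifying this constant with $\chl_h=\sup_n n\lambda^{\bepsilon n}$ is only valid up to coefficients that you must then absorb into the $\bepsilon$-notation (as the paper's conventions allow), so the $\chi_h,\chi_v$ you obtain may differ from the stated ones by such absorbable factors. Second, for the vertical estimates you need both an upper bound $\sup|\partial_y e_{p_{m+i}}|\le\lambda^{1-\bepsilon'}<1$ (to get genuine decay of $|y_i|$) and a lower bound $\partial_y e_{p_{m+i}}\gtrsim b_{m+i}$ (to control $\log\bigl(\partial_y e_{p_{m+i}}(z_i)/b_{m+i}\bigr)$); from the literal constants in \thmref{reg chart} ii) and iii) this requires the perturbation $\lambda^{1+\bepsilon}$ to be genuinely small compared with $b_{m+i}\asymp\lambda$, which is how the source sets up its charts but which you should state explicitly rather than writing $\lambda^{1-\bepsilon}(1+\lambda^{2\bepsilon})$ and calling it geometric (as written, that factor need not be less than $1$ when $\lambda$ is close to $1$). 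With these points made precise, your proof establishes the proposition with constants of the stated form.
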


For $-M \leq m \leq N$ and $q \in \cU_{p_m}$, write $z := \Phi_{p_m}(q)$. The {\it vertical/horizontal direction at $q$ in $\cU_{p_m}$} is defined as $E^{v/h}_q := D\Phi_{p_m}^{-1}(E^{gv/gh}_z)$. By the construction of regular charts in \thmref{reg chart}, vertical directions are invariant under $F$ (i.e. $DF(E_q^v) = E_{F(q)}^v$ for $q \in \cU_{p_m}$). Note that the same is not true for horizontal directions.

\begin{prop}\cite[Proposition 6.5]{CLPY2}\label{chart cons}
For $-M \leq m \leq N$ and $q \in \cU_{p_m}$, we have
$$
\frac{1}{\sqrt{2}} \leq \frac{\|D\Phi_{p_m}|_{E_z^{v/h}}\|}{\|D\Phi_{p_m}|_{E_{p_m}^{v/h}}\|} \leq \sqrt{2}.
$$
\end{prop}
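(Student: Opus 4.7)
The plan is to reduce the claim to an oscillation bound for the partial derivatives of the inverse chart $\Psi := \Phi_{p_m}^{-1}$, and then to control this oscillation by exploiting the precise scale balance between the $C^0$-norm of $D\Phi_{p_m}$ and the radius of the regular neighborhood $U_{p_m}$.

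First I would write $z := \Phi_{p_m}(q) \in U_{p_m}$ and note that, by the defining relation $E^v_q = D\Psi_z(E^{gv}_z)$, the unit vector in $E^v_q$ is $D\Psi_z(e_2)/|\partial_y\Psi(z)|$; applying $D\Phi_{p_m}$ to it yields $e_2/|\partial_y\Psi(z)|$, so
$$
\|D\Phi_{p_m}|_{E^v_q}\| = \frac{1}{|\partial_y\Psi(z)|}, \qquad \|D\Phi_{p_m}|_{E^h_q}\| = \frac{1}{|\partial_x\Psi(z)|},
$$
and the proposition reduces to showing that $|\partial_y\Psi|$ and $|\partial_x\Psi|$ each oscillate by at most a factor of $\sqrt{2}$ over $U_{p_m} = \bbB(l_{p_m})$.

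Next I would apply a mean-value estimate. From \thmref{reg chart}, $\|D\Psi\|_{C^{r-1}} < C(1+\omega)$; since $r \geq 2$, this yields $\|D^2\Psi\|_{C^0} \leq C(1+\omega)$, and combined with $|z| \leq l_{p_m} = \lambda^{1+\bepsilon}(C\cK_{p_m})^{-1}$ the mean value inequality gives
$$
\bigl||\partial_{y/x}\Psi(z)| - |\partial_{y/x}\Psi(0)|\bigr| \;\leq\; C(1+\omega)\,l_{p_m} \;=\; \frac{(1+\omega)\lambda^{1+\bepsilon}}{\cK_{p_m}}.
$$
For the denominator, $\|D\Phi_{p_m}\|_{C^0} < C\cK_{p_m}$ implies that the smallest singular value of $D\Psi(0) = D\Phi_{p_m}(p_m)^{-1}$ is at least $1/(C\cK_{p_m})$, whence $|\partial_{y/x}\Psi(0)| = |D\Psi(0)\,e_{2/1}| \geq 1/(C\cK_{p_m})$. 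Dividing, the relative oscillation is at most $C^2(1+\omega)\lambda^{1+\bepsilon}$, independent of $m$, and under the standing assumption that $\bepsilon$ is sufficiently small relative to the data (so that $\omega$ and $\lambda^{1+\bepsilon}$ are small) this is bounded by $1-1/\sqrt{2}$, giving the claim for both vertical and horizontal directions.

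The delicate point is the precise cancellation of $\cK_{p_m}$: the regular radius $l_{p_m}\sim 1/\cK_{p_m}$ makes the absolute oscillation small, while the lower bound $|\partial_{y/x}\Psi(0)|\geq 1/(C\cK_{p_m})$ allows the denominator itself to be equally small. These matching $|m|$-dependent shrinkages cancel exactly, yielding a uniform-in-$m$ relative error of order $\lambda^{1+\bepsilon}$; without this careful matching of scales built into the definition of $l_{p_m}$ in \thmref{reg chart}, the estimate would degenerate as the orbit depth $|m|$ grows.
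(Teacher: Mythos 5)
Your reduction is correct and clean: since $E^{v/h}_q = D\Phi_{p_m}^{-1}(E^{gv/gh}_z)$, one indeed has $\|D\Phi_{p_m}|_{E^{v/h}_q}\| = 1/|\partial_{y/x}\Phi_{p_m}^{-1}(z)|$, so the proposition is exactly a $\sqrt{2}$-oscillation bound for $|\partial_y\Psi|$ and $|\partial_x\Psi|$ on $U_{p_m}$, and your observation that the $\cK_{p_m}$-dependence cancels (small radius $l_{p_m}\sim \cK_{p_m}^{-1}$ against a possibly small lower bound $|\partial_{y/x}\Psi(0)|\gtrsim \cK_{p_m}^{-1}$) correctly explains why any bound obtained this way is uniform in $m$. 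Note, however, that this paper contains no proof of the statement -- it is imported from [CLPY2, Proposition 6.5] -- so the comparison can only be with what the quoted bounds of \thmref{reg chart} actually yield.

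The genuine gap is the last step. From $\|D\Phi_{p_m}^{-1}\|_{C^{r-1}}<C(1+\omega)$, $\|D\Phi_{p_m}\|_{C^0}<C\cK_{p_m}$ and $l_{p_m}=\lambda^{1+\bepsilon}(C\cK_{p_m})^{-1}$, your argument gives a relative oscillation of order $C(1+\omega)\lambda^{1+\bepsilon}$, and you then assert this is at most $1-1/\sqrt{2}$ ``under the standing assumption that $\bepsilon$ is sufficiently small (so that $\omega$ and $\lambda^{1+\bepsilon}$ are small).'' That assertion is false: shrinking the marginal exponent $\epsilon$ does not shrink $C$, $\omega$ or $\lambda$. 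The constant $C=C(\|DF\|_{C^r},\lambda^{-\epsilon})\geq 1$ can be large, $\omega=\tfrac{\lambda^{1-\epsilon}}{1-\lambda^{1-\epsilon}}\|DF^{-1}\|\,\|DF\|$ is a fixed quantity of the map which in the dissipative H\'enon-like regime (where $\|DF^{-1}\|$ is huge) is certainly not small, and $\lambda^{1+\bepsilon}\approx\lambda$ may be close to $1$; the hypotheses only require $\lambda\in(0,1)$. So what you have actually proved is a distortion bound of the form $1+K(C,\omega)\lambda$, which can well exceed $\sqrt{2}$, whereas the proposition claims a universal constant independent of all this data. To close the gap one needs finer structural information about the regular charts than the aggregate norm bounds quoted in \thmref{reg chart} -- e.g.\ that $\|D\Phi_{p_m}\|$ along the distinguished vertical/horizontal directions at $p_m$ is itself comparable to $\cK_{p_m}$ with constants matched to those controlling the second derivatives of $\Phi_{p_m}^{-1}$, or the explicit ``linear part composed with a map $C^1$-close to the identity'' form of the chart coming from its construction in [CLPY2]; with only the worst-case bounds $\sigma_{\min}(D\Phi_{p_m}^{-1})\geq (C\cK_{p_m})^{-1}$ and $\|D^2\Phi_{p_m}^{-1}\|\leq C(1+\omega)$, the factor $\sqrt{2}$ is not attainable.
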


\begin{cor}\cite[Corollary 6.6]{CLPY1}\label{ver hor cons}
For some $-M \leq m_0 \leq N$, let $q_{m_0} \in \cU_{p_{m_0}}$. Suppose for $m_0 \leq m \leq m_1 \leq N$, we have $q_m \in \cU_{p_m}$. Let
$$
\hE^h_{q_m} := DF^{m-m_0}(E^h_{q_{m_0}}).
$$
Then for $m_0 \leq m'\leq m_1$, we have
$$
\frac{1}{2\chi_h}\leq \frac{\|DF^{m'-m}|_{\hE^h_{q_m}}\|}{\|DF^{m'-m}|_{E^h_{p_m}}\|} \leq 2\chi_h
\matsp{and}
\frac{1}{2\chi_v}\leq \frac{\|DF^{m'-m}|_{E^v_{q_m}}\|}{\|DF^{m'-m}|_{E^v_{p_m}}\|} \leq 2\chi_v,
$$
where $\chi_h$ and $\chi_v$ are constants given in \propref{lin comp}.
\end{cor}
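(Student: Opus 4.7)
The strategy is to push both derivative computations into the regular charts supplied by \thmref{reg chart}, where by \propref{lin comp} the linearized cocycle $DF_{p_m}^{m'-m}$ is a lower-triangular matrix whose entries at $z_m$ are multiplicatively close to their values at $0$. Starting from the intertwining $DF^{m'-m}=D\Phi_{p_{m'}}^{-1}\circ DF_{p_m}^{m'-m}\circ D\Phi_{p_m}$, a direct computation gives, for any direction $E$ at $q_m$,
$$
\|DF^{m'-m}|_E\|\ =\ \frac{\|D\Phi_{p_m}|_E\|\cdot\|DF_{p_m}^{m'-m}|_{D\Phi_{p_m}(E)}\|}{\|D\Phi_{p_{m'}}|_{DF^{m'-m}(E)}\|}.
$$
Applying this identity at both $q_m$ and $p_m$ and dividing reduces each of the two claimed bounds to controlling three factors: chart-norm ratios at $p_m$ and $p_{m'}$ (each at most $\sqrt{2}$ by \propref{chart cons}), and a ratio of linearized-matrix norms at $z_m$ versus at $0$.

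For the vertical case this is immediate: since $F_{p_m}(x,y)=(f_{p_m}(x),e_{p_m}(x,y))$ preserves genuine verticals, the iterated direction $DF^{m'-m}(E^v_{q_m})$ stays equal to $E^v_{q_{m'}}$, so the middle factor is exactly $|b_m^{m'-m}(z_m)|$, and likewise $|b_m^{m'-m}(0)|$ at $p_m$. Their ratio lies in $[\chi_v^{-1},\chi_v]$ by \propref{lin comp}, and combining with the two chart-norm factors, each at most $\sqrt{2}$, gives the bound $[1/(2\chi_v),2\chi_v]$.

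The horizontal case is subtler because $\hE^h_{q_m}$ is in general \emph{not} the chart-horizontal direction $E^h_{q_m}$: the same intertwining yields $D\Phi_{p_m}(\hE^h_{q_m})=DF_{p_{m_0}}^{m-m_0}(E^{gh}_{z_{m_0}})$, which has slope $\gamma:=c_{m_0}^{m-m_0}(z_{m_0})/a_{m_0}^{m-m_0}(z_{m_0})$ with $|\gamma|<\lambda^{(1-\bepsilon)(m-m_0)}$ by \propref{lin comp}. The action of the lower-triangular matrix $DF_{p_m}^{m'-m}$ on such a near-horizontal direction has norm equal to $|a_m^{m'-m}(z_m)|$ times a slope-correction factor $\sqrt{1+\gamma^2}/\sqrt{1+(\gamma')^2}$, in which the image slope $\gamma'$ is again small by \propref{lin comp}. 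Hence the middle factor is comparable to $|a_m^{m'-m}(0)|$ up to a ratio in $[\chi_h^{-1},\chi_h]$.

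The main technical obstacle is the chart-norm factors in the horizontal case, since \propref{chart cons} bounds $\|D\Phi_{p_m}|_{E^h_q}\|/\|D\Phi_{p_m}|_{E^h_{p_m}}\|$ only for the chart-horizontal $E^h_q$, whereas the analogue is needed for $\hE^h_{q_m}$ and $\hE^h_{p_{m'}}$. This is handled by noting that $D\Phi_{p_m}(\hE^h_{q_m})$ and $D\Phi_{p_m}(E^h_{q_m})=E^{gh}_{z_m}$ differ by an angle of order $|\gamma|$, and since $\|D\Phi_{p_m}^{\pm 1}\|$ is uniformly bounded by \thmref{reg chart}, the resulting norm ratio is $1+O(\lambda^{(1-\bepsilon)(m-m_0)})$, and similarly at $p_{m'}$. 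These corrections are uniformly $O(1)$ and can be absorbed harmlessly into the constant $\chi_h$, yielding the horizontal bound $[1/(2\chi_h),2\chi_h]$. The case $m'<m$ follows identically by reading the orbit in reverse.
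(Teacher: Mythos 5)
The overall architecture of your argument is sound: the intertwining identity for $\|DF^{m'-m}|_E\|$ via the regular charts is correct, the vertical case does follow cleanly from two applications of \propref{chart cons} and one of \propref{lin comp}, and you correctly identify the real obstacle in the horizontal case (that $\hE^h_{q_m}$ is not the chart-horizontal direction $E^h_{q_m}$, whereas \propref{chart cons} only controls chart-horizontals).

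The gap is in your resolution of that obstacle. You assert that ``$\|D\Phi_{p_m}^{\pm 1}\|$ is uniformly bounded by \thmref{reg chart}''. This is false: \thmref{reg chart} gives a uniform bound $\|D\Phi_{p_m}^{-1}\|_{C^{r-1}} < C(1+\omega)$, but for the forward map it gives only $\|D\Phi_{p_m}\|_{C^0} < C\cK_{p_m}$, where $\cK_{p_m} \sim \bL\,\lambda^{1-\bepsilon|m|}$ grows exponentially in $|m|$. Consequently, the condition number of $D\Phi_{p_m}$ is only controlled by $\cK_{p_m}$, and the correction you need — comparing $\|D\Phi_{p_m}|_{\hE^h_{q_m}}\|$ to $\|D\Phi_{p_m}|_{E^h_{q_m}}\|$ when their images under $D\Phi_{p_m}$ differ by slope $\gamma$ — is of order $1+O(\cK_{p_m}|\gamma|)$, not $1+O(|\gamma|)$. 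Since $|\gamma| \lesssim \lambda^{(1-\bepsilon)(m-m_0)}$ while $\cK_{p_m} \sim \lambda^{-\bepsilon|m|}$, the product $\cK_{p_m}|\gamma|$ need not be small (for instance, taking $m_0$ large and $m - m_0$ of size comparable to $\bepsilon m_0$ makes the exponent negative). So the ``$1+O(\lambda^{(1-\bepsilon)(m-m_0)})$'' you claim is not justified by the cited theorem, and the horizontal bound does not follow as written.

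A secondary, smaller issue: you say the corrections ``can be absorbed harmlessly into the constant $\chi_h$''. But $\chi_h = \exp(\chl_h\|F\|_{C^3}/\lambda^\bepsilon)$ is a fixed quantity defined in \propref{lin comp}, not a floating constant; even uniformly bounded corrections would change $2\chi_h$ into $2C\chi_h$ for some extra $C$, not literally $2\chi_h$. Finally, a typo: the statement concerns $\hE^h_{q_{m'}}$ at the endpoint, not ``$\hE^h_{p_{m'}}$'' (at the reference orbit the pushed-forward horizontal \emph{does} coincide with the chart horizontal, since $D_0 F_{p_m}$ is diagonal). To close the gap, one needs either a genuine bound on the aspect ratio $\|D_{p_m}\Phi_{p_m}|_{E^h_{p_m}}\|/\|D_{p_m}\Phi_{p_m}|_{E^v_{p_m}}\|$ coming from the chart construction itself, or a decomposition that avoids ever evaluating $\|D\Phi_{p_m}\|$ along a non-chart direction at an intermediate time.
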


\begin{prop}[Vertical alignment of forward contracting directions]\cite[Proposition 6.8]{CLPY1}\label{vert angle shrink}
Let $q_0 \in \cU_{p_0}$ and $\tiE_{q_0}^v \in \bbP^2_{q_0}$. Suppose $q_i \in \cU_{p_ i}$ for $0 \leq i \leq n \leq N$, and that
$$
\nu := \|DF^n|_{\tiE_{q_0}^v}\| < \frac{\lambda^{\bepsilon n}}{\chi_h(2+\omega)^3 \bC},
$$
where $C, \omega, \chi_h \geq 1$ are uniform constants given in \thmref{reg chart} and \propref{lin comp}. Denote $z_0 := \Phi_{p_0}(q_0)$ and $\tiE_{z_0}^v := D\Phi_{p_0}(\tiE^v_{q_0})$. Then
$$
\measuredangle(\tiE^v_{z_0}, E_{z_0}^{gv}) < \chi_h (1+\omega)\bC\lambda^{-\bepsilon n} \nu.
$$
\end{prop}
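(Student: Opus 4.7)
The plan is to work in the regular chart at $p_0$ and exploit the lower-triangular block structure of $DF_{p_0}^n$ supplied by \propref{lin comp}, converting the smallness hypothesis on the total contraction $\nu$ into smallness of the horizontal component of $\tiE^v_{z_0}$.

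First, parametrize $\tiE^v_{z_0}$ by a unit vector $w_0=(u,v)\in\bbR^2$, so that $\sin\measuredangle(\tiE^v_{z_0},E^{gv}_{z_0})=|u|$. The vector $v_0:=D\Phi_{p_0}^{-1}(w_0)\in\tiE^v_{q_0}$ satisfies $\|v_0\|\leq\|D\Phi_{p_0}^{-1}\|\leq C(1+\omega)$ by \thmref{reg chart}, and since $DF^n(v_0)=D\Phi_{p_n}^{-1}(DF_{p_0}^n(w_0))$, the definition $\nu=\|DF^n(v_0)\|/\|v_0\|$ combined with $\|D\Phi_{p_n}\|\leq C\cK_{p_n}$ gives
\[
\|DF_{p_0}^n(w_0)\|\ \leq\ \|D\Phi_{p_n}\|\cdot\|v_0\|\cdot\nu\ \leq\ C^2(1+\omega)\,\cK_{p_n}\,\nu.
\]

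On the other hand, by \propref{lin comp} the matrix $D_{z_0}F_{p_0}^n$ is lower triangular with $(1,1)$-entry $a_0^n(z_0)$ satisfying $|a_0^n(z_0)|\geq \chi_h^{-1}|a_0^n(0)|$, and by \thmref{reg chart}~ii) each $a_m>\lambda^{\bepsilon}$, so $|a_0^n(0)|\geq \lambda^{\bepsilon n}$. Reading off the first coordinate of $DF_{p_0}^n(w_0)=(a_0^n(z_0)u,\ c_0^n(z_0)u+b_0^n(z_0)v)$ yields
\[
\|DF_{p_0}^n(w_0)\|\ \geq\ |a_0^n(z_0)\,u|\ \geq\ \chi_h^{-1}\lambda^{\bepsilon n}\,|u|.
\]
Combining the two inequalities and substituting $\cK_{p_n}\leq\bL(1+\omega)^5\|DF^{-1}\|\lambda^{1-\bepsilon n}$ produces an estimate of the form $|u|\leq\chi_h(1+\omega)\bC\lambda^{-\bepsilon n}\nu$, where $\bC$ and $\bepsilon$ absorb the remaining bounded constants, powers, and factors $\lambda^{-c\epsilon n}$ under the conventions of \secref{sec.intro}. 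The smallness hypothesis on $\nu$ then forces $|u|\leq 1/2$, so the angle $\measuredangle(\tiE^v_{z_0},E^{gv}_{z_0})$ is comparable to $|u|$ with the same constant.

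The main obstacle is purely bookkeeping: verifying that the several factors of $(1+\omega)$ and $\lambda^{\pm\bepsilon n}$ produced by the chart estimates of \thmref{reg chart} and the lower-triangular structure of \propref{lin comp} assemble into the advertised constant $\chi_h(1+\omega)\bC\lambda^{-\bepsilon n}$. Once one sees that the block-triangular form forces the horizontal component to grow at rate at least $\lambda^{\bepsilon n}$ while the full image is controlled by $\nu$, the proposition reduces to elementary linear algebra; no additional analytic input beyond \thmref{reg chart} and \propref{lin comp} is needed.
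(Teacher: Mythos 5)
Your argument is correct and is the natural (and, to the best of my judgment, the paper's) approach: push $\tiE^v_{z_0}$ into the regular chart, use the lower-triangular structure of $D_{z_0}F_{p_0}^n$ from \propref{lin comp} to extract a lower bound $\|DF_{p_0}^n(w_0)\|\geq \chi_h^{-1}\lambda^{\bepsilon n}|u|$ on the surviving horizontal component, compare it with the upper bound coming from $\nu$ and the chart norms $\|D\Phi_{p_n}\|\leq C\cK_{p_n}$, $\|D\Phi_{p_0}^{-1}\|\leq C(1+\omega)$, and absorb the resulting factors of $C$, $\bL$, $(1+\omega)$, and $\lambda^{\pm\bepsilon n}$ under the paper's bar/underline conventions. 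The only minor stylistic point is that $\measuredangle\leq\frac{\pi}{2}\sin\measuredangle$ holds automatically for directions (angle $\leq\pi/2$), so the intermediate observation $|u|\leq 1/2$ is not actually needed to pass from the sine to the angle.
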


\begin{prop}[Horizontal alignment of backward neutral directions]\cite[Proposition 6.9]{CLPY1}\label{hor angle shrink}
Let $q_0 \in \cU_{p_0}$ and $\tiE_{q_0}^h \in \bbP^2_{q_0}$. Suppose  $q_{-i} \in \cU_{p_ {-i}}$ for $0 \leq i \leq n \leq M$, and that
$$
\mu := \|DF^{-n}|_{\tiE^h_{q_0}}\| < \frac{1}{\chi_v(2+\omega)^3\bC \lambda^{(1-\bepsilon)n}}.
$$
Denote
$$
z_0 := \Phi_{p_0}(q_0)
\comma
\tiE_{z_0}^h := D\Phi_{p_0}(\tiE^h_{q_0})
\matsp{and}
\hE_{z_0}^h := D\Phi_{p_0} \circ F^n(E^h_{q_{-n}}).
$$
Then
$$
\measuredangle(\tiE_{q_0}^h, \hE_{q_0}^h) < \chi_v(1+\omega)\bC \lambda^{(1-\bepsilon)n}\cdot \mu.
$$
\end{prop}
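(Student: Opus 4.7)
The plan is to reduce to chart coordinates via $\Phi_{p_0}$ and $\Phi_{p_{-n}}$ and exploit the explicit lower-triangular form
$$D_{z_{-n}}F^n_{p_{-n}} = \begin{pmatrix} a_{-n}^n & 0 \\ c_{-n}^n & b_{-n}^n \end{pmatrix}$$
from \propref{lin comp}, where $z_{-n} := \Phi_{p_{-n}}(q_{-n})$, the entry $a_{-n}^n$ is of unit order, $b_{-n}^n$ is on the order of $\lambda^{(1-\bepsilon)n}$ (strongly forward-contracting, equivalently strongly backward-expanding), and $|c_{-n}^n| < \lambda^{(1-\bepsilon)n}$. By construction, $\hE^h_{z_0}$ is the image of $E^{gh}_{z_{-n}} = (1,0)$ under this matrix, i.e., the direction of its first column $(a_{-n}^n, c_{-n}^n)$, which is nearly horizontal in chart coordinates. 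Its preimage under $D_{z_{-n}}F^n_{p_{-n}}$ is exactly $E^{gh}_{z_{-n}}$.

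First, I would decompose $\tiE^h_{z_0}$ with respect to the orthogonal basis $\{\hE^h_{z_0}, \bhE^\perp\}$ where $\bhE^\perp$ is the Euclidean perpendicular of $\hE^h_{z_0}$, setting $\theta := \measuredangle(\tiE^h_{z_0}, \hE^h_{z_0})$. Applying $(D_{z_{-n}}F^n_{p_{-n}})^{-1}$ pulls the $\cos\theta$-component back to a chart-horizontal vector of controlled norm, while the $\sin\theta$-component is essentially vertical (since $\hE^h_{z_0}$ is nearly horizontal) and is therefore stretched by the backward-vertical-expansion factor $1/b_{-n}^n$ in the near-vertical direction. The two contributions to the pulled-back vector are almost orthogonal in the chart at $p_{-n}$, yielding a lower bound of the shape
$$\bigl\|(D_{z_{-n}}F^n_{p_{-n}})^{-1}\,\tiE^h_{z_0}\bigr\| \;\gtrsim\; \frac{|\sin\theta|}{b_{-n}^n},$$
modulo a cross-correction controlled by $c_{-n}^n/(a_{-n}^n b_{-n}^n)$ and by the smallness of $\cos\theta$ for small angles.

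Next, I would convert between chart norms and phase-space norms by invoking \propref{chart cons} together with the $C^{r-1}$-bounds on $D\Phi_{p_0}^{\pm 1}$ and $D\Phi_{p_{-n}}^{\pm 1}$ from \thmref{reg chart}, which cost factors of $C$ and $(1+\omega)$. Concatenating these identifications rewrites the left-hand side above as a constant multiple of $\mu$, with the accumulated constant exactly of the form $\chi_v(1+\omega)\bC$. Solving for $|\sin\theta|$ and absorbing the cross-term via the hypothesis on $\mu$ gives
$$|\sin\theta| \;\leq\; \chi_v(1+\omega)\bC\,\lambda^{(1-\bepsilon)n}\,\mu,$$
and the claimed bound on $\measuredangle(\tiE^h_{q_0}, \hE^h_{q_0})$ follows since the hypothesis on $\mu$ forces $\theta$ small, where $|\sin\theta| \asymp \theta$.

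The main obstacle is bookkeeping the constants: tracking how $(1+\omega)$, $\chi_v$, and $\bC$ compound through the chain of chart-to-phase-space identifications and confirming the cross-term between the $\hE^h_{z_0}$-component and the perpendicular one is absorbed by the smallness hypothesis on $\mu$ without inflating the final factor. Formally, the argument is dual to the proof of \propref{vert angle shrink}, with backward iteration exchanging the roles of the strongly contracting vertical direction and the nearly neutral horizontal one.
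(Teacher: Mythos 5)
The paper does not contain its own proof of this proposition: it is stated as a citation to \cite[Proposition 6.9]{CLPY1}, and Appendix A is explicitly a summary of that paper's results. So there is no internal proof to compare against. What can be said is whether your sketch is a plausible reconstruction of the argument, and it is.

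Your strategy — pass to the regular charts, use the lower-triangular form of $D_{z_{-n}}F^n_{p_{-n}}$ from \propref{lin comp} and \thmref{reg chart} iv), observe that $\hE^h_{z_0}$ pulls back to exactly the genuine horizontal $E^{gh}_{z_{-n}}$, decompose $\tiE^h_{z_0}$ against $\hE^h_{z_0}$ and its Euclidean normal, and read off a lower bound $\|(D_{z_{-n}}F^n_{p_{-n}})^{-1}\tiE^h_{z_0}\| \gtrsim |\sin\theta|/b^n_{-n} \gtrsim |\sin\theta|\,\lambda^{-(1-\bepsilon)n}$ — is the correct dual of the mechanism behind \propref{vert angle shrink}. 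In fact the cross-term you flag is more benign than you suggest: because the pullback of the $\hE^h_{z_0}$-component is \emph{exactly} horizontal in the $p_{-n}$-chart, the vertical coordinate of the total pullback is contributed entirely by the $\sin\theta$-component, so the lower bound $|\sin\theta|/|b^n_{-n}|$ holds without any competition from the $\cos\theta$-term. The role of the size hypothesis on $\mu$ is only to guarantee that the resulting bound on $|\sin\theta|$ is strictly below $1$ (so that $\theta$ is genuinely small and the inequality is not vacuous), not to suppress a cancellation. The constants $\chi_v$, $(1+\omega)$, $\bC$ then arise exactly where you say: $\chi_v$ from comparing the $b$-entry at $z_{-n}$ with its value at the basepoint via \propref{lin comp}, and $(1+\omega)$, $C$ from the chart-derivative bounds of \thmref{reg chart} together with \propref{chart cons}.

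Two small corrections of emphasis. First, $a^n_{-n}$ is not of unit order; per \thmref{reg chart} ii) each $a_m \in (\lambda^{\bepsilon},\lambda^{-\bepsilon})$, so $a^n_{-n} \in (\lambda^{\bepsilon n}, \lambda^{-\bepsilon n})$ — sub-exponential relative to $\lambda^n$, which is all the argument needs, since $1/|b^n_{-n}| \asymp \lambda^{-(1\mp\bepsilon)n}$ dominates $1/|a^n_{-n}|$. The slope $|c^n_{-n}/a^n_{-n}| < \lambda^{(1-2\bepsilon)n}$ still makes $\hE^h_{z_0}$ nearly chart-horizontal, as you claim. Second, the conclusion as printed reads $\measuredangle(\tiE^h_{q_0},\hE^h_{q_0})$, but $\hE^h_{q_0}$ is never defined — only $\hE^h_{z_0}$ is; comparing with \propref{vert angle shrink}, the intended conclusion is the chart-angle $\measuredangle(\tiE^h_{z_0},\hE^h_{z_0})$, which is what your computation actually bounds. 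Modulo that and the unfinished bookkeeping of constants (which you flag yourself), the proposal is sound.
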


The {\it $n$-times truncated regular neighborhood of $p_0$} is defined as
$$
\cU_{p_m}^n := \Phi_{p_m}^{-1}\left(U_{p_m}^n\right)\subset \cU_{p_m},
\matsp{where}
U_{p_0}^n := \bbB\left(\lambda^{\bepsilon n}l_{p_m}, l_{p_m}\right).
$$
The purpose of truncating a regular neighborhood is to ensure that its iterated images stay inside regular neighborhoods.

\begin{lem}\cite[Lemma 6.10]{CLPY1}\label{trunc neigh fit}
Let $-M \leq m \leq N$ and $0 \leq n \leq N-m$. We have $F^i(\cU_{p_m}^n) \subset \cU_{p_{m+i}}$ for $0 \leq i \leq n$.
\end{lem}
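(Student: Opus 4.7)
The plan is to argue in the linearized coordinates provided by \thmref{reg chart}. Given a point $z_0 = (x, y) \in U_{p_m}^n$, so that $|x| \leq \lambda^{\bepsilon n} l_{p_m}$ and $|y| \leq l_{p_m}$, I will track the forward iterates $F_{p_m}^i(z_0) = (f_{p_m}^i(x), e_{p_m}^i(x, y))$ for $0 \leq i \leq n$, and show that both coordinates have magnitude bounded by $l_{p_{m+i}}$, which is the condition for lying in $U_{p_{m+i}}$.

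The structural input comes from \thmref{reg chart}(iv): each $F_{p_m}$ has triangular form $(f_{p_m}(x), e_{p_m}(x,y))$ with $e_{p_m}(\cdot, 0) \equiv 0$ (since the bound $|\partial_x^s e_{p_m}(\cdot,y)| \leq \|DF\|_{C^r}|y|$ with $s=0, y=0$ forces $e_{p_m}(\cdot,0) \equiv 0$). Two consequences follow: first, the horizontal component $f_{p_m}^i(x)$ depends only on $x$, so the forward orbit's $x$-coordinate evolves under the one-dimensional diffeomorphisms $f_{p_m}, \ldots, f_{p_{m+i-1}}$; second, the horizontal axis is invariant under each $F_{p_m}$, hence $e_{p_m}^i(x, 0) \equiv 0$, so the $y$-coordinate of the iterate arises purely from vertical contraction.

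To bound the horizontal component, I would apply Proposition~\ref{lin comp} to get $|Df_{p_m}^i(z)| = a_m^i(z) \leq \chi_h\, a_m^i(0)$, and then use $a_j < \lambda^{-\bepsilon}$ from \thmref{reg chart}(ii) to bound $a_m^i(0) = \prod_{j=0}^{i-1} a_{m+j} < \lambda^{-\bepsilon i}$. Thus $|f_{p_m}^i(x)| < \chi_h \lambda^{-\bepsilon i}|x| \leq \chi_h \lambda^{\bepsilon(n-i)} l_{p_m}$. For the vertical, since $e_{p_m}^i(x,0) = 0$, the fundamental theorem gives $|e_{p_m}^i(x,y)| \leq \|\partial_y e_{p_m}^i\|_{C^0}\cdot |y|$; Proposition~\ref{lin comp} again bounds $\partial_y e_{p_m}^i(z) = b_m^i(z) \leq \chi_v\, b_m^i(0)$, and $b_j < \lambda^{1-\bepsilon}$ from \thmref{reg chart}(ii) yields $b_m^i(0) < \lambda^{(1-\bepsilon)i}$. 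Hence $|e_{p_m}^i(x,y)| \leq \chi_v \lambda^{(1-\bepsilon)i} l_{p_m}$.

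The remaining task is to compare these bounds against $l_{p_{m+i}}$, using the identity $l_{p_{m+i}}/l_{p_m} = \cK_{p_m}/\cK_{p_{m+i}} = \lambda^{\bepsilon(|m+i| - |m|)}$. The vertical estimate is comfortable: $\lambda^{(1-\bepsilon)i}$ decays much faster than any $\lambda^{\bepsilon|m+i|}$-type factor once $\bepsilon$ is small. The main obstacle will be the horizontal estimate, where in the worst case $|m+i|-|m|=i$ one needs $\chi_h \lambda^{\bepsilon(n-i)} \leq \lambda^{\bepsilon i}$; this is where the flexibility in the $\bepsilon$ notation becomes essential. The truncation exponent $\bepsilon$ in the definition of $U_{p_m}^n$ should be chosen a uniformly bounded multiple of the $\bepsilon$ appearing in the derivative bounds, so that the constant $\chi_h$ and the worst-case ratio $\lambda^{\bepsilon i}$ are both absorbed, yielding the inclusion $F^i(\cU_{p_m}^n) \subset \cU_{p_{m+i}}$.
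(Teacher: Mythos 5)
Your outline identifies the correct mechanisms: the linearized map $F_{p_m}$ preserves the horizontal axis (since $e_{p_m}(\cdot,0)\equiv 0$), the vertical coordinate contracts like $\lambda^{(1-\bepsilon)i}$, the horizontal coordinate expands at most like $\lambda^{-\bepsilon i}$, and the regular radius scales like $l_{p_{m+i}}/l_{p_m}=\lambda^{\bepsilon(|m+i|-|m|)}$, so that the truncation exponent must dominate the sum of the horizontal-growth and radius-decay exponents. These are indeed the right ingredients.

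However, there is a circularity you do not address. You invoke Proposition~\ref{lin comp} to get the uniform distortion bounds $a_m^i(z)\le \chi_h\,a_m^i(0)$ and $b_m^i(z)\le \chi_v\,b_m^i(0)$, but that proposition is stated under the \emph{hypothesis} that $z_j\in U_{p_{m+j}}$ for $0\le j\le n$ --- precisely the conclusion you are trying to establish. And this hypothesis is not a mere technicality that you can drop: without it, the only globally available estimate on $F_{p_m}$ is \thmref{reg chart}~(iii), $\|D_zF_{p_m}-D_0F_{p_m}\|<\lambda^{1+\bepsilon}$, whose per-step accumulation produces a factor like $(1+\lambda)^{i}$ that is not uniformly bounded in $i$. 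So the clean $\chi_h$-bound really is the result of a distortion argument that presumes confinement, and a bootstrap is unavoidable. The standard fix is a continuity/maximality argument: for $q_0\in\cU_{p_m}^n$ consider $t z_0$ for $t\in[0,1]$ (or argue by induction on the first escape time $j_0$), use the confined-orbit distortion bound on $[0,j_0]$ to show that $z_{j_0}$ is in fact strictly interior to $U_{p_{m+j_0}}$ with definite margin, and conclude $j_0=n$. As written, your proof assumes the bound it needs to produce.

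A second, smaller issue is that ``absorbed into the $\bepsilon$ notation'' is doing a great deal of unexamined work. The horizontal inequality at step $i=n$ reads $\chi_h\le \lambda^{(\bepsilon_{\mathrm{rad}}+\bepsilon_{\mathrm{der}}-\bepsilon_{\mathrm{trunc}})n}$, which for small $n$ forces $\bepsilon_{\mathrm{trunc}}-\bepsilon_{\mathrm{rad}}-\bepsilon_{\mathrm{der}}$ to beat $\log\chi_h/\log(1/\lambda)$ --- and $\chi_h$ itself depends on $\bepsilon$ through $\chl_h$ and $\lambda^{-\bepsilon}$. You should at least indicate how these exponents are chosen relative to one another so that this chain of inequalities closes; waving at the flexibility of the notation does not establish that a consistent choice exists.
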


\begin{prop}\cite[Propositions B.5 and B.6]{CLPY1}\label{cr lin comp}
There exists a uniform constant $K = K(\|DF\|_{C^r}, \lambda, \epsilon, r) \geq 1$ such that the following result holds. For $-M \leq m \leq N$ and $0 \leq n \leq N-m$, consider the $C^r$-maps $f_{p_m}^n$ and $e_{p_m}^n$ given in \eqref{eq.compose}. Then we have
$$
\|Df_{p_m}^n\|_{C^{r-1}} < K\lambda^{-\bepsilon n}
\matsp{and}
\|De_{p_m}^n\|_{C^{r-1}} < K\lambda^{(1-\bepsilon)n}.
$$
\end{prop}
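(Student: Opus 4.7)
The plan is to establish both estimates simultaneously by induction on $n$, exploiting the triangular form $F_{p_m}=(f_{p_m},e_{p_m})$. The recursion $F_{p_m}^{n+1}=F_{p_{m+n}}\circ F_{p_m}^n$ gives
\begin{align*}
f_{p_m}^{n+1}(x)&=f_{p_{m+n}}\bigl(f_{p_m}^n(x)\bigr),\\
e_{p_m}^{n+1}(x,y)&=e_{p_{m+n}}\bigl(f_{p_m}^n(x),\,e_{p_m}^n(x,y)\bigr),
\end{align*}
and Fa\`a di Bruno's formula expresses every derivative of $f_{p_m}^{n+1}$ or $e_{p_m}^{n+1}$ of order $\leq r$ as a polynomial in derivatives of $F_{p_{m+n}}$ of order $\leq r$ (uniformly bounded by Theorem~\ref{reg chart}(i)) and in derivatives of $f_{p_m}^n,e_{p_m}^n$ of the same order (controlled by the inductive hypothesis).

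Two structural ingredients drive the induction. First, Theorem~\ref{reg chart}(iv) applied at $y=0$, together with $F_{p_m}(0)=0$, gives $e_{p_m}(x,0)\equiv 0$, and this property propagates inductively to $e_{p_m}^n(x,0)\equiv 0$ for every $n$. Combined with the $C^0$-bound $|\partial_y e_{p_m}^n|<K\lambda^{(1-\bepsilon)n}$ from Proposition~\ref{lin comp}, integration in $y$ yields the pointwise estimate $|e_{p_m}^n(x,y)|\leq K\lambda^{(1-\bepsilon)n}|y|$. Second, Proposition~\ref{lin comp} also provides the $C^0$-bound $|(f_{p_m}^n)'(x)|<K\lambda^{-\bepsilon n}$. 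For $Df_{p_m}^{n+1}$, each Fa\`a di Bruno term is a product of at most $r$ derivatives of $f_{p_m}^n$ (each bounded by $K\lambda^{-\bepsilon n}$ inductively) times a uniformly bounded derivative of $f_{p_{m+n}}$, and the polynomial accumulation in $n$ is absorbed into the $\bepsilon$-convention.

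For $De_{p_m}^{n+1}$, the Fa\`a di Bruno terms split into two types according to whether the outer derivative of $e_{p_{m+n}}$ is purely in $y$, contributing a factor $\partial_y e_{p_{m+n}}$ of size $O(\lambda^{1-\bepsilon})$ by Theorem~\ref{reg chart}(ii)(iii) and pairing with a lower-order derivative of $e_{p_m}^n$ to produce a geometric recursion of ratio $\lambda^{1-\bepsilon}$; or involves at least one $\partial_x$, in which case Theorem~\ref{reg chart}(iv) supplies the factor $|\partial_x^s\partial_y^t e_{p_{m+n}}|=O(|e_{p_m}^n(x,y)|)=O(\lambda^{(1-\bepsilon)n}|y|)$. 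In both cases the desired rate $K\lambda^{(1-\bepsilon)(n+1)}$ emerges.

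The main obstacle is bounding the second type of terms, where many $x$-derivatives may fall on the inner iterate $e_{p_m}^n$ and thereby generate powers of $(f_{p_m}^n)'$ that naively accumulate to $\lambda^{-r\bepsilon n}$. Theorem~\ref{reg chart}(iv) is exactly what defuses this: each such dangerous term is forced to carry the factor $|e_{p_m}^n(x,y)|\leq K\lambda^{(1-\bepsilon)n}|y|$, which dominates the polynomial-in-$n$ loss because $r$ is fixed and the $\bepsilon$-convention absorbs all uniformly bounded coefficients and powers. Summing the finitely many Fa\`a di Bruno contributions and iterating closes the induction with a uniform constant $K=K(\|DF\|_{C^r},\lambda,\epsilon,r)$.
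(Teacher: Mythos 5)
Your skeleton is sound: the triangular structure, the vanishing $e_{p_m}(\cdot,0)\equiv 0$ supplied by Theorem~\ref{reg chart}(iv) together with $F_{p_m}(0)=0$, the resulting pointwise bound $|e_{p_m}^n(x,y)|\leq K\lambda^{(1-\bepsilon)n}|y|$, and the dominant contraction carried by $\partial_y e_{p_m}$ are the correct structural ingredients, and Fa\`a di Bruno is the right combinatorial machine.

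The clearest gap is the clause claiming that whenever the outer derivative involves $\partial_x$, Theorem~\ref{reg chart}(iv) supplies $|\partial_x^s\partial_y^t e_{p_{m+n}}|=O(|e_{p_m}^n(x,y)|)$. Item (iv) controls only the \emph{pure} $x$-derivatives $\partial_x^s e_{p_{m+n}}(\cdot,y)$, forcing them to vanish at $y=0$; it says nothing about mixed derivatives $\partial_x^s\partial_y^t e_{p_{m+n}}$ with $t\geq 1$, which are generically only $O(1)$ (the toy map $e(x,y)=xy$ satisfies the conclusion of (iv) yet $\partial_x\partial_y e\equiv 1$). Thus for Fa\`a di Bruno terms whose outer derivative $\partial_u^i\partial_v^j e_{p_{m+n}}$ has both $i\geq 1$ and $j\geq 1$, item (iv) gives no smallness whatsoever; the required factor must be extracted instead from the $j$ inner $v$-block factors $\partial_x^c\partial_y^d e_{p_m}^n$, each of size $O(\lambda^{(1-\bepsilon)n})$ by the inductive hypothesis, and item (iv) is precisely what handles the residual case $j=0$. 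A second issue is that iterating a pointwise per-step bound $|\partial_y e_{p_{m+l}}|\leq b_{m+l}+\lambda^{1+\bepsilon}<2\lambda^{1-\bepsilon}$ compounds a spurious factor $2^n$ that the $\bepsilon$-convention cannot absorb, since $2=\lambda^{-\delta}$ with $\delta=\log 2/|\log\lambda|$ independent of $\epsilon$. The accumulated coefficient is instead controlled by Proposition~\ref{lin comp}: $\prod_{l=0}^{n-1}\partial_y e_{p_{m+l}}(F_{p_m}^l(z))=b_m^n(z)$ and $|b_m^n(z)|\leq\chi_v\,b_m^n(0)<\chi_v\lambda^{(1-\bepsilon)n}$ with a single uniform distortion constant $\chi_v$. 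So the recursion must be unrolled once globally rather than closed step-by-step, and the analogous product bound $|a_m^n(z)|<\chi_h\lambda^{-\bepsilon n}$ is what anchors the first-derivative bound on the $f$-side; the higher-order derivatives then follow by a secondary finite induction on the derivative order, each stage loosening $\bepsilon$ by a multiple bounded in terms of $r$ only.
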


\subsection{$C^r$-estimates}

Let $g : \bbR \to \bbR$ be a $C^r$-function. The curve
$$
\Gamma_g := \{(x, g(x)) \; x \in \bbR\}
$$
is the {\it horizontal graph of $g$}. Let $H : \bbR^2 \to \bbR^2$ be a $C^r$-diffeomorphism. Suppose that there exists a $C^r$-function $H_*(g) : \bbR \to \bbR$ such that $H(\Gamma_g) = \Gamma_{H_*(g)}$. Then $H_*(g)$ and $\Gamma_{H_*(g)}$ are referred to as the {\it horizontal graph transform of $g$} and $\Gamma_g$ {\it by $H$} respectively.

\begin{prop}[$C^r$-convergence of horizontal graphs]\cite[Proposition 4.5]{CLPY1}\label{for gt}
Let $g : \bbR \to \bbR$ be a $C^r$-map with $\|g'\|_{C^{r-1}} < \infty$. For $-M \leq m \leq N$ and $1 \leq n \leq N-m$, consider the graph transform $\tig := (F_{p_m}^n)_*(g)$. Then
$$
\|\tig'\|_{C^{r-1}} < C\lambda^{(1-\bepsilon)n}(1+\|g'\|_{C^{r-1}})^r
$$
where $C = C(\bfC, \lambda,\epsilon, r) \geq 1$ is a uniform constant.
\end{prop}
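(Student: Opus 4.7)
My plan is as follows. In the linearized coordinates $F^n_{p_m}$ has the triangular form $F^n_{p_m}(x,y) = (f^n_{p_m}(x),\, e^n_{p_m}(x,y))$ by \thmref{reg chart}~iv) and \eqref{eq.compose}, so I parameterize the image curve $F^n_{p_m}(\Gamma_g)$ as
$$
x \;\longmapsto\; \bigl(f^n_{p_m}(x),\; e^n_{p_m}(x, g(x))\bigr).
$$
By the definition of the horizontal graph transform, $\tig$ then satisfies $\tig(f^n_{p_m}(x)) = e^n_{p_m}(x, g(x))$, i.e.
$$
\tig(t) \;=\; e^n_{p_m}\bigl(\phi(t),\, g(\phi(t))\bigr), \qquad \phi := (f^n_{p_m})^{-1}.
$$
Thus the problem reduces to estimating $\|\tig'\|_{C^{r-1}}$ directly from the bounds of \propref{cr lin comp} on $e^n_{p_m}$ and $f^n_{p_m}$, combined with a lower bound on $f^{n\prime}_{p_m}$.

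I first establish the $C^0$ bound by the chain rule:
$$
\tig'(t) \;=\; \frac{\partial_x e^n_{p_m}(x,g(x)) + \partial_y e^n_{p_m}(x,g(x))\, g'(x)}{f^{n\prime}_{p_m}(x)}\bigg|_{x = \phi(t)}.
$$
The numerator is bounded by $\|De^n_{p_m}\|_{C^0}(1 + \|g'\|_{C^0}) \leq K\lambda^{(1-\bepsilon)n}(1 + \|g'\|_{C^0})$ via \propref{cr lin comp}. For the denominator, $f^{n\prime}_{p_m}(x)$ equals the top-left entry $a^n_m$ of $D F^n_{p_m}$, which by \propref{lin comp} is comparable to $a^n_m(0) \geq \lambda^{\bepsilon n}$ (since each factor $a_i > \lambda^{\bepsilon}$ by \thmref{reg chart}~ii)). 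Thus $1/f^{n\prime}_{p_m} \leq \chi_h \lambda^{-\bepsilon n}$; the resulting $\lambda^{-\bepsilon n}$ loss is absorbed into the barred exponent convention, yielding $\|\tig'\|_{C^0} < C\lambda^{(1-\bepsilon)n}(1+\|g'\|_{C^0})$.

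For the higher-order estimate, I expand $\tig = e^n_{p_m}(\phi,\, g\circ\phi)$ via repeated application of Fa\`a di Bruno's formula. The key structural observations are: (i) every summand in $\tig^{(k)}$ for $1 \leq k \leq r$ carries at least one derivative of $e^n_{p_m}$, which supplies the decisive factor $\lambda^{(1-\bepsilon)n}$ from \propref{cr lin comp}; (ii) the derivatives of $\phi$ up to order $r$ are polynomially controlled in $\lambda^{-\bepsilon n}$ by the $C^{r-1}$-bound $\|Df^n_{p_m}\|_{C^{r-1}} < K\lambda^{-\bepsilon n}$ of \propref{cr lin comp} together with the lower bound $f^{n\prime}_{p_m} \geq \lambda^{\bepsilon n}/\chi_h$, via standard inverse-function estimates; and (iii) organizing the Fa\`a di Bruno sum by the multiplicity of $g\circ\phi$-factors, each multinomial term carries $g'$-derivatives with total combined degree at most $r$, producing the $(1+\|g'\|_{C^{r-1}})^r$ factor. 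Summing the finitely many terms gives
$$
\|\tig'\|_{C^{r-1}} \;\leq\; C\lambda^{(1-\bepsilon)n}(1+\|g'\|_{C^{r-1}})^r,
$$
with $C = C(\bfC, \lambda, \epsilon, r) \geq 1$.

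The main obstacle I anticipate is the combinatorial bookkeeping in the Fa\`a di Bruno expansion: ensuring both that the single $\lambda^{(1-\bepsilon)n}$ factor from $e^n_{p_m}$ survives the many intermediate $\lambda^{-\bepsilon n}$ factors from horizontal expansion and inverse-function derivatives (the number of such factors in each term is bounded in terms of $r$ only, and so is absorbed into the $\bepsilon$-notation), and that the exponent $r$ on $(1+\|g'\|_{C^{r-1}})$ is not degraded. The latter is best organized by indexing Fa\`a di Bruno terms by the number of times one differentiates through the $g\circ\phi$-slot of $e^n_{p_m}$, so that a term with $j$ such differentiations produces a product of $j$ factors of $g$-derivatives whose orders sum to at most $r$, bounded by $\|g'\|_{C^{r-1}}^j \leq (1+\|g'\|_{C^{r-1}})^r$.
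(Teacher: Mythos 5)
Your plan takes the natural route and is almost certainly the same as that of [CLPY1, Prop.\ 4.5]: exploit the skew-product form $F^n_{p_m}(x,y)=(f^n_{p_m}(x),e^n_{p_m}(x,y))$ from Theorem \ref{reg chart}~iv) and \eqref{eq.compose} to write $\tig=e^n_{p_m}(\phi,\,g\circ\phi)$ with $\phi:=(f^n_{p_m})^{-1}$, obtain the decisive $\lambda^{(1-\bepsilon)n}$ decay from $\|De^n_{p_m}\|_{C^{r-1}}$ via \propref{cr lin comp}, and organize the higher derivatives by Fa\`a di Bruno, with the exponent $r$ on $(1+\|g'\|_{C^{r-1}})$. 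The structure and the combinatorial bookkeeping are right.

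The one step you cannot justify from the tools quoted in this paper is the global lower bound $(f^n_{p_m})'(x)\geq c\,\lambda^{\bepsilon n}$, which you need both for $\phi'$ in the $C^0$ estimate and, via \lemref{cr inverse}, for all higher derivatives of $\phi$. You invoke \propref{lin comp} to compare $a^n_m(z)$ with $a^n_m(0)$, but that proposition carries the standing hypothesis that the orbit $z_i=F^i_{p_m}(z)$ stays inside the shrinking regular neighborhoods $U_{p_{m+i}}$; the graph transform is applied over all of $\bbR^2$, where this hypothesis fails. Using only the globally valid bounds of Theorem \ref{reg chart}~ii)--iii), the naive telescope yields $(f^n_{p_m})'(x)>\prod_i\bigl(a_{m+i}-\lambda^{1+\bepsilon}\bigr)\geq\lambda^{\bepsilon n}(1-\lambda)^n$, and the compounding factor $(1-\lambda)^n$ worsens the exponential rate by a $\lambda$-dependent constant that is not absorbed by the $\bepsilon$-convention (which only absorbs quantities comparable to a uniform power of $\epsilon$). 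You therefore need either a global lower bound on $(f^n_{p_m})'$ taken directly from [CLPY1, Propositions B.5--B.6] --- of which \propref{cr lin comp} in this summary records only the upper bounds on $Df^n_{p_m}$ and $De^n_{p_m}$ --- or a restriction of the graph transform to a domain on which the orbit hypothesis of \propref{lin comp} holds. Modulo that one input, your Fa\`a di Bruno organization (each summand carrying at least one factor of $De^n_{p_m}$, a number of $\lambda^{-\bepsilon n}$ losses bounded in terms of $r$ alone, and total $g$-derivative degree at most $r$ per term) is sound and gives the stated bound.
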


For $p \in \bbR^2$ and $u \in \bbR$, let $E_p^u \in \bbP_p^2$ be the tangent direction at $p$ given by
$$
E_p^u := \{r(u, 1) \; | \; r \in \bbR\}.
$$
Let $\xi : \bbR^2 \to \bbR$ be a $C^{r-1}$-map. The direction field
$$
\cE_\xi := \{E_p^{\xi(p)} \; | \; p \in \bbR^2\}
$$
is the {\it vertical direction field of $\xi$}. Let $H : \bbR^2 \to \bbR^2$ be a $C^r$-diffeomorphism. Suppose that there exists a $C^{r-1}$-map $H^*(\xi) : \bbR^2 \to \bbR$ such that $DH^{-1}(\cE_\xi) =\cE_{H^*(\xi)}$. Then $H^*(\xi)$ and $\cE_{H^*(\xi)}$ are referred to as the {\it vertical direction field transform of $\xi$} and $\cE_\xi$ {\it by $H$} respectively.

\begin{prop}[Backward vertical direction field transform]\cite[Proposition 4.6]{CLPY1}\label{back dt}
There exist uniform constants $C, \tiC \geq 1$ depending only on $\bfC, \lambda, \epsilon, r$  such that the following holds. Let $\xi : \bbR^2 \to \bbR$ be a $C^{r-1}$-map with $\|\xi\|_{C^{r-1}} < \infty$. For $-M \leq m < N$ and $0 \leq n \leq M+m$, consider the vertical direction transform 
$$
\tixi := (F_{p_m}^n)^*(\xi)|_{\bbR \times (-1, 1)}.
$$
Suppose
$$
C\lambda^{(1-\bepsilon)n}(1+\|\xi\|_{C^{r-1}}) < 1.
$$
Then
$$
\|\tixi\|_{C^{r-1}} < \tiC\lambda^{(1-\bepsilon)n}\|\xi\|_{C^{r-1}}.
$$
\end{prop}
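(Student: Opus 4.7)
My approach is to reduce to a single-step pullback formula, establish a sharp $C^{r-1}$ bound for one application of $F_{p_{m+k}}^*$, and then iterate. Since the composition identity $(F_{p_m}^n)^* = F_{p_m}^* \circ F_{p_{m+1}}^*\circ\cdots\circ F_{p_{m+n-1}}^*$ reduces the problem to $n$ successive single-step pullbacks, the main task is controlling one such step sharply enough that the $\lambda^{1-\bepsilon}$ factor per step dominates all accumulating error terms.

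First, since $DF_{p_{m+k}}$ is lower-triangular, with diagonal entries $f_{p_{m+k}}'$ and $\partial_y e_{p_{m+k}}$ and off-diagonal $\partial_x e_{p_{m+k}}$, a direct computation from the defining relation $DH^{-1}(\cE_\xi) = \cE_{H^*\xi}$ yields the single-step formula
$$F_{p_{m+k}}^*(\xi)(p) \;=\; \frac{\xi(F_{p_{m+k}}(p))\,\partial_y e_{p_{m+k}}(p)}{f_{p_{m+k}}'(\pi_h p) \;-\; \xi(F_{p_{m+k}}(p))\,\partial_x e_{p_{m+k}}(p)}.$$
This is well-defined on $\bbR\times(-1,1)$ as long as $\|\xi\|_{C^0}$ is below a uniform threshold, so that the denominator is bounded below; note also that by \thmref{reg chart} iii)--iv) the map $F_{p_{m+k}}$ sends $\bbR\times(-1,1)$ into itself, so composing $\xi$ with $F_{p_{m+k}}$ makes sense. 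Using \thmref{reg chart} ii)--iv) I have $\partial_y e_{p_{m+k}}\leq \lambda^{1-\bepsilon}$, $f_{p_{m+k}}'\geq \lambda^{\bepsilon}$, and $\|DF_{p_{m+k}}\|_{C^{r-1}}\leq \|DF\|_{C^r}$. Applying Fa\`a di Bruno to differentiate the rational expression above $r-1$ times, together with geometric-series expansion of the denominator, I obtain the single-step bound
$$\|F_{p_{m+k}}^*(\xi)\|_{C^{r-1}(\bbR\times(-1,1))} \;\leq\; K\,\lambda^{1-\bepsilon}\,\|\xi\|_{C^{r-1}}\bigl(1+K\|\xi\|_{C^{r-1}}\bigr)^{r-1}$$
for a uniform $K=K(\|DF\|_{C^r},\lambda,\epsilon,r)$. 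The factor $\lambda^{1-\bepsilon}$ comes from $|\partial_y e/f'|\leq \lambda^{1-2\bepsilon}$; the polynomial correction absorbs both the chain-rule derivatives of $\xi\circ F_{p_{m+k}}$ and the derivatives of $(f' - \xi\,\partial_x e)^{-1}$.

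I then set $\xi^{(n)} := \xi$ and $\xi^{(k)} := F_{p_{m+k}}^*(\xi^{(k+1)})$ for $0\leq k < n$, so that $\tilde\xi = \xi^{(0)}$, and write $\eta_k := \|\xi^{(k)}\|_{C^{r-1}}$. The single-step bound reads $\eta_k \leq K\lambda^{1-\bepsilon}\eta_{k+1}(1+K\eta_{k+1})^{r-1}$. A backward induction on $k$ gives $\eta_k \leq 2K^{n-k}\lambda^{(1-\bepsilon)(n-k)}\eta_n$, provided the $C^0$-threshold from the single-step formula is satisfied at every stage; the total product of correction factors is bounded by $\exp\bigl(C(r-1)\eta_n\sum_{j\geq 0}(K\lambda^{1-\bepsilon})^j\bigr)\leq \exp(C'\eta_n)$, which stays uniformly bounded under the hypothesis $C\lambda^{(1-\bepsilon)n}(1+\|\xi\|_{C^{r-1}})<1$ (taking $C$ large enough also ensures the $C^0$-threshold throughout the iteration). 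Finally, the accumulated constant $K^n$ can be written as $\lambda^{-\beta n}$ for some small $\beta>0$ and absorbed by widening $\bepsilon$ slightly, yielding the desired bound $\|\tilde\xi\|_{C^{r-1}} \leq \tiC\,\lambda^{(1-\bepsilon)n}\|\xi\|_{C^{r-1}}$.

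The main obstacle is the chain-rule blow-up when differentiating $\xi^{(k+1)}\circ F_{p_{m+k}}$: derivatives of $F$ are only bounded (not contracting) in the horizontal direction, and a naive iteration would compound them into a catastrophic $\|DF\|^{(r-1)n}$ factor that overwhelms the vertical contraction. The organization above — running the induction directly on $\|\xi^{(k)}\|_{C^{r-1}}$ itself rather than on $\|\xi\circ F_{p_m}^{n-k}\|_{C^{r-1}}$ — is what controls this: at each step only one composition with $F_{p_{m+k}}$ enters the estimate, contributing a single bounded chain-rule factor that is absorbed into the uniform constant $K$. The smallness hypothesis then precisely ensures that the resulting geometric product of $(1+K\eta_{k+1})^{r-1}$ corrections stays uniformly bounded, leaving the $\lambda^{(1-\bepsilon)n}$ contraction intact.
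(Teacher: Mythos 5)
Your single-step formula for $F_{p_{m+k}}^*(\xi)$ is correct (it follows directly from the lower-triangular structure of $DF_{p_{m+k}}$), but the iterative strategy built on it has a fatal accumulation problem that the $\bepsilon$-notation cannot repair.

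The sharpest single-step bound available has the form $\|F_{p_{m+k}}^*(\xi)\|_{C^{r-1}} \leq K\,\lambda^{1-\bepsilon}\,\|\xi\|_{C^{r-1}}(\cdots)$ with $K = K(\|DF\|_{C^r},\lambda,\epsilon,r)$ a fixed constant that is in general strictly larger than $1$. The reason is that the $C^0$ estimate $|\partial_y e_{p_{m+k}}| < 2\lambda^{1-\bepsilon}$ (from \thmref{reg chart} ii)--iii)) does \emph{not} extend to higher derivatives: \thmref{reg chart} only gives $\|DF_{p_{m+k}}\|_{C^{r-1}} \leq \|DF\|_{C^r}$, so $\|\partial_y e_{p_{m+k}}\|_{C^{r-1}}$ is merely $O(1)$. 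Concretely, the Leibniz term in which all $r-1$ derivatives land on $\partial_y e_{p_{m+k}}$ contributes roughly $\|\xi\|_{C^0}\,\|DF\|_{C^r}/\lambda^{\bepsilon}$, which forces $K \gtrsim \|DF\|_{C^r}/\lambda$. Iterating then yields $\|\tilde\xi\|_{C^{r-1}} \lesssim K^n\lambda^{(1-\bepsilon)n}\|\xi\|_{C^{r-1}}$. Writing $K^n = \lambda^{-\beta n}$ gives $\beta = \log K/|\log\lambda|$, a fixed positive number independent of $\epsilon$. The notation $\bepsilon$ is constrained by $\bepsilon < C\epsilon^D$ with $D\in(0,1)$, so it tends to $0$ as $\epsilon \to 0$; you cannot enlarge it by the fixed amount $\beta$. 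So ``absorbing $K^n$ by widening $\bepsilon$ slightly'' is not legitimate, and the conclusion as stated does not follow from the per-step estimate. (Your treatment of the $C^0$-threshold is fine; it is precisely the higher derivatives where the per-step contraction is lost.)

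The remedy is to apply the pullback formula to the full composition $F_{p_m}^n$ in one shot. Since $DF_{p_m}^n$ is again lower-triangular, the same algebra gives
$$
\tixi \;=\; \frac{\bigl(\xi\circ F_{p_m}^n\bigr)\,\partial_y e_{p_m}^n}{\bigl(f_{p_m}^n\bigr)' \;-\; \bigl(\xi\circ F_{p_m}^n\bigr)\,\partial_x e_{p_m}^n}.
$$
Now invoke \propref{cr lin comp} for the composition: $\|De_{p_m}^n\|_{C^{r-1}} < K\lambda^{(1-\bepsilon)n}$ and $\|Df_{p_m}^n\|_{C^{r-1}} < K\lambda^{-\bepsilon n}$, together with the lower bound $(f_{p_m}^n)' > \lambda^{\bepsilon n}/\chi_h$ coming from \propref{lin comp} (the product of the $a_{m+k} > \lambda^{\bepsilon}$ with the uniform distortion control $\chi_h$). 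In the Fa\`a di Bruno / Leibniz expansion of $\|\tixi\|_{C^{r-1}}$, the contracting factor $\lambda^{(1-\bepsilon)n}$ from $\partial_y e_{p_m}^n$ appears exactly once in every term, while each additional derivative of $f_{p_m}^n$, $e_{p_m}^n$, or $\xi\circ F_{p_m}^n$ contributes at most $\lambda^{-\bepsilon n}$, and there are at most $O(r)$ such factors. The net exponent is therefore $(1-C_r\bepsilon)n$ with $C_r$ bounded, and the whole estimate carries a single $n$-independent constant $\tiC$. In short, \propref{cr lin comp} already encodes the cancellation across the orbit that makes the vertical derivatives of the \emph{composite} super-exponentially small; you must use it rather than rebuild it step-by-step, because the step-by-step version does not contract in $C^{r-1}$.
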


\subsection{Stable and center manifolds}

For $-M \leq m \leq N$, define the {\it local vertical} and {\it horizontal manifold at $p_m$} as
$$
W^v_{\loc}(p_m) := \Phi_{p_m}^{-1}(\{(0, y) \in U_{p_m}\})
\matsp{and}
W^h_{\loc}(p_m) := \Phi_{p_m}^{-1}(\{(x,0) \in U_{p_m}\})
$$
respectively.

If $N = \infty$, then \propref{vert angle shrink} implies that $E_{p_0}^v$ is the unique direction along which $p_0$ is infinitely forward regular. In this case, we denote $E_{p_0}^{ss} := E_{p_0}^v$, and refer to this direction as the {\it strong stable direction at $p_0$}. Additionally, we define the {\it strong stable manifold of $p_0$} as
$$
W^{ss}(p_0) := \left\{q_0 \in \Omega \; | \; \limsup_{n \to \infty}\frac{1}{n}\log\|q_n - p_n\| < (1-\epsilon)\log\lambda\right\}.
$$

\begin{thm}[Canonical strong stable manifold]\cite[Theorem 6.13]{CLPY1}\label{stable}
If $N = \infty$, then
$$
W^{ss}(p_0) := \bigcup_{n=0}^\infty F^{-n}(W^v_{\loc}(p_n)).
$$
Consequently, $W^{ss}(p_0)$ is a $C^{r+1}$-smooth manifold.
\end{thm}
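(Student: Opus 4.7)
The plan is to verify the claimed equality via two inclusions and then address the regularity upgrade, all within the linearizing coordinates of \thmref{reg chart}. For the forward inclusion $\bigcup_{n\ge 0}F^{-n}(W^v_{\loc}(p_n))\subseteq W^{ss}(p_0)$: if $q_n\in W^v_{\loc}(p_n)$ then $\Phi_{p_n}(q_n)=(0,y)$, and since $f_{p_m}(0)=0$ for every $m$, the vertical axis of $U_{p_m}$ is invariant under $F_{p_m}$, so $\Phi_{p_{n+k}}(q_{n+k})=(0,y_k)$. By \propref{lin comp} applied at the base-point, the product of vertical derivatives from $n$ to $n+k$ is comparable within a factor $\chi_v$ to $b_n^k(0)\le \lambda^{(1-\bepsilon)k}$. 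Pulling back through the $C^1$-bound on $\|D\Phi_{p_{n+k}}^{-1}\|$ from \thmref{reg chart} yields $\|q_{n+k}-p_{n+k}\|\le C\lambda^{(1-\bepsilon)k}$, which supplies the limsup estimate required by the definition of $W^{ss}(p_0)$.

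For the reverse inclusion $W^{ss}(p_0)\subseteq \bigcup_{n\ge 0}F^{-n}(W^v_{\loc}(p_n))$: pick $q_0\in W^{ss}(p_0)$ and fix $\eta<\epsilon$ with $\|q_k-p_k\|\le C\lambda^{(1-\eta)k}$ eventually. Combining \lemref{size reg nbh} with \propref{grow irreg} yields $q_k\in \cU_{p_k}$ for $k$ large; write $\Phi_{p_k}(q_k)=(x_k,y_k)$. The crux is to force $x_k=0$. Assume not. Since the linearized maps decouple in the first coordinate as $F_{p_{k+j}}(x,y)=(f_{p_{k+j}}(x),\cdot)$, iteration gives $x_{k+j}=f_{p_k}^j(x_k)$, and the mean value theorem together with \propref{lin comp} produces
$$
|x_{k+j}|\ \ge\ \chi_h^{-1}\Bigl|\prod_{i=0}^{j-1}a_{k+i}(0)\Bigr|\,|x_k|.
$$
Dividing the domination bound ($s=1$) by the contraction bound ($s=0$) in the definition of $(L,\epsilon,\lambda)$-regularity gives $\prod_i|a_{k+i}(0)|\ge L^{-2}\lambda^{2\epsilon j}$. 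Pulling back through \propref{chart cons} yields $\|q_{k+j}-p_{k+j}\|\gtrsim |x_k|\lambda^{2\bepsilon j}$. Because the marginal exponent is assumed small enough that $2\bepsilon<1-\eta$ and $\lambda<1$, letting $j\to\infty$ contradicts the standing decay bound, forcing $x_k=0$ and hence $q_k\in W^v_{\loc}(p_k)$.

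For the regularity statement, each $W^v_{\loc}(p_n)=\Phi_{p_n}^{-1}(\{0\}\times(-l_{p_n},l_{p_n}))$ is $C^r$ as the image of a line under the $C^r$-chart, and the pieces are coherent under $F$ by vertical invariance, so the union is an immersed $C^r$-curve. To promote the regularity to $C^{r+1}$, one runs a Pesin--Hirsch--Pugh--Shub-type graph-transform argument on the fastest-contracting direction: in the two-dimensional setting the strong-stable leaf is one-dimensional and corresponds to the extremal Lyapunov exponent, so no bunching is required and $F\in C^{r+1}$ suffices to pick up the extra derivative. The delicate step of the overall argument is the forced vanishing of $x_k$ in the reverse inclusion, which requires fitting the nearly-neutral horizontal factor $\lambda^{2\bepsilon j}$ strictly inside the ambient decay $\lambda^{(1-\eta)j}$; this demands careful bookkeeping of every $\bepsilon$-type error propagated through Propositions \ref{lin comp}, \ref{chart cons} and \ref{grow irreg}, and crucially uses both the contraction and domination clauses of regularity together.
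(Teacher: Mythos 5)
This paper does not actually prove the statement: it is quoted verbatim from \cite[Theorem 6.13]{CLPY1} in the appendix, so I can only measure your argument against the statement and the toolkit summarized there. Your overall architecture (two inclusions in the linearizing charts, forcing the horizontal coordinate to vanish, graph transform for smoothness) is the natural one, but the forward inclusion has a genuine quantitative gap. You derive $\|q_{n+k}-p_{n+k}\|\le C\lambda^{(1-\bepsilon)k}$ from the per-step diagonal bounds $b_m<\lambda^{1-\bepsilon}$ of \thmref{reg chart}~ii) via \propref{lin comp}. In this paper's conventions $\bepsilon>\epsilon$, so $(1-\bepsilon)\log\lambda>(1-\epsilon)\log\lambda$ and your estimate does \emph{not} verify the defining inequality $\limsup\frac1k\log\|q_k-p_k\|<(1-\epsilon)\log\lambda$: the $\bepsilon$-loss is paid at every iterate and ends up in the exponent. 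The loss must instead be kept as a $k$-independent constant: by \propref{grow irreg} the point $p_n$ is forward $(\cL_{p_n},\epsilon,\lambda)$-regular along $E^v_{p_n}$ with $\cL_{p_n}\le\bL\lambda^{-\bepsilon n}$, and by \corref{ver hor cons} the contraction along the tangent directions $E^v_q$ of $W^v_{\loc}(p_n)$ is comparable, within the fixed factor $2\chi_v$, to $\|DF^k|_{E^v_{p_n}}\|\le\cL_{p_n}\lambda^{(1-\epsilon)k}$; estimating the arclength of $F^k(W^v_{\loc}(p_n))$ this way gives the rate $(1-\epsilon)\log\lambda$ with an $n$-dependent but $k$-independent constant, which the $\limsup$ absorbs. (Even this only yields $\le(1-\epsilon)\log\lambda$; the strict inequality is a boundary artifact of the transcribed definition, but your $(1-\bepsilon)$ misses the threshold outright.) You also never justify that the forward orbit of $W^v_{\loc}(p_n)$ remains in the regular neighborhoods $\cU_{p_{n+k}}$, which is needed both to propagate $\Phi_{p_{n+k}}(q_{n+k})=(0,y_k)$ and to invoke \propref{lin comp}/\corref{ver hor cons}; this is exactly what \lemref{trunc neigh fit} supplies, since a vertical leaf lies in every truncated neighborhood $\cU_{p_m}^n$.

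The reverse inclusion is the right idea and essentially sound modulo the bookkeeping you flag: $L$ must be replaced by $\cL_{p_k}\le\bL\lambda^{-\bepsilon k}$ (harmless for fixed $k$ as $j\to\infty$), and the mean-value point used with \propref{lin comp} must have its orbit in the neighborhoods, which holds because $e_{p_m}(x,0)=0$ by \thmref{reg chart}~iv), so that point stays on the horizontal axis squeezed between the orbits of $0$ and $x_k$. The remaining gap is the regularity upgrade: each $W^v_{\loc}(p_n)$ is only $C^r$ (the charts of \thmref{reg chart} are $C^r$ for a $C^{r+1}$ map), so the displayed union is a priori only a $C^r$ curve, and the claimed $C^{r+1}$ smoothness is exactly the part you dispatch with an appeal to a Pesin--Hirsch--Pugh--Shub graph transform. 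That is a plausible route but not a proof; nothing in the quoted appendix toolkit produces the extra derivative, and this is where the cited proof in \cite{CLPY1} has to do genuine work.
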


If $M = \infty$, then \propref{hor angle shrink} implies that $E_{p_0}^h$ is the unique direction along which $p_0$ is infinitely backward regular. In this case, we denote $E_{p_0}^c := E_{p_0}^h$, and refer to this direction as the {\it center direction at $p_0$}. Moreover, we define the {\it (local) center manifold at $p_0$} as
$$
W^c(p_0) := \Phi_{p_0}^{-1}(\{(x, 0) \in U_{p_0}\}).
$$
Unlike strong stable manifolds, center manifolds are not canonically defined. However, the following result states that it still has a canonical jet.

\begin{thm}[Canonical jets of center manifolds]\cite[Theorem 6.16]{CLPY1}\label{center jet}
Suppose $M = \infty$. Let $\Gamma_0: (-t, t) \to \cU_{p_0}$ be a $C^{r+1}$-curve parameterized by its arclength such that $\Gamma_0(0) = p_0$, and for all $n \in \bbN$, we have
$$
\|DF^{-n}|_{\Gamma_0'(t)}\| < \lambda^{-\frac{(1-\bepsilon)n}{r+1}}
\matsp{for}
|t| < \lambda^{\epsilon n}.
$$
Then $\Gamma_0$ has a degree $r+1$ tangency with $W^c(p_0)$ at $p_0$.
\end{thm}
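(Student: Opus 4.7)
The plan is to work in the regular chart $\Phi_{p_0}$ of \thmref{reg chart}, where the local center manifold $W^c(p_0)$ corresponds to the horizontal axis $\{y=0\}$, and to express $\Gamma_0$ near $p_0$ as the graph $y = g(x)$ of a $C^{r+1}$-function with $g(0) = 0$. The goal then reduces to showing $g^{(k)}(0) = 0$ for $k = 1, \ldots, r$, which is precisely the degree $r+1$ tangency (order $r+1$ contact) with $W^c(p_0)$. That $\Gamma_0$ admits such a horizontal graph representation with $g'(0) = 0$ follows from the hypothesis itself: by \propref{ext deriv bound}, any vertical component in $\Gamma_0'(0)$ would force $\|DF^{-n}|_{\Gamma_0'(0)}\| \gtrsim \lambda^{-(1-\bepsilon)n}$, which eventually exceeds the hypothesized bound $\lambda^{-(1-\bepsilon)n/(r+1)}$ for large $n$.

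The central step is an application of the forward graph transform \propref{for gt}. For each $n \geq 1$ I would select a truncation $s_n$ of order $\lambda^{(\bepsilon + (1-\bepsilon)/(r+1))n}$, which is at most $\lambda^{\epsilon n}$ for $\bepsilon$ sufficiently small, so that the backward iterate $\Gamma_{-n} := F^{-n}(\Gamma_0|_{|t| < s_n})$ has length at most $2 s_n \lambda^{-(1-\bepsilon)n/(r+1)} \lesssim \lambda^{\bepsilon n}$ and therefore fits inside the regular neighborhood $\cU_{p_{-n}}$ of diameter $\sim \lambda^{\bepsilon n}$ (see \lemref{size reg nbh}). In chart $\Phi_{p_{-n}}$ I would represent $\Gamma_{-n}$ as the graph of a $C^{r+1}$-function $g_n : I_n \to \bbR$. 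Since $\Gamma_0|_{\tiI_n} = (F_{p_{-n}}^n)_*(\Gamma_{-n})$ on the image interval $\tiI_n := f_{p_{-n}}^n(I_n) \ni 0$, applying \propref{for gt} yields
$$
\|g'\|_{C^{r-1}(\tiI_n)} \leq C\lambda^{(1-\bepsilon)n}(1 + \|g_n'\|_{C^{r-1}})^r.
$$

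Granting a uniform bound $\|g_n'\|_{C^{r-1}} \leq K$ independent of $n$, the estimate above gives $\|g'\|_{C^{r-1}(\tiI_n)} = O(\lambda^{(1-\bepsilon)n})$. Since $0 \in \tiI_n$, this forces $|g^{(k)}(0)| = O(\lambda^{(1-\bepsilon)n})$ for $1 \leq k \leq r$, and letting $n \to \infty$ gives $g^{(k)}(0) = 0$ for these values of $k$. Together with $g(0) = 0$, this delivers the order-$(r+1)$ contact of $\Gamma_0$ with $W^c(p_0)$ at $p_0$.

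The hard part will be establishing the uniform bound $\|g_n'\|_{C^{r-1}} \leq K$. I would address this via the backward direction field transform \propref{back dt}: extending the slope field $\xi(x,y) := g'(x)$ of $\Gamma_0$ to $\bbR^2$, the pullback $\tixi := (F_{p_{-n}}^n)^*(\xi)$ satisfies $\|\tixi\|_{C^{r-1}} \leq \tiC\lambda^{(1-\bepsilon)n}\|\xi\|_{C^{r-1}}$. Since $g_n$ obeys the implicit relation $g_n'(x) = \tixi(x, g_n(x))$ with $\|g_n\|_{C^0}$ controlled by the length of $\Gamma_{-n}$, a chain-rule bootstrap iterating through each derivative order should yield the required uniform $C^{r-1}$-control on $g_n'$, at which point the rest of the argument closes.
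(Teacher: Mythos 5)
The paper itself contains no proof of this theorem (it is quoted from \cite{CLPY1}), so I can only judge your argument on its own terms; it has a genuine gap at its central step. Your plan is to pull back a truncated piece of $\Gamma_0$ to time $-n$, represent it in the chart $\Phi_{p_{-n}}$ as a horizontal graph $g_n$ with a uniform bound $\|g_n'\|_{C^{r-1}}\leq K$, and then push forward with \propref{for gt}. That uniform bound is where essentially all the content of the theorem sits, and the mechanism you propose for it --- applying \propref{back dt} to the slope field $\xi(x,y):=g'(x)$ of $\Gamma_0$ --- is not legitimate. \propref{back dt} concerns nearly \emph{vertical} direction fields: in the paper's convention $\cE_\xi$ consists of the directions $E^u_p=\{s(u,1)\}$, so $\xi$ measures the deviation from the genuine vertical, and the proposition expresses the backward contraction of the vertical cone. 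The tangent field of the nearly horizontal curve $\Gamma_0$ is not of this type; written in that convention its ``$\xi$'' is $1/g'$, which is unbounded (infinite wherever $g'=0$), so the hypothesis $C\lambda^{(1-\bepsilon)n}(1+\|\xi\|_{C^{r-1}})<1$ can never be met. If instead you read $\xi=g'$ as a horizontal slope field, the actual transformation law in the charts of \thmref{reg chart} (where $DF_{p_m}$ is lower triangular with $\partial_y e_{p_m}\approx\lambda$ and $f_{p_m}'\approx 1$) multiplies horizontal slopes by roughly $\lambda^{-1}$ per backward step, plus a coupling term: the opposite of the contraction you quote. Were your relation $g_n'(x)=\tixi(x,g_n(x))$ with $\|\tixi\|_{C^{r-1}}\lesssim\lambda^{(1-\bepsilon)n}$ true, it would say that backward images of horizontal curves become flatter, which is false --- under backward iteration tangent directions align with the vertical, and it is precisely the quantitative hypothesis $\|DF^{-n}|_{\Gamma_0'(t)}\|<\lambda^{-(1-\bepsilon)n/(r+1)}$, with its specific exponent $\tfrac{1}{r+1}$ and its interplay with the truncation scale $s_n$, that must be exploited to keep the pulled-back pieces tame. (Even your prior claim that $\Gamma_{-n}$ is a horizontal graph in the chart at $p_{-n}$ is asserted rather than proved, and one must also verify that the truncated pieces $\Gamma_{-i}$ remain inside the regular neighborhoods $\cU_{p_{-i}}$ for \emph{all} intermediate $0\leq i\leq n$, not only $i=n$, before any chart estimate may be invoked.)

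Two further remarks. First, your justification of $g'(0)=0$ cites \propref{ext deriv bound}, which only gives two-sided bounds valid for \emph{every} direction; the relevant tool is \propref{hor angle shrink}, which states exactly that small backward growth forces proximity to $E^h_{p_0}=E^c_{p_0}$. Second, that same proposition suggests a repair bypassing the uniform graph bound altogether: if the first nonvanishing derivative of $g$ at $0$ had order $k\leq r$, then at the points $\Gamma_0(t)$ with $|t|\asymp s_n$ the tangent direction makes an angle of order at least $s_n^{k-1}$ with the pushed-forward horizontal direction, whence (by \propref{hor angle shrink}, applicable since the truncated backward pieces do stay in the $\cU_{p_{-i}}$ after a mild adjustment of $s_n$) one gets $\|DF^{-n}|_{\Gamma_0'(t)}\|\gtrsim s_n^{k-1}\lambda^{-(1-\bepsilon)n}$; with $s_n\approx\lambda^{(\bepsilon+\frac{1-\bepsilon}{r+1})n}$ this contradicts the hypothesis as soon as $(1-\bepsilon)\bigl(1-\tfrac{k}{r+1}\bigr)>(k-1)\bepsilon$, which holds for all $k\leq r$ when $\epsilon$ is small. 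As written, however, your argument does not close.
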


\subsection{Horizontal regularity}

We say that $p \in \cB$ is {\it $N$-times forward horizontally $(L, \epsilon, \lambda)$-regular along $E_p^{h,+} \in \bbP_p^2$} if, for $s\in \{1, 2\}$, we have
\begin{equation}\label{eq:hor for reg}
L^{-1}\lambda^{(1+\epsilon)n} \leq \frac{\Jac_p F^n}{\|D_pF^n|_{E_p^{h,+}}\|^s} \leq L\lambda^{(1-\epsilon)n}
\matsp{for}
1 \leq n \leq N.
\end{equation}
Similarly, we say that $p$ is {\it $M$-times backward horizontally $(L, \epsilon, \lambda)$-regular along $E_p^{h,-} \in \bbP_p^2$} if, for $s \in \{1, 2\}$, we have
\begin{equation}\label{eq:hor back reg}
L^{-1}\lambda^{(1+\epsilon)n}  \leq  \frac{\|D_pF^{-n}|_{E_p^{h,-}}\|^s}{\Jac_p F^{-n}} \leq L\lambda^{(1-\epsilon)n}
\matsp{for}
1 \leq n \leq M.
\end{equation}
If \eqref{eq:hor for reg} and \eqref{eq:hor back reg} hold with $E_p^h := E_p^{h,+} = E_p^{h,-}$, then $p$ is {\it $(M, N)$-times horizontally $(L, \epsilon, \lambda)$-regular along $E_p^h$}.

\begin{prop}[Horizontal vs vertical forward regularity]\cite[Proposition 5.2]{CLPY2}\label{transverse for reg}
If $p$ is $N$-times forward horizontally $(L, \epsilon, \lambda)$-regular along $E_p^h \in \bbP_p^2$, then there exists $E_p^v \in \bbP_p^2$ such that $p$ is $N$-times forward $(\bL, \bepsilon, \lambda)$-regular along $E_p^v$.
\end{prop}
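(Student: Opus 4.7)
The strategy is to construct $E^v_p$ as the preimage under $DF^N$ of the direction perpendicular to $DF^N(E^h_p)$ at $p_N := F^N(p)$, and then reduce the two defining clauses of forward regularity for $E^v_p$ to a two-dimensional determinant identity plus a uniform lower bound on an angle. Setting $\theta_m := \measuredangle(E^h_{p_m}, E^v_{p_m})$, so that $\theta_N = \pi/2$ by construction, the identity
$$
\|DF^n|_{E^h_p}\|\cdot \|DF^n|_{E^v_p}\|\cdot \sin\theta_n \;=\; \Jac_p F^n\cdot \sin\theta_0 \qquad (0\le n\le N)
$$
gives $\|DF^n|_{E^v_p}\| \asymp \Jac_p F^n/\|DF^n|_{E^h_p}\|$ as soon as both $\sin\theta_0$ and $\sin\theta_n$ are bounded below by $\bL^{-1}$.

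Granting this angle control, the $s=0$ clause of \eqref{eq.for reg} for $E^v_p$ follows at once from the $s=1$ clause of \eqref{eq:hor for reg} for $E^h_p$, while the $s=1$ clause of \eqref{eq.for reg} follows from the $s=2$ clause of \eqref{eq:hor for reg}, in both cases with the constants degraded from $(L,\epsilon)$ to $(\bL,\bepsilon)$ as permitted by the statement.

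The main task is therefore the uniform lower bound $\sin\theta_m \ge \bL^{-1}$ on the entire orbit segment $0 \leq m \leq N$. I would establish it by working in an orthonormal frame at each $p_m$ whose first vector spans $E^h_{p_m}$, so that $DF$ from $p_m$ to $p_{m+1}$ becomes upper triangular with diagonal entries $\alpha_m = \|DF|_{E^h_{p_m}}\|$ and $\delta_m = \Jac_{p_m}F/\alpha_m$ and off-diagonal entry $\beta_m$. The backward projective action of $DF^{-1}$ on the reciprocal slope $t_m := \cot\theta_m$ is then the affine map $t_m = (\delta_m/\alpha_m)\,t_{m+1} - \beta_m/\alpha_m$, so iterating from $t_N = 0$ yields the telescoping expression
$$
t_m \;=\; -\sum_{k=m}^{N-1}\frac{\beta_k}{\alpha_k}\cdot \frac{\Jac_{p_m} F^{k-m}}{\|DF^{k-m}|_{E^h_{p_m}}\|^2}.
$$
By the $s=2$ horizontal bound applied at $p_m$, the second factor is of order $\bL\lambda^{(1-\bepsilon)(k-m)}$; and by the horizontal analogue of \propref{grow irreg} applied at each intermediate $p_k$, the single-step ratio $|\beta_k/\alpha_k|$ grows at most like $\bL\lambda^{-\bepsilon(k-m)}$. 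The two factors combine into a summable geometric tail, giving $|t_m|\le \bL$ and hence $\sin\theta_m \ge \bL^{-1}$.

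The main obstacle is precisely this angle control: lower bounding $\alpha_k = \|DF|_{E^h_{p_k}}\|$ at every intermediate time forces one to invoke the horizontal growth-in-irregularity estimate at each $p_k$, and it is here that the degradation $(L,\epsilon)\rightsquigarrow(\bL,\bepsilon)$ in the regularity parameters enters. Once the angle bound is secured, the rest of the proof is the purely algebraic manipulation of the determinant identity outlined above.
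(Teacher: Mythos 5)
This proposition is quoted from \cite[Proposition 5.2]{CLPY2}; the present paper contains no proof of it, so there is no in-text argument to compare yours against, and I judge it on its own terms. Your outline is essentially sound: the choice $E^v_p := DF^{-N}\bigl((DF^N(E^h_p))^{\perp}\bigr)$, the determinant identity $\|DF^n|_{E^h_p}\|\,\|DF^n|_{E^v_p}\|\sin\theta_n = \Jac_pF^n\,\sin\theta_0$, and the reduction of the $s=0$ (resp.\ $s=1$) clause of \eqref{eq.for reg} to the $s=1$ (resp.\ $s=2$) clause of \eqref{eq:hor for reg} are all correct, as is the telescoped formula for $t_m=\cot\theta_m$ obtained by pulling $t_N=0$ back through the upper-triangular frames.

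The one place where you overstate is the claimed uniform bound $|t_m|\le\bL$ for all $0\le m\le N$. The input you invoke --- horizontal regularity at the intermediate point $p_m$, via the horizontal analogue of \propref{grow irreg} --- only holds with constant $\bL\lambda^{-\bepsilon m}$ (the irregularity factor deteriorates along the orbit), so your sum yields $|t_m|\le\bL\lambda^{-\bepsilon m}$, i.e.\ $\sin\theta_m\ge\bL^{-1}\lambda^{\bepsilon m}$, not a bound independent of $m$. (Also, no growth estimate is needed for $|\beta_k/\alpha_k|$: one always has $\alpha_k=\|DF|_{E^h_{p_k}}\|\ge\|DF^{-1}\|^{-1}$ and $|\beta_k|\le\|DF\|$, so this ratio is uniformly bounded.) Fortunately the weaker, deteriorating angle bound is all you need: in the identity at time $n$, the lower bound on $\sin\theta_n$ enters only the upper estimates, where the extra factor $\lambda^{-\bepsilon n}$ is absorbed into the exponent degradation $\epsilon\rightsquigarrow\bepsilon$ permitted by the statement, while the lower estimates use only $\sin\theta_n\le 1$ together with the genuinely uniform bound $\sin\theta_0\ge\bL^{-1}$, which is the case $m=0$ of your sum, where the hypothesis at $p=p_0$ applies verbatim. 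With that one correction your argument goes through.
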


\begin{prop}[Horizontal vs vertical backward regularity]\cite[Proposition 5.3]{CLPY2}\label{transverse back reg}
Suppose $p$ is $M$-times backward horizontally $(L, \epsilon, \lambda)$-regular along $E_p^h \in \bbP_p^2$. Let $E_p^v \in \bbP_p^2 \setminus \{E_p^h\}$. If $\measuredangle (E_p^h, E_p^v) > \theta$, then the point $p$ is $M$-times backward $(\bL/\theta^2, \bepsilon, \lambda)$-regular along $E_p^v$.
\end{prop}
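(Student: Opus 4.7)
The plan is to pass from the hypothesis along $E_p^h$ to the singular value decomposition of $A := D_pF^{-n}$, and then to read off the behavior along a transverse direction $E_p^v$.

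First I would combine the $s = 1$ and $s = 2$ horizontal backward regularity bounds to extract the two-sided estimates
\begin{equation*}
\|AE_p^h\| \asymp 1
\matsp{and}
\Jac_p F^{-n} \asymp \lambda^{-n},
\end{equation*}
where ``$\asymp$'' denotes equality up to multiplicative factors $\bL\lambda^{\pm\bepsilon n}$. Indeed, dividing the $s=2$ hypothesis by the square of the $s=1$ hypothesis squeezes $1/\Jac_p F^{-n}$ between two comparable quantities of order $\lambda^n$, and then either hypothesis squeezes $\|AE_p^h\|$ near $1$. Let $\sigma_1 \geq \sigma_2 > 0$ be the singular values of $A$, so $\sigma_1\sigma_2 = \Jac_p F^{-n}$. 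Since $\sigma_2 \leq \|AE_p^h\| \lesssim 1$, we obtain $\sigma_1 \gtrsim \lambda^{-n}$. The matching upper bound $\sigma_1 \lesssim \lambda^{-n}$ comes from the standing control on $\lambda^{1-\epsilon}\|DF^{-1}\|$ via $\sigma_1 \leq \|DF^{-1}\|^n$. Hence $\sigma_1 \asymp \lambda^{-n}$.

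Next I would decompose $E_p^h = \alpha\, u_- + \beta\, u_+$ in the right singular basis $\{u_+, u_-\}$ of $A$, with $\alpha^2 + \beta^2 = 1$. The identity $\|AE_p^h\|^2 = \alpha^2\sigma_2^2 + \beta^2\sigma_1^2 \lesssim 1$ together with $\sigma_1 \asymp \lambda^{-n}$ forces
\begin{equation*}
|\beta| \;\lesssim\; \bL\,\lambda^{(1-\bepsilon)n},
\end{equation*}
that is, $E_p^h$ is exponentially close to the weak direction $u_-$. A direction $E_p^v$ making angle at least $\theta$ with $E_p^h$ therefore has $u_+$-component of absolute value at least $\sin\theta - |\beta| \geq \tfrac{1}{2}\sin\theta$ once $n$ is large. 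Applying the analogous decomposition $E_p^v = \alpha' u_- + \beta' u_+$ to $\|AE_p^v\|^2 = (\alpha')^2\sigma_2^2 + (\beta')^2\sigma_1^2$ then yields
\begin{equation*}
\tfrac{\sin\theta}{2}\,\sigma_1 \;\leq\; \|AE_p^v\| \;\leq\; \sigma_1,
\end{equation*}
so $\|AE_p^v\| \asymp \theta\,\lambda^{-n}$.

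Finally, substituting $\|AE_p^v\| \asymp \theta\lambda^{-n}$ and $\Jac_p F^{-n} \asymp \lambda^{-n}$ into the two backward regularity inequalities verifies them with constant $L' = \bL/\theta^2$. The $\theta^{-2}$ is forced by the $s=1$ condition, which involves the ratio $\Jac_p F^{-n}/\|AE_p^v\|^2 \sim \lambda^{n}/\theta^2$; the $s=0$ condition only asks for $\theta^{-1}$. I expect the principal technical nuisance to be the systematic book-keeping of $\lambda^{O(\epsilon n)}$ errors so that all compounded factors fit inside a single enlarged marginal exponent $\bepsilon$---precisely the ritual the $\overline{(\cdot)}$-notation of the introduction is designed to streamline. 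A secondary point of care is the initial transient: for small $n$ the alignment of $E_p^h$ with $u_-$ has not yet taken hold, but those bounded-range iterates contribute only a bounded multiplicative error that can be absorbed into $\bL$.
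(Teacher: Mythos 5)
Your extraction of the two consequences $\|D_pF^{-n}|_{E_p^h}\|\asymp 1$ and $\Jac_pF^{-n}\asymp\lambda^{-n}$ from the $s=1,2$ hypotheses is correct, and the lower bound on $\|D_pF^{-n}|_{E_p^v}\|$ is fine (though it is cleaner, and free of your ``initial transient'', to get it from the area inequality $\Jac_pF^{-n}\,\sin\measuredangle(E_p^h,E_p^v)\le\|D_pF^{-n}|_{E_p^h}\|\cdot\|D_pF^{-n}|_{E_p^v}\|$, which is uniform in $n$; your transient is not of bounded length --- it lasts on the order of $\log(1/\theta)$ iterates, so it costs a power of $1/\theta$ governed by $\|DF^{-1}\|$, not a constant absorbable into $\bL$). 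The genuine gap is the upper bound $\sigma_1(D_pF^{-n})\lesssim\lambda^{-n}$. Your justification, $\sigma_1\le\|DF^{-1}\|^n$ plus ``the standing control on $\lambda^{1-\epsilon}\|DF^{-1}\|$'', does not deliver it: $\|DF^{-1}\|^n=(\lambda^{1-\epsilon}\|DF^{-1}\|)^n\,\lambda^{-(1-\epsilon)n}$, and nothing in the hypotheses makes $\lambda^{1-\epsilon}\|DF^{-1}\|\le 1$ (for a H\'enon-like map with Jacobian of order $\lambda$ one expects $\|DF^{-1}\|\sim\|DF\|\,\lambda^{-1}$). The constants of this framework may depend on the number $\lambda^{1-\epsilon}\|DF^{-1}\|$, but only through a bounded power, never through its $n$-th power, so this exponential surplus cannot be hidden in $\bL\lambda^{-\bepsilon n}$. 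Since both the $s=0$ upper bound and the $s=1$ lower bound of the conclusion are exactly statements that $\|D_pF^{-n}|_{E_p^v}\|$ does not exceed $\lambda^{-(1+\bepsilon)n}$ up to $\bL/\theta^2$, the proof is incomplete at its decisive point.

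Moreover, no argument carried out at the single time $n$ can close this gap: a single matrix $A$ with $|\det A|\approx\lambda^{-n}$ and $\|Ae_h\|\approx1$ can have arbitrarily large norm (take singular values $T$ and $\lambda^{-n}/T$ with $T$ huge and $e_h$ chosen so that $\|Ae_h\|=1$), so the hypothesis at the intermediate times $1\le m\le n$, combined with the one-step bound on $DF^{-1}$, must enter. One way to do this: set $w_m:=D_pF^{-m}e_h$ and $z_m:=D_pF^{-m}e_h^{\perp}$; the area identity shows the component of $z_m$ orthogonal to $w_m$ has norm exactly $\Jac_pF^{-m}/\|w_m\|\le\bL\lambda^{-(1+\bepsilon)m}$, so one further backward step increases $\|z_m\|$ by at most $\|DF^{-1}\|\,\bL\lambda^{-(1+\bepsilon)m}$ beyond the neutral factor $\|w_{m+1}\|/\|w_m\|$; dividing by $\|w_{m+1}\|$ and summing the resulting geometric series over $m<n$ gives $\|D_pF^{-n}\|\le\bL\lambda^{-(1+\bepsilon)n}$ with only a single factor of $\|DF^{-1}\|$. (Equivalently, one can run the triangular cocycle decomposition with off-diagonal entries $\zeta_m$ set up at the start of \appref{sec.pesin} and estimate the accumulated off-diagonal contribution.) With that operator-norm bound in hand, your SVD computation, or directly the area inequality together with $\|D_pF^{-n}|_{E_p^v}\|\le\sigma_1$, does yield both regularity inequalities with constant $\bL/\theta^2$. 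Note finally that the present paper only quotes this proposition from \cite{CLPY2}, so the comparison above is with what the statement demands rather than with a proof printed here.
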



\section{Distortion Theorems for 1D Maps}\label{sec.koebe}

In this section, we summarize some of the techniques in 1D dynamical systems used to control distortion. See \cite{dMvS} for complete details.

Let $f : I \to f(I)$ be a $C^1$-diffeomorphism on an interval $I \subset \bbR$. For $J \subset I$, the {\it distortion of $f$ on $J$} is defined as
$$
\Dis(f, J) := \sup_{x, y \in J}\frac{|f'(x)|}{|f'(y)|}.
$$
We denote $\Dis(f) := \Dis(f, I)$. For $K \geq 1$, we say that $f$ has {\it $K$-bounded distortion on $J$} if
$$
\Dis(f, J) \leq K.
$$
Clearly, if $g : I' \to g(I')$ is another $C^1$-diffeomorphism on an interval $I' \supset f(J)$, then we have
\begin{equation}\label{eq.dis product}
\Dis(g \circ f, J) \leq \Dis(g, f(J)) \cdot \Dis(f, J).
\end{equation}

\begin{thm}[Denjoy Lemma]\label{denjoy}
Let $f: I \to I$ be a $C^r$-map on an interval $I \subset \bbR$. Then there exists a uniform constant $K >0$ such that if $f^n|_J$ is a diffeomorphism on a subinterval $J \subset I$ for some $n \in \bbN$, then
$$
\log(\Dis(f^n, J)) \leq K \sum_{i=0}^{n-1}|f(J)|.
$$
\end{thm}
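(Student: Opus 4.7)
The plan is to use the standard telescoping chain-rule argument. I interpret the sum on the right-hand side as $\sum_{i=0}^{n-1}|f^i(J)|$, since the statement as written (with $|f(J)|$ independent of $i$) would collapse to $n|f(J)|$ and be useless in the applications. Throughout, let $\phi := \log|f'|$, which is a $C^{r-1}$ function on the open set $\{f'\neq 0\}\subset I$; since $f^n|_J$ is a diffeomorphism, the chain rule forces $f'$ to be nonzero at every point of $f^i(J)$ for $0\le i\le n-1$, so $\phi$ is well-defined and smooth on each $f^i(J)$.

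The first step is to write, via the chain rule,
\[
\log|(f^n)'(x)|-\log|(f^n)'(y)|=\sum_{i=0}^{n-1}\bigl(\phi(f^i(x))-\phi(f^i(y))\bigr)
\]
for any $x,y\in J$. For each $i$, the points $f^i(x)$ and $f^i(y)$ lie in the interval $f^i(J)$ (this uses that $f^i|_J$ is itself a diffeomorphism, which follows from the hypothesis on $f^n|_J$), so the mean value theorem applied to $\phi$ on $f^i(J)$ gives
\[
\bigl|\phi(f^i(x))-\phi(f^i(y))\bigr|\le \Bigl(\sup_{t\in f^i(J)}|\phi'(t)|\Bigr)\cdot|f^i(J)|.
\]
Since $\phi'=f''/f'$, and by compactness the quantity $K:=\sup_{I}|f''/f'|$ is a uniform constant depending only on $f$ (bounded on the domain where $f'\neq 0$, which contains $\bigcup_{i}f^i(J)$ by the diffeomorphism hypothesis), we get
\[
\bigl|\log|(f^n)'(x)|-\log|(f^n)'(y)|\bigr|\le K\sum_{i=0}^{n-1}|f^i(J)|.
\]
Taking the supremum over $x,y\in J$ on the left-hand side yields the logarithm of $\Dis(f^n,J)$, and the bound is exactly the desired estimate.

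The main subtlety, and the only nonroutine point, is justifying that $K$ is a genuine uniform constant. If $f$ has critical points in $I$, the ratio $f''/f'$ is unbounded near them, so a priori the supremum $\sup_I|f''/f'|$ could be infinite. In the present statement this is not an issue in the applications (for instance, in the proof of Theorem~5.1 the Denjoy lemma is invoked for a composition of diffeomorphisms $g_k^n$ with uniformly bounded $C^r$ norms and $f'$ uniformly bounded away from zero, so $K$ is indeed uniform). When $f$ does have critical points, one should read the statement as requiring $\bigcup_{i=0}^{n-1}f^i(J)$ to lie in a compact subregion where $|f''/f'|$ is bounded, and $K$ then depends on this subregion; this is automatic in the intended applications because the hypothesis that $f^n|_J$ is a diffeomorphism forces the iterates to avoid a uniform neighborhood of $\Crit(f)$ in the regimes of interest.
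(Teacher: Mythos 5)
Your proof is the standard telescoping chain-rule argument for the Denjoy lemma and is correct. The paper does not actually prove this statement --- it appears in the appendix that summarizes background 1D material and simply refers the reader to \cite{dMvS} --- and your argument is precisely the one found there. Your two editorial observations are also well taken: the displayed sum should read $\sum_{i=0}^{n-1}|f^i(J)|$ rather than $\sum_{i=0}^{n-1}|f(J)|$, and the ``uniform constant'' $K=\sup|f''/f'|$ is only finite when the orbit $\bigcup_{i=0}^{n-1}f^i(J)$ stays away from the critical set of $f$; as you observe, both issues are harmless in the single place the paper invokes the lemma (the proof of \thmref{crit rec}, where the composed maps $g^n_k$ have uniformly bounded $C^r$-norm and derivative bounded away from zero on the relevant intervals, so that the nonlinearity $\log|{g^n_k}'|$ is uniformly Lipschitz).
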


\subsection{Cross Ratios}

Let $J \Subset I \subset \bbR$ be bounded open intervals. The complement $I \setminus \overline{J}$ consists of two intervals $L$ and $R$. The {\it cross-ratio of $J$ in $I$} is given by
$$
\Cr(I, J) := \frac{|I||J|}{|L||R|}.
$$
For $\tau > 0$, we say that $I$ {\it contains a $\tau$-scaled neighborhood of $J$} if
$$
|L|, |R| > \tau |J|.
$$

Let $f : I \to f(I)$ be a homeomorphism. The {\it cross-ratio distortion under $f$ of $J$ in $I$} is given by
$$
\CrD(f, I, J) := \frac{\Cr(f(I), f(J))}{\Cr(I, J)}.
$$
Clearly, if $g : f(I) \to g\circ f(I)$ is another homeomorphism, then
\begin{equation}\label{eq.crd product}
\CrD(g\circ f, I, J)= \CrD(g, f(I), f(J)) \cdot \CrD(f, I, J).
\end{equation}
For $\nu > 0$, we say that $f$ has {\it $\nu$-bounded cross-ratio distortion on $I$} if
$$
\CrD(f, I', J) > \nu
$$
for all bounded open intervals $J \Subset I' \subset I$.

\begin{lem}\label{power crd}
For $\alpha > 1$, let $P_\alpha : \bbR^+ \to \bbR^+$ be an $\alpha$-power map such that
$$
P_\alpha(x) = x^\alpha
\matsp{for}
x \in \bbR^+.
$$
Then $P_\alpha|_{\bbR^+}$ has negative Schwarzian derivative. Consequently, $P_\alpha|_{\bbR^+}$ has $1$-bounded cross-ratio distortion on $\bbR_+$.
\end{lem}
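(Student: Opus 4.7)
The lemma is a direct computation combined with a classical fact from one-dimensional dynamics, so I would split the proof into two short steps.

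\textbf{Step 1: Schwarzian computation.} I would begin by computing the Schwarzian derivative
\[
S(P_\alpha)(x) = \frac{P_\alpha'''(x)}{P_\alpha'(x)} - \frac{3}{2}\left(\frac{P_\alpha''(x)}{P_\alpha'(x)}\right)^2
\]
directly from $P_\alpha(x) = x^\alpha$. Since $P_\alpha'(x) = \alpha x^{\alpha-1}$, $P_\alpha''(x) = \alpha(\alpha-1)x^{\alpha-2}$ and $P_\alpha'''(x) = \alpha(\alpha-1)(\alpha-2)x^{\alpha-3}$, the two terms simplify to $(\alpha-1)(\alpha-2)x^{-2}$ and $(\alpha-1)^{2}x^{-2}$ respectively, giving
\[
S(P_\alpha)(x) = -\,\frac{\alpha^{2}-1}{2x^{2}}.
\]
For $\alpha>1$ this is strictly negative on $\bbR^{+}$, which gives the first assertion.

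\textbf{Step 2: From negative Schwarzian to cross-ratio expansion.} For the second assertion I would invoke the classical theorem (see de Melo--van Strien) that a $C^{3}$-diffeomorphism with negative Schwarzian derivative expands cross-ratios, i.e.\ $\CrD(f,I',J) \geq 1$ for every pair of nested bounded open intervals $J \Subset I' \subset \operatorname{dom}(f)$. Applied to $P_\alpha|_{\bbR^{+}}$, this is exactly the definition of $1$-bounded cross-ratio distortion as given in the paper. (If a self-contained argument is desired in place of citing the theorem, one can reduce to the infinitesimal version: the condition $Sf<0$ is equivalent to $|f'|^{-1/2}$ being strictly convex, and the cross-ratio inequality then follows from integrating this convexity over the four subintervals determining $\Cr(I',J)$ and $\Cr(f(I'),f(J))$.)

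\textbf{Expected difficulty.} There is essentially no obstacle here: the Schwarzian computation is a one-line calculation and the implication ``$Sf<0 \Rightarrow$ cross-ratio expansion'' is standard. The only thing to be careful about is that the paper's convention defines ``$\nu$-bounded cross-ratio distortion'' as a lower bound $\CrD(f,\cdot,\cdot)>\nu$ (rather than an upper bound), so ``$1$-bounded'' really does correspond to cross-ratio expansion, which is consistent with what $Sf<0$ yields.
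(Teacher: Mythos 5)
Your computation is correct: $S(P_\alpha)(x) = -\frac{\alpha^2-1}{2x^2} < 0$ for $\alpha > 1$, and the implication ``negative Schwarzian $\Rightarrow$ cross-ratio expansion'' is the standard fact the lemma is invoking. The paper itself leaves this lemma unproven — it appears in an appendix that explicitly defers to de Melo--van Strien for details — so your proof supplies exactly the expected argument, and you correctly flag the one subtle point, namely that the paper's convention defines ``$\nu$-bounded'' as a lower bound, so $1$-bounded indeed means expansion.
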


\begin{lem}\label{d crd}
Let $I \subset \bbR$ be a bounded open interval, and let $f : I \to f(I)$ be a $C^1$-diffeomorphism with $K$-bounded distortion on $I$ for some $K >0$. Then there exists a uniform constant $\nu = \nu(K) > 0$ such that $f$ has $\nu$-bounded cross-ratio distortion on $I$.
\end{lem}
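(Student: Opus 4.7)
The plan is to reduce the cross-ratio distortion directly to a ratio of derivatives at four points via the mean value theorem, and then bound this ratio using the $K$-bounded distortion hypothesis.

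Let $J \Subset I' \subset I$ be bounded open intervals, and write $I' \setminus \overline{J} = L \sqcup R$. Since $f$ is a $C^1$-diffeomorphism on $I$, its restriction to any subinterval $X \in \{I', J, L, R\}$ is continuously differentiable, so the mean value theorem yields a point $\xi_X \in X$ with $|f(X)| = |f'(\xi_X)| \cdot |X|$. Substituting into the definition of $\CrD$ and canceling the length factors gives
$$
\CrD(f, I', J) = \frac{|f(I')|\,|f(J)|\,|L|\,|R|}{|f(L)|\,|f(R)|\,|I'|\,|J|} = \frac{|f'(\xi_{I'})|\,|f'(\xi_J)|}{|f'(\xi_L)|\,|f'(\xi_R)|}.
$$

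Since all four points lie in $I$, the $K$-bounded distortion hypothesis gives
$$
\frac{|f'(\xi_{I'})|}{|f'(\xi_L)|} \;\geq\; \frac{1}{K}
\matsp{and}
\frac{|f'(\xi_J)|}{|f'(\xi_R)|} \;\geq\; \frac{1}{K},
$$
so that $\CrD(f, I', J) \geq 1/K^2$. Taking $\nu := 1/K^2$, which depends only on $K$, yields the desired uniform bound on cross-ratio distortion across all subintervals $J \Subset I' \subset I$.

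There is no real obstacle here: the argument is a one-line computation once one observes that the cross-ratio is organized as a product of two length ratios $|I'|/|L|$ and $|J|/|R|$ in the numerator when expanded, and that MVT converts each length ratio after applying $f$ into a single derivative ratio. The only mild point worth checking is that $L$ and $R$ are nonempty (so the cross-ratios are finite), which follows from $J \Subset I'$.
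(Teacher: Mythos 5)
Your argument is correct and is the standard one: the paper states this lemma without proof (deferring to \cite{dMvS}), and the intended justification is exactly this mean-value computation expressing $\CrD(f,I',J)$ as the ratio $\frac{|f'(\xi_{I'})|\,|f'(\xi_J)|}{|f'(\xi_L)|\,|f'(\xi_R)|}$ and bounding it below by $K^{-2}$ using $\Dis(f,I)\leq K$, so there is nothing substantive to compare. Two cosmetic points: since the paper's definition of $\nu$-bounded cross-ratio distortion requires the strict inequality $\CrD(f,I',J)>\nu$, take $\nu:=\tfrac{1}{2}K^{-2}$ (or observe that $\geq K^{-2}$ is all that is ever used); and when $\overline{I'}$, $\overline{L}$ or $\overline{R}$ meets $\partial I$ the mean value theorem should be applied to the continuous extension of $f$ to $\overline{I}$ (which exists because $K$-bounded distortion forces $f'$ to be bounded), or more simply replace $|f'(\xi_X)|$ by the average $|f(X)|/|X|=\frac{1}{|X|}\int_X|f'|$, which lies between $\inf_I|f'|$ and $\sup_I|f'|$ and yields the same bound.
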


\begin{thm}[Koebe distortion theorem]\label{koebe}
Let $J \Subset I \subset \bbR$ be bounded open intervals, and let $f : I \to f(I)$ be a $C^1$-diffeomorphism with $\nu$-bounded cross-ratio distortion on $I$ for some $\nu >0$. If $f(I)$ contains a $\tau$-scaled neighborhood of $f(J)$, then there exists a uniform constant $K = K(\nu, \tau) >0$ depending only on $\nu$ and $\tau$ such that $f$ has $K$-bounded distortion on $J$.
\end{thm}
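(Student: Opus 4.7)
The plan is to follow the classical Koebe distortion argument (as in \cite{dMvS}, Ch.~IV), adapted to the one-sided cross-ratio hypothesis of the statement. Write $I = [A, B]$.

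First I would convert the cross-ratio bound into a pointwise inequality on $f'$. Applying $\CrD(f, I', J') \geq \nu$ to $I' = I$ and $J' = [x, x + \epsilon]$ for $x \in J$ and sending $\epsilon \to 0^+$ yields
\[
f'(x) \;\geq\; \nu \cdot \frac{|I|\,(f(x)-f(A))(f(B)-f(x))}{(x-A)(B-x)\,|f(I)|}, \qquad x \in J.
\]
As stated this estimate is not quite usable, because the factors $(x - A)(B - x)$ and $|f(I)|$ are not controlled in terms of $|J|$ and $|f(J)|$ alone.

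The next step, which is the heart of the argument, is to turn the image-side $\tau$-scaled neighborhood hypothesis into two-sided geometric control near $J$. Concretely, choose an intermediate interval $I' \Subset I$ consisting of $J$ together with $\tau|J|$-collars $L', R'$ on each side, and apply the cross-ratio bound to $(I', J)$. The resulting inequality, combined with the identity $|f(I')| = |f(L')| + |f(J)| + |f(R')|$, forces $|f(L')|$, $|f(R')|$, and $|f(I')|$ all to be comparable to $|f(J)|$ up to constants depending only on $\nu, \tau$. (The image-side $\tau$-scaled hypothesis is used to rule out the degenerate case where one of $|f(L')|, |f(R')|$ is very small; the cross-ratio inequality rules out the case where it is very large.) With this bounded Koebe space, the pointwise lower bound from the first step, applied now with $I$ replaced by $I'$, gives a uniform estimate $f'(x) \geq c(\nu, \tau) \cdot |f(J)|/|J|$ for every $x \in J$.

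A matching upper bound $f'(x) \leq C(\nu, \tau) \cdot |f(J)|/|J|$ is obtained by iterating the same argument on suitably chosen subintervals of $J$, combined with the total-length identity $\int_J f'(t)\,dt = |f(J)|$; the two estimates together yield $\Dis(f, J) \leq K(\nu, \tau)$. The main obstacle is precisely in establishing the bounded Koebe space on both sides: the cross-ratio inequality $\CrD(f) \geq \nu$ is one-sided, and its naive transfer to the inverse function gives $\CrD(f^{-1}) \leq 1/\nu$, which is the wrong direction for a Koebe-type estimate on $f^{-1}$. The classical resolution, following \cite{dMvS}, is to exploit the image-side $\tau$-collars directly, which rule out the degenerate geometries that the cross-ratio inequality alone cannot, with the distortion constant $K(\nu, \tau)$ deteriorating in a controlled way as $\nu \to 0^+$ or $\tau \to 0^+$.
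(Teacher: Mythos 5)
Note first that the paper does not prove this theorem; the appendix opens with a pointer to \cite{dMvS} and simply states the results it needs, so I compare your proposal to the standard argument from the literature. The gap is in your second step, which you correctly flag as the heart of the argument. You attach \emph{domain-side} collars $L', R'$ of length $\tau|J|$ to $J$ to form $I'$, and this goes wrong in two ways. First, such an $I'$ need not sit inside $I$: the hypothesis gives a $\tau$-scaled neighborhood of $f(J)$ inside $f(I)$, but says nothing about the domain-side room in $I\setminus J$, which can be arbitrarily small on either side --- surviving precisely this degeneration is the whole content of the Koebe principle. Second, even when $I' \subset I$, the claim that the cross-ratio bound on $(I',J)$ together with $|f(I')|=|f(L')|+|f(J)|+|f(R')|$ forces $|f(L')|,|f(R')| \asymp |f(J)|$ is false: writing $u:=|f(L')|/|f(J)|$ and $v:=|f(R')|/|f(J)|$, the inequality $\CrD(f,I',J)\geq\nu$ with $\Cr(I',J)=(1+2\tau)/\tau^2$ gives only $uv \leq \frac{\tau^2}{\nu(1+2\tau)}(1+u+v)$, which is satisfied as $u\to 0^+$. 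The attempt to patch this with the image-side hypothesis does not close the gap, because that hypothesis bounds $|f(L)|$ for the \emph{full} component $L$ of $I\setminus J$, not $|f(L')|$ for the subinterval $L'\subset L$; extracting a lower bound on $|f(L')|$ from a lower bound on $|f(L)|$ would require controlling the distortion of $f$ on $L$, which is exactly what is to be proved.

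The fix, following \cite{dMvS}, is to put the $\tau$-collars on the \emph{image} side: pick $M=(m_-,m_+)$ with $J\subset M\subset I$ such that each component of $f(M)\setminus f(J)$ has length $\tau|f(J)|$; this $M$ exists precisely by the hypothesis. Then for $x<y$ in $J$, the infinitesimal cross-ratio bound with $T^*=(m_-,y)$ and $J^*=(x,x+\epsilon)$, $\epsilon\to 0^+$, yields the lower bound $f'(x) \geq \nu\,\frac{(f(x)-f(m_-))(f(y)-f(x))(y-m_-)}{(f(y)-f(m_-))(x-m_-)(y-x)}$, while the infinitesimal bound with $T^*=(x-\delta,y+\delta)$, $J^*=(x,y)$, $\delta\to 0^+$, yields $f'(x)f'(y)\leq \nu^{-1}\left(\frac{f(y)-f(x)}{y-x}\right)^2$. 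Dividing the latter by the square of the former, the secant-slope factors cancel, and the remaining ratios $\frac{f(y)-f(m_-)}{f(x)-f(m_-)}\leq\frac{1+\tau}{\tau}$ and $\frac{x-m_-}{y-m_-}\leq 1$ are controlled directly by the image-side Koebe space, giving $\frac{f'(y)}{f'(x)}\leq\nu^{-3}\left(\frac{1+\tau}{\tau}\right)^2$; the symmetric bound on $f'(x)/f'(y)$ follows from the analogous estimate with $T^*=(x,m_+)$ collapsing at $y$. Neither a domain-side Koebe space nor the integral/iteration argument sketched in your third step is needed, and the distortion constant obtained is $K(\nu,\tau)=\nu^{-3}\left(\frac{1+\tau}{\tau}\right)^2$.
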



\section{Elementary $C^r$-Estimates}

\begin{lem}\cite[(4)]{PuSh}\label{hocb}
Let $F, G$ be $C^r$-maps such that $F \circ G$ is well-defined. Then
$$
\|F\circ G\|_r \leq r^r \|F\|_r \|G\|_r^r,
$$
where $\|F\|_r := \|DF\|_{C^{r-1}}$.
\end{lem}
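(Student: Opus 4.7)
The plan is to derive the bound from Faà di Bruno's chain-rule formula for higher-order derivatives of a composition, combined with an elementary count of the terms that appear.

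First, I would recall (or derive by iterated application of the ordinary chain rule together with Leibniz's rule) the set-partition form of Faà di Bruno's formula. For each $1\leq k\leq r$,
$$
D^{k}(F\circ G)(x) \;=\; \sum_{\pi\in\mathcal{P}_{k}} D^{|\pi|}F\bigl(G(x)\bigr)\Bigl[\bigotimes_{B\in\pi} D^{|B|}G(x)\Bigr],
$$
where $\mathcal{P}_{k}$ denotes the set of set-partitions of $\{1,\ldots,k\}$, $|\pi|$ is the number of blocks and $|B|$ is the size of a block $B\in\pi$. This is the natural multilinear form of the formula when $F$ and $G$ are vector-valued, with tensor products handling the bookkeeping of indices.

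Second, I would bound each summand. Because the block sizes and the number of blocks are all between $1$ and $k\leq r$, the definition $\|\cdot\|_{r}:=\|D\,\cdot\|_{C^{r-1}}$ gives both $\|D^{|\pi|}F\|_{\infty}\leq \|F\|_{r}$ and $\|D^{|B|}G\|_{\infty}\leq \|G\|_{r}$. Since the operator norm of a tensor product of multilinear maps is the product of operator norms, each term is controlled by
$$
\Bigl\|D^{|\pi|}F(G(\cdot))\bigl[\bigotimes_{B\in\pi}D^{|B|}G(\cdot)\bigr]\Bigr\|_{\infty}\;\leq\; \|F\|_{r}\,\|G\|_{r}^{|\pi|}\;\leq\; \|F\|_{r}\,\|G\|_{r}^{k},
$$
where in the last step one uses $\|G\|_{r}\geq 1$ (the only case of interest in the applications; the alternative case is absorbed into the constant).

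Third, I would count the terms. The cardinality of $\mathcal{P}_{k}$ is the Bell number $B_{k}$, and the crude estimate that assigns each of $k$ elements to one of at most $k$ blocks yields $B_{k}\leq k^{k}$. Summing over partitions gives
$$
\|D^{k}(F\circ G)\|_{\infty}\;\leq\; B_{k}\,\|F\|_{r}\,\|G\|_{r}^{k}\;\leq\; k^{k}\|F\|_{r}\,\|G\|_{r}^{k}\;\leq\; r^{r}\|F\|_{r}\,\|G\|_{r}^{r},
$$
and taking the maximum over $1\leq k\leq r$ gives $\|F\circ G\|_{r}=\max_{1\leq k\leq r}\|D^{k}(F\circ G)\|_{\infty}\leq r^{r}\|F\|_{r}\|G\|_{r}^{r}$. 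The only non-automatic ingredient is the combinatorial one: setting up Faà di Bruno correctly for vector-valued maps and controlling the Bell number. The analytic content is just the product rule iterated $k$ times, so there is no genuine obstacle.
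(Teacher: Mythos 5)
The paper itself offers no argument for this lemma; it is quoted directly from Pugh--Shub \cite{PuSh}, so your proof can only be judged on its own terms. The route you take --- the set-partition (Fa\`a di Bruno) form of the higher-order chain rule, the bound $\|D^{|\pi|}F(G(\cdot))[\bigotimes_{B\in\pi}D^{|B|}G(\cdot)]\| \le \|F\|_r\,\|G\|_r^{|\pi|}$ from submultiplicativity of operator norms under tensoring, and the Bell-number count $B_k\le k^k$ --- is the standard derivation of this kind of composition estimate, and those three steps are sound.

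The one genuine issue is the step $\|G\|_r^{|\pi|}\le\|G\|_r^{k}$, which requires $\|G\|_r\ge 1$, and your parenthetical claim that the case $\|G\|_r<1$ ``is absorbed into the constant'' is not correct: no constant depending only on $r$ repairs the stated inequality in that regime. Indeed, take $F=\Id$ and $G(x)=\epsilon x$ on an interval, with $r\ge 2$; then $\|F\circ G\|_r=\epsilon$, $\|F\|_r=1$, $\|G\|_r=\epsilon$, while $r^r\|F\|_r\|G\|_r^r=r^r\epsilon^r\ll\epsilon$ for small $\epsilon$. So, with $\|F\|_r:=\|DF\|_{C^{r-1}}$ as defined, the inequality as literally stated needs the normalization $\|G\|_r\ge 1$ (equivalently, the right-hand side should be read with $\max(1,\|G\|_r)^r$). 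This caveat is harmless where the paper invokes the lemma (the proof of \lemref{factor}, where one may always replace $\|G\|_r$ by $\max(1,\|G\|_r)$), and is presumably the convention intended in \cite{PuSh}, but your write-up should state the hypothesis explicitly rather than dismiss the other case. With that correction, your proof is complete.
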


\begin{lem}\cite[Lemma B.4]{CLPY1}\label{cr inverse}
Consider a $C^r$-diffeomorphism $f : \bbR \to \bbR$. Suppose $\|f'\| > \mu$ for some constant $\mu \in (0,1)$. Then there exists a uniform constant $C = C(r) \geq 1$ such that
$$
\|(f^{-1})'\|_{C^{r-1}} < C\mu^{1-2r}\|f''\|_{C^{r-2}}^{r-1}.
$$
\end{lem}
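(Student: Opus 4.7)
The plan is to prove this by induction on $r \geq 2$, exploiting the explicit formula for derivatives of the inverse of a one-variable function. Setting $g := f^{-1}$, the starting point is
$$
g'(y) = \frac{1}{f'(g(y))}.
$$
The base case $r = 2$ is immediate: $\|g'\|_{C^0} \leq \mu^{-1}$, and differentiating once more gives $g''(y) = -f''(g(y))/(f'(g(y)))^3$, so that $\|g'\|_{C^1} \leq \mu^{-1} + \mu^{-3}\|f''\|_{C^0}$, which absorbs into a bound of the required shape.

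For the inductive step, the structural core of the argument is the claim that for each $1 \leq k \leq r$, the derivative $g^{(k)}(y)$ admits a finite expansion
$$
g^{(k)}(y) = \sum_{\alpha} c_\alpha \cdot \frac{1}{(f' \circ g)^{N_\alpha}}\prod_{j=2}^{k}(f^{(j)}\circ g)^{\alpha_j},
$$
where the sum runs over finitely many multi-indices $\alpha = (\alpha_2, \ldots, \alpha_k)$, the $c_\alpha$ are integers depending only on $k$, and the positive integer exponents $N_\alpha$, together with the degrees $\alpha_j$, satisfy $N_\alpha \leq 2k-1$ and $\sum_{j=2}^{k}(j-1)\alpha_j \leq k-1$. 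This is proved by induction on $k$, starting from $k=1$ (where the formula reduces to $1/(f'\circ g)$ with $N = 1$). Differentiating an individual monomial in $y$ and using $g'(y) = 1/(f'\circ g)$ yields either a term produced by differentiating $(f'\circ g)^{-N_\alpha}$ (which raises $N_\alpha$ by $2$ and multiplies by $f''\circ g$, incrementing the weighted degree by $1$) or a term from differentiating a factor $(f^{(j)}\circ g)^{\alpha_j}$ (which raises $N_\alpha$ by $1$ and shifts one copy of $f^{(j)}$ to $f^{(j+1)}$, again incrementing the weighted degree by $1$). A short case analysis verifies that the bounds $N_\alpha \leq 2(k+1)-1$ and $\sum(j-1)\alpha_j \leq k$ are preserved.

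Granted the structural claim, the estimate follows by plugging in the hypotheses. For each monomial one uses $|f'\circ g| > \mu$ and $|f^{(j)} \circ g| \leq \|f^{(j)}\|_{C^0} \leq \|f''\|_{C^{r-2}}$ for $2 \leq j \leq k \leq r$. The bound on the weighted degree forces $\sum_j \alpha_j \leq k-1$, so each monomial is dominated by $\mu^{-(2k-1)}(1+\|f''\|_{C^{r-2}})^{k-1}$. Summing over $1 \leq k \leq r$ and absorbing additive constants into the multiplicative constant (using $\mu < 1$) yields the desired bound of the form $C(r)\mu^{1-2r}\|f''\|_{C^{r-2}}^{r-1}$.

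The main obstacle is the combinatorial bookkeeping in the structural claim: one has to verify that differentiation preserves both the exponent bound $N_\alpha \leq 2k-1$ (which accounts for the $\mu^{1-2r}$ factor) and the weighted-degree bound $\sum(j-1)\alpha_j \leq k-1$ (which caps the number of $f^{(j)}$-factors, giving the $\|f''\|_{C^{r-2}}^{r-1}$ factor). Once this is in place, the estimate is a direct substitution.
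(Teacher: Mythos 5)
The paper itself does not prove this lemma; it is imported verbatim from \cite{CLPY1} (Lemma B.4), so there is no in-paper argument to compare against. Your proof is the standard one for such estimates, and its combinatorial core checks out: with $g=f^{-1}$, each $g^{(k)}$ is a finite integer combination of monomials $(f'\circ g)^{-N}\prod_{j\ge 2}(f^{(j)}\circ g)^{\alpha_j}$, and differentiation either raises $N$ by $2$ while adding a factor $f''\circ g$, or raises $N$ by $1$ while promoting some $f^{(j)}$ to $f^{(j+1)}$; in both cases $N\le 2k-1$ and $\sum_j(j-1)\alpha_j\le k-1$ are preserved, which is exactly what produces the exponents $\mu^{1-2r}$ and $r-1$. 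The one soft spot is the final absorption step: from the monomial bound $\mu^{-(2k-1)}(1+\|f''\|_{C^{r-2}})^{k-1}$ you cannot in general pass to $C(r)\,\mu^{1-2r}\|f''\|_{C^{r-2}}^{r-1}$ when $\|f''\|_{C^{r-2}}$ is small --- indeed for an affine $f$ the right-hand side of the lemma vanishes while $\|(f^{-1})'\|_{C^{r-1}}>0$, so the inequality as literally stated already presupposes something like $\|f''\|_{C^{r-2}}\ge 1$ (or a bound of the form $C\mu^{1-2r}(1+\|f''\|_{C^{r-2}})^{r-1}$). This is an imprecision inherited from the statement rather than a flaw in your argument, and it is harmless in the paper's applications (Lemma~\ref{root at value}, Lemma~\ref{factor}), where the relevant second derivatives are bounded below by $\kappa_F$ or a uniform constant; you should simply either keep the $(1+\|f''\|_{C^{r-2}})^{r-1}$ form or state the normalization explicitly.
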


\begin{lem}\label{factor}
For $r \geq 4$, let $f : I \to f(I)$ be a $C^r$-map defined on an interval $0 \in I \subset \bbR$ such that $f(0) = 0 = f'(0)$ and $f''(0) = \kappa > 0$. Then there exists a $C^r$-diffeomorphism $\psi_f : I \to \psi_f(I)$ such that $f(x) = \kappa \cdot (\psi_f(x))^2$, and $\|\psi_f^{\pm 1}\|_{C^{r-3}} < C$ for some uniform constant $C = C(\|f\|_{C^r},\kappa, r) >0$.
\end{lem}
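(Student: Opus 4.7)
\textbf{Proof plan for Lemma \ref{factor}.} The strategy is the one-dimensional Morse lemma: factor out $x^2$ using Hadamard's lemma, then take a smooth square root. Since $f(0) = f'(0) = 0$, two applications of the fundamental theorem of calculus give the representation
\begin{equation*}
f(x) \;=\; x^2 h(x), \qquad h(x) := \int_0^1 \!\! \int_0^1 t\, f''(stx)\, ds\, dt.
\end{equation*}
Because $f \in C^r$, we have $f'' \in C^{r-2}$, and differentiation under the integral yields $h \in C^{r-2}$ with $\|h\|_{C^{r-2}} \leq \|f''\|_{C^{r-2}} \leq \|f\|_{C^r}$. Evaluating at zero, $h(0) = \tfrac{1}{2} f''(0) = \kappa/2 > 0$.

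By continuity of $h$, there is a neighborhood of $0$ (of size controlled only by $\kappa$ and $\|f\|_{C^r}$) on which $h \geq \kappa/4$. This is the sense in which $I$ is taken; in the application (Lemma \ref{quad mapping}), the global lower bound $\tilde Q'' \geq c\kappa_F$ ensures $h$ is uniformly positive on all of $I^h_{-1}$. Assuming $h \geq \kappa/4$ on $I$, define
\begin{equation*}
\psi_f(x) := x\sqrt{h(x)/\kappa},
\end{equation*}
so that $\kappa\,\psi_f(x)^2 = x^2 h(x) = f(x)$ by construction. Since $t \mapsto \sqrt{t}$ is smooth on the range $[1/4,\infty)$ of $h/\kappa$, the composition $\sqrt{h/\kappa}$ inherits the $C^{r-2}$ regularity of $h$ with norm controlled in terms of $\|h\|_{C^{r-2}}$ and the lower bound $\kappa/4$. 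Hence $\psi_f \in C^{r-2}$, and in particular $\|\psi_f\|_{C^{r-3}} \leq C(\|f\|_{C^r}, \kappa, r)$.

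For invertibility: $\psi_f'(0) = \sqrt{h(0)/\kappa} = 1/\sqrt{2}$, so after possibly shrinking $I$ we have $|\psi_f'| \geq 1/(2\sqrt 2)$ on $I$, bounded away from zero by a quantity depending only on $\kappa$ and $\|f\|_{C^r}$. Applying Lemma \ref{cr inverse} to $\psi_f$ (with the role of $r$ there played by $r-2$, which requires $r \geq 4$ exactly to make the target norm non-trivial) bounds $\|(\psi_f^{-1})'\|_{C^{r-4}}$ polynomially in $\|\psi_f''\|_{C^{r-4}}$ and $\mu^{-1}$, and therefore bounds $\|\psi_f^{-1}\|_{C^{r-3}}$ in terms of $\|f\|_{C^r}, \kappa,$ and $r$, as claimed.

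The only substantive point is the uniform positivity of $h$ on $I$, which is why the $C^r$-norm of $f$ (not merely regularity) enters the constant $C$: a larger $\|f\|_{C^r}$ forces $h$ to possibly oscillate more, shrinking the guaranteed neighborhood on which $h \geq \kappa/4$. All remaining steps---Hadamard factorization, smoothness of the square root on a set bounded away from zero, and inversion via Lemma \ref{cr inverse}---are routine estimates governed by the chain rule and the composition inequality of Lemma \ref{hocb}.
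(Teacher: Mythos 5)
Your proposal is correct in substance and shares the paper's overall strategy: write $f(x)=x^{2}\cdot(\text{positive factor})$, set $\psi_f(x)=x\sqrt{\text{factor}}$, and bound the inverse with \lemref{cr inverse}. The difference is in how the regularity of the factor is obtained. The paper splits $\kappa^{-1}f(x)-x^{2}$ into its Taylor polynomial plus a remainder $h$, proves the pointwise decay $|g^{(k)}(x)|\lesssim |x|$ for $k\le r-3$ (where $g(x)=(\kappa^{-1}f(x)-x^{2})/x^{2}$), and treats the region away from $x=0$ separately by estimating $\sqrt{f}$ directly via \lemref{hocb}; your Hadamard-type integral representation $h(x)=\int_0^1\!\int_0^1 t f''(stx)\,ds\,dt$ gives $h\in C^{r-2}$ with $\|h\|_{C^{r-2}}\le\|f\|_{C^{r}}$ in one step, avoids the near/far dichotomy, and even yields slightly better regularity for $\psi_f$ near $0$ (your normalization $\psi_f'(0)=1/\sqrt2$ is also the one consistent with $f''(0)=\kappa$). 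Two caveats. First, both arguments need $f$ (equivalently $h$) to be uniformly positive on $I\setminus\{0\}$, which does not follow from the stated hypotheses alone; the paper uses it implicitly ($f>1/K_4$ on $I\setminus J$), and you flag it explicitly and correctly point to the global bound $\tilde Q''\ge c\kappa_F$ in \lemref{quad mapping}. Second, the lemma asserts a diffeomorphism on all of $I$, so ``after possibly shrinking $I$'' is not literally available for the lower bound on $\psi_f'$; but under the same global convexity you invoke, one has $f'(x)\ge c\kappa x$ and $f(x)\le \tfrac12\|f''\|x^{2}$, whence $\psi_f'=f'/(2\sqrt{\kappa f})$ is bounded below on all of $I$ by a constant depending only on $\kappa$ and $\|f\|_{C^{r}}$, so no shrinking is needed and \lemref{cr inverse} applies as you describe.
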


\begin{proof}
In the proof, let $K_i > 0$ for $i \in \bbN$ be uniform constants that depend only on $\|f\|_{C^r}$, $\kappa$ and $r$.

Write
$$
\kappa^{-1}f(x)-x^2 =  h(x) +\sum_{i=3}^r a_i x^i,
$$
where
$$
\lim_{x\to 0}\frac{h(x)}{x^r} = 0
\matsp{and}
\|h^{(r)}\| < K_1.
$$
Consequently,
\begin{equation}\label{eq.factor}
\|h^{(i)}\| < K_2 |x|^{r-i}
\matsp{for}
0\leq i \leq r.
\end{equation}

Define
$$
\psi_f(x) := x\sqrt{1+g(x)}
\matsp{where}
g(x) := \frac{\kappa^{-1}f(x)-x^2}{x^2}.
$$
Let $J := \{|x| < 1/K_3\}$. Observe that $f(x) > 1/K_4$ for $x \in I\setminus J$. Thus, applying \lemref{hocb}, we have
$$
\|(\psi_f|_{I\setminus J})^{\pm 1}\|_{C^r} < \left\|\left(\sqrt{f|_{I\setminus J}}\right)^{\pm 1}\right\|_{C^r} < K_5.
$$

Let $\hat h(x) := h(x)/x^2$. We claim that that $\hat h^{(k)}(x)$ with $k \leq r-3$ is a sum of a uniform number of terms of the form
\begin{equation}\label{eq.factor 1}
c\frac{h^{(i)}(x)}{x^{2+k-i}}
\end{equation}
for some coefficient $c \in \bbR$ independent on $f$ and $i \leq k$. Proceeding by induction, suppose that this is true for $k < r-3$. Differentiating, \eqref{eq.factor 1}, we obtain
$$
c\frac{h^{(i+1)}(x)}{x^{2+k-i}} + (2+k-i)c\frac{h^{(i)}(x)}{x^{2+k-i+1}}.
$$
The claim follows. Hence, by \eqref{eq.factor}, we conclude that
$$
|\hat h^{(k)}(x)|,\; |g^{(k)}(x)| < K_5|x|
\matsp{for}
0 \leq k \leq r-3.
$$
In particular, $\|g|_J\| \ll 1$.

A simple computation shows that $\|\psi_f|_J\|_{C^{r-3}} < K_6$, and $|\psi_f'(x)| > c$ for $x \in J$, where $c>0$ is an independent constant. Applying \lemref{cr inverse} to obtain the required bound for the inverse of $\psi_f$, the result follows.
\end{proof}


\bigskip

\begin{tabular}{l l l}
\emph{Sylvain Crovisier} &&
\emph{Mikhail Lyubich}\\

Laboratoire de Math\'ematiques d'Orsay &&
Institute for Mathematical Sciences\\

CNRS - Univ. Paris-Saclay &&
Stony Brook University\\

Orsay, France &&
Stony Brook, NY, USA\\

&&\\

\emph{Enrique Pujals} &&
\emph{Jonguk Yang}\\

Graduate Center - CUNY &&
Institut für Mathematik\\

New York, USA &&
Universität Zürich\\

&& Zürich, Switzerland

\end{tabular}


\begin{thebibliography}{99999}







\bibitem[AvdMMa]{AvdMMa} Avila, A., de Melo, W., Martens, M. {\it On the dynamics of the renormalization operator}. Global analysis of dynamical systems, Inst. Phys., Bristol, 449-460 (2001).

\bibitem[BeCa]{BeCa} M. Benedicks, L. Carleson. {\it On dynamics of the H\'enon map}, Ann. Math. 133:73-169 (1991).

\bibitem[BeMaPa]{BeMaPa} M. Benedicks, M. Martens, L. Palmisano. {\it Newhouse Laminations}, (2018), arXiv:1811.00617.

\bibitem[Ber]{Ber} P. Berger, Strong regularity. Abundance of non-uniformly hyperbolic H\'enon-like endomorphisms. \emph{Asterisk} 410, 53 - 177.

\bibitem[BoSt]{BoSt} J. P. Boro\'nski, S. \v{S}timac. {\it The pruning front conjecture, folding patterns and classification of H\'enon maps in the presence of strange attractors}, (2023), arXiv:2302.12568.

\bibitem[CLPY1]{CLPY1} S. Crovisier, M. Lyubich, E. Pujals, J. Yang. {\it Renormalization of Unicritical Diffeomorphisms of the Disk}, (2024), arXiv:2401.13559.

\bibitem[CLPY2]{CLPY2} S. Crovisier, M. Lyubich, E. Pujals, J. Yang. {\it Quantitative Pesin Theory in Dimension Two}, (2024), Preprint available at https://user.math.uzh.ch/yang/

\bibitem[CPTr]{CPTr} S. Crovisier, E. Pujals, C. Tresser. {\it Mildly dissipative diffeomorphisms of the disk with zero entropy}, (2020), arXiv:2005.14278.

\bibitem[CoEcKo]{CoEcKo} P. Collet, J. -P. Eckmann, H. Koch. {\it Period doubling bifurcations for families of maps on $\bbR^n$}. J. Stat. Phys. (1980).

\bibitem[CoTr]{CoTr} P. Coullet, C. Tresser. {\it It\'erations d'endomorphismes et groupe de renormalisation}. J. Phys. Colloque C 539, C5-25 (1978).

\bibitem[DCLMa]{DCLMa} A. De Carvalho, M. Lyubich, M. Martens, {\it Renormalization in the H\'enon Family, I: Universality but Non-Rigidity}, J. Stat. Phys. (2006) 121 5/6, 611-669.

\bibitem[DF]{DF} E. de Faria. {\it Asymptotic rigidity of scaling ratios for critical circle mappings}. Ergod. Th. \& Dynam. Sys. 19(4), 995--1035 (1999).

\bibitem[dFdMPi]{dFdMPi} E. de Faria, W. de Melo, A. Pinto. {\it Global Hyperbolicity of Renormalization for $C^r$ Unimodal Mappings}. Ann. of Math., 164 (2006), 731-824.

\bibitem[dMvS]{dMvS} W. de Melo, S. J. Van Strien, {\it One-Dimensional Dynamics}, Springer-Verlag, New York, Heidelberg, Berlin, (1993).

\bibitem[DuL1]{DuL1} D. Dudko, M. Lyubich. {\it Uniform a priori bounds for neutral renormalization}. (2022) Preprint:	 arXiv:2210.09280

\bibitem[DuL2]{DuL2} D. Dudko, M. Lyubich. {\it MLC at Feigenbaum points}. (2023) Preprint: arXiv:2309.02107


\bibitem[Fe]{Fe} M. J. Feigenbaum. {\it Quantitative universality for a class of nonlinear transformations}, J. Statist. Phys. 19 (1978), 25–52.

\bibitem[GaJoMa]{GaJoMa} D. Gaidashev, T. Johnson, M. Martens, {\it Rigidity for infinitely renormalizable area-preserving maps}, Duke Math. J. 165(1) (2016).

\bibitem[GaTr]{GaTr} J.-M. Gambaudo, C. Tresser. (1991). {\it How Horseshoes are Created}, In: Tirapegui, E., Zeller, W. (eds) Instabilities and Nonequilibrium Structures III. Mathematics and Its Applications, vol 64. Springer



\bibitem[GavSTr]{GavSTr} J.-M. Gambaudo, S. van Strien, C. Tresser. {\it H\'enon-like maps with strange attractors: There exist $C^\infty$ Kupka-Smale diffeomorphisms on $S^2$ with neither sinks nor sources}, Nonlinearity 2:287-304 (1989).

\bibitem[GoYa]{GoYa} N. Goncharuk, M. Yampolsky. {\it Analytic linearization of conformal maps of the annulus}. Advances in Mathematics, Volume 409, Part A, 2022.

\bibitem[Gu]{Gu} J. Guckenheimer. {\it Sensitive dependence to initial conditions for one-dimensional maps}. Comm. Math. Phys., v. 70 (1979), 133--160.


\bibitem[InSh]{InSh} H. Inou and M. Shishikura. {\it The renormalization for parabolic fixed points and their perturbations}. Manuscript 2008.

\bibitem[Ha]{Ha} P. E. Hazard, {\it H\'enon-like maps with arbitrary stationary combinatorics}, Ergodic Theory Dynam. Systems 31 (2011), no. 5, 1391-1443.

\bibitem[He]{He} M. H\'enon. {\it A two dimensional mapping with a strange attractor}, Comm. Math. Phys. 50 (1976), 69-77.

\bibitem[Her1]{Her1} M. Herman, {\it Sur la conjugaison diff\'erentiable des diff\'eomorphismes du cercle \`a des rotations}. IHES Publ. Math. 49, 5--233 (1979).

\bibitem[Her2]{Her2} M. Herman. {\it Conjugaison quasi symm\'etrique des diff\'eomorphisms du cercle \`a des rotations et applications aux disques singuliers de Siegel}. Manuscript (1986).

\bibitem[KaL]{KaL} J. Kahn and M. Lyubich. {\it A priori bounds for some infinitely renormalizable quadratics: II. Decorations}. Ann. Scient. Ec. Norm. Sup., v. 41 (2008), 57--84.

\bibitem[L1]{L1} M. Lyubich. {\it Feigenbaum-Coullet-Tresser universality and Milnor's hairiness conjecture}, Ann. of Math. 149 (1999), 319-420.

\bibitem[L2]{L2} M. Lyubich. {\it The quadratic family as a qualitatively solvable model of chaos}, Notices Amer. Math. Soc., 47(9):1042–1052, 2000.

\bibitem[L3]{L3} M. Lyubich. {\it Dynamics of quadratic polynomials, I-II}. Acta Math., v. 178 (1997), 185--297.

\bibitem[LYam]{LYam} M. Lyubich, M. Yampolsky, {\it Dynamics of quadratic polynomials: complex bounds for real maps}. Annals of the Fourier Institute (1997), Volume: 47, Issue: 4, page 1219-1255.

\bibitem[Mc]{Mc} C. McMullen, {\it Self-similarity of Siegel disks and Hausdorff dimension of Julia sets}, Acta Math. 180 (1998), 247-292.

\bibitem[MiTh]{MiTh} J. Milnor, W. Thurston. {\it On iterated maps of the interval}. In Dynamical systems (College Park, MD, 1986–87), volume 1342 of Lecture Notes in Math., pages 465–563. Springer, Berlin, 1988.

\bibitem[PaYo]{PaYo} J. Palis and J.C. Yoccoz. Non-uniformly hyperbolic horseshoes arising from bifurcations of
Poincar\'e heteroclinic cycles. Publ. Math. Inst. Hautes Etudes Sci., (110):1-217, 2009.


\bibitem[PuSh]{PuSh} C. Pugh, M. Shub. {\it Ergodic Attractors}. Transactions of the American Mathematical Society, Vol. 312, No. 1 (Mar., 1989), pp. 1-54.

\bibitem[Su]{Su} D. Sullivan, {\it Bounds, quadratic differentials, and renormalization conjectures}, AMS Centennial Publications II, Mathematics into Twenty-first Century, 417-466, 1992.

\bibitem[Sw]{Sw} G. Swiatek. {\it On critical circle homeomorphisms}. Bol. Soc. Bras. Mat., v. 29 (1998), 329--351.

\bibitem[WaYo]{WaYo} Q. Wang,and L-S. Young, Toward a theory of rank one attractors. \textit{Ann. of Math.} (2) \textbf{167} (2008), no.2, 349–480.

\bibitem[Yam]{Yam} Yampolsky, M. {\it Complex bounds for renormalization of critical circle maps}. Erg. Th. \& Dyn. Systems 19, 227--257 (1999).

\bibitem[Y]{Y} J. Yang. {\it On Regular H\'enon-like Renormalization}, (2024), Preprint available at https://user.math.uzh.ch/yang/

\bibitem[Yo]{Yo} J.-C. Yoccoz, {\it Th\'eor\`eme de Siegel, nombres de Bruno et polyn\^omes quadratiques}. Ast\'erisque 231, 3--88 (1995).



\end{thebibliography}
\end{document}